\patchcmd{\specialsection}{\MakeUppercase}{\MakeTextUppercase}{}{}
\patchcmd{\specialsection}{\MakeUppercase}{\MakeTextUppercase}{}{}
\crefname{equation}{equation}{equations}
\renewcommand*{\backrefalt}[4]{%
    \ifcase #1 \footnotesize{(Not cited.)}%
    \or        \footnotesize{(Cited on page~#2.)}%
    \else      \footnotesize{(Cited on pages~#2.)}%
    \fi}
\newcommand*{\addFileDependency}[1]{
  \typeout{(#1)}
  \@addtofilelist{#1}
  \IfFileExists{#1}{}{\typeout{No file #1.}}
}
\newtheorem{theos}{Theorem}
\newtheorem{props}{Proposition}
\newtheorem{lems}{Lemma}
\newtheorem{cors}{Corollary}
\newlength{\widebarargwidth}
\newlength{\widebarargheight}
\newlength{\widebarargdepth}
\long\def\@makecaption#1#2{
        \vskip 0.8ex
        \setbox\@tempboxa\hbox{\small {\bf #1:} #2}
        \parindent 1.5em  
        \dimen0=\hsize
        \advance\dimen0 by -3em
        \ifdim \wd\@tempboxa >\dimen0
                \hbox to \hsize{
                        \parindent 0em
                        \hfil
                        \parbox{\dimen0}{\def\baselinestretch{0.96}\small
                                {\bf #1.} #2
                                }
                        \hfil}
        \else \hbox to \hsize{\hfil \box\@tempboxa \hfil}
        \fi
        }
\long\def\comment#1{}
\definecolor{OliveGreen}{cmyk}{0.91,0,0.88,0.2} 
\newenvironment{carlist}
 {\begin{list}{$\bullet$}
 {\setlength{\topsep}{0in} \setlength{\partopsep}{0in}
  \setlength{\parsep}{0in} \setlength{\itemsep}{\parskip}
  \setlength{\leftmargin}{0.15in} \setlength{\rightmargin}{0.08in}
  \setlength{\listparindent}{0in} \setlength{\labelwidth}{0.08in}
  \setlength{\labelsep}{0.1in} \setlength{\itemindent}{0in}}}
 {\end{list}}
\newcommand{\bcar}{\begin{carlist}}
\newcommand{\ecar}{\end{carlist}}
\newcommand{\err}{\ensuremath{\epsilon}}
\newcommand{\mydefn}{: \, =}
\newcommand{\noisesd}{\sigma}
\DeclareMathOperator{\tr}{trace}
\DeclareMathOperator{\Exs}{\mathbb{E}}
\DeclareMathOperator{\Cov}{\mathrm{Cov}}
\newcommand{\smallop}[1]{{\operatorname{#1}}}
\newcommand{\indistrb}{\ensuremath{ \stackrel{\smallop{d}}{\longrightarrow}}}
\newcommand{\indist}{\indistrb}
\newcommand{\inEllOne}{\ensuremath{\stackrel{L_1}{\longrightarrow}}}
\newcommand{\almostSurely}{\ensuremath{\stackrel{\smallop{a.s.}}{\longrightarrow}}}
\newcommand{\y}{\ensuremath{y}}
\newcommand{\x}{\ensuremath{\mathbf{x}}}
\newcommand{\filtration}{\ensuremath{\mathcal{F}}}
\newcommand{\real}{\ensuremath{\mathbb{R}}}
\newcommand{\X}{\ensuremath{\mathbf{X}}}
\newcommand{\W}{\ensuremath{\mathbf{W}}}
\newcommand{\w}{\ensuremath{\mathbf{w}}}
\newcommand{\error}{\ensuremath{\mathbf{\epsilon}}}
\newcommand{\Id}{\ensuremath{\mathbf{I}}}
\newcommand{\V}{\ensuremath{\mathbf{V}}}
\newcommand{\inprod}[2]{\ensuremath{\big\langle #1, #2 \big \rangle}}
\newcommand{\myinprod}[2]{\inprod{#1}{#2}}
\newcommand{\Ncal}{\mathcal{N}}
\newcommand{\BigoP}{\ensuremath{O_p }}
\newcommand{\Bigo}{\ensuremath{O }}
\newcommand{\smalloP}{\ensuremath{o_p}}
\newcommand{\thetabf}{\ensuremath{\boldsymbol{\theta}}}
\newcommand{\thetaDecorr}{\ensuremath{ \widehat{ \boldsymbol{\theta}}_{\scriptscriptstyle{\operatorname{OD}}} }}
\newcommand{\thetaLS}{\ensuremath{ \widehat{ \boldsymbol{\theta}}_{\scriptscriptstyle{\operatorname{LS}}} }}
\newcommand{\z}{\ensuremath{\mathbf{z}}}
\newcommand{\Z}{\ensuremath{\mathbf{Z}}}
\newcommand{\tuneParScaled}{\gamma}
\newcommand{\thetastar}{{\ensuremath{\boldsymbol{\theta}^*}}}
\newcommand{\thetastarScalar}{\ensuremath{\theta}^*}
\newcommand{\Prob}{\ensuremath{\mathbb{P}}}
\newcommand{\Dim}{\ensuremath{d}}
\newcommand{\XscaleMat}{\ensuremath{\mathbf{\Gamma}}}
\newcommand{\DelMat}{\ensuremath{\Delta}}
\newcommand{\SigmaLim}{\ensuremath{\mathbf{B}}}
\newcommand{\SigmaMat}{\ensuremath{\mathbf{S}}}
\newcommand{\e}{\ensuremath{\mathbf{e}}}
\newcommand{\RNum}[1]{\uppercase\expandafter{\romannumeral #1\relax}}
\newcommand{\diag}{\mathrm{diag}}
\newcommand{\SigmaMatLB}{\ensuremath{\mathbf{L}}}
\newcommand{\MaxVal}{\ensuremath{C}}
\newcommand{\ExsXsqLB}{\ensuremath{\mathbf{G}}}
\newcommand{\wiener}{\ensuremath{\mathrm{w}}}
\newcommand{\ceil}[1]{\ensuremath{\left \lceil #1 \right \rceil}}
\newcommand{\basis}{\ensuremath{\mathbf{e}}}
\newcommand{\armMeans}{\thetastar}
\newcommand{\armCount}{\ensuremath{N}}
\newcommand{\greedyEps}{\ensuremath{\varepsilon}}
\newcommand{\A}{\ensuremath{\mathbf{A}}}
\newcommand{\B}{\ensuremath{\mathbf{B}}}
\newcommand{\errorstd}{\ensuremath{\sigma}}
\def\E{{\Exs}} 
\newcommand{\dir}{{\mathbf{v}}}
\def\<{{\langle}}
\def\>{{\rangle}}
\newcommand{\selection}{\ensuremath{\psi}}
\newcommand{\Selections}{\Psi}
\newcommand{\reals}{{\mathbb{R}}}
\newcommand{\normal}{{\mathcal{N}}}
\newcommand{\thetahat}{{\widehat{\boldsymbol{\theta}}}}
\newcommand{\CI}{{\mathcal{I}}}
\newcommand{\thetaLSScalar}{\theta_{\scriptscriptstyle{\operatorname{LS}}} }
\newcommand{\thetaDecorrScalar}{ \widehat{\theta}_{\scriptscriptstyle{\operatorname{OD}}} }
\newcommand{\armCountMat}{\mathbf{S}}
\def\mahal{{\mathbf{M}}}
\def\Unif{{\mathrm{Unif}}}
\def\sigmahat{{\widehat{\sigma}}}
\newcommand{\parDelta}{\Delta}
\newenvironment{talign*}
 {\csname align*\endcsname} {\endalign}
\newcommand{\qtext}[1]{\quad\text{#1}\quad}
\newcommand{\numobs}{\ensuremath{n}}
\newcommand{\NormalQuantile}[1]{z_{#1}}
\newcommand{\mompar}{\ensuremath{\Delta}}
\newcommand{\mymatrix}[1]{\ensuremath{\mathbf{#1}}}
\newcommand{\myvector}[1]{\ensuremath{\mathbf{#1}}}
\newcommand{\Bias}{\ensuremath{\myvector{b}}}
\newcommand{\ZeroMartin}{\ensuremath{\myvector{v}}}
\newcommand{\FullNseq}[5]{\ensuremath{ \{ #1_{#2} \}_{#3=#4}^{#5}}}
\newcommand{\Nseq}[2]{\FullNseq{#1}{#2}{#2}{1}{\numobs}}
\newcommand{\NseqNosub}[2]{ \{#1\}_{#2=1}^\numobs}
\newcommand{\Nsum}[1]{\ensuremath{ \sum_{#1=1}^\numobs}}
\newcommand{\RV}{Z}
\newcommand{\realseq}{a}
\newcommand{\matsqrt}[1]{\ensuremath{#1^{\frac{1}{2}}}}
\newcommand{\MyIdDim}{\ensuremath{\Id}}
\newcommand{\PopSigma}{\mathbf{\Sigma}}
\DeclareSymbolFont{symbolsC}{U}{txsyc}{m}{n}
\DeclareMathSymbol{\notnimartin}{\mathrel}{symbolsC}{61}
\newcommand{\matsnorm}[2]{\ensuremath{|\!|\!| #1 | \! | \!|_{{#2}}}}
\newcommand{\maxnorm}[1]{\|#1\|_{\scriptsize{\operatorname{max}}}}
\newcommand{\opnorm}[1]{\matsnorm{#1}{\scriptsize{\operatorname{op}}}}
  \newcommand{\frobnorm}[1]{\matsnorm{#1}{\scriptsize{\operatorname{F}}}}
\newcommand{\myparagraph}[1]{\subsection*{\textbf{#1}}}
\newcommand{\dims}{d}
\newcommand{\lebesgue}{\nu}
\newcommand{\greedy}{\varepsilon}
\newcommand{\OLS}{OLS\xspace}
\newcommand{\Loss}{\mathrm{\ell}_\mahal}
\newcommand{\Y}{\mathbf{y}}
\newcommand{\actionset}{\mathcal{A}}
\newcommand{\action}{\mathbf{a}}
\newcommand{\thetaRidge}{\ensuremath{\hat{\mathbf{\theta}}_{\text{ridge}}}}
\newcommand{\lambdaRidge}{\ensuremath{\lambda_{\text{ridge}}}}
\newcommand{\CIset}{\mathbf{A}}
\newcommand{\chisq}{\chi^2}
\newcommand{\diagCov}{\ensuremath{\mathbf{D}}}
\newcommand{\Basis}{\ensuremath{\mathbf{V}}}
\newcommand{\enorm}[1]{ \| #1 \|}
\newcommand{\thetaBasis}{\ensuremath{ \widehat{ \boldsymbol{\theta}}_{\dir, \scriptscriptstyle{\operatorname{diagOD}}} }}
\newcommand{\algoDir}{\ensuremath{u}}
\newcommand{\randDir}{\ensuremath{v}}
\newcommand{\thetaVec}{\ensuremath{\mathbf{\theta}}}
\newcommand{\thetaBasisLS}{\ensuremath{ \widehat{ \boldsymbol{\theta}}_{\dir, \scriptscriptstyle{\operatorname{LS}}} }}
\renewcommand{\real}{\bf{R}}
\newcommand{\InvSigmaV}{\mathbf{\Omega}}
\newcommand{\OpTuning}{\beta}
\newcommand{\zero}{\bm 0}
\newcommand{\ip}[1]{\langle #1 \rangle}
\newcommand{\norm}[1]{\Vert #1 \Vert}
\newcommand{\cE}{\ensuremath{{\widehat{U}}}}
\newcommand{\thetaODNew}{\widehat{\theta}_{OD}}
\newcommand{\gammahat}{\widehat{\gamma}} 
\providecommand{\convprob}{\stackrel{\smallop{p}}{\longrightarrow}}
\newcommand{\banditdir}{\e_j}
\def\CIrisk{{\sf Risk}}
\newcommand{\Event}{\ensuremath{\mathcal{E}}}
\newcommand{\uphat}{\ensuremath{\widehat{u}}}
\newcommand{\lowhat}{\ensuremath{\widehat{\ell}}}
\newcommand{\hackCI}{\ensuremath{[\lowhat_\numobs, \uphat_\numobs]}}
\newcommand{\smallv}{\ensuremath{\xi_\numobs}}
\newcommand{\smallvtil}{\ensuremath{\tilde{v}_\numobs}}
\newcommand{\ExsData}{\ensuremath{\Exs_{\thetabf}}}
\newcommand{\ProbData}{\ensuremath{\Prob_{\thetabf}}}
\newcounter{example}[section]
\theoremstyle{definition}
\newtheorem{definition}{Definition}[section]
\begin{document}

\begin{frontmatter}

\title{Near-optimal inference in adaptive linear regression}
\runtitle{Near-optimal inference in adaptive linear regression}

\begin{aug}
\author[A]{\fnms{Koulik} \snm{Khamaru}\ead[label=e1]{kk1241@stat.rutgers.edu}},
\author[B]{\fnms{Yash} \snm{Deshpande}\ead[label=e2,mark]{ydeshpande@voleon.com}},\\
\author[C]{\fnms{Tor} \snm{Lattimore}\ead[label=e3,mark]{tor.lattimore@gmail.com}},
\author[D]{\fnms{Lester} \snm{Mackey}\ead[label=e4,mark]{lmackey@microsoft.com}},
\and
\author[E]{\fnms{Martin J.} \snm{Wainwright}\ead[label=e5,mark]{wainwrigwork@gmal.com}}
\address[A]{Department of Statistics,
Rutgers University,
\printead{e1}}

\address[B]{ 
Voleon Group, \printead{e2}}

\address[C]{DeepMind, \printead{e3}}
\address[D]{Microsoft Research, \printead{e4}}

\address[E]{Lab for Information and Decision Systems,
Statistics and Data Science Center,
Massachusetts Institute of Technology \\
Departments of Statistics and EECS,
UC Berkeley
\printead{e5}}

\end{aug}

\begin{abstract}
When data is collected in an adaptive manner, even simple methods like
ordinary least squares can exhibit non-normal asymptotic behavior.  As
an undesirable consequence, hypothesis tests and confidence intervals
based on asymptotic normality can lead to erroneous results.  We
propose a family of online debiasing estimators to correct these
distributional anomalies in least squares estimation.  Our proposed
methods take advantage of the covariance structure present in the
dataset and provide sharper estimates in directions for which more
information has accrued.  We establish an asymptotic normality
property for our proposed online debiasing estimators under mild
conditions on the data collection process and provide asymptotically
exact confidence intervals. We additionally prove a minimax lower
bound for the adaptive linear regression problem, thereby providing a
baseline by which to compare estimators. There are various conditions
under which our proposed estimators achieve the minimax lower bound.
We demonstrate the usefulness of our theory via applications to
multi-armed bandit, autoregressive time series estimation, and active
learning with exploration.
\end{abstract}

\begin{keyword}[class=MSC2020]
\kwd[Primary ]{62E20}
\kwd[; secondary ]{6208}
\end{keyword}

\begin{keyword}
\kwd{Adaptive linear regression}
\kwd{Online debiasing}
\kwd{Minimax lower bound}
\kwd{Asymptotic normality}
\kwd{Multi-armed bandits}
\kwd{Autoregressive time series}
\end{keyword}

\end{frontmatter}

\section{Introduction} 
\label{sec:introduction}
Consider a prediction problem in which we observe $\numobs$ datapoints
of the form $(\x_i, \y_i) \in \real^\Dim \times \real$ with covariate
vector $\x_i$ and response $\y_i$ linked via the linear model
\begin{align}
\label{eqn:linear-reg-model}
\y_i = \myinprod{\x_i}{\thetastar} + \error_i \quad \mbox{for $i = 1,
  \ldots, \numobs$.}
\end{align}
Here the vector $\thetastar \in \real^\Dim$ is an unknown parameter of
interest, and $\error_i$ is additive noise.  When the datapoints are
generated via some i.i.d. sampling process, this model, and in
particular the behavior of the ordinary least squares (OLS) estimate
$\thetaLS$, is very well-understood.  The focus of this paper is the
more challenging setting in which the covariate vectors $\Nseq{\x}{i}$
have been \emph{adaptively} collected, meaning that the choice of
$\x_i$ can depend on the entire set of previous observations $\{\x_j,
\y_j\}_{j =1}^{i-1}$.

More precisely, given a filtration $\Nseq{\filtration}{i}$, assume
that $\x_i$ is $\filtration_{i - 1}$-measurable and
that the additive error $\Nseq{\error}{i}$ is a martingale difference
sequence with respect to $\Nseq{\filtration}{i}$, with 
\begin{align}
\Exs[\error_i \mid \filtration_{i - 1} ] = 0, \quad \mbox{and} \quad
\Exs[\error_i ^2 \lvert \filtration_{i-1}] = \sigma^2,
\end{align}
for some non-random scalar $\sigma^2 > 0$.  We refer to the
combination of the linear observation
model~\eqref{eqn:linear-reg-model} with such (potentially) adaptive
collection procedures as the \emph{adaptive linear regression model}.
Instances of adaptive linear regression arise in a variety of
applications, including multi-armed
bandits~\citep{lattimore2020bandit}, active
learning~\citep{fontaine2019online}, times series
modeling~\citep{box2015time}, stochastic
control~\citep{aastrom2012introduction}, and adaptive stochastic
approximation schemes~\citep{lai1982least,deshpande2017accurate}.

Let us discuss some known results for the OLS estimate $\thetaLS$.
The estimate can be expanded in the form
\begin{align}
  \label{EqnLSDecompose}
\thetaLS = \SigmaMat_\numobs^{-1}\X_n^\top \y_n = \thetastar
  + \SigmaMat_\numobs^{-1} \sum_{i=11}^\numobs \x_i \error_i, \qquad
\text{where} \quad \SigmaMat_\numobs \mydefn \sum_{i=1}^\numobs \x_i \x_i^\top.
\end{align}
This decomposition reveals that the statistical properties of the OLS
estimate depend on the martingale transform $\Nsum{i} \x_i \error_i$,
along with the random matrix $\SigmaMat_\numobs$.  There is a lengthy
literature on conditions under which the OLS estimate is
consistent~\citep{lai1982least,aastrom2012introduction,box2015time,goodwin1977dynamic,lai1979adaptive,lai1979strong}.
Notably, \citet[Thm.~1]{lai1982least} show that the OLS estimate is
strongly consistent, meaning that $\thetaLS \almostSurely \thetastar$,
whenever
\begin{align}
\label{eqn:strong-consistency-cond}\lambda_{\min}(\SigmaMat_\numobs) \almostSurely \infty \quad \text{and}
\quad
\frac{\log  \lambda_{\max}(\SigmaMat_\numobs)}{\lambda_{\min}(\SigmaMat_\numobs)}
\almostSurely 0.
\end{align}
Arguably, these conditions for consistency are quite mild.  In
contrast, \citet[Theorem 3]{lai1982least} also show that asymptotic
normality of the least squares estimator in the adaptive linear
regression model holds under a stability condition that is
substantially more restrictive---namely, the existence of a sequence
$\{\SigmaLim_\numobs\}_{\numobs \geq 1}$ of \emph{non-random} strictly
positive definite matrices such that
\begin{align}
  \label{eqn:Cond-Cov-stability} \SigmaLim_\numobs^{-1} \SigmaMat_\numobs \stackrel{\smallop{p}}{\longrightarrow} \MyIdDim.
\end{align}
Moreover, \citet[Example 3]{lai1982least} demonstrate through the
example of a unit root autoregressive model that the OLS estimator
fails to be asymptotically normal in absence of the stability
property~\eqref{eqn:Cond-Cov-stability}.  In such cases, confidence
intervals and other forms of inference performed using Gaussian limit
theory are no longer valid.


\subsection*{Contributions}

In this paper, we propose and analyze a new family of estimators for
the parameter vector $\thetastar$ (or linear functionals thereof)
based on online debiasing techniques.  We show that, under mild
conditions, our proposed estimators are both asymptotically unbiased
and asymptotically normal. The underlying assumptions are less
stringent than the stability condition~\eqref{eqn:Cond-Cov-stability}
and are satisfied by a large class of models for data generation and
protocols for choosing covariate vectors.  We provide a detailed
discussion of three such example classes in \cref{sec:applications}.
By deriving minimax lower bounds on the performance of any estimator,
we show that our estimators are minimax optimal.  We also show that
the asymptotic performance of these estimators are near-optimal in an
instance dependent sense, in that they match the performance of the
best problem-specific behavior up to a logarithmic factor.


\subsection*{Related work} 

The broader literature on bandit algorithms and experimentation
focuses mostly on a single statistical objective, with standard
examples being minimizing regret or selecting an optimal arm with high
probability. In the papers~\citep{xu2013estimation,villar2015multi},
the authors empirically observed that bandit algorithms induce bias,
which can be problematic for ex-post inference.  Later
works~\citep{nie2018adaptively,shin2019sample,shin2019bias}
characterize the sign and bound the magnitude of this bias. In the
paper~\citep{hadad2019confidence}, the authors develop estimators that
use propensity scores for the multi-armed bandit setting, a special
case of the stochastic regression model~\eqref{eqn:linear-reg-model}
in which the covariate vectors $\x_i$ are restricted to standard basis
vectors. However, it is not clear how to extend this approach to
general designs. Also in the bandit setting, Zhang et
al.~\citep{zhang2020inference} develop a least squares estimator that
exploits an assumed batch structure, meaning that only a fixed, finite
number of adaptive decisions are made. This approach, however, does
not apply to more general schemes that make adaptive decisions at each
round. Recently, Zhang et al.~\citep{zhangstatistical} proposed a
weighted \mbox{M-estimator} for contextual bandit problems where the
bandit algorithm is known. It is also not clear how to generalize this
approach to a more general data collection scheme or to the case when
the data collection algorithm is not completely known.

There is also a parallel line of work that exploits concentration of
measure results (e.g., see the papers~\citep{BouLugMas13, wainwright2019high}) to
develop confidence regions that are valid uniformly in time.  This
approach has its roots in the bandits
literature~\citep{abbasi2011online,jamieson2014lil} and has been
refined in more recent
work~\citep{howard2018uniform,kaufmann2018mixture}.  An advantage of
this approach is that it yields bounds that are uniform in time. On
the flip side, it requires very strong exponential tail conditions on
the error sequence in contrast to the relatively mild moment conditions
that we impose.  Overall, we view this line of work as being
complementary to our goal of developing corrected estimators that obey
asymptotic normality.

This paper builds upon and extends past work, due to a subset of the
current authors~\citep{deshpande2017accurate,deshpande2019online},
using online debiasing techniques. In~\cref{thm:Minimax-Lowerbound},
we prove a lower bound that shows how the matrix sequence
$\SigmaMat_\numobs^{-1}$ controls the fundamental difficulty of the
problem, and this lower bound also motivates the particular form of
debiasing proposed in this paper.  The construction used in past
work~\citep{deshpande2017accurate,deshpande2019online} is based on a
non-adaptive upper bound of the form $\lambda_* \MyIdDim$, where the
scalar $\lambda_*$ is chosen to be much larger than
$\lambda_{\max}(\SigmaMat_\numobs^{-1})$ with high probability.  By
sharp contrast, our analysis instead makes use of an adaptive upper
bound that simultaneously respects the structure of
$\SigmaMat_\numobs^{-1}$ and leads to a stable martingale transform;
this particular construction and our analysis thereof allows us to
obtain sharper guarantees than past
work~\citep{deshpande2017accurate,deshpande2019online}.

\subsection*{Notation} Let us summarize some notation used throughout
the remainder of the paper.  For a positive integer $n$, we make use
of the convenient shorthand $[n] \mydefn \{1, 2, \ldots, n\}$.  We use
$\myvector{e}_j$ to denote the $j$th standard basis vector in
$\real^{\Dim}$.  For a matrix $\mymatrix{M}$, we use the notation
$\opnorm{\mymatrix{M}}$ and $\frobnorm{\mymatrix{M}}$ to denote the
operator norm (maximum singular value) and the Frobenius norm of the
matrix $\mymatrix{M}$, respectively; similarly, we use the notation
$\maxnorm{\mymatrix{M}}$ to denote the maximum entry in absolute
value.  For a square matrix $\mymatrix{S}$, the quantities
$\lambda_{\max}(\mymatrix{S})$ and $\lambda_{\min}(\mymatrix{S})$
respectively denote the maximum and minimum eigenvalue of the matrix
$\mymatrix{S}$.  The quantity $\tr(\mymatrix{S})$ denotes the sum of
diagonal entries of the square matrix $\mymatrix{S}$. For a pair of
squares matrices $(\mymatrix{A}, \mymatrix{B})$ of compatible
dimensions, we use the notation $\mymatrix{A} \succcurlyeq
\mymatrix{B}$ to indicate that the difference matrix $\mymatrix{A} -
\mymatrix{B}$ is positive semidefinite; we use the notation
$\mymatrix{A} \succ \mymatrix{B}$ when the difference matrix
$\mymatrix{A} - \mymatrix{B}$ is positive definite. The relations
$\mymatrix{A} \preccurlyeq \mymatrix{B}$ and $\mymatrix{A} \prec
\mymatrix{B}$ are defined analogously. For a symmetric positive
semidefinite matrix $\mymatrix{S}$, we use $\matsqrt{\mymatrix{S}}$ to
denote its symmetric matrix square root.

For a sequence of random variables $\{\RV_\numobs\}_{\numobs \geq 1}$
and a random variable $\RV$, we write $\RV_\numobs
\stackrel{\smallop{p}}{\longrightarrow} \RV$ to mean that the sequence
of random variables $\{\RV_\numobs\}_{\numobs \geq 1}$ converges to
$\RV$ in probability; the notation $\RV_\numobs \indistrb \RV$
indicates convergence in distribution.  For a sequence of real-valued
random variables $\{\RV_\numobs\}_{\numobs \geq 1}$ and a sequence of
non-zero real numbers $\{\realseq_\numobs\}_{\numobs \geq 1}$, we
write $\RV_\numobs = \smalloP(\realseq_\numobs)$ to mean that the
ratio $\tfrac{\RV_\numobs}{\realseq_\numobs}
\stackrel{\smallop{p}}{\longrightarrow} 0$.  We write $\RV_\numobs =
\BigoP(\realseq_\numobs)$ to mean that the ratio
$\RV_\numobs/\realseq_\numobs$ is stochastically bounded.  More
precisely, for every scalar $\epsilon >0$, there exits a positive real
number $C_{\epsilon}$ such that \mbox{$\sup \limits_{\numobs \geq 1}
  \, \Prob [\RV_\numobs / \realseq_\numobs > C_\epsilon] < \epsilon$.}


\section{From ordinary least squares to online debiasing}
\label{SecMotivation}

In this section, we begin by motivating the work by discussing how
classical theory about ordinary least squares estimate can break down
when data is collected in an adaptive manner.  We then introduce an
online debiasing approach to computing alternative estimates.

\subsection{Breakdown of the ordinary least squares estimator}
\label{sec:least-square-estimators}

\begin{figure}[!ht]
  \centering
  \begin{subfigure}{0.45\linewidth}
      \includegraphics[width=6cm]{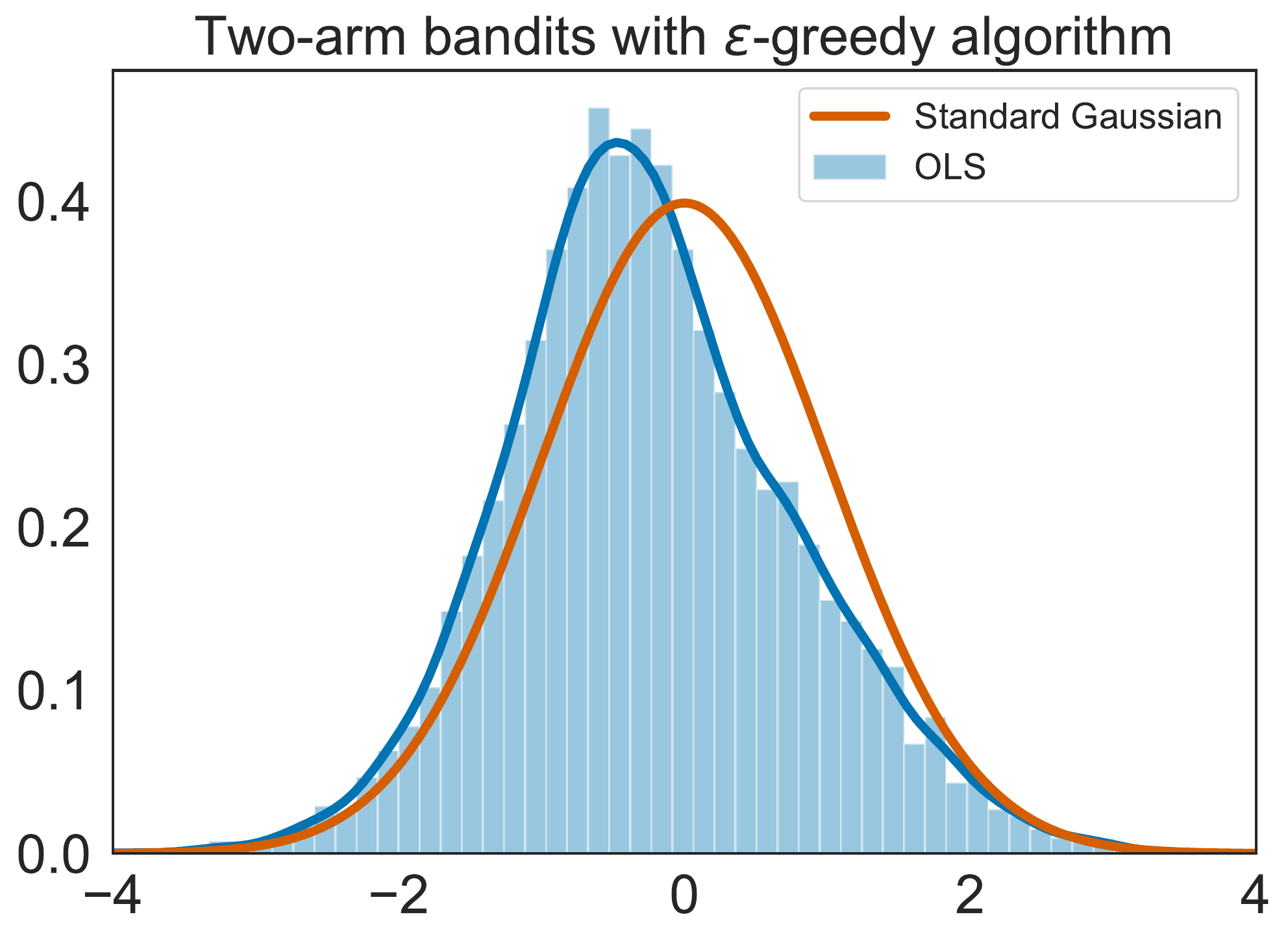}
    \caption{Distribution of $\frac{\thetahat_2 -
        \theta_2^*}{\sqrt{(\SigmaMat_n^{-1})_{22}} }$}
  \end{subfigure}
  \quad
  \begin{subfigure}{0.45\linewidth}
    \raisebox{0.14in}{
      \includegraphics[width=6cm]{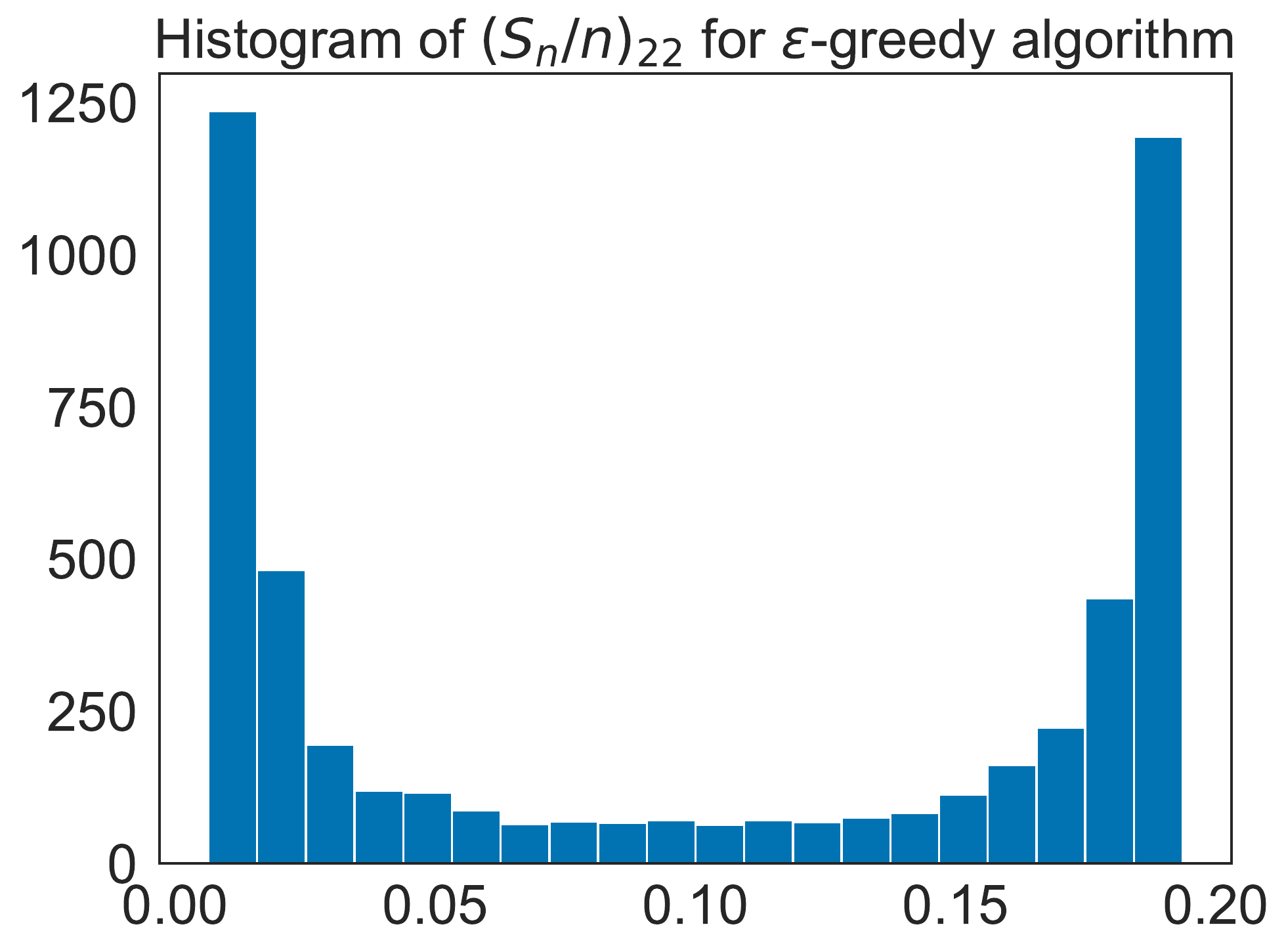}
      }
    \caption{Distribution of $\left(\SigmaMat_n/n\right)_{22}$}
  \end{subfigure}
\caption{Quantitative behavior of the first coordinate $\thetahat_1$
  of the ordinary least squares estimator $\thetaLS := (\thetahat_1,
  \thetahat_2)$ on a dataset drawn from the $\greedy$-greedy two-armed
  bandit model of \cref{sec:least-square-estimators}. The results are
  obtained with a dataset of size $n = 1000$ and $5000$ independent
  replications. (a) The distribution of the rescaled difference $\frac{\thetahat_2 -
    \theta_2^*}{\sqrt{(\SigmaMat_n^{-1})_{22}} }$ is far from standard
  Gaussian.  (b) The bimodal distribution of $\left(\SigmaMat_n/n
  \right)_{22}$ suggests that the scaled covariance matrix
  $\SigmaMat_n/n$ does not converge to a deterministic matrix.}
 \label{fig:toy-bandit}
\end{figure}

Let us begin by considering the behavior of the OLS estimator
$\thetaLS$ from equation~\eqref{EqnLSDecompose}.  When the covariates
$\{\x_i\}_{i \geq 1}$ are either fixed or independently sampled from a
fixed distribution, it has several optimality properties.
Accordingly, it is natural to ask what the performance of the \OLS
estimator is when the covariates $\{\x_i\}$ are drawn in an adaptive
manner.

In order to fix ideas, let us consider a two-armed bandit
problem~\citep{lattimore2020bandit}, a special case of the linear
regression model~\eqref{eqn:linear-reg-model} with each $\x_i$ chosen
to be either $(1,0)^\top$ or $(0, 1)^\top$ based on the prior data
$\{\x_j, \y_j \mid j \leq i - 1 \}$.  In order to generate the
covariates $\{\x_i\}_{i = 1}^{n}$, suppose that we apply the
$\greedy$-greedy selection algorithm, a popular choice for tackling
bandit problems~\citep{lattimore2020bandit}.

A simple simulation reveals some interesting phenomena.  We generated
linear regression data using $\greedy$-greedy selection algorithm with
the choices $\greedy = 0.1$, $\thetastar = (0.3, 0.3)^\top$, and noise
variables $\error_i \stackrel{i.i.d.}{\sim} \Ncal(0, 1)$.  Let
$\thetahat_2$ denote the first coordinate of the \OLS estimator fit to
the bandit data.  Figure~\ref{fig:toy-bandit}(a) demonstrates that the
distribution of $\thetahat_2$, even after proper re-centering and
scaling, does not converge to a standard normal distribution.  As an
undesirable consequence, the confidence intervals for $\thetastarScalar_2$,
usually constructed using the quantiles of a standard normal random
variable, are not valid.

Let us try to understand why the \OLS estimate 
fails to be asymptotically normal.
Figure~\ref{fig:toy-bandit}(b) plots a histogram 
of the $(2,2)$ entry of the scaled sample covariance 
matrix $\SigmaMat_n/n$.
The bimodal behavior suggests that $\SigmaMat_n/n$ fails to converge to a 
non-random matrix $\SigmaLim$ and indicates that the stability 
condition~\eqref{eqn:Cond-Cov-stability}
is not satisfied. Indeed, in a recent paper~\citep{zhang2020inference},
the authors show that when $\theta_1^* = \theta_2^*$ as in our example, the \OLS estimator, 
after proper centering and scaling, converges to
a distribution which is \emph{not a standard Gaussian} distribution.

It turns out that this distributional anomaly of the \OLS estimator is
neither specific to the two-armed bandit
problem~\citep{lattimore2020bandit} nor to the $\greedy$-greedy
algorithm used to simulate the data for
Figure~\ref{fig:toy-bandit}. The same phenomenon was documented in the
time-series and forecasting literature half a century ago, dating back
to the works of~\citet{white1958limiting,dickey1979distribution}
and~\citet{lai1982least}. More recent
work~\citep{deshpande2017accurate,zhang2020inference} has highlighted
that a similar phenomenon commonly occurs in multi-armed bandit
problems when using popular selection algorithms, including Thompson
sampling and the upper confidence bound (UCB)
algorithm~\citep{lattimore2020bandit}.

In Section~\ref{sec:W-decorr}, we rectify the distributional anomaly
of the \OLS estimator by proposing an estimator based on the online
debiasing principles of~\citet{deshpande2017accurate} and show that
our online debiasing estimator exhibits asymptotic normality even in
the absence of the stability condition~\eqref{eqn:Cond-Cov-stability}.
In Section~\ref{sec:applications}, we demonstrate the usefulness of
our theory via applications to the multi-armed bandit problems,
autoregressive time series, and active learning problems with
exploration.

\subsection{Online debiasing estimator}
\label{sec:W-decorr}

In this section, we propose and analyze an estimator based on an
online debiasing technique motivated by the
work of~\citet{deshpande2017accurate}. At a high-level, the estimator
involves a specific perturbation of the ordinary least squares
estimator $\thetaLS$.  This perturbation is constructed via a linear
combination of the prediction errors $\NseqNosub{\y_i -
  \myinprod{\x_i}{\thetaLS}}{i}$ along with a carefully chosen
sequence of weight vectors $\Nseq{\w}{i}$.  The key property ensured by the
construction is that the weight vector $\w_i$ is $\filtration_{i - 1}$
measurable for each $i \in [\numobs]$.

Concretely, for weight vectors $\Nseq{\w}{i}$, we
compute the \mbox{\emph{online debiasing estimate}}
\begin{align}
\label{eqn:ThetaDecorr-defn}
\thetaDecorr & \mydefn \thetaLS + \SigmaMat_\numobs^{-\frac{1}{2}}
\sum_{i = 1}^{\numobs} \w_i (\y_i - \myinprod{\x_i}{\thetaLS}).
\end{align}
Here the reader should recall our earlier definition
$\SigmaMat_\numobs \mydefn \sum_{i=1}^\numobs \x_i \x_i^\top$, and
throughout, we assume that the sample covariance $\SigmaMat_\numobs$
is invertible. The matrix $\SigmaMat_\numobs^{-\frac{1}{2}}$ denotes a
symmetric matrix square root of $\SigmaMat_\numobs^{-1}$.

Of course, there is an infinite family of estimators of the
form~\eqref{eqn:ThetaDecorr-defn}, and the key question is how to
define the weight vectors.  In this paper, we propose an estimator in
which the sequence $\Nseq{\w}{i}$ is obtained by solving an
optimization problem that takes three inputs:
\begin{enumerate}
\item[(i)] the original data $\NseqNosub{(\x_i, \y_i)}{i}$,
\item[(ii)] a non-random scalar $\tuneParScaled_{\numobs} \in (0, 1]$,
  and
\item[(iii)] a sequence of symmetric positive semidefinite matrices
  $\Nseq{\XscaleMat}{i}$ such that $\XscaleMat_{i} \in \filtration_{i -
  1}$ for each $i \in [\numobs] \mydefn \{1, \ldots, \numobs \}$.
\end{enumerate}

In order to simplify notation, we adopt the shorthand $\z_i \mydefn
\XscaleMat_i^{-\frac{1}{2}} \x_i$. Moreover, for each index \mbox{$i
  \in [\numobs]$,} we define the matrices
\begin{align*}
\Z_{i}^\top \mydefn \begin{bmatrix} \z_1 & \z_2 & \cdots & \z_i
\end{bmatrix}, \quad
\mbox{and} \quad \W_{i} \mydefn \begin{bmatrix} \w_{1} & \w_2 & \cdots
  & \w_i
\end{bmatrix}
\end{align*}
We also define $\W_{0} = 0$ and $\Z_0 = 0$.  With these definitions,
the vectors $\Nseq{\w}{i}$ are obtained recursively by solving the
following convex program
\begin{subequations}
\label{eqn:Wn-expression-full}  
  \begin{align}
    \label{eqn:Wn-expression-original}    
    \w_{i} & \mydefn \arg \min _{\w \in \real^\Dim} \left \{
    \frobnorm{ \MyIdDim - \W_{i - 1} \Z_{i - 1} - \w \z_i^\top }^{2} +
    \frac{\tuneParScaled_\numobs}{2} \|\w\|_2^2 \right \} \quad
    \mbox{for $i = 1, 2, \ldots, \numobs$.}
  \end{align}
Conveniently, this optimization problem has the explicit solution
\begin{align}    
  \label{eqn:Wn-expression}
\w_i & = \frac{(\MyIdDim - \W_{i - 1} \Z_{i -
    1})\z_i}{(\tuneParScaled_\numobs/2) + \| \z_i \|_2^2}.
  \end{align}
\end{subequations}

\section{Main results}
\label{SecMain}

Having motivated and introduced the online debiasing approach, we now
turn to some theoretical guarantees that can be given for these
methods.  We begin in~\Cref{sec:normality} by providing sufficient
conditions for the online debiasing estimator of~\Cref{sec:W-decorr}
to exhibit asymptotically Gaussian behavior
(\Cref{thm:asymp-normality}). In~\Cref{subsec:asympexactci}, we
provide an asymptotically exact confidence region for $\thetastar$ as
well as an asymptotically exact confidence interval
(Proposition~\ref{prop:gen-confinv}) for $\dir^\top \thetastar$, where
$\dir$ is an arbitrary fixed direction $\dir \in \real^\dims$.
In~\Cref{SecLower}---in particular,
see~\Cref{thm:Minimax-Lowerbound}---we complement these results by
providing minimax lower bounds on a family of Mahalanobis errors and
the length of confidence intervals.  These lower bounds apply to any
estimator for the stochastic regression model which does not know the
true value of the target parameter $\thetastar$ but may have the full
knowledge of how the data was collected. Finally,
in~\Cref{SecSufficientThree} we provide general strategies which can
be used to verify the conditions of~\Cref{thm:asymp-normality}.  All
of our asymptotic statements assume that the dimension $\Dim$ is fixed
(constant) while the sample size $\numobs$ grows.


\subsection{Asymptotic normality guarantees}
\label{sec:normality}
The main result of this section is an asymptotic normality guarantee
for the proposed estimator~\eqref{eqn:ThetaDecorr-defn}, where the
weight vectors are defined via the
recursion~\eqref{eqn:Wn-expression-full}.

We begin by stating our assumptions and providing some intuition about
their role in the theorem.
\subsubsection*{Assumption A}
\begin{enumerate}[label=(A\arabic*)]
  \item \label{assumption:A1} There are positive scalars $\sigma$ and
    $\mompar$ such that the noise sequence $\Nseq{\error}{i}$ satisfies
    the conditions $\Exs[\error_i \mid \filtration_{i - 1} ]
    = 0$ and $\Exs[\error_i^2 \mid \filtration_{i - 1}] = \noisesd^2$ for all
    $i \in [\numobs]$ and moreover
    \begin{align*}
\max_{i \in [\numobs]} \; \Exs[\error_i^{2+\mompar} \mid
  \filtration_{i - 1}] < \infty.
\end{align*}
\item
\label{assumption:A2}
The sequence of matrices $\{\SigmaMat_\numobs\}_{\numobs \geq1}$
satisfy the conditions \mbox{$\lambda_{\min}(\SigmaMat_\numobs)
  \almostSurely \infty$} and \mbox{$\frac{\log
    \lambda_{\max}(\SigmaMat_\numobs)}{\lambda_{\min}(\SigmaMat_\numobs)}
  \almostSurely 0$.}
\item
\label{assumption:A3}
For each $\numobs$, the scalar $\tuneParScaled_\numobs > 0$ and
positive semidefinite matrices $\Nseq{\XscaleMat}{i}$ with
\mbox{$\XscaleMat_i \in \filtration_{i - 1}$} are chosen such that:
\begin{subequations}
\begin{align}
\tag*{(a) Asymptotic negligibility:} \max \limits_{i \in [\numobs]}
\;\; \left \{ \frac{1}{\tuneParScaled_\numobs}
\myinprod{\x_i}{\XscaleMat_i^{-1} \x_i} \right \} &
\stackrel{\smallop{p}}{\longrightarrow} 0, \\
  \tag*{(b) Vanishing bias:} \sqrt{\tuneParScaled_\numobs \log
    \lambda_{\max}(\SigmaMat_\numobs) } \cdot \opnorm{\MyIdDim -
    \W_\numobs \X_\numobs \SigmaMat_\numobs^{-\frac{1}{2}}} & \stackrel{\smallop{p}}{\longrightarrow}
  0, \quad \mbox{and} \\
\tag*{(c) Variance stability:} \opnorm{\MyIdDim - \Nsum{i} \w_i
  \x_i^\top \XscaleMat_i^{-\frac{1}{2}}} &
\stackrel{\smallop{p}}{\longrightarrow} 0.
\end{align}
\end{subequations}
\end{enumerate}

\vspace*{0.2in}

Let us provide some intuition for the role of each of these
assumptions in the theorem.  First, Assumption~\ref{assumption:A1} is
quite simple: it imposes relatively mild moment conditions on the
noise variables.  Second, as discussed in the introduction,
Assumption~\ref{assumption:A2} is standard in guaranteeing the
consistency of the least squares estimate.  Both
Assumptions~\ref{assumption:A1} and~\ref{assumption:A2} are viewed as
mild conditions in the stochastic linear regression literature and are
satisfied by many practical models including those studied in the
papers~\citep{lai1979adaptive,lai1982least,lai1994asymptotic,deshpande2017accurate}.
Note that Assumptions~\ref{assumption:A1} and~\ref{assumption:A2}
concern the regression model itself as opposed to the method: in
particular, they do not depend on the algorithm parameters
$\tuneParScaled_\numobs$ and $\Nseq{\XscaleMat}{i}$.

The more subtle requirements for our theorem to apply, which \emph{do
depend} on the algorithm parameters, are stated in
Assumption~\ref{assumption:A3}.  We discuss the technical role of
these conditions in the comments after~\Cref{thm:asymp-normality}, to
be stated momentarily.  In~\Cref{SecSufficientThree} to follow, we
provide concrete choices of the algorithm parameters
$\tuneParScaled_\numobs$ and $\Nseq{\XscaleMat}{i}$ that ensure that
Assumption~\ref{assumption:A3} holds.

Finally, it should be noted that Assumption~\ref{assumption:A3} is
weaker than the stability condition~\eqref{eqn:Cond-Cov-stability}.
Indeed, if the stability condition~\eqref{eqn:Cond-Cov-stability} and
the growth condition~\ref{assumption:A2} are satisfied, then we may
take $\gamma_n = 1$, $\XscaleMat_i = \SigmaLim_\numobs$ and $\w_i =
\SigmaLim_\numobs^{-\frac{1}{2}} \x_i$. With these choices, the
conditions~\ref{assumption:A3} are automatically satisfied; moreover,
in this particular case, the online debiased estimator reduces to
ordinary least squares; see~\Cref{prop:stability-to-debiasing} for
details. \\

With these preliminaries in 
place, we are now equipped to state our main theorem on the online
debiasing estimator $\thetaDecorr$:
\begin{theos}
  \label{thm:asymp-normality}
Let $\widehat{\noisesd}^2$ be any consistent estimator of
$\noisesd^2$.  Then under
\mbox{Assumptions~\ref{assumption:A1}--\ref{assumption:A3}}, we have
\begin{align}
  \label{eqn:asymp-normality}
  \sqrt{\frac{\tuneParScaled_\numobs}{\widehat{\noisesd}^2}} \cdot
  \SigmaMat_\numobs^{\frac{1}{2}}(\thetaDecorr - \thetastar) \indistrb
  \Ncal(0, \MyIdDim).
\end{align}
\end{theos}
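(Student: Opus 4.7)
The plan is to start from the identity
\[
\SigmaMat_\numobs^{1/2}(\thetaDecorr - \thetastar) \;=\; \W_\numobs \error \;+\; \bigl(\MyIdDim - \W_\numobs \X_\numobs \SigmaMat_\numobs^{-1/2}\bigr)\,\SigmaMat_\numobs^{1/2}(\thetaLS - \thetastar),
\]
which follows from the definition~\eqref{eqn:ThetaDecorr-defn}, the decomposition $\y_i - \myinprod{\x_i}{\thetaLS} = \error_i - \myinprod{\x_i}{\thetaLS-\thetastar}$, and the matrix identities $\sum_i \w_i\x_i^\top = \W_\numobs\X_\numobs$ and $\sum_i \w_i\error_i = \W_\numobs\error$. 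In view of Slutsky's theorem and the assumed consistency of $\widehat{\noisesd}^2$, I split the goal into showing (i) the ``bias'' term $\sqrt{\tuneParScaled_\numobs}\,(\MyIdDim - \W_\numobs\X_\numobs\SigmaMat_\numobs^{-1/2})\SigmaMat_\numobs^{1/2}(\thetaLS-\thetastar) \stackrel{\smallop{p}}{\longrightarrow} 0$, and (ii) the ``stochastic'' term $\sqrt{\tuneParScaled_\numobs/\noisesd^2}\,\W_\numobs\error \indistrb \Ncal(0,\MyIdDim)$.

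For (i), I appeal to the Lai--Wei strong law for adaptively collected data, which, under Assumptions~\ref{assumption:A1}--\ref{assumption:A2}, yields $\|\SigmaMat_\numobs^{1/2}(\thetaLS-\thetastar)\|^2 = O(\log\lambda_{\max}(\SigmaMat_\numobs))$ almost surely. Submultiplicativity of the operator norm then gives
\[
\bigl\|\sqrt{\tuneParScaled_\numobs}(\MyIdDim - \W_\numobs\X_\numobs\SigmaMat_\numobs^{-1/2})\SigmaMat_\numobs^{1/2}(\thetaLS-\thetastar)\bigr\| \leq \sqrt{\tuneParScaled_\numobs \log\lambda_{\max}(\SigmaMat_\numobs)}\cdot \opnorm{\MyIdDim - \W_\numobs\X_\numobs\SigmaMat_\numobs^{-1/2}}\cdot O_p(1),
\]
which vanishes by Assumption (A3)(b).

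For (ii), I first identify the conditional covariance. Writing $G_i := \MyIdDim - \W_i\Z_i$ and $\alpha_i := 1/(\tuneParScaled_\numobs/2 + \|\z_i\|^2)$, the closed form~\eqref{eqn:Wn-expression} gives $\w_i = \alpha_i G_{i-1}\z_i$ and the multiplicative recursion $G_i = G_{i-1}(\MyIdDim - \alpha_i\z_i\z_i^\top)$. A short algebraic computation with this recursion produces the key telescoping identity
\[
G_{i-1}G_{i-1}^\top - G_i G_i^\top \;=\; (\tuneParScaled_\numobs + \|\z_i\|^2)\,\w_i\w_i^\top,
\]
and summing over $i \in [\numobs]$ with $G_0 = \MyIdDim$ yields $\MyIdDim - G_\numobs G_\numobs^\top = \tuneParScaled_\numobs \W_\numobs \W_\numobs^\top + \sum_i \|\z_i\|^2\, \w_i\w_i^\top$. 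In particular $\tuneParScaled_\numobs \W_\numobs\W_\numobs^\top \preccurlyeq \MyIdDim$, so
\[
\opnorm{\textstyle\sum_i \|\z_i\|^2 \w_i\w_i^\top} \leq \max_i \|\z_i\|^2 \cdot \opnorm{\W_\numobs\W_\numobs^\top} \leq \tuneParScaled_\numobs^{-1}\cdot \max_i \myinprod{\x_i}{\XscaleMat_i^{-1}\x_i},
\]
which vanishes in probability by Assumption (A3)(a). Combined with $\opnorm{G_\numobs G_\numobs^\top} \leq \opnorm{G_\numobs}^2 \stackrel{\smallop{p}}{\longrightarrow} 0$ from (A3)(c), this shows the conditional covariance $\tuneParScaled_\numobs \W_\numobs\W_\numobs^\top \stackrel{\smallop{p}}{\longrightarrow} \MyIdDim$, so that the conditional variance of $\sqrt{\tuneParScaled_\numobs/\noisesd^2}\,\W_\numobs\error$ converges in probability to $\MyIdDim$.

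To upgrade this to a distributional limit I apply the Cram\'er--Wold device in tandem with a martingale CLT (e.g.\ Hall and Heyde, 1980, Corollary~3.1). For each fixed $u \in \real^\Dim$, the scalar martingale $\sqrt{\tuneParScaled_\numobs/\noisesd^2}\sum_i (u^\top\w_i)\error_i$ has conditional variance $u^\top (\tuneParScaled_\numobs \W_\numobs\W_\numobs^\top)\, u \stackrel{\smallop{p}}{\longrightarrow} \|u\|^2$; a Lyapunov condition, which is sufficient thanks to the $(2+\mompar)$ moment bound in Assumption~\ref{assumption:A1}, reduces to $\tuneParScaled_\numobs \max_i \|\w_i\|^2 \stackrel{\smallop{p}}{\longrightarrow} 0$, and this holds because $\|\w_i\|^2 \leq \alpha_i^2\|\z_i\|^2 \leq 4\|\z_i\|^2/\tuneParScaled_\numobs^2$ together with Assumption (A3)(a) gives $\tuneParScaled_\numobs\max_i\|\w_i\|^2 \leq 4\tuneParScaled_\numobs^{-1}\max_i\myinprod{\x_i}{\XscaleMat_i^{-1}\x_i}\stackrel{\smallop{p}}{\longrightarrow} 0$. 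I expect the main obstacle to be the derivation of the telescoping identity and the accompanying argument that $\tuneParScaled_\numobs \W_\numobs\W_\numobs^\top \stackrel{\smallop{p}}{\longrightarrow} \MyIdDim$; once that algebra is established, the Assumptions (A3)(a)--(c) align precisely with what is required to invoke Lai--Wei for the bias term and a standard martingale CLT for the stochastic term.
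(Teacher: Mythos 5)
Your proposal is correct and follows essentially the same route as the paper: the same bias-plus-martingale decomposition, the Lai--Wei bound $\|\SigmaMat_\numobs^{1/2}(\thetaLS-\thetastar)\|_2 = \Bigo(\sqrt{\log\lambda_{\max}(\SigmaMat_\numobs)})$ combined with Assumption (A3)(b) for the bias term, and a Cram\'er--Wold martingale CLT for the stochastic term. Your telescoping identity and the resulting claims $\tuneParScaled_\numobs \W_\numobs\W_\numobs^\top \stackrel{\smallop{p}}{\longrightarrow} \MyIdDim$ and $\tuneParScaled_\numobs\max_i\|\w_i\|_2^2 \stackrel{\smallop{p}}{\longrightarrow} 0$ are exactly the content of the paper's Lemma~\ref{lem:stability-lemma}, proved there by the identical recursion $\DelMat_{i}\DelMat_{i}^\top = \DelMat_{i-1}\DelMat_{i-1}^\top - (\tuneParScaled_\numobs + \|\z_i\|^2)\w_i\w_i^\top$.
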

\noindent We prove this theorem in~\Cref{proof:thm:asymp-normality}.\\

A few comments on this theorem are in order.  First, needing a
consistent estimate of the error variance $\noisesd^2$ is a mild
requirement.  For instance, under our conditions, the estimator
\begin{align*}
\widehat{\noisesd}^2 = \frac{1}{\numobs} \sum_{i = 1}^\numobs
\left(\y_i - \x_i^\top \thetaLS \right)^2
\end{align*}
is strongly consistent; see Lemma 3 in the paper~\citep{lai1982least}
for details.

A second important fact is that Assumption~\ref{assumption:A3} is
considerably weaker than the stability
condition~\eqref{eqn:Cond-Cov-stability} required for asymptotic
normality of the OLS estimate.  To reinforce this point,
\cref{sec:applications} provides a detailed discussion of three
classes of problems for which OLS fails to be asymptotically normal
but the guarantee~\eqref{eqn:asymp-normality} still holds for the
online debiasing estimator.

Of all the conditions of~\cref{thm:asymp-normality}, verifying the
variance stability condition in part (c) of
Assumption~\ref{assumption:A3} is the most challenging, and our
arguments for doing so vary from problem to problem.  In
Corollaries~\ref{cor:bandits-corr} and~\ref{cor:unit-root-autoreg}, we
verify the variance stability condition for multi-armed bandit
problems and autoregressive time series models, respectively. In
Corollary~\ref{cor:multidim-gen-case}, we verify this condition for a
large class of problems satisfying a sufficient exploration
condition. In Section~\ref{SecSufficientThree} to follow, we argue
that when $\lambda_{\min}(\SigmaMat_\numobs) \geq \log^2(n)$, we can
always find choices of the tuning parameters $\gamma_n$ and
$\{\Gamma_i\}$ such that Assumption~\ref{assumption:A3} is
satisfied. Additionally, in absence of such lower bound on
$\lambda_{\min}(\SigmaMat_\numobs)$, we propose a data-augmentation
strategy that gets rids of this growth condition at the cost of
collecting additional $\log^2(n)$ many data-points.

Let us now discuss how Assumption~\ref{assumption:A3} enters the proof of
Theorem~\ref{thm:asymp-normality}.  Our argument is based on the
decomposition
\begin{subequations}
  \label{eqn:bias-variance-decomp}
\begin{align}  
\sqrt{\tuneParScaled_\numobs} \cdot
  \SigmaMat_\numobs^{\frac{1}{2}}(\thetaDecorr - \thetastar) & =
  \Bias_\numobs + \ZeroMartin_\numobs, \quad \mbox{where} \\
  \Bias_\numobs & \mydefn \sqrt{\tuneParScaled_\numobs} \cdot \left(
  \MyIdDim - \W_\numobs \X_\numobs \SigmaMat_\numobs^{-\frac{1}{2}}
  \right)(\thetaLS - \thetastar) \quad \mbox{and} \\
\ZeroMartin_\numobs & \mydefn \sqrt{\tuneParScaled_\numobs} \cdot
\sum_{i = 1}^\numobs \w_i \error_i.
\end{align}
\end{subequations}
By construction (and suggested by our notation), the term
$\Bias_\numobs$ corresponds to the bias in our estimate, a quantity
that must be shown to vanish in order for our claim to hold. In order
to do so, we first derive an upper bound on the norm
$\|\Bias_\numobs\|_2$.  The ``vanishing bias'' condition stated in
Assumption~\ref{assumption:A3}(b) enters in showing that, via our
choices of the tuning parameters $\tuneParScaled_\numobs$ and
$\Nseq{\XscaleMat}{i}$, this upper bound converges to zero in
probability.

The random vector $\ZeroMartin_\numobs$ defines a zero-mean
martingale, and our proof controls its behavior via a standard
martingale central limit theorem.  Doing so requires a Lindeberg type
condition on the weight vectors $\Nseq{\w}{i}$, as given in part (a)
of Assumption~\ref{assumption:A3}.  Moreover, it requires that the
conditional covariance of the martingale behave suitably, in which
context part (c) of Assumption~\ref{assumption:A3} enters.



\subsection{Obtaining confidence regions and intervals}
\label{subsec:asympexactci}

In this section, we use the online
debiasing procedure to obtain asymptotically exact confidence regions
and intervals.

\subsubsection{Confidence region for $\thetastar$}

First, for some user-defined level $\alpha \in (0,1)$, consider the
problem of finding a confidence region for $\thetastar$---that is, a
(random) set $\CIset_{1 - \alpha}$ that contains $\thetastar$ with
probability at least $1 - \alpha$.  We would like a set that is as
small as possible, asymptotically exact in the sense that its coverage
converges to $1-\alpha$.

Theorem~\ref{thm:asymp-normality} allows us to construct such a set in
the following straightforward way.  For any $\alpha \in (0,1)$,
consider the subset of $\real^\dims$ given by
\begin{align*}
\CIset_{1 - \alpha} = \left\{ \thetaVec\in \real^{\dims} \mid
\frac{\tuneParScaled_\numobs}{\sigmahat^2} \cdot (\thetaDecorr -
\theta)^\top \SigmaMat_\numobs (\thetaDecorr - \theta) \leq
\chisq_{\dims, 1 - \alpha} \right\}
 \end{align*} 
where $\chisq_{\dims, 1 - \alpha}$ denotes the $(1 - \alpha$)-quantile
for a standard chi-squared distribution with degrees of freedom $d$.
From the result  of Theorem~\ref{thm:asymp-normality}, we have the
guarantee
\begin{align*}
  \lim_{\numobs \rightarrow \infty} \Prob(\thetastar\in\CIset_{1 - \alpha}) = 1 - \alpha,
\end{align*}

In many applications, however, instead of a confidence region for the
full vector $\thetastar$, we are instead interested in obtaining a
confidence interval for the scalar quantity $\dir^\top \thetastar$,
where $\dir \in \real^\dims$ is a fixed direction.  It turns out that
Theorem~\ref{thm:asymp-normality} no longer provides a straightforward
answer to this question.  In order to understand why, it is useful to
begin by following a naive line of reasoning that is incorrect
and then show how it can be fixed.

\subsubsection{An incorrect argument}
In order to obtain a confidence interval for $\dir^\top \thetastar$,
it might be tempting to ``directly invert'' the distributional
property~\eqref{eqn:asymp-normality}.  In particular, letting
\mbox{$\NormalQuantile{1 - (\alpha/2)} \mydefn \Phi^{-1}(1 -
  \tfrac{\alpha}{2})$} denote the $1 - \tfrac{\alpha}{2}$ quantile of
the standard Gaussian distribution, we might claim that the interval
\begin{align}
\label{eqn:wrong-CI}
\left[ \myinprod{\dir}{\thetaDecorr} -
  \tfrac{\sigmahat}{\sqrt{\tuneParScaled_\numobs}}
  (\myinprod{\dir}{\SigmaMat_\numobs^{-1} \dir})^{\frac{1}{2}}
  \NormalQuantile{1 - \alpha/2}, \qquad
  \myinprod{\dir}{\thetaDecorr} +
  \tfrac{\sigmahat}{\sqrt{\tuneParScaled_\numobs}} (\myinprod{\dir}
        {\SigmaMat_\numobs^{-1}\dir})^{\frac{1}{2}}
        \NormalQuantile{1 - \alpha/2} \right],
\end{align}
is an asymptotically exact $1 - \alpha$ confidence interval for
$\dir^\top \thetastar$.

Unfortunately, the conclusion~\eqref{eqn:wrong-CI} is based on faulty
logic, namely the assertion that the asymptotic
guarantee~\eqref{eqn:asymp-normality} implies that
\begin{align}
\label{eqn:wrong-ASN}
  \frac{\sqrt{\tuneParScaled_\numobs}}{\sigmahat} \cdot (\thetaDecorr
  - \thetastar) \; - \; \Ncal(0, \SigmaMat_\numobs^{-1}) \indistrb 0.
\end{align} 
It is now interesting to understand when the implication above follows from Theorem~\ref{thm:asymp-normality}. 
%
Under the stability condition $\mathbf{C}_n^{-1} \mathbf{S}_n \indistrb \Id$ for a sequence of deterministic matrices $\mathbf{C}_n$, the conclusion \eqref{eqn:wrong-ASN} follows from the Theorem~\ref{thm:asymp-normality} by Slutsky's theorem.
However, in the absence of this stability condition, the sample covariance matrix remains random and \emph{may depend on $\thetaDecorr$}.

The following counterexample shows that when a sequence of random vectors $\mathbf{b}_n$ and matrices $\mathbf{A}_n$ are dependent, $\mathbf{A}_n \mathbf{b}_n \indistrb \mathcal{N}(0,\Id)$ does not imply $\mathbf{b}_n - \mathcal{N}(0,(\mathbf{A}_n^\top \mathbf{A}_n)^{-1}) \indistrb 0$ in general.
Let $X, Y$ be independent standard Gaussian random variables. Consider the vector $\mathbf{b} = (\textrm{sign}(X), \textrm{sign}(Y))^\top$, and a $2\times2$  matrix $\mathbf{A}$ with $\mathbf{A}_{11} = |X|/\sqrt{2}, \mathbf{A}_{12} = |Y|/\sqrt{2}, A_{21} = - |X|/\sqrt{2}, \mathbf{A}_{22} = |Y|/\sqrt{2}$. Simple calculations yields that $\mathbf{Ab} = (X + Y,\; X - Y)/\sqrt{2} \sim \mathcal{N}\left((0, 0)^\top, \Id \right)$. Note that each entry of $\mathbf{b}$ is $\pm1$ with probability $1/2$. Additionally, 
the variable $\mathcal{N}( (0, 0)^\top (\mathbf{A}^\top \mathbf{A})^{-1})$ is a scale mixture of Gaussians --- a continuous distribution ---- and therefore does not match the distribution of $\mathbf{b}$.
%
%
In summary, we conclude that additional justification is needed to guarantee 
the asymptotic validity of the confidence interval~\eqref{eqn:wrong-CI} for the functional $\dir^\top \thetastar$ in general.


Nonetheless, there are certain special cases in which the
interval~\eqref{eqn:wrong-CI} is a valid CI.  Concretely, suppose that
$\dir = \e_j$ is one of the standard coordinate basis vectors and that
$\SigmaMat_\numobs$ is diagonal as in the multi-armed bandit setting
studied in~\Cref{sec:bandits}.  In this case, the calculations
of~\Cref{app:gen-confinv} show that the interval~\eqref{eqn:wrong-CI}
is valid.  More generally, given an arbitrary direction $\dir$, our
strategy will be to run a variant of online debiasing that effectively
reduces the problem to this favorable case.


\subsubsection{Correct fixed-direction confidence intervals}
\label{sec:gen-CI}

Let us now describe the variant of online debiasing that can be used
to obtain asymptotically correct confidence intervals for fixed
directions.  Let $\dir \in \real^\dims$ be the direction of interest;
without loss of generality, we assume that $\enorm{\dir} = 1$.  We now
form an orthonormal basis of $\real^\dims$ with $\dir$ as its first
element---that is, a collection of orthonormal vectors $\{\dir_1 =
\dir, \dir_2, \ldots, \dir_\dims \}$.  Let $\Basis$ be the matrix with
$\dir_j^\top$ as its $j^{th}$ row.  Note that we have $\Basis \Basis^\top
= \MyIdDim$ and $\Basis^\top \e_1 = \dir$ by construction.  Using these
two properties, we can rewrite our model as

\begin{align*}
  \dir^\top \thetastar = \e_1^\top \Basis \thetastar \;\;
  \text{and} \;\; \y_i = \inprod{\Basis \x_i}{ \Basis \thetastar} +
  \error_i \quad \text{for all} \;\; i = 1, \ldots, \numobs.
\end{align*}
Consequently, in this new basis, estimating the scalar $\dir^\top
\thetastar$ is same as estimating the first coordinate of transformed
vector $\Basis \thetastar$.

This fact allows us to define a variant of online debiasing that
supports asymptotically exact confidence intervals for $\dir^\top
\thetastar$.  In particular, let us introduce the notation
\begin{gather*}
 \x_{\dir, i} = \Basis \x_i, \quad \X_{\dir, \numobs} = \X_\numobs
 \Basis^\top, \quad \SigmaMat_{\dir, \numobs}
  = (\Basis \SigmaMat_\numobs \Basis^\top)^{-1}, \quad \text{and} \quad
  \InvSigmaV_{\dir, \numobs} = \SigmaMat_{\dir, \numobs}^{-1}.
\end{gather*}
Define the block diagonal matrix $\diagCov_{\dir,\numobs}$ as 
by
\begin{align}
\label{eqn:Blk-diag-scaling-defn}
    \diagCov_{\dir, \numobs}^{\frac{1}{2}} = \begin{pmatrix} \omega_{11}^{-\frac{1}{2}} & \mathbf{0}^\top \\ 
    \mathbf{0} & \InvSigmaV_{22}^{-\frac{1}{2}} \end{pmatrix} \qquad  \text{where} \qquad
    \InvSigmaV_{\dir, \numobs} = \begin{pmatrix} \omega_{11} &\InvSigmaV_{12} \\ \InvSigmaV_{21} & \InvSigmaV_{22} \end{pmatrix},
\end{align}
where the matrix $\InvSigmaV_{22}^{-\frac{1}{2}}$ denotes the symmetric square root of the matrix $\InvSigmaV_{22}^{-1}$.  Now consider the estimator
\begin{align}
  \label{eqn:thetaBasis-defn}
  \thetaBasis & \mydefn \thetaBasisLS + \OpTuning_\numobs \cdot \diagCov_{\dir,
    \numobs}^{-\frac{1}{2}} \sum_{i = 1}^{\numobs} \w_i (\y_i -
  \x_{\dir, i}^\top  \thetaBasisLS),
\end{align}
where $\OpTuning_\numobs \mydefn
\opnorm{\diagCov_{\dir,\numobs}^{\frac{1}{2}} \SigmaMat_{\dir,
    \numobs}^{-\frac{1}{2}}}$ is a scalar which is at least one by
definition, and \mbox{$\thetaBasisLS \mydefn \SigmaMat_{\dir,
    \numobs}^{-1} \X_{\dir, \numobs}^\top \y$} is the OLS estimator
using the data $\{\y_i, \Basis \x_i \}_{i = 1}^\numobs$. We analyze
the behavior of $\thetaBasis$ under the following variant of
Assumption~\ref{assumption:A3}.

\subsubsection*{Assumption (A3)$^\prime$}
\begin{enumerate}[label=(A3)$^\prime$]
\item \label{assumption:A3prime} For each $\numobs$, the scalar
  $\tuneParScaled_\numobs > 0$ and positive semidefinite matrices
  $\Nseq{\XscaleMat}{i}$ with \mbox{$\XscaleMat_i \in \filtration_{i - 1}$} are
  chosen such that:
\begin{subequations}
\begin{align}
\tag*{(a) Asymptotic negligibility:} \max \limits_{i \in [\numobs]}
\;\; \left \{ \frac{1}{\tuneParScaled_\numobs}
\myinprod{\x_{\dir, i}}{\XscaleMat_i^{-1} \x_{\dir, i}} \right \} & \stackrel{\smallop{p}}{\longrightarrow} 0, \\
  \tag*{(b) Vanishing bias:} \sqrt{\tuneParScaled_\numobs \log
    \lambda_{\max}(\SigmaMat_{\dir,\numobs}) } \cdot \opnorm{ \tfrac{1}{\OpTuning_\numobs} \cdot \diagCov_{\dir, \numobs}^{\frac{1}{2}} \SigmaMat_{\dir, \numobs}^{-\frac{1}{2}}  -
    \W_\numobs \X_{\dir, \numobs} \SigmaMat_{\dir, \numobs}^{-\frac{1}{2}}} & \stackrel{\smallop{p}}{\longrightarrow}
  0, \quad \mbox{and} \\
\tag*{(c) Variance stability:} \opnorm{\MyIdDim - \Nsum{i} \w_i
  \x_{\dir, i}^\top \XscaleMat_i^{-\frac{1}{2}}}  & \stackrel{\smallop{p}}{\longrightarrow} 0.
\end{align}
\end{subequations}
\end{enumerate} 

\begin{props}
  \label{prop:gen-confinv}
Under \mbox{Assumptions~\ref{assumption:A1},\ref{assumption:A2},
  and~\ref{assumption:A3prime}}, given any consistent estimator
$\widehat{\noisesd}^2$ of $\noisesd^2$, the following interval is an
asymptotically exact $1 - \alpha$ confidence interval for $\dir^\top
\thetastar$
\begin{align}
\label{eqn:general-CI}
\left[ \e_1^\top \thetaBasis -
  \tfrac{ \OpTuning_\numobs \sigmahat }{\sqrt{\tuneParScaled_\numobs}}
  (\myinprod{\dir}{\SigmaMat_\numobs^{-1} \dir})^{\frac{1}{2}}
  \NormalQuantile{1 - \alpha/2}, \  \e_1^\top \thetaBasis +
  \tfrac{ \OpTuning_\numobs \sigmahat}{\sqrt{\tuneParScaled_\numobs}}
  (\myinprod{\dir} {\SigmaMat_\numobs^{-1}\dir})^{\frac{1}{2}}
  \NormalQuantile{1 - \alpha/2} \right].
\end{align}
\end{props}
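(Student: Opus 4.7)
The plan is to transport the proof of \Cref{thm:asymp-normality} into the rotated coordinate system in which $\dir$ is the first basis vector, so that constructing a confidence interval for $\dir^\top \thetastar = \e_1^\top \Basis \thetastar$ becomes a scalar inference problem on the first coordinate of $\Basis \thetastar$. A direct rotated analog of \Cref{thm:asymp-normality} would yield a joint central limit theorem with scaling $\SigmaMat_{\dir,\numobs}^{1/2}$, but this matrix square root mixes coordinates, so projection onto $\e_1$ would not isolate a scalar Gaussian for $\e_1^\top \thetaBasis - \dir^\top \thetastar$. The block-diagonal scaling $\diagCov_{\dir,\numobs}^{1/2}$ appearing in~\eqref{eqn:thetaBasis-defn} is engineered so that $\e_1^\top \diagCov_{\dir,\numobs}^{1/2} = \omega_{11}^{-1/2}\e_1^\top$, i.e.\ projection onto $\e_1$ commutes with the normalization; the scalar prefactor $\OpTuning_\numobs$ absorbs the resulting discrepancy between $\diagCov_{\dir,\numobs}^{1/2}$ and $\SigmaMat_{\dir,\numobs}^{-1/2}$ so that the bias assumption (A3)$^\prime$(b) remains achievable.

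The main computational step is a bias-variance decomposition analogous to~\eqref{eqn:bias-variance-decomp}: starting from $\y_i - \x_{\dir,i}^\top \thetaBasisLS = \error_i - \x_{\dir,i}^\top(\thetaBasisLS - \Basis\thetastar)$ and the definition of $\thetaBasis$, one arrives at the identity
\begin{align*}
\tfrac{\sqrt{\tuneParScaled_\numobs}}{\OpTuning_\numobs}\,\diagCov_{\dir,\numobs}^{1/2}(\thetaBasis - \Basis\thetastar) \;=\; \Bias_\numobs' + \ZeroMartin_\numobs',
\end{align*}
where $\Bias_\numobs' = \sqrt{\tuneParScaled_\numobs}\bigl(\tfrac{1}{\OpTuning_\numobs}\diagCov_{\dir,\numobs}^{1/2}\SigmaMat_{\dir,\numobs}^{-1/2} - \W_\numobs \X_{\dir,\numobs}\SigmaMat_{\dir,\numobs}^{-1/2}\bigr)\SigmaMat_{\dir,\numobs}^{1/2}(\thetaBasisLS - \Basis\thetastar)$ and $\ZeroMartin_\numobs' = \sqrt{\tuneParScaled_\numobs}\sum_{i=1}^\numobs \w_i \error_i$. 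I would then handle the two pieces exactly as in the proof of \Cref{thm:asymp-normality}: the rotated OLS error obeys $\|\SigmaMat_{\dir,\numobs}^{1/2}(\thetaBasisLS - \Basis\thetastar)\|_2 = \BigoP\!\bigl(\sqrt{\log \lambda_{\max}(\SigmaMat_{\dir,\numobs})}\bigr)$ by the standard martingale bound for OLS, so Assumption~\ref{assumption:A3prime}(b) forces $\Bias_\numobs' \convprob 0$; and the martingale central limit theorem, with the Lindeberg condition supplied by Assumption~\ref{assumption:A3prime}(a) and conditional covariance convergence supplied by Assumption~\ref{assumption:A3prime}(c), yields $\ZeroMartin_\numobs' \indistrb \Ncal(0,\sigma^2 \MyIdDim)$.

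Combining these ingredients, together with Slutsky's lemma and consistency of $\sigmahat^2$, produces the joint statement $\tfrac{\sqrt{\tuneParScaled_\numobs}}{\OpTuning_\numobs\,\sigmahat}\,\diagCov_{\dir,\numobs}^{1/2}(\thetaBasis - \Basis\thetastar) \indistrb \Ncal(0,\MyIdDim)$. Here the block-diagonal design pays off: projecting both sides onto $\e_1$ gives a \emph{scalar} Gaussian limit for $\e_1^\top \thetaBasis - \dir^\top \thetastar$ with an explicit variance, and a short linear-algebraic identity using the block structure of $\diagCov_{\dir,\numobs}$ identifies this standard deviation with $\OpTuning_\numobs\,\sigmahat\,(\dir^\top \SigmaMat_\numobs^{-1}\dir)^{1/2}/\sqrt{\tuneParScaled_\numobs}$. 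Inverting this one-dimensional central limit theorem produces the interval~\eqref{eqn:general-CI}.

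The main obstacle in this program is in fact the design of $\thetaBasis$ itself, not the analysis that follows: the scaling matrix used in the correction term must simultaneously be block-diagonal (so that $\e_1$-projection commutes with the normalization) and close enough to $\SigmaMat_{\dir,\numobs}^{-1/2}$ (up to the scalar $\OpTuning_\numobs$) that the bias condition (A3)$^\prime$(b) can be driven to zero by the weight recursion~\eqref{eqn:Wn-expression-full}. Once the design --- $\diagCov_{\dir,\numobs}^{1/2}$ together with the operator-norm prefactor $\OpTuning_\numobs$ --- is in place, the remainder of the argument parallels the proof of \Cref{thm:asymp-normality} almost line-for-line, with the rotated design matrices $\X_{\dir,\numobs}$ and $\SigmaMat_{\dir,\numobs}$ playing the roles of $\X_\numobs$ and $\SigmaMat_\numobs$.
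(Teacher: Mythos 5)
Your proposal is correct and follows essentially the same route as the paper: the paper's proof also passes through a rotated central limit theorem for $\diagCov_{\dir,\numobs}^{1/2}(\thetaBasis - \Basis\thetastar)$ (stated as an auxiliary lemma), using exactly your decomposition into $\Bias_\numobs'$ and $\ZeroMartin_\numobs'$, the Lai--Wei bound on $\enorm{\SigmaMat_{\dir,\numobs}^{1/2}(\thetaBasisLS - \Basis\thetastar)}$ together with (A3)$^\prime$(b) for the bias, and the same martingale CLT for the variance term. The final step of projecting onto $\e_1$ and identifying the scalar variance via $\e_1^\top\diagCov_{\dir,\numobs}^{-1}\e_1 = \dir^\top\SigmaMat_\numobs^{-1}\dir$ is likewise identical.
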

\noindent See Section~\ref{app:gen-confinv} for the proof of this
claim.

A few comments regarding Proposition~\ref{prop:gen-confinv} are in order.
Observe that 
the length of the confidence intervals~\eqref{eqn:general-CI}
matches the length of the confidence interval~\eqref{eqn:wrong-CI}
up to a multiplicative factor $\OpTuning_\numobs$. Thus, it is interesting to understand 
the value of the scalar $\OpTuning_\numobs$.  Note that $\OpTuning_\numobs^2 =
\lambda_{\max}(\diagCov_{\dir, \numobs}^{\frac{1}{2}} \SigmaMat_{\dir, \numobs}^{-1} \diagCov_{\dir, \numobs}^{\frac{1}{2}})$, and a little calculation yields
\begin{align*}
     \diagCov_{\dir, \numobs}^{\frac{1}{2}} \SigmaMat_{\dir, \numobs}^{-1} \diagCov_{\dir, \numobs}^{\frac{1}{2}} 
     = \Id_{\dims} +   \begin{pmatrix}
        0  & (\InvSigmaV_{22}^{-\frac{1}{2}} \InvSigmaV_{21} \omega_{11}^{-\frac{1}{2}})^\top \\
         \InvSigmaV_{22}^{-\frac{1}{2}} \InvSigmaV_{21} \omega_{11}^{-\frac{1}{2}}  & {\mathbf{0}}_{\dims - 1}
     \end{pmatrix}.
\end{align*}
Thus, we have
\begin{align}
    \label{eqn:OpTuning-upperbound}
    1 \leq \OpTuning_\numobs^2 = 1 + 2 \cdot\enorm{\InvSigmaV_{22}^{-\frac{1}{2}} \InvSigmaV_{21} \omega_{11}^{-\frac{1}{2}}}^2,
\end{align}
which yields $\OpTuning_\numobs \approx 1$ when the vector
$\InvSigmaV_{21} \approx 0$.  To gain further intuition on when
$\OpTuning_\numobs \approx 1$, let us assume $\dir = \e_1$, i.e., we
are interested in obtaining a confidence interval for the coordinate
$\thetastarScalar_1$.  In this case, a natural choice of the basis matrix is
$\Basis = \MyIdDim$, and as a result, we have $\InvSigmaV_{21} =
(\SigmaMat_\numobs^{-1})_{21} = (\SigmaMat^{-1}_{21}, \ldots,
\SigmaMat^{-1}_{\dims1})$. Recall that for $j \neq 1$, the entry
$\SigmaMat^{-1}_{j1}$ is proportional to the (empirical) partial
correlation coefficient between the first and the $j^{th}$ coordinate,
conditioned the remaining $\dims - 2$ coordinates of $\x$; meaning
that $\InvSigmaV_{21} \approx 0$ when the first coordinate of $\x$ has
small correlation with all linear functions of the other $\dims - 1$
coordinates of $\x$.

Finally, we point out that the assumption~\ref{assumption:A3prime}
is not significantly stronger than the original
assumption~\ref{assumption:A3}. To fix ideas, we again assume $\dir = \e_1$
and $\Basis = \Id$.
In that case, assumption~\ref{assumption:A3prime} and~\ref{assumption:A3}
only differ in the vanishing bias condition~(b). Assuming
\mbox{$\sqrt{\tuneParScaled_\numobs \log \lambda_{\max}(\SigmaMat_\numobs)}
= \smalloP(1)$} and condition~\ref{assumption:A3}(b) holds, we have
\begin{align}
  &\phantom{=} \sqrt{\tuneParScaled_\numobs \cdot \log
    \lambda_{\max}(\SigmaMat_{\e_1,\numobs})} \cdot \opnorm{ \tfrac{1}{\OpTuning_\numobs} \cdot
    \diagCov_\numobs^{\frac{1}{2}} \SigmaMat_\numobs^{-\frac{1}{2}}-
    \W_\numobs \X_\numobs \SigmaMat_\numobs^{-\frac{1}{2}} } \notag \\
     &\leq
  \sqrt{\tuneParScaled_\numobs \cdot \log
    \lambda_{\max}(\SigmaMat_\numobs) } \cdot \opnorm{ \tfrac{1}{\OpTuning_\numobs}
     \cdot\MyIdDim-
    \W_\numobs \X_\numobs \SigmaMat_\numobs^{-\frac{1}{2}} } \notag \\ 
    & \quad \quad \quad \quad +  
   \sqrt{\tuneParScaled_\numobs \cdot \log
    \lambda_{\max}(\SigmaMat_\numobs) } \cdot (1 +
  \tfrac{1}{\OpTuning_\numobs} \cdot \opnorm{\diagCov_\numobs^{\frac{1}{2}}
    \SigmaMat_\numobs^{-\frac{1}{2}}}) \notag \\
     &= \smalloP(1) + \smalloP(1) \stackrel{\smallop{p}}{\longrightarrow} 0 
     \label{eqn:A3_to_A3prime} .
\end{align}
The first step  uses the fact $\lambda_{\max}(\SigmaMat_\numobs) = \lambda_{\max}(\SigmaMat_{\dir, \numobs})$ for any basis matrix $\Basis$.
The second step uses the vanishing bias condition~\ref{assumption:A3}(b), 
the fact that the dimension $\dims$ is 
fixed, and the upper
bound \mbox{$\opnorm{\diagCov_\numobs^{\frac{1}{2}}
  \SigmaMat_\numobs^{-\frac{1}{2}}} = \OpTuning_\numobs$}.

\subsubsection{Comparison with least squares confidence intervals}
An interesting consequence of the results so far, is that we can construct a confidence intervals based on any consistent estimator $\thetahat$ of $\thetastar$. To illustrate this, we focus on constructing a confidence interval for the mean of the first arm ($\theta_1$) of a multiarmed bandit problem. Theorem~\ref{thm:asymp-normality} and Proposition~\ref{prop:gen-confinv} ensures that 
$$[\widehat{\theta}^{OD}_{1} - \sqrt{1/\gamma_n} \cdot \frac{z_{1 - \alpha/2}}{\sqrt{\SigmaMat_{11}}}, \;\;\widehat{\theta}^{OD}_1 + \sqrt{1/\gamma_n} \cdot \frac{z_{1 - \alpha/2}}{\sqrt{\SigmaMat_{11}}}]$$
is an asymptotically exact $1 - \alpha$ confidence interval for $\theta^\star$. As an immediate consequence, we have that for any weakly consistent estimator $\widehat{\theta}_1$ for $\theta^\star_1$ the confidence interval  
$$[\widehat{\theta}_{1} -  \sqrt{1/\gamma_n} \cdot \frac{z_{1 - \alpha/2}}{\sqrt{\SigmaMat_{11}}}, \;\;\widehat{\theta}_1 + \sqrt{1/\gamma_n} \cdot \frac{z_{1 - \alpha/2}}{\sqrt{\SigmaMat_{11}}}]$$
is also an asymptotically exact $1 - \alpha$ confidence interval. Accordingly, when the OLS estimator $\widehat{\theta}^{OLS}$ is consistent, the above argument also allows us to construct 
an asymptotically valid confidence interval for the least squares estimator. 

Interestingly, while the width of a valid OLS interval is tightly constrained by its own bias, the width of a valid online debiasing interval can be significantly smaller. In particular, one can construct confidence intervals which are $\sqrt{\log(n)}$ times smaller in width than the ones obtained from lest square estimator; see Appendices~\ref{sec:LS-vs-online} and~\ref{sec:post-debiasing-proof} for details.


\subsection{Minimax lower bounds}
\label{SecLower}

Thus far, we have derived two guarantees for online debiasing
procedures: asymptotic normality in Theorem~\ref{thm:asymp-normality}
along with confidence intervals in Proposition~\ref{prop:gen-confinv}.
It is natural to wonder in what sense these guarantees are optimal.
Accordingly, this section is devoted to lower bounds that apply to the
performance of \emph{any} estimator $\thetahat$.  These bounds are
derived within the classical minimax framework and cover two
particular risk measures.

Our first risk measure involves the \emph{Mahalanobis pseudometric}:
given an arbitrary positive semi-definite matrix $\mahal$, possibly
random, this pseudometric\footnote{We parameterize the Mahalnobis
pseudometric slightly differently than standard definitions, using
$\mahal$ as opposed to its inverse for the quadratic form.  This is
only for notational ease when $\mahal$ has a non-trivial null space.}
is given by
\begin{align}
  \label{EqnMahal}
  \lVert\thetahat - \thetastar\rVert_\mahal \mydefn \lVert
  \mahal^{\frac{1}{2}}(\thetahat - \thetastar) \rVert_2,
\end{align}
and we provide lower bounds on the squared form of this pseudometric
in part (a) of Theorem~\ref{thm:Minimax-Lowerbound}, below.  Notably,
our analysis allows for the matrix $\mahal$ to also depend on the
dataset $\{ \x_i, \y_i \}_{i = 1}^\numobs$ itself, so that for example, setting $\mahal =
\SigmaMat_\numobs$ is a valid choice.

Our second risk measure corresponds to the length of a two-sided
confidence interval. For a given vector $\dir \in \real^\Dim$ and
significance level $\alpha \in (0,1)$, let \mbox{$ \CI_{\alpha, \dir}
  \equiv [\ell_\alpha, u_\alpha] \subseteq \real$} be any level
$\alpha$ confidence interval for the scalar
$\myinprod{\dir}{\thetastar}$, so that by definition, we have
\begin{align}
\label{eqn:valid-CI}
  \Prob_{\thetastar} \big[\myinprod{\dir}{\thetastar} \in \CI_{\alpha, \dir}
     \big] & \geq 1 -  \alpha \qquad \text{for
    all } \;\; \thetastar \in \real^\dims.
\end{align}
We are interested in finding the smallest such confidence interval,
and part (b) of Theorem~\ref{thm:Minimax-Lowerbound} provides a lower
bound on its length ${|\CI_{\alpha, \dir}| \mydefn u_\alpha -
  \ell_\alpha}$.

Our bounds apply to any estimator $\thetahat$, meaning a measurable
function of the data as well as the data collection process.  The data
collection process is summarized by a collection of (potentially
randomized) selection algorithms, each of the form
\mbox{$\selection_i: (\reals \times \reals^\Dim)^{i - 1} \to
  \reals^\Dim$,} which take the observed data $\{ (\x_j, \y_j)
\}_{j=1}^{i-1}$ up to time $i$ and output a new observation
$\x_{i}$. With a slight abuse of notation, we refer to
\mbox{$\Selections_\numobs \mydefn (\selection_i)_{i \in [\numobs]}$}
as the \emph{selection algorithm} of the data collection process.

\begin{theos}
\label{thm:Minimax-Lowerbound}
Fix any selection algorithm $\Selections_\numobs$.  Under the linear
model~\eqref{eqn:linear-reg-model} with i.i.d.\ Gaussian noise
\mbox{$\error_i \sim \normal(0, \sigma^2)$} and data collected using
$\Selections_\numobs$, the following claims hold:
\begin{enumerate}[label=(\alph*)]
\itemsep 6pt
\item For any (possibly random) matrix $\mahal$ such that $\Exs[\tr(
  \SigmaMat_\numobs^{-1} \mahal)]$ is finite, we have
\begin{subequations}  
  \begin{align}
\label{EqnMinimaxMSE}    
\inf_{\thetahat} \sup_{\thetastar \in \real^\dims}
\E \lVert
  \thetahat - \thetastar \rVert^{2}_{\mahal} \geq \sigma^2 \Exs[\tr(
    \SigmaMat_\numobs^{-1} \mahal) \big],
\end{align}
where the infimum is taken over any estimator $\thetahat$, potentially
depending on $\Selections_\numobs$, in addition to the data.
\item
There is a universal constant $C > 0$ such that for any pair
$(\numobs, \Dim)$ with $n \geq d^3/C$ and $d \geq 2$, there exists a selection
algorithm $\Psi_\numobs$ and direction $\dir \in \reals^d$ such that
\begin{align*}
\inf_{\cE} \sup_{\thetastar} \E\left[\frac{(\ip{\dir, \thetastar} -
    \cE)^2}{\norm{\dir}^2_{\SigmaMat_n^{-1}}}\right] \geq C \cdot d
\sigma^2 \log\left(n\right)\,,
\end{align*}
where the infimum is taken over all $\filtration_n$-measurable
estimates $\cE$ of the scalar $\ip{\dir,\thetastar}$
\item Suppose that $\Exs[\SigmaMat_n]$ exists and is invertible.  Then
  for any direction $\dir \in \real^\Dim$ and scalar \mbox{$\alpha \in
    (0,1/8)$,} we have
  \begin{align}
\label{EqnMinimaxCI}    
\inf_{\CI_{\alpha, \dir}} \sup_{\thetastar \in \real^\dims} \E \big[
  \lvert \CI_{\alpha, \dir} \rvert \big] \geq 2\noisesd \cdot z_{1-
  \alpha/2} \cdot\Exs\left\{\left(\dir^\top \SigmaMat_n^{-1}
\dir\right)^\frac{1}{2}\right\},
\end{align}
\end{subequations}
where the infimum is taken over any procedure, potentially depending
on $\Selections_\numobs$ in addition to the data, that returns valid
level $1- \alpha$ confidence intervals $\CI_{\alpha, \dir}$ for
$\ip{\dir,\thetastar}$ (cf. definition~\eqref{eqn:valid-CI}).
\end{enumerate}
\end{theos}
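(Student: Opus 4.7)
The plan is to prove each of the three parts via a Bayesian reduction, exploiting the structural fact that since the selection algorithm $\Psi_n$ does not depend on $\thetastar$, the likelihood of $\thetastar$ given the complete dataset reduces to the usual Gaussian regression likelihood $L(\theta;\,\mathrm{data}) \propto \exp\bigl(-\tfrac{1}{2\sigma^2}\sum_{i=1}^n (y_i - \x_i^\top \theta)^2\bigr)$ (the selection factors are $\theta$-independent and cancel). Under a conjugate Gaussian prior $\pi_\tau = \Ncal(0, \tau^2 \MyIdDim)$, the posterior for $\theta$ given the data is therefore Gaussian with covariance $\sigma^2(\SigmaMat_n + (\sigma^2/\tau^2) \MyIdDim)^{-1}$ and mean $(\SigmaMat_n + (\sigma^2/\tau^2) \MyIdDim)^{-1}\X_n^\top \y$.

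For part (a), I would invoke minimax-dominates-Bayes, $\sup_\theta \Exs_\theta \lVert\hat\theta - \theta\rVert_\mahal^2 \geq \Exs_{\pi_\tau} \Exs_\theta \lVert\hat\theta - \theta\rVert_\mahal^2$, the right-hand side being minimized by the posterior mean and equal to $\sigma^2\,\Exs[\tr(\mahal (\SigmaMat_n + (\sigma^2/\tau^2) \MyIdDim)^{-1})]$. Letting $\tau \to \infty$ and using monotone convergence---since $(\SigmaMat_n + (\sigma^2/\tau^2) \MyIdDim)^{-1}$ increases to $\SigmaMat_n^{-1}$ in the PSD order on the event $\{\SigmaMat_n \succ 0\}$, together with the integrability hypothesis $\Exs[\tr(\mahal \SigmaMat_n^{-1})] < \infty$---yields the bound~\eqref{EqnMinimaxMSE}.

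For part (c), the approach is analogous but targets the CI length. For any valid $1-\alpha$ CI $\CI$, pointwise frequentist coverage implies Bayes-averaged coverage $\Exs_{\pi_\tau}[\Prob_\theta(\dir^\top\theta \in \CI)] \geq 1-\alpha$. Since conditional on the data $\dir^\top\theta$ has a Gaussian posterior of variance $v_\tau(Y)$, for any interval $\CI(Y)$ of length $L(Y)$ the posterior coverage is bounded by $2\Phi(L(Y)/(2\sqrt{v_\tau(Y)})) - 1$, giving the pointwise inequality $L(Y) \geq 2\sqrt{v_\tau(Y)}\,\Phi^{-1}((1+p(Y))/2)$ where $p(Y)$ denotes the posterior coverage at $Y$. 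Combining this with Jensen's inequality---using convexity of $\Phi^{-1}((1+\cdot)/2)$ on $[1/2,1]$ and the marginal coverage constraint $\Exs_{\pi_\tau}[p(Y)] \geq 1-\alpha$---and then sending $\tau \to \infty$ so that $v_\tau(Y) \to \sigma^2 \dir^\top \SigmaMat_n^{-1}\dir$ should recover~\eqref{EqnMinimaxCI}. The main obstacle is handling the potential correlation between $v_\tau(Y)$ and $p(Y)$, which obstructs a naive Jensen bound on their product; a disintegration argument that conditions on $\SigmaMat_n$ (effectively freezing $v_\tau$) is likely required to obtain the bound in the desired $\Exs[(\dir^\top \SigmaMat_n^{-1}\dir)^{1/2}]$ form with the sharp constant $2 z_{1-\alpha/2}$; the invertibility of $\Exs[\SigmaMat_n]$ assumed in the statement should come into play here to control the $\tau\to\infty$ limit.

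For part (b), I would construct an explicit adversarial family: take a $d$-armed bandit embedding with $\x_i \in \{\basis_1, \ldots, \basis_d\}$ so that $\SigmaMat_n$ is diagonal with entries equal to arm-pull counts. A carefully chosen selection algorithm---combining batched uniform exploration with an exploitation phase that amplifies differences between plausible parameter values---together with a packing set inside a rescaled hypercube such as $\{0,\Delta\}^d$ should permit application of a Fano- or Assouad-type inequality, with the KL divergences between the induced joint data laws computed from the same cancellation used in part (a). Choosing $\dir$ aligned with a coordinate whose pull count varies most across the packing, one can then certify that $\Exs\bigl[(\ip{\dir,\thetastar}-\cE)^2/\norm{\dir}^2_{\SigmaMat_n^{-1}}\bigr] \geq C\, d\sigma^2 \log n$. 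The hardest engineering step is calibrating $\Psi_n$ and the packing so that the pull counts differ by a multiplicative factor of order $n$ across parameters---yielding the $\log n$---while the KL divergences between data distributions remain small enough to drive the packing exponent $d$.
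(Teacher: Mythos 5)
Your treatments of parts (a) and (c) follow essentially the same route as the paper: a conjugate Gaussian prior $\Ncal(0,\tau^2 \MyIdDim)$, the observation that the selection factors cancel from the likelihood so the posterior is Gaussian with covariance $(\SigmaMat_\numobs/\sigma^2 + \MyIdDim/\tau^2)^{-1}$, minimax-dominates-Bayes, and the limit $\tau \to \infty$. For (c) you are in fact more careful than the paper, which asserts the pointwise width bound $2 z_{1-\alpha/2}\smallvtil$ directly for intervals whose coverage is only constrained on average over the prior; your Jensen/convexity step addresses a real (if minor) gap in that passage.

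Part (b) is where your proposal breaks down. You propose a $d$-armed bandit embedding with $\x_i \in \{\basis_1,\dots,\basis_d\}$, so that $\SigmaMat_\numobs$ is diagonal with entries equal to the pull counts $N_j$. But then $\norm{\basis_j}^2_{\SigmaMat_\numobs^{-1}} = 1/N_j$, and the normalized risk for $\dir = \basis_j$ is $\E[N_j(\theta_j^* - \cE)^2]$. With a diagonal design the only channel through which adaptivity can inflate this quantity is the optional-stopping structure of the single arm's pull count, and that channel is governed by the law of the iterated logarithm: it yields at most a $\log\log \numobs$ inflation, not $\log \numobs$, and no factor of $d$. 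The paper itself flags this: the $d=1$ statement of the bound is false, and the logarithm degrades to a double logarithm precisely because of the LIL. The $C\, d\, \sigma^2 \log \numobs$ rate is achieved in the paper by a genuinely non-diagonal construction (discretizing Lattimore's continuous-time argument): each covariate mixes one of the first $d-1$ coordinates with the $d$-th, and the $\basis_d$-component $a_{u,v}$ is chosen adaptively as the negative of a running ridge estimate so that $\ip{\x_i,\thetastar} \approx 0$ under every $\thetastar$ in the prior's support. This makes the data nearly uninformative about $\ip{\basis_d,\thetastar}$ (so the posterior probability $\cE_\numobs$ of $\{\thetastar \neq \zero\}$ stays bounded away from $0$ and $1$), while the Schur complement $1/\norm{\basis_d}^2_{\SigmaMat_\numobs^{-1}}$ — the \emph{apparent} profile information for the $d$-th coordinate — is inflated to $\Omega_p(d\log(\numobs/d^3))$ by the accumulated cross terms. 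A further difficulty with your Fano/Assouad plan is that the risk is a ratio involving the random, $\thetastar$-dependent quantity $\norm{\dir}^2_{\SigmaMat_\numobs^{-1}}$; packing arguments control $\sup_\thetastar \E[(\ip{\dir,\thetastar}-\cE)^2]$ but give no handle on the joint behavior of the error and $\SigmaMat_\numobs$, whereas the paper's two-point mixture lets both factors be analyzed separately and combined by a union bound. Without replacing the diagonal embedding by a design of this cross-coupled type, the $d\log \numobs$ rate is unreachable.
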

\noindent We provide the proofs of parts (a), (b) and (c)
of~\Cref{thm:Minimax-Lowerbound}
in~\Cref{proof:thm:Minimax-Lowerbound},
\Cref{proof:thm:Minimax-Lowerbound-minimax}
and~\Cref{proof:Confidence-interval-lower-bound}, respectively.

\subsection*{Comments on part (a): instance-dependent lower bound on MSE}
In order to gain intuition for the MSE bound in part (a), it is
helpful to begin with the simplest case---that is, the non-adaptive
setting.  Consider the classical problem of fixed design linear
regression, in which the covariates (and hence $\SigmaMat_\numobs$)
are viewed as fixed, and the additive noise is zero-mean Gaussian with
variance $\noisesd^2$.  In this case, the standard OLS estimate
$\thetaLS$ follows the Gaussian distribution $\normal(\thetastar,
\noisesd^2 \SigmaMat_\numobs^{-1})$ for any sample size $\numobs$.
Consequently, for any fixed matrix $\mahal$, we have the equality
\begin{align}
\E \lVert \thetahat - \thetastar \rVert^{2}_{\mahal} = \sigma^2
\Exs[\tr(\SigmaMat_\numobs^{-1} \mahal)].
\end{align}
This simple calculation shows that the lower
bound~\eqref{EqnMinimaxMSE} is unimprovable in general.

Of course, the more substantive content of
Theorem~\ref{thm:Minimax-Lowerbound}(a) lies in the fact that it
allows for adaptive data collection, along with potentially random
choices of $\mahal$.  One interesting choice is the random matrix
$\mahal = \SigmaMat_\numobs$, for which the
bound~\eqref{EqnMinimaxMSE} guarantees that \mbox{$\E[\lVert \thetahat
    - \thetastar\rVert_{\SigmaMat_\numobs}^2] \geq \noisesd^2 \Dim$.}
It is worth comparing this lower bound to~\Cref{thm:asymp-normality}.
From the arguments used to prove this theorem, and under a
\emph{mildly stronger} version of
Assumption~\ref{assumption:A3}---which the convergence in distribution
conditions are replaced by convergence in $L_1$---it can be shown that
\begin{align}
\label{EqnL1-convergence}
\lim_{\numobs \rightarrow +\infty} \tuneParScaled_\numobs
\|\thetaDecorr - \thetastar\|_{\SigmaMat_\numobs}^2 & = \noisesd^2
\dims.
\end{align}
See~\Cref{sec:L1-convergence} for the details of this argument.

As discussed in~\Cref{sec:applications} to follow, in many practical
problems of interest, the tuning parameter $\tuneParScaled_\numobs$
typically scales logarithmically in the sample size $\numobs$, and
also our choice of the tuning parameters ensure that the
aforementioned stronger version of assumption~\ref{assumption:A3} is
satisfied. Consequently, the result~\eqref{EqnL1-convergence}, when
combined with the lower bound~\eqref{EqnMinimaxMSE}, shows that the
online debiasing procedure is instance-optimal up to logarithmic
factors.


\subsubsection*{Comments on part (b): minimax lower bound on MSE}
Part (b) of Theorem~\ref{thm:Minimax-Lowerbound} provides a minimax
lower bound on the MSE. It shows that in the adaptive setting, the
scaling of $\gamma_n$ needs to at least $\log(\numobs)$, and moreover,
this logarithmic scaling is unavoidable when dimension $d \geq 2$. As
discussed in the last comment, for many problems, we can take
$\gamma_n$ arbtitarily close to $\log(\numobs)$; thus, we conclude
that the online debiased estimator is miniamx optimal when $d \geq
2$. Finally, we point out that the lower bound result is \emph{not
true} when $d = 1$. In this case the logarithmic dependence on $n$
becomes doubly logarithmic, as is consistent with the law of the
iterated logarithm that underlies this behavior in that case.

\subsection*{Comments on part (c): bounds on lengths of CIs}
\Cref{thm:Minimax-Lowerbound}(c) provides a lower bound on the width
of any confidence interval for the scalar $\dir^\top \thetastar$ that
is valid when the data set is collected in an adaptive manner. To the
best of our knowledge, this is the first result providing a lower
bound on the width of confidence intervals in an adaptive setting.


\subsection{Choices of the tuning parameters}
\label{SecSufficientThree}

Let us now return to the practical issue of choosing the tuning
parameters $\tuneParScaled_\numobs$ and $\Nseq{\XscaleMat}{i}$ of our
debiasing procedures.  In particular, these parameters must be chosen
appropriately so as to ensure that either
Assumption~\ref{assumption:A3}, or its variant in
Assumption~\ref{assumption:A3prime}, is satisfied.

We analyze practical default choices that are based upon on a
deterministic matrix $\SigmaMatLB_\numobs$ that acts as a lower bound
on the sample covariance matrix $\SigmaMat_\numobs$.  Let $\{
\SigmaMatLB_\numobs \}_{\numobs \geq 1}$ be a sequence of
$\Dim$-dimensional diagonal matrices with nonnegative entries
such that
\begin{align}
  \label{eqn:SigmaLB-conditions}
\opnorm{ \SigmaMatLB_\numobs^{\frac{1}{2}}
  \diag(\SigmaMat_\numobs^{-1}) \SigmaMatLB_\numobs^{\frac{1}{2}} } =
\BigoP(1) \quad \text{and} \quad \lambda_{\min}(\SigmaMatLB_\numobs)
\almostSurely \infty.
\end{align} 
For a given $\numobs$, we define a collection of (diagonal) scaling
matrices
\begin{align}
  \label{eqn:scaling_mat-choice}
  \XscaleMat_{i, \numobs} & \mydefn \max \left \lbrace \diag\left(
  \SigmaMat_i^{-1} \right)^{-1}, \SigmaMatLB_\numobs \right \rbrace,
\end{align}
where $\max\{\cdot, \cdot\}$ denotes the element-wise maximum
operator.\footnote{The choice~\eqref{eqn:scaling_mat-choice} of
scaling matrix is especially easy to understand for multi-armed bandit
problems, where the scaling matrix $\XscaleMat_{i, \numobs}$ can be
written as $\XscaleMat_{i, \numobs} = \max \left\lbrace \SigmaMat_i,
\;\; \SigmaMatLB_\numobs \right\rbrace$.  Assuming that $\SigmaMat_i =
\max\{\SigmaMatLB_\numobs, \SigmaMat_i \}$ for large value of $i$, we
see that the tuning parameter $\XscaleMat_{i, \numobs}$ is the sample
covariance matrix up to time $i$.  This assumption indeed holds for
Corollaries~\ref{cor:bandits-corr}--~\ref{cor:multidim-gen-case} to be
presented in the sequel.}
We point out that it is relatively straightforward to find a diagonal
matrix $\SigmaMatLB_\numobs$ satisfying the
condition~\eqref{eqn:SigmaLB-conditions} as long as
$\lambda_{\min}(\SigmaMat_\numobs) \rightarrow \infty$ almost
surely. For simplicity, let us assume that the covariates $\x_i$ are
uniformly bounded and that the minimum eigenvalue of the matrix
$\SigmaMat_\numobs$ is lower bounded as \mbox{$\lambda_{\min}
  (\SigmaMat_\numobs) \geq \log^2 \numobs$} with high probability
(see~\Cref{sec:Growth-cond-on-lambda-min} for one sufficient condition
for this bound to hold).  Then, with our recommended default choices
\mbox{$\SigmaMatLB_\numobs = \frac{\log\log(n)}{\gamma_n} \MyIdDim$}
and $\gamma_n = \frac{1}{\log(n)\log\log(n)}$ the
condition~\eqref{eqn:SigmaLB-conditions} is satisfied.

\begin{props}
\label{prop:zero-bias-lemma}
Consider the solutions $\Nseq{\w}{i}$ obtained from the optimization
problem~\eqref{eqn:Wn-expression-original} using parameters
$\NseqNosub{\XscaleMat_{i, \numobs}}{i}$ 
defined in equation~\eqref{eqn:scaling_mat-choice} and (non-random) $\tuneParScaled_\numobs > 0$. Then we have the operator norm bound
\begin{align}
  \label{EqnVanishBiasProp}
  \opnorm{ \Id - \W_\numobs \X_\numobs
    \SigmaMat_\numobs^{-\frac{1}{2}} } = \BigoP(\Dim^2).
\end{align}
In particular, if $\tuneParScaled_\numobs = \smalloP 
\left(\Dim^4\log \lambda_{\max}(\SigmaMat_\numobs) \right)$,
the vanishing bias and asymptotic negligibility conditions in
Assumption~\ref{assumption:A3} are satisfied.
\end{props}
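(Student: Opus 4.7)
My plan is to establish the operator-norm bound~\eqref{EqnVanishBiasProp} first; the vanishing-bias and asymptotic-negligibility conditions then follow as short corollaries. Introduce the residual matrix $\M_\numobs \mydefn \Id - \W_\numobs \Z_\numobs$, and observe that substituting the closed-form update~\eqref{eqn:Wn-expression} into $\W_i \Z_i = \W_{i-1} \Z_{i-1} + \w_i \z_i^\top$ produces the multiplicative recursion
\[
\M_i = \M_{i-1}\Bigl(\Id - \frac{\z_i \z_i^\top}{\tuneParScaled_\numobs/2 + \|\z_i\|_2^2}\Bigr), \qquad \M_0 = \Id.
\]
Each factor is a symmetric PSD rank-one perturbation of the identity with spectrum contained in $\{1\} \cup \{\tuneParScaled_\numobs/(\tuneParScaled_\numobs + 2\|\z_i\|_2^2)\}$, and so has operator norm exactly one. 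Iterating yields $\opnorm{\M_\numobs} \leq 1$ uniformly in $\numobs$.

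To pass from $\W_\numobs \Z_\numobs$ to $\W_\numobs \X_\numobs \SigmaMat_\numobs^{-\frac{1}{2}}$, use the additive decomposition
\[
\Id - \W_\numobs \X_\numobs \SigmaMat_\numobs^{-\frac{1}{2}} = \M_\numobs + \sum_{i=1}^\numobs \w_i u_i^\top, \qquad u_i \mydefn \bigl(\XscaleMat_{i,\numobs}^{-\frac{1}{2}} - \SigmaMat_\numobs^{-\frac{1}{2}}\bigr) \x_i.
\]
The task now reduces to showing $\opnorm{\sum_i \w_i u_i^\top} = \BigoP(\Dim^2)$. I would proceed entry by entry. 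Because the choice~\eqref{eqn:scaling_mat-choice} makes each $\XscaleMat_{i,\numobs}$ diagonal, the $(j,k)$ entry of $\sum_i \w_i u_i^\top$ can be rewritten, after substituting the closed form of $\w_i$ from~\eqref{eqn:Wn-expression} and recognising the multiplicative recursion of $\M_i$, as a telescoping partial sum whose total is controlled by $\opnorm{\Id - \M_\numobs}$. Combining this with the element-wise-max structure of~\eqref{eqn:scaling_mat-choice} and the growth condition~\eqref{eqn:SigmaLB-conditions} yields $\bigl|\bigl(\sum_i \w_i u_i^\top\bigr)_{jk}\bigr| = \BigoP(\Dim)$ per entry, and the relation $\opnorm{A} \leq \Dim \cdot \|A\|_{\max}$ assembles the $\Dim^2$ entries into the advertised $\BigoP(\Dim^2)$ operator-norm estimate.

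Given the operator-norm bound, both items of Assumption~\ref{assumption:A3} follow immediately. The vanishing-bias condition is simply the product $\sqrt{\tuneParScaled_\numobs \log \lambda_{\max}(\SigmaMat_\numobs)} \cdot \BigoP(\Dim^2) = \smalloP(1)$ under the stated scaling of $\tuneParScaled_\numobs$. For asymptotic negligibility, the inclusion $\XscaleMat_{i,\numobs} \succeq \SigmaMatLB_\numobs$ built into~\eqref{eqn:scaling_mat-choice} yields
\[
\max_{i \in [\numobs]} \frac{1}{\tuneParScaled_\numobs} \x_i^\top \XscaleMat_{i,\numobs}^{-1} \x_i \;\leq\; \frac{\max_i \|\x_i\|_2^2}{\tuneParScaled_\numobs \, \lambda_{\min}(\SigmaMatLB_\numobs)},
\]
which vanishes in probability under~\eqref{eqn:SigmaLB-conditions} and the prescribed decay of $\tuneParScaled_\numobs$. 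The main obstacle is the entry-wise telescoping argument: one must carefully separate the early epochs in which $\XscaleMat_{i,\numobs}$ is pinned to the floor $\SigmaMatLB_\numobs$ from the later epochs in which it tracks $\diag(\SigmaMat_i^{-1})^{-1}$, and it is precisely the element-wise-max construction together with~\eqref{eqn:SigmaLB-conditions} that keeps the telescoping total bounded uniformly in $\numobs$.
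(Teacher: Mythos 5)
Your first step is sound and coincides with the paper's: the recursion $\M_i = \M_{i-1}\bigl(\Id - \z_i\z_i^\top/(\tuneParScaled_\numobs/2+\|\z_i\|_2^2)\bigr)$, together with the fact that each factor is a symmetric contraction, gives $\opnorm{\M_k}\leq 1$ for every $k$, hence $\opnorm{\sum_{i=1}^k\w_i\x_i^\top\XscaleMat_i^{-\frac{1}{2}}}\leq 2$ uniformly in $k$; this is exactly the paper's bound~\eqref{eqn:first-term-max-norm-bound}. The gap is in the second step. You reduce the problem to bounding $\opnorm{\sum_i\w_i u_i^\top}$ with $u_i=(\XscaleMat_{i,\numobs}^{-\frac{1}{2}}-\SigmaMat_\numobs^{-\frac{1}{2}})\x_i$ and then assert that each entry is ``a telescoping partial sum whose total is controlled by $\opnorm{\Id-\M_\numobs}$.'' That assertion is the entire content of the proposition and is never established; worse, the decomposition you chose obstructs the argument. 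The Abel-summation device that makes such telescoping work (the paper's Lemma~\ref{lem:real-sum-product-bound}) needs the weights multiplying the partial sums to form a \emph{scalar, nonnegative, non-increasing} sequence. Writing your correction term as $\sum_i\w_i\x_i^\top\XscaleMat_i^{-\frac{1}{2}}(\Id-\XscaleMat_i^{\frac{1}{2}}\SigmaMat_\numobs^{-\frac{1}{2}})$, the matrices $\Id-\XscaleMat_i^{\frac{1}{2}}\SigmaMat_\numobs^{-\frac{1}{2}}$ are not diagonal (the symmetric square root $\SigmaMat_\numobs^{-\frac{1}{2}}$ is dense in general), their entries need not be nonnegative, and their size is controlled only by $\opnorm{\XscaleMat_\numobs^{\frac{1}{2}}\SigmaMat_\numobs^{-\frac{1}{2}}}=\BigoP(\sqrt{\Dim})$ rather than by $1$; the monotone structure you invoke from the element-wise maximum in~\eqref{eqn:scaling_mat-choice} does not survive the multiplication by $\SigmaMat_\numobs^{-\frac{1}{2}}$.

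The paper sidesteps this by inserting the \emph{final diagonal} scaling matrix $\XscaleMat_\numobs$ as an intermediary: it factors $\W_\numobs\X_\numobs\SigmaMat_\numobs^{-\frac{1}{2}}=\bigl(\W_\numobs\X_\numobs\XscaleMat_\numobs^{-\frac{1}{2}}\bigr)\bigl(\XscaleMat_\numobs^{\frac{1}{2}}\SigmaMat_\numobs^{-\frac{1}{2}}\bigr)$, proves $\maxnorm{\W_\numobs\X_\numobs\XscaleMat_\numobs^{-\frac{1}{2}}}\leq 4$ by splitting off $\sum_i\w_i\x_i^\top\XscaleMat_i^{-\frac{1}{2}}(\Id-\XscaleMat_i^{\frac{1}{2}}\XscaleMat_\numobs^{-\frac{1}{2}})$ --- here the correction matrices \emph{are} diagonal with nonnegative, non-increasing entries, so Abel summation applies entrywise against the uniformly bounded partial sums --- and only then pays a single factor $\opnorm{\XscaleMat_\numobs^{\frac{1}{2}}\SigmaMat_\numobs^{-\frac{1}{2}}}=\BigoP(\sqrt{\Dim})$ coming from condition~\eqref{eqn:SigmaLB-conditions}, plus the conversion $\opnorm{A}\leq\Dim^{3/2}\maxnorm{A}$, to arrive at $\BigoP(\Dim^2)$. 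If you reorganize your second step around this intermediate comparison to $\XscaleMat_\numobs$ rather than directly to $\SigmaMat_\numobs$, the argument closes; as written, the key estimate is missing and the route you sketch for it does not go through.
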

\noindent See~\cref{app-bias-control-lemma} for the proof of this
claim.   \\

\subsubsection{Sharper bound for multi-armed bandits} The dimension
dependence of the upper bound~\eqref{EqnVanishBiasProp} can be removed
in many concrete applications in which we have additional information
about the data generating process.

As one concrete example, in the multi-armed bandit model of the sequel
(\cref{sec:bandits}), the upper bound can be sharpened to
\begin{align}
\label{eqn:bias-bound-improved} 
\opnorm{\Id - \W_\numobs \X_\numobs \SigmaMat_\numobs^{-\frac{1}{2}} }
= \BigoP(1).
\end{align}  
See the end of Appendix~\ref{app-bias-control-lemma} for a proof of
this claim, and see the proofs of the
Corollaries~\ref{cor:bandits-corr},~\ref{cor:unit-root-autoreg},
and~\ref{cor:multidim-gen-case} in the sequel for more details.

\subsubsection{Verifying the growth condition on $\lambda_{\min}(\SigmaMat_\numobs)$}
\label{sec:verifying-growth}
\label{sec:Growth-cond-on-lambda-min}
In the absence of any additional assumption on the data collection
method, it may be difficult to verify the lower bound
\mbox{$\lambda_{\min} (\SigmaMat_\numobs) \geq \log^2(\numobs)$.}

A simple fix to this problem is to collect $\log^2(\numobs)$ many
additional data points with $\x_i$ chosen uniformly at random from a
$d$-dimensional unit sphere and append them to the original
dataset. Then the condition
$\lambda_{\min}(\SigmaMat^{\textrm{new}}_\numobs) \geq
\log^2(\numobs)$ is automatically satisfied for
$\SigmaMat^{\textrm{new}}_\numobs$ --- the sample covariance matrix of
the new augmented dataset.
%


\section{Applications}
\label{sec:applications}

We next illustrate the concrete consequences of our results.
Sections~\ref{sec:bandits} and~\ref{sec:one-dim} are devoted to
multi-armed bandit problems and autoregressive time series models,
respectively, while \cref{sec:suff-exploration} discusses active
learning with exploration.  We end each section with an empirical
evaluation of online debiasing.  Specifically, we compare the
confidence interval (CI) coverage and width of four methods: our
online debiasing estimator~\eqref{eqn:ThetaDecorr-defn},
OLS~\eqref{EqnLSDecompose} with standard but potentially invalid
Gaussian intervals, the \mbox{$W$-decorrelation estimator}
of~\citet{deshpande2017accurate}, and a valid CI based on the
concentration inequality of~\citet{abbasi2011online}.  We highlight
that the CIs for OLS are based on the distributional assumption
\mbox{$\SigmaMat_n^{\frac{1}{2}} (\thetaLS - \thetastar) \sim \Ncal(0,
  \MyIdDim)$}. This property, while true when the covariates $\{\x_i
\}_{i = 1}^n$ are selected in a non-adaptive manner, need not hold for
adaptively collected
covariates~\citep{deshpande2017accurate,zhang2020inference}, and as a
consequence, the corresponding CIs need not give the correct coverage.
Meanwhile, the valid concentration inequality-based
intervals~\citep{abbasi2011online} are guaranteed to provide at least
the nominal coverage but are often unnecessarily wide.


\subsection{Multi-armed bandits}
\label{sec:bandits}
Consider a multi-armed bandit with $\Dim$ arms indexed by the set
\mbox{$[\Dim] \mydefn \{1, \ldots, \Dim \}$.}  At each time $i \in
     [\numobs]$, a bandit algorithm selects an arm $k_i \in [\Dim]$
     and observes the reward
\begin{align}
\label{model:bandits}
\y_i = \myinprod{\basis_{k_i}}{\armMeans} + \epsilon_i,
\end{align}   
where $\basis_{k_i}$ is the $k_i^{th}$ basis vector in dimension
$\Dim$ and $\armMeans \in \real^\Dim$ is the vector containing the
mean rewards of $\Dim$ arms.  We assume that the noise sequence
$\Nseq{\error}{i}$ satisfies Assumption~\ref{assumption:A1}.  Notably,
the multi-armed bandit model~\eqref{model:bandits} is a special case
of the adaptive linear regression model~\eqref{eqn:linear-reg-model}
with $\x_i = \basis_{k_i}$ for each $i \in [\numobs]$.

Since the bandit observation model~\eqref{model:bandits} has a simple
linear form, the OLS solution $\thetaLS$ is a standard estimate of the
reward vector $\thetastar \in \real^\Dim$.  As we mentioned earlier,
the behavior of the OLS estimate depends on the stability of the
matrix~$\SigmaMat_\numobs$; see the covariance stability
condition~\eqref{eqn:Cond-Cov-stability}.  In the
paper~\citep{deshpande2017accurate}, the authors conjectured based on
empirical evidence that for various popular data selection algorithms,
including the Upper Confidence Bound (UCB), Thompson Sampling, and
$\greedyEps$-greedy algorithms (see the
book~\citep{lattimore2020bandit}), the stability
condition~\eqref{eqn:Cond-Cov-stability} is not satisfied when there
are multiple optimal arms. In recent work, Zhang et
al.~\citep{zhang2020inference} established the validity of this
conjecture for the two-armed bandit problem: when the two means are
equal, then the OLS estimate fails to have a Gaussian limiting
distribution.

In sharp contrast to these negative results for \OLS,
\cref{cor:bandits-corr} to follow guarantees that the online debiasing
estimator~\eqref{eqn:ThetaDecorr-defn} is asymptotically normal under
a mild assumption on the minimum number of times that each arm is
pulled.  More precisely, for each arm $k \in [\Dim]$ and round $i \in
[\numobs]$, let $\armCount_{k, i}$ denote the number of times $k$ is
pulled in the first $i$ rounds, and define the minimum
\mbox{$\armCount_{\min} \mydefn \min \limits_{k \in [d]} \armCount_{k,
    \numobs}$,} and maximum \mbox{$\armCount_{\max} = \max \limits_{k
    \in [d]} \armCount_{k, \numobs}$} arm counts.  Then the scaled
sample covariance is a $\Dim \times \Dim$ diagonal matrix, in which
the $k^{th}$ diagonal entry corresponds to the number of times that
arm $k$ is pulled within the first $i$ rounds:
\begin{align}
  \label{eqn:bandit-notations}
  \armCountMat_i & = \diag \left( \armCount_{1, i}, \ldots,
  \armCount_{\Dim, i} \right).
\end{align}

We assume a lower bound on the minimum number of times that each arm
is pulled---namely,
\begin{align}
\label{eqn:bandits-minimum-arm-LB}
\armCount_{\min} \geq ( \log \numobs)^2.
\end{align}  
Moreover, we implement the debiasing
estimate~\eqref{eqn:ThetaDecorr-defn} with the choice of tuning
parameters
\begin{align}
  \label{eqn:bandits-tuning}
  \tuneParScaled_\numobs = \frac{1}{(\log \numobs) \cdot \log
    \log(\numobs)} \quad \text{and} \quad \XscaleMat_{i, \numobs} =
  \max \left\lbrace \armCountMat_i, \;\; (\log \numobs)^2 \cdot
  \Id_{\Dim} \right \rbrace,
\end{align}
where $\max \left\lbrace \cdot, \cdot \right\rbrace$ denotes the
element-wise maximum operator.

\begin{cors}
\label{cor:bandits-corr}
Suppose the minimum arm pull
condition~\eqref{eqn:bandits-minimum-arm-LB} and the moment
condition~\ref{assumption:A1} are valid. Then, given any consistent
estimate $\sigmahat^2$ of the error variance $\errorstd^2$, the
estimate $\thetaDecorr$ obtained using the tuning parameter
choices~\eqref{eqn:bandits-tuning} satisfies
\begin{align}
\label{EqnBanditsCorr}  
  (\widehat{\noisesd}^2 \cdot (\log \numobs) \cdot \log\log(\numobs) \big)^{-1/2}
  \cdot \armCountMat_\numobs^{\frac{1}{2}} \left( \thetaDecorr -
  \armMeans \right) \indistrb \Ncal(0, \MyIdDim).
  \end{align}
\end{cors}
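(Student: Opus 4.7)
The plan is to apply Theorem~\ref{thm:asymp-normality} directly, verifying each of Assumptions~\ref{assumption:A1}--\ref{assumption:A3} in the multi-armed bandit setting with the prescribed tuning parameters~\eqref{eqn:bandits-tuning}. Assumption~\ref{assumption:A1} is given by hypothesis. For Assumption~\ref{assumption:A2}, note that in the bandit model the sample covariance $\SigmaMat_\numobs = \armCountMat_\numobs$ is diagonal with entries $\armCount_{k,\numobs}$. The minimum arm pull lower bound~\eqref{eqn:bandits-minimum-arm-LB} forces $\lambda_{\min}(\SigmaMat_\numobs) \geq (\log \numobs)^2 \almostSurely \infty$, while $\log \lambda_{\max}(\SigmaMat_\numobs) \leq \log \numobs$ yields a ratio of order $1/\log \numobs \to 0$.

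For Assumption~\ref{assumption:A3}(a), since $\x_i = \basis_{k_i}$ and the chosen $\XscaleMat_{i, \numobs}$ is diagonal with entries at least $(\log \numobs)^2$, one has $\myinprod{\x_i}{\XscaleMat_{i,\numobs}^{-1} \x_i} \leq (\log \numobs)^{-2}$; dividing by $\tuneParScaled_\numobs = 1/((\log \numobs)\log\log \numobs)$ gives $\log\log \numobs / \log \numobs \to 0$. For~(b), I would invoke the bandit-specific improved bias bound~\eqref{eqn:bias-bound-improved}, which gives $\opnorm{\MyIdDim - \W_\numobs \X_\numobs \SigmaMat_\numobs^{-1/2}} = \BigoP(1)$; combining this with $\sqrt{\tuneParScaled_\numobs \log \lambda_{\max}(\SigmaMat_\numobs)} \leq 1/\sqrt{\log\log \numobs} \to 0$ delivers the required vanishing bias.

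The main obstacle is the variance stability condition~(c), where the diagonal structure of the bandit problem is essential. Setting $\z_i = \XscaleMat_{i,\numobs}^{-1/2} \x_i = c_i \basis_{k_i}$ with $c_i^2 = (\XscaleMat_{i,\numobs})_{k_i k_i}^{-1}$, the closed-form update~\eqref{eqn:Wn-expression} yields the telescoping identity $\MyIdDim - \W_\numobs \Z_\numobs = \prod_{i=1}^\numobs \bigl( \MyIdDim - \z_i \z_i^\top /(\tuneParScaled_\numobs/2 + \|\z_i\|_2^2) \bigr)$, in which each factor is diagonal, so the product is diagonal and decouples across arms. Its $(k,k)$ entry equals $\prod_{i:k_i = k} (\tuneParScaled_\numobs/2)/(\tuneParScaled_\numobs/2 + c_i^2)$. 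For the first $(\log \numobs)^2$ pulls of arm $k$, the construction of $\XscaleMat_{i,\numobs}$ forces $c_i^2 = (\log \numobs)^{-2}$. Using $\log(1+x) \geq x/2$ for $x \leq 1$ (valid since $2c_i^2/\tuneParScaled_\numobs \leq 2\log\log \numobs/\log \numobs \to 0$), the logarithm of the $(k,k)$ entry is bounded above by $-(\log \numobs)^2 \cdot \tfrac{1}{2} \cdot (2\log\log \numobs/\log \numobs) = -(\log \numobs)\log\log \numobs \to -\infty$. Consequently each diagonal entry tends to zero, and since $\Dim$ is fixed, so does the operator norm.

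With all three assumptions verified, substituting $\SigmaMat_\numobs = \armCountMat_\numobs$ and $\tuneParScaled_\numobs = 1/((\log \numobs)\log\log \numobs)$ into the conclusion $\sqrt{\tuneParScaled_\numobs/\sigmahat^2} \cdot \SigmaMat_\numobs^{1/2}(\thetaDecorr - \thetastar) \indistrb \Ncal(0, \MyIdDim)$ of Theorem~\ref{thm:asymp-normality} yields the claimed CLT~\eqref{EqnBanditsCorr}.
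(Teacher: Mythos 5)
Your proposal is correct and follows the paper's proof essentially step for step: both reduce to Theorem~\ref{thm:asymp-normality}, verify \ref{assumption:A2} and \ref{assumption:A3}(a) from the minimum arm-pull bound, and handle \ref{assumption:A3}(b) via the sharpened bandit bias bound~\eqref{eqn:bias-bound-improved} combined with $\sqrt{\tuneParScaled_\numobs \log \numobs} \to 0$. The only divergence is in the variance-stability step (c), where you evaluate the diagonal entries of the product $\prod_{i}\bigl(\MyIdDim - \z_i\z_i^\top/(\tuneParScaled_\numobs/2+\|\z_i\|_2^2)\bigr)$ directly over the first $(\log \numobs)^2$ pulls of each arm, while the paper invokes its general commutative bound (Lemma~\ref{lems:Commutative-lemma}) together with the domination $\XscaleMat_{i,\numobs} \preceq \SigmaMat_\numobs$ to obtain $\exp(-1/\tuneParScaled_\numobs)$ --- both computations give the same $\numobs^{-\log\log \numobs}$ decay, so this is a cosmetic rather than substantive difference.
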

\noindent See~\cref{proof:cor-bandits} for the proof of this claim.
\cref{cor:bandits-corr} also enables us to construct asymptotically exact
confidence regions for $\thetastar$. Moreover, the sample covariance matrix $\SigmaMat_n$
is diagonal, and as a result, we can also construct confidence intervals of the coordinates
$\theta_i^*$; see the proof of Proposition~\ref{prop:gen-confinv} for details. Finally,
for a direction $\dir$ which is not a standard basis direction,
we can obtain an asymptotically exact $1 - \alpha$ confidence interval
of $\dir^\top \thetastar$ using Proposition~\ref{prop:gen-confinv}; see the comments following
Corollary~\ref{cor:multidim-gen-case} for further details.


\subsubsection{Numerical experiment}
\label{sec:numericals-bandits-small}
\cref{fig:Thompson-sampling} 
illustrates the performance of online debiasing with bandit tuning~\eqref{eqn:bandits-tuning}.
Here 
we consider a two-armed bandit problem~\eqref{model:bandits} with arm-mean vector  
$\thetastar = (0.3, 0.3)^\top$ and i.i.d.\ standard normal error 
$\{ \error_i \}_{i = 1}^\numobs$.
The covariates $\{ \x_i \}_{i = 1}^\numobs$  were
generated using the Thompson sampling algorithm~\citep{thompson1933likelihood}, and we consider confidence intervals (CIs) for $\theta_1^*$.

We observe first that online debiasing provides appropriate coverage for all confidence levels.
Meanwhile, the \OLS lower tail interval severely undercovers, and  W-decorrelation undercovers for both tails despite having larger widths than online debiasing.
Finally, the concentration CI provides 100\% coverage for all  confidence levels but yields intervals uniformly larger than the online debiasing CIs.  
In Appendix~\ref{sec:bandits-simulation}, we present analogous results for two other popular multi-armed bandit algorithms, the upper confidence bound (UCB) and $\greedy$-greedy algorithms.   

\begin{figure}[H]%
    \centering
    \includegraphics[width=.33\textwidth]{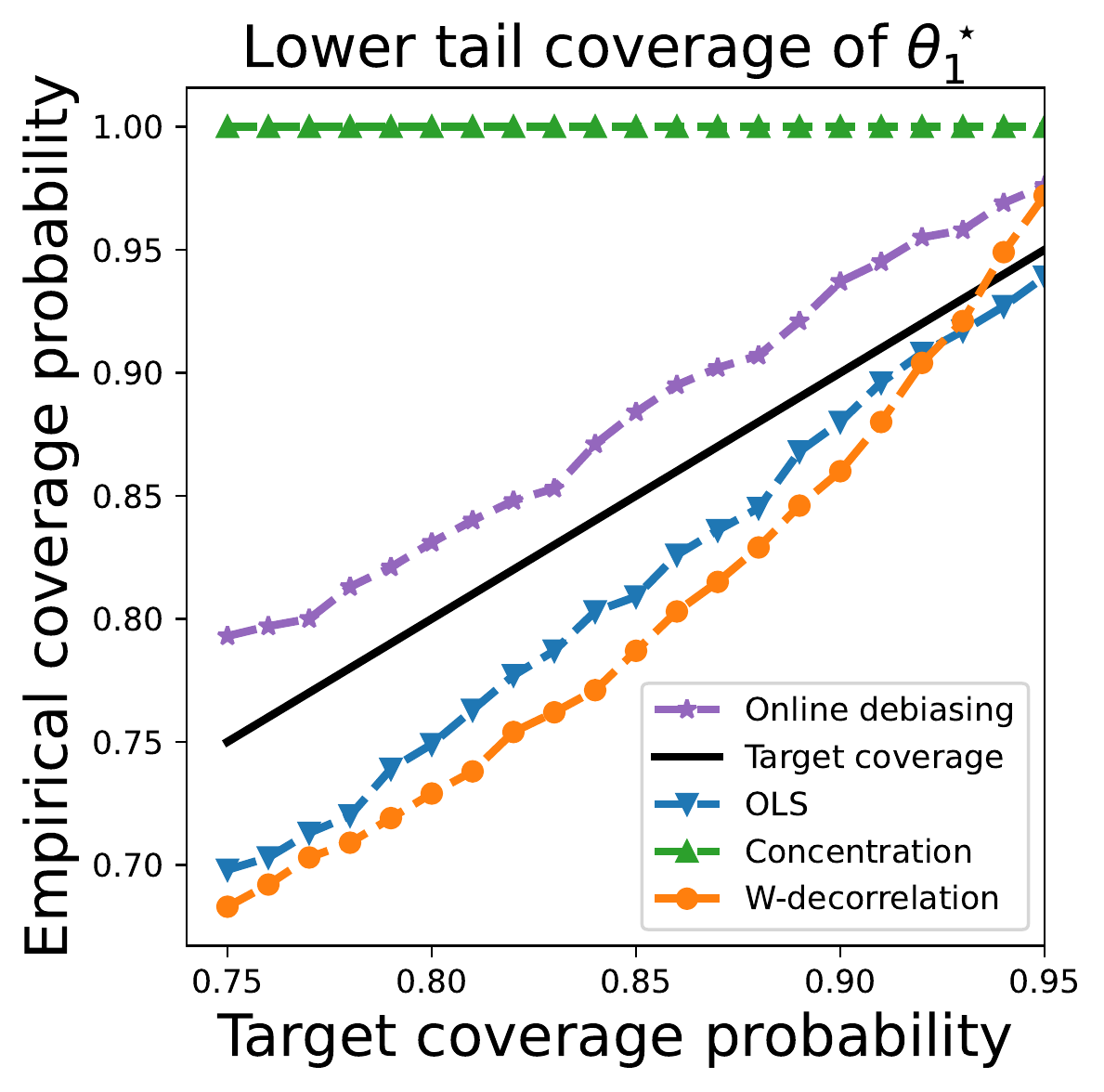}%
        \includegraphics[width=.33\textwidth]{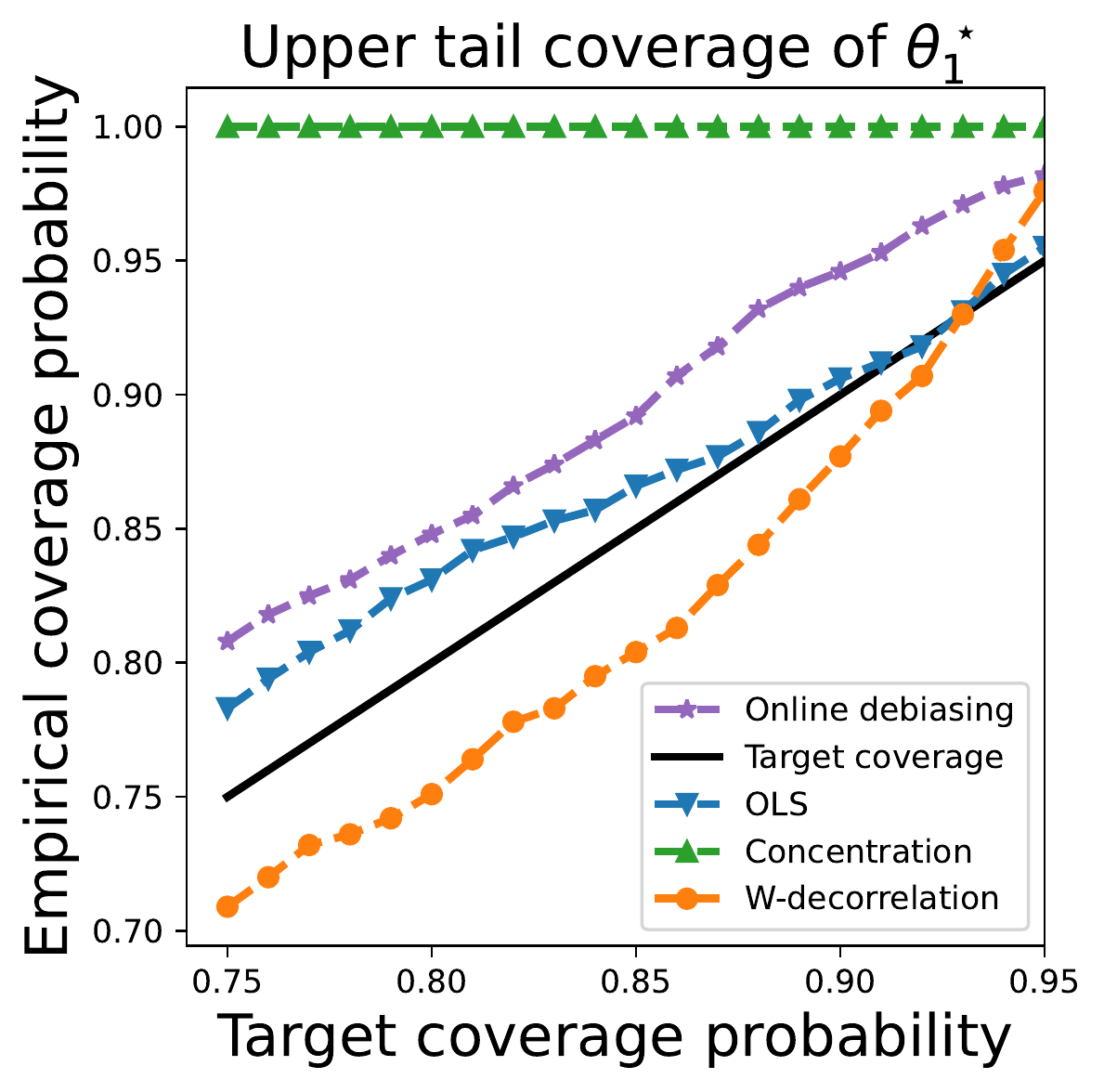}%
    \includegraphics[width=.35\textwidth]{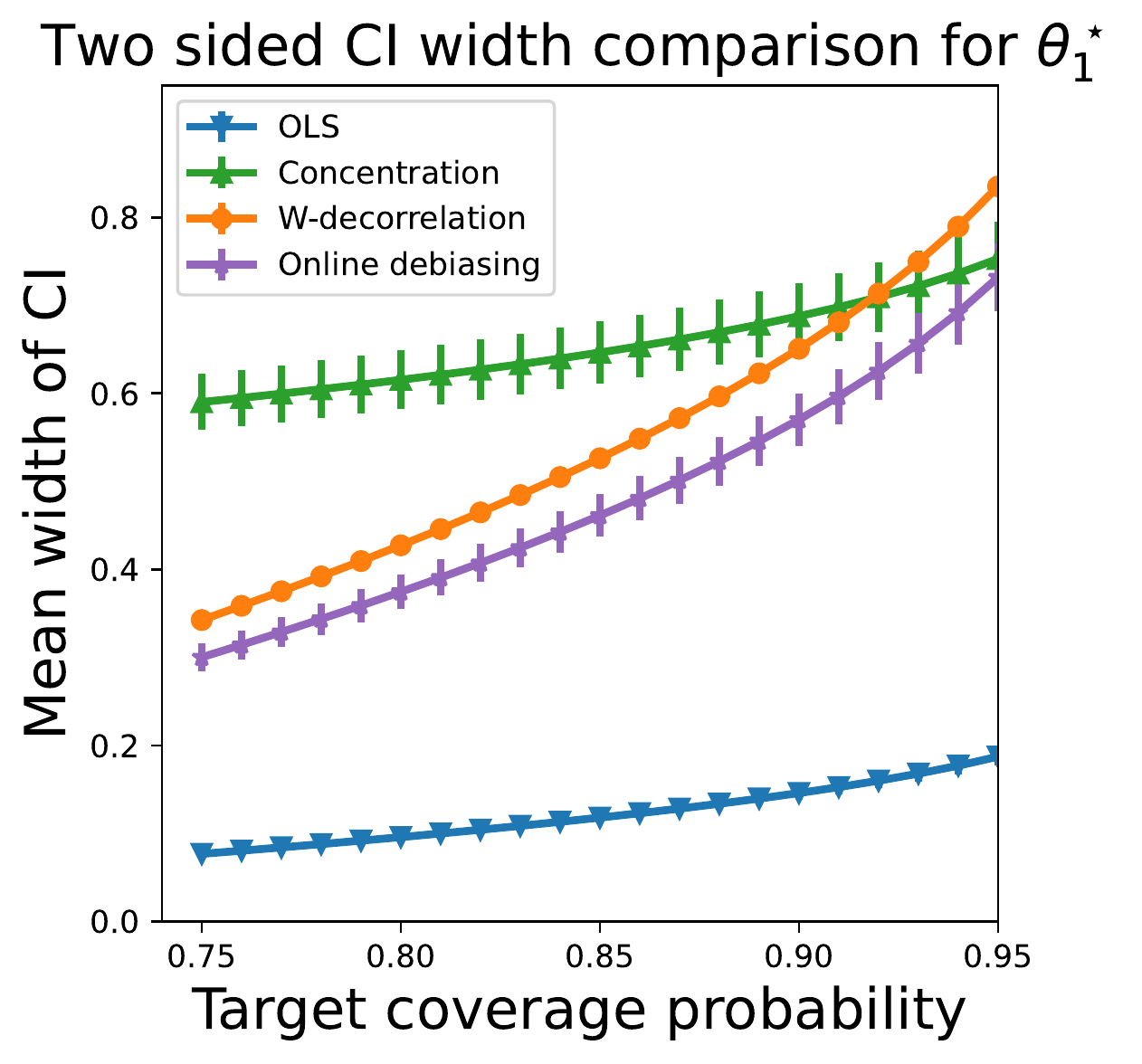} \\
     \includegraphics[width=.33\textwidth]{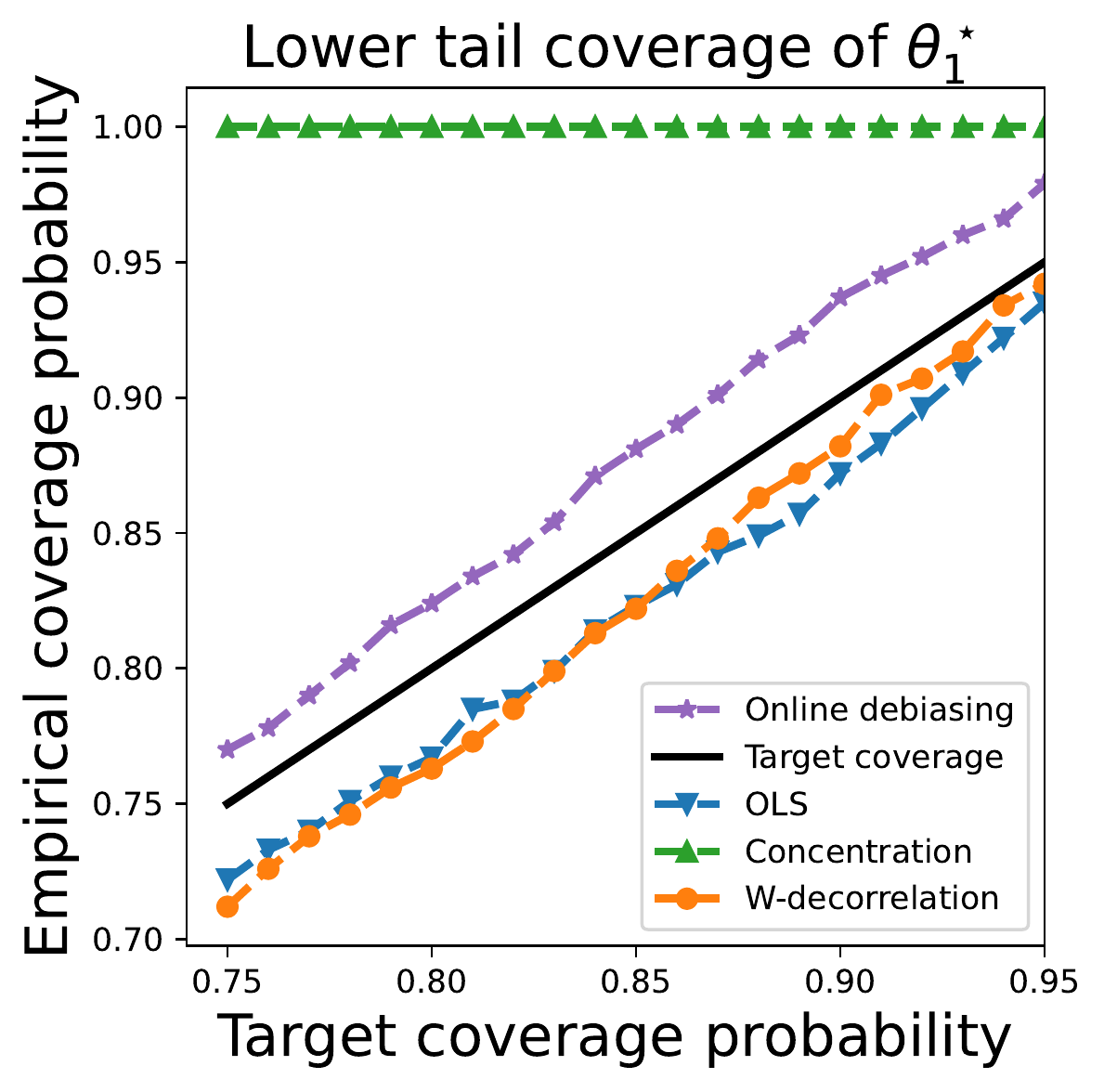}%
        \includegraphics[width=.33\textwidth]{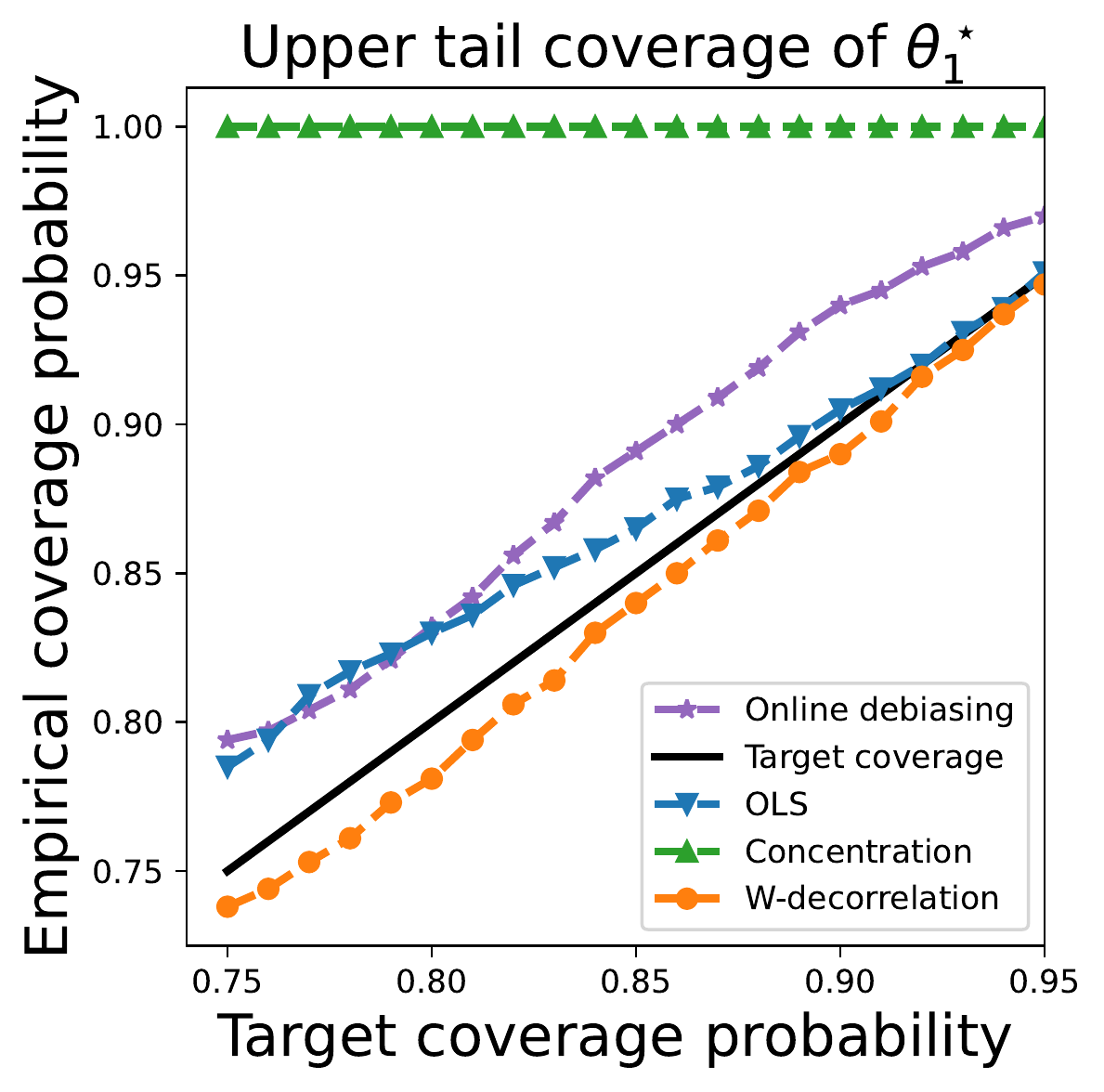}%
    \includegraphics[width=.35\textwidth]{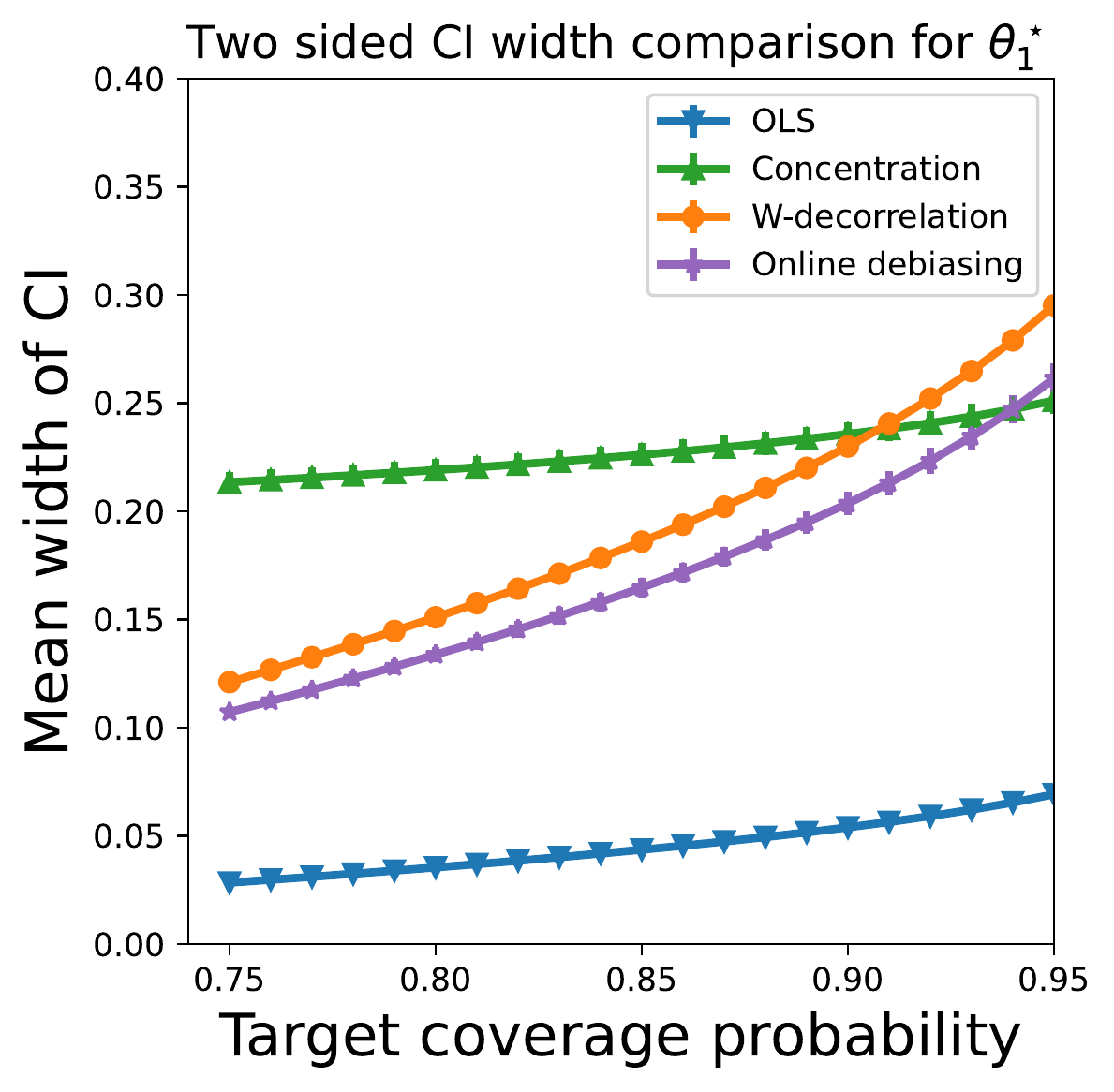}
    %
    \caption{
    Average coverage and width of confidence intervals for $\theta_1^*$ across 1000 independent replications  of a multi-armed bandit experiment~\eqref{model:bandits} with $\thetastar \equiv (\theta_1^*, \theta_2^*) =  (0.3, 0.3)^\top$. The covariates $\{\x_i\}_{i = 1}^{1000}$
    were selected using the UCB algorithm~\citep{lattimore2020bandit}, and the error bars represent $\pm 1$ standard error.
    \textbf{Left} and \textbf{Center:} Coverage of one-sided $1 - \alpha$
    intervals for $\theta_1^*$. \textbf{Right:} Width of two-sided $1 - \alpha$ intervals for $\theta_1^*$.
    The first row corresponds to data generated following the UCB algorithm, the second row corresponds to performances when additional data of size $\log^2(n)$ is added to the data; see Section~\ref{sec:verifying-growth}. We observe that the online debiased estimator provide qualitatively similar performances in the both cases. 
    See Section~\ref{sec:numericals-bandits-small} for additional simulations.}
    \label{fig:Thompson-sampling}
\end{figure}

\subsection{Autoregressive time series model}
\label{sec:one-dim}

Our next example involves estimating the parameters of an autoregressive
time series model.  It is well-known that the \OLS estimate can exhibit non-Gaussian limit behavior for
versions of such processes that are unstable \citep{lai1982least}.  In order to focus attention on the key
issues, we restrict ourselves here to the simple case of a scalar
autoregressive process.

More precisely, given the initial point $\y_0 = 0$ and an unknown
scalar $\thetastarScalar \in (-1, 1]$, consider a stochastic process
  generated by the first-order autoregression
\begin{align}
\label{eqn:autoreg-model}
  \y_i = \thetastarScalar \y_{i - 1} + \error_i \quad \mbox{for $i =
    1, \ldots, \numobs$.}
\end{align} 
We assume that the noise sequence $\Nseq{\error}{i}$ consists of
i.i.d.\ standard normal random variables.  Note that the
autoregression~\eqref{eqn:autoreg-model} is a special case of the
stochastic linear regression model~\eqref{eqn:linear-reg-model}, in
particular one with $x_i = \y_{i - 1}$ for all $i \in [\numobs]$. An
especially interesting instantiation of the
autoregression~\eqref{eqn:autoreg-model} is obtained by setting
$\thetastarScalar = 1$. Such a process is a special case of a
\emph{unit root autoregression}, a class of models that play an
important role in econometric time series analysis~\citep{box2015time}.

With the choice $\thetastarScalar = 1$, the
process~\eqref{eqn:autoreg-model} is a random walk and so has
a variance that grows linearly with time.  
Moreover, by an application of Donsker's theorem (cf. Example
3 in the paper~\citep{lai1982least}), we have
\begin{align}
\label{eqn:Var-scaling-AR1}  
 \frac{1}{\numobs^2} \Nsum{i} x_i^2 \mydefn \frac{1}{\numobs^2}
 \Nsum{i} \y_{i - 1}^2 &\indistrb \int_{0}^1 \wiener^2(t) dt, \qquad
 \mbox{and} \\
\sqrt{\Nsum{i} y_{i - 1}^2} \cdot (\thetaLSScalar - \thetastarScalar)
& \indistrb \frac{\wiener^2(1) - 1}{2 \int_{0}^1 \wiener^2(t) dt},
\notag
\end{align}
where $\wiener(t)$ denotes the standard Wiener process (see the
paper~\citep{white1958limiting} for details).  Put simply, in the autoregressive time series 
model~\eqref{eqn:autoreg-model} with $\thetastar = 1$ the
stability condition~\eqref{eqn:Cond-Cov-stability} is not satisfied,
and the distribution of the OLS estimate $\thetaLSScalar$ is not
asymptotically normal.

In contrast to this negative result for the OLS estimate, we can show
that the debiasing estimate $\thetaDecorrScalar$, after suitable
centering and scaling, does indeed converge in distribution to a
standard Gaussian.  Our result is based on the tuning parameters
and scaling matrices chosen as
\begin{align}
  \label{eqn:autoregressive-tuning}
\tuneParScaled_\numobs = \frac{1}{(\log \numobs)\cdot \log\log(\numobs)}, \quad
\mbox{and} \quad {\XscaleMat_{i, \numobs} = \max \left \{ (\log \numobs)^2 y_{i - 1}^2, \;\; \sum_{j =1}^{i-1} y_j^2 \right \}}.
\end{align}

\begin{cors}
\label{cor:unit-root-autoreg}
Given a sequence $\Nseq{\y}{i}$ generated from the autoregressive
model~\eqref{eqn:autoreg-model}, the estimate
$\thetaDecorrScalar$~\eqref{eqn:ThetaDecorr-defn} obtained with the
tuning parameters~\eqref{eqn:autoregressive-tuning} satisfies
\begin{align}
\label{EqnAutoRegClima}  
  \sqrt{\frac{\Nsum{i} \y_{i - 1}^2}{(\log \numobs) \cdot \log\log(\numobs)}}
  \cdot ( \thetaDecorrScalar - \thetastarScalar ) \indistrb \Ncal(0,
  1).
\end{align}
\end{cors}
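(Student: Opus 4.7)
The plan is to invoke Theorem~\ref{thm:asymp-normality} with the tuning parameters specified in~\eqref{eqn:autoregressive-tuning} and verify Assumptions~\ref{assumption:A1}--\ref{assumption:A3}. Assumption~\ref{assumption:A1} holds trivially since the noise sequence is i.i.d.\ standard Gaussian. In dimension one, $\SigmaMat_\numobs = \sum_{i=1}^\numobs y_{i-1}^2$ is a scalar, so verifying Assumption~\ref{assumption:A2} reduces to showing $\SigmaMat_\numobs \almostSurely \infty$; this follows from the ergodic theorem when $|\thetastarScalar| < 1$ and from the law of the iterated logarithm applied to the random walk when $\thetastarScalar = 1$. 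In both regimes $\log \SigmaMat_\numobs = \Bigo(\log \numobs)$, a bound that will be used repeatedly below.

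The two easier parts of Assumption~\ref{assumption:A3} follow quickly. The lower bound $\XscaleMat_{i,\numobs} \geq (\log \numobs)^2 y_{i-1}^2$ from~\eqref{eqn:autoregressive-tuning} gives $z_i^2 = y_{i-1}^2/\XscaleMat_{i,\numobs} \leq (\log \numobs)^{-2}$ uniformly in $i$, so $\max_i (z_i^2/\gamma_\numobs) \leq (\log\log \numobs)/(\log \numobs) \to 0$, establishing asymptotic negligibility~\ref{assumption:A3}(a). For the vanishing bias condition~\ref{assumption:A3}(b), Proposition~\ref{prop:zero-bias-lemma} with $d = 1$ yields $\opnorm{\Id - \W_\numobs \X_\numobs \SigmaMat_\numobs^{-1/2}} = \BigoP(1)$; multiplying by $\sqrt{\gamma_\numobs \log \SigmaMat_\numobs} = \BigoP(1/\sqrt{\log\log \numobs})$ produces a quantity vanishing in probability.

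The main obstacle is the variance stability condition~\ref{assumption:A3}(c). In the scalar case, the recursion~\eqref{eqn:Wn-expression} reduces to $w_i = u_{i-1} z_i/((\gamma_\numobs/2) + z_i^2)$, where $u_i := 1 - \sum_{j \leq i} w_j z_j$ satisfies $u_i = u_{i-1} \cdot (\gamma_\numobs/2)/((\gamma_\numobs/2) + z_i^2)$, giving the explicit product formula
\begin{align*}
u_\numobs \; = \; \prod_{i=1}^{\numobs} \frac{\gamma_\numobs/2}{(\gamma_\numobs/2) + z_i^2}.
\end{align*}
Variance stability amounts to $u_\numobs \convprob 0$, and since $z_i^2 = \smalloP(\gamma_\numobs)$ by part (a), a Taylor expansion yields $-\log u_\numobs \geq (1 - \smalloP(1)) \cdot (2/\gamma_\numobs) \sum_{i=1}^\numobs z_i^2$, reducing the task to a lower bound on $\sum_i z_i^2$.

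To close the argument, set $S_i := \sum_{j \leq i} y_{j-1}^2 = \SigmaMat_i$. With probability tending to one, $S_{i-1} \geq (\log \numobs)^2 y_{i-1}^2$ for all $i \geq i_0(\numobs) = \Bigo((\log \numobs)^2 \log\log \numobs)$---a consequence of LIL-type bounds on $\max_{i \leq \numobs} y_{i-1}^2$ combined with the growth rates of $S_i$ in each of the two regimes. On this event, $\XscaleMat_{i,\numobs} = S_{i-1}$ and $z_i^2 = y_{i-1}^2/S_{i-1}$ for $i \geq i_0$, so the telescoping inequality $\log(1 + y_{i-1}^2/S_{i-1}) \leq y_{i-1}^2/S_{i-1}$ yields
\begin{align*}
\sum_{i = i_0}^\numobs z_i^2 \; \geq \; \sum_{i = i_0}^\numobs \log\!\left(\frac{S_i}{S_{i-1}}\right) \; = \; \log\!\left(\frac{\SigmaMat_\numobs}{S_{i_0 - 1}}\right),
\end{align*}
which is of order $\log \numobs$ in probability. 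Combined with $1/\gamma_\numobs = (\log \numobs)(\log\log \numobs)$, this gives $-\log u_\numobs \gtrsim (\log \numobs)^2 \log\log \numobs \to \infty$, so $u_\numobs \convprob 0$. Applying Theorem~\ref{thm:asymp-normality} with any consistent estimator $\widehat{\noisesd}^2$ of $\noisesd^2 = 1$ then yields~\eqref{EqnAutoRegClima}.
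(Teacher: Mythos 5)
Your proposal follows the same skeleton as the paper's proof---verify Assumptions~\ref{assumption:A1}--\ref{assumption:A3} for the tuning choice~\eqref{eqn:autoregressive-tuning} and invoke Theorem~\ref{thm:asymp-normality}---and your treatment of \ref{assumption:A1}, \ref{assumption:A2} and the asymptotic negligibility condition \ref{assumption:A3}(a) coincides with the paper's. Where you genuinely diverge is in the last two conditions. For the vanishing bias condition \ref{assumption:A3}(b), the paper does not invoke Proposition~\ref{prop:zero-bias-lemma} verbatim (the lower-bounding term $(\log\numobs)^2 y_{i-1}^2$ in $\XscaleMat_{i,\numobs}$ is random and $i$-dependent, so the tuning does not match the form~\eqref{eqn:scaling_mat-choice}); instead it redoes the max-norm calculation and pays an extra factor $\BigoP\bigl(1+\sqrt{\XscaleMat_\numobs/\SigmaMat_\numobs}\bigr)$, which it controls via the auxiliary bound $(\log\numobs)^2 y_\numobs^2 = \BigoP(\Nsum{i}\y_{i-1}^2)$. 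Your direct citation of the proposition glosses over this mismatch, though the underlying bounds~\eqref{eqn:first-term-max-norm-bound}--\eqref{eqn:second-term-max-norm-bound} do essentially carry over. For variance stability \ref{assumption:A3}(c), your explicit scalar product formula for $u_\numobs$ is exactly the $\Dim=1$ specialization of Lemma~\ref{lems:Commutative-lemma}, but your lower bound on $\sum_i z_i^2$ is different and much stronger than necessary: the paper simply observes that $\XscaleMat_{i,\numobs}\preccurlyeq\SigmaMat_\numobs$ with high probability, whence $\sum_i z_i^2 \geq \sum_i y_{i-1}^2/\SigmaMat_\numobs = 1$ and $u_\numobs \leq \exp(-1/\tuneParScaled_\numobs) \leq 1/\numobs$. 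Your telescoping bound $\sum_{i\geq i_0} z_i^2 \geq \log(\SigmaMat_\numobs/S_{i_0-1}) \asymp \log\numobs$ rests on the unproven uniform claim that $S_{i-1}\geq(\log\numobs)^2 y_{i-1}^2$ for all $i\geq i_0(\numobs)$, which is delicate in the unit-root case (it requires controlling $\inf_{i\geq i_0} S_{i-1}/y_{i-1}^2$, not just a single index). This is the one soft spot in your argument; it is not fatal, because all you need is $\sum_i z_i^2 \gg \tuneParScaled_\numobs$, and the paper's one-line positive-semidefinite comparison delivers that with far less machinery.
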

\noindent See~\cref{proof-unit-root-autoreg} for the proof of this
claim. 
\cref{cor:unit-root-autoreg} enables us to construct asymptotically exact
confidence intervals for $\thetastarScalar$. We also reiterate that the above result holds for any ${\thetastar \in (-1, 1]}$. 

\subsubsection{Numerical experiment}
\label{sec:simulation-time-series-small}
\cref{fig:Time-series-small} illustrates the performance of online
debiasing with autoregression
tuning~\eqref{eqn:autoregressive-tuning}.  Here our data is generated
from the time series model~\eqref{eqn:autoreg-model} with $\thetastar
= 1$.  We again find that online debiasing provides appropriate
coverage for all confidence levels.  Meanwhile, the \OLS lower tail
interval exhibits
severe undercoverage, and W-decorrelation exhibits ranges of undercoverage for both tails.  Finally, the concentration-based CI again
provides 100\% coverage for all confidence levels, at the expense of
interval lengths that are uniformly longer than the online debiasing
CIs.
\begin{figure}[H]%
     \centering
    \includegraphics[width=.34\textwidth]{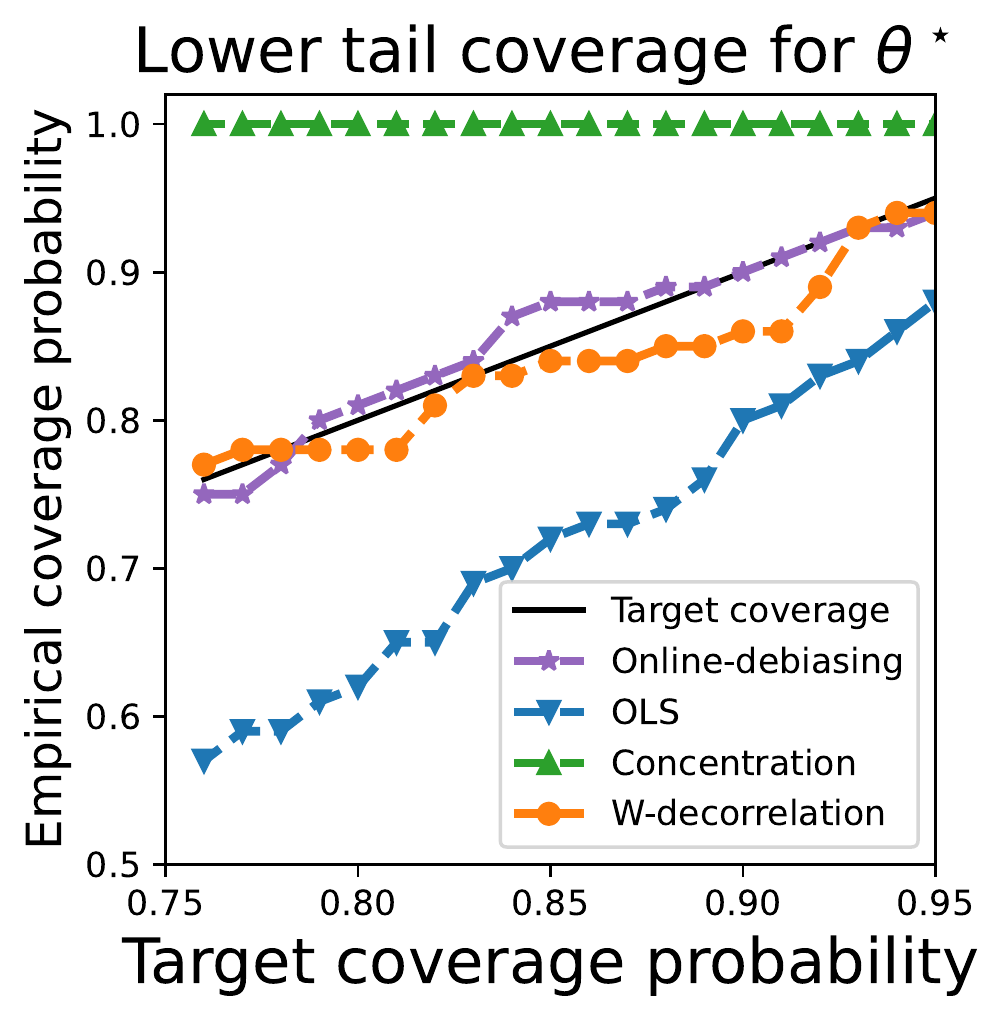}%
        \includegraphics[width=.34\textwidth]{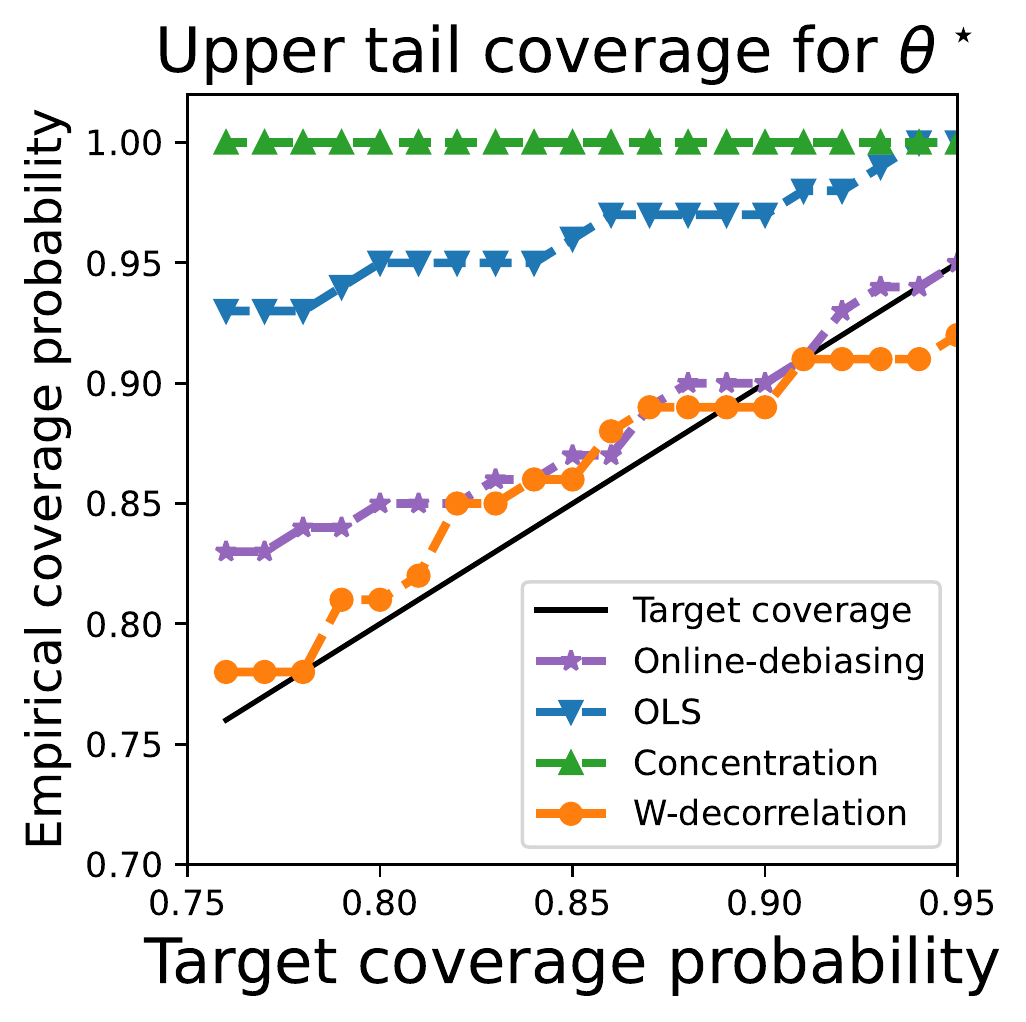}%
        \includegraphics[width=.34\textwidth]{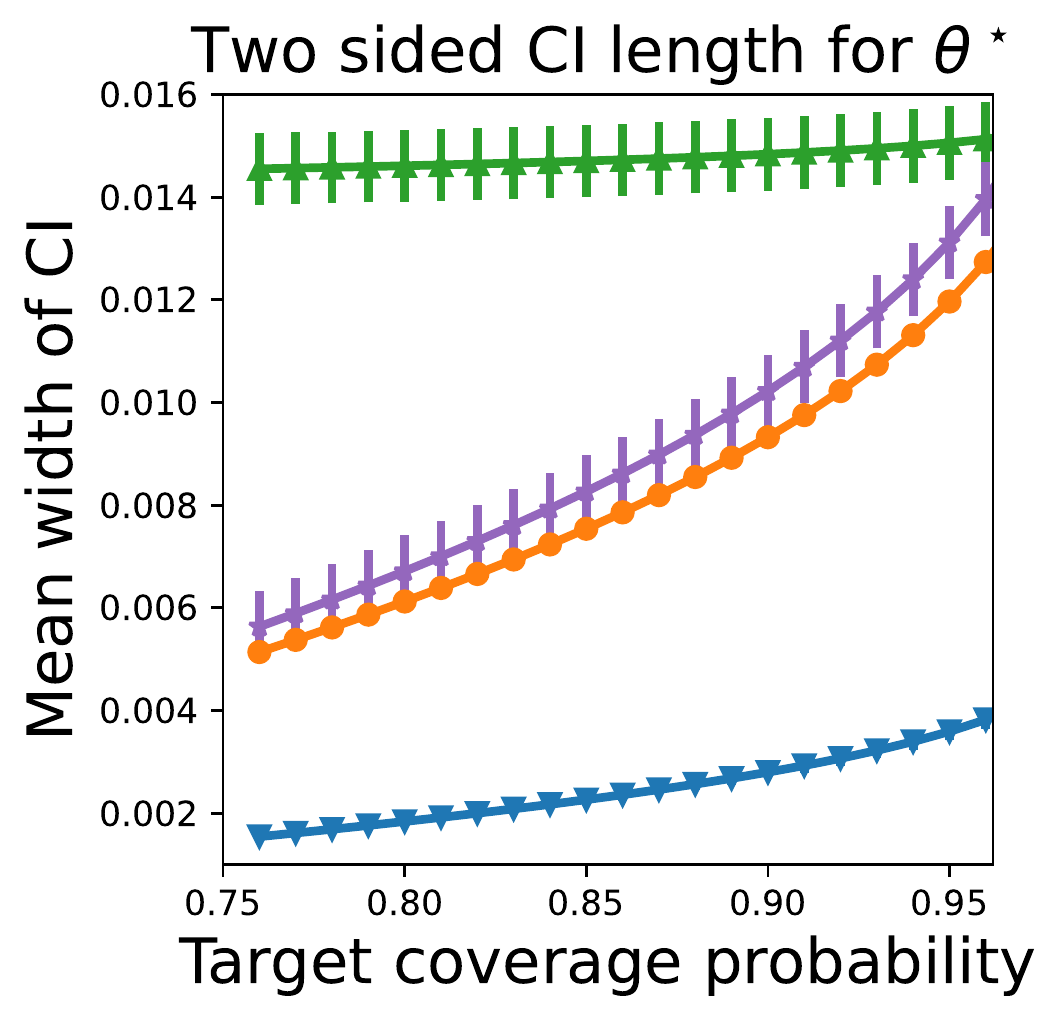} 
    %
    \caption{
    Average coverage and width of confidence intervals for $\thetastar=1$ across 1000 independent replications of an autoregressive time series experiment~\eqref{model:bandits}. The error bars represent $\pm1$ standard error.
    \textbf{Left} and \textbf{Center:} Coverage of one-sided $1 - \alpha$
    intervals for $\thetastar$. \textbf{Right:} Width of two-sided $1 - \alpha$ intervals for $\thetastar$.
    See Section~\ref{sec:simulation-time-series-small} for details.}
\label{fig:Time-series-small}
\end{figure}


\subsection{Active learning with exploration}
\label{sec:suff-exploration}

In our third example, we focus on the case where the covariates
$\Nseq{\x}{i}$ are generated using any algorithm satisfying a sufficient exploration property.


\begin{definition}[Selection algorithms with $\greedy$-exploration]
We say that a selection algorithm $\Selections_\numobs =
\Nseq{\selection}{i}$ admits a $\greedy$-exploration property
if
\begin{align}
\label{eqn:selection-with-exploration}
  \x_i \mydefn
  \begin{cases}
    \algoDir_i & \text{with probability $1 - \greedyEps_i$ for some $\algoDir_i \in
      \filtration_{i - 1}$, and} \\
    \randDir_i & \mbox{with probability $\greedyEps_i$ for some $\randDir_i$
      independent of $\filtration_{i - 1}$.}
   \end{cases} 
\end{align}
Here the exploration probability sequence $\Nseq{\greedyEps}{i}$
consists of nonnegative scalars in the interval $(0,1)$, and the
vectors $\Nseq{v}{i}$ are i.i.d.\ random vectors such that
\begin{align}
\label{eqn:exploration-lower-bound}
  \Exs[\randDir_i \randDir_i^\top] \succeq \ExsXsqLB
  \qquad \text{where}  \quad \ExsXsqLB \succeq \mathbf{0}
\end{align}

In words, the selection algorithm $\selection_i$ behaves as follows:
with probability $1 - \greedyEps_i$, it chooses vector $\algoDir_i$
based on the previous data points $\{(\x_j, \y_j)\}_{j =1}^{i-1}$, and
with probability $\greedyEps_i$, it chooses a random direction
$\randDir_i$, independent of the previous data points.
\end{definition}

\subsubsection*{{\bf{Example: $\greedy$-greedy linear bandits}}}
Let us briefly consider a concrete instance of a selection algorithm
$\Nseq{\selection}{i}$ that is of the $\greedyEps$-greedy type.  In
the linearly parameterized bandit problem, at each time $i \in
[\numobs]$, an algorithm $\selection_i$ chooses an action vector
$\x_i$, usually lying within some bounded set $\actionset_i$, and
obtains a reward \mbox{$\y_i = \myinprod{\x_i}{\thetastar} + \err_i$}.
A popular and simple strategy for regret minimization is a special
case of the $\greedyEps$-greedy selection
algorithm~\citep{lattimore2020bandit}.  For linearly parameterized
bandits, the selection algorithm $\selection_i$ chooses
\begin{align}
\label{eqn:greedy-selection}
  \x_i \;\; &
  \begin{cases} 
       \in \arg \max \limits_{\x\in \actionset_i} \quad
       \myinprod{\x_i}{\thetaRidge^{(i - 1)}} & \mbox{with probability
         $1 - \greedyEps_i$, and} \\
       \sim \Unif(\actionset_i) & \mbox{with probability
         $\greedyEps_i$.}
   \end{cases} 
\end{align}
where $\thetaRidge^{(i - 1)}$ denotes the ridge regression estimator
based on all data observed up to stage $i - 1$, i.e., the collection
of covariate-response pairs $\{ (\x_j, \y_j)\}_{j=1}^{i-1}$.  Put
simply, with probability $1-\greedyEps_i$, the selection algorithm
$\selection_i$ chooses an optimal action given data collected so far
(exploitation), and with probability $\greedyEps_i$, the algorithm
randomizes uniformly amongst its choices (exploration).  In the more
general setting~\eqref{eqn:selection-with-exploration} considered
here, it is not necessary to select the optimal action in the
exploitation step.  Rather, our result holds also when an arbitrary,
$\filtration_{i-1}$-measurable choice is made in the first part
of~\cref{eqn:greedy-selection}, as in the EXP3 or UCB algorithms with
exploration.  See the book~\citep{lattimore2020bandit} for more
details.  \hfill $\diamondsuit$

\medskip

Returning to our general
setting~\eqref{eqn:selection-with-exploration}, we now state a
guarantee for selection algorithms with $\greedyEps$-exploration.  As
is standard in the bandit literature, we assume that the covariates
are uniformly bounded, so that there exists a scalar $K$ satisfying
\begin{subequations}
\begin{align}
\label{eqn:bounded-covariates}
\| \x_i \|_2 \leq K \quad \mbox{for all $i \in [\numobs]$.}
\end{align}
See our discussion following the corollary for how this condition can
be relaxed. In addition, we impose a \emph{sufficient exploration}
condition, meaning a lower bound on the magnitude of the exploration
probabilities, of the form
\begin{align}
\label{eqn:suff-exploration}
  \sum_{i = 1}^\numobs \greedyEps_i \geq \frac{ \Exs[ \max_{i \in
        [\numobs]} \| \x_i \|^2_2] }{\lambda_{\min}(\ExsXsqLB)} (\log
  \numobs)^2 
\end{align}
where the reader should recall that the matrix $\ExsXsqLB$ was defined
in~\cref{eqn:exploration-lower-bound}.  We implement the debiasing
estimate~\eqref{eqn:ThetaDecorr-defn} with the choice of tuning
parameters
\begin{align}
  \label{EqnTuningGreedy}
\XscaleMat_{i, \numobs} = \sum_{j = 1}^\numobs \greedyEps_j \ExsXsqLB
\quad \qtext{and} \quad \tuneParScaled_\numobs =
\frac{1}{(\log\numobs) \cdot \log\log(\numobs)}.
\end{align}
\end{subequations}

\begin{cors}
\label{cor:multidim-gen-case}
Suppose that Assumptions~\ref{assumption:A1} and \ref{assumption:A2}
hold, the covariates satisfy the bound~\eqref{eqn:bounded-covariates},
and the exploration conditions~\eqref{eqn:exploration-lower-bound}
and~\eqref{eqn:suff-exploration} both hold. Then given any consistent
estimator $\sigmahat^2$ of the error variance $\errorstd^2$, the
estimator $\thetaDecorr$ with tuning
parameters~\eqref{EqnTuningGreedy} satisfies

\begin{subequations}
\begin{align}
\label{eqn:gen-case-asymp-normality}
  \big(\log\numobs \cdot \log\log(\numobs) \cdot
  \sigmahat^2\big)^{-1/2} \cdot \SigmaMat_\numobs^{\frac{1}{2}} \left(
  \thetaDecorr - \thetastar \right) \indistrb \Ncal(0, \MyIdDim).
\end{align}
Moreover, for any $\dir \in \real^\dims$, the following is an
asymptotically exact $1 - \alpha$ confidence intervals for $ \dir^\top
\thetastar$
\begin{align}
\label{eqn:gen-case-simple-CI}
        \left[ \e_1^\top \thetaBasis - \tfrac{\OpTuning_\numobs
            \sigmahat}{\sqrt{\tuneParScaled_\numobs}}
          (\myinprod{\dir}{\SigmaMat_\numobs^{-1} \dir})^{\frac{1}{2}}
          \NormalQuantile{1 - \alpha/2}, \qquad \e_1^\top \thetaBasis
          +
          \tfrac{\OpTuning_\numobs\sigmahat}{\sqrt{\tuneParScaled_\numobs}}
          (\myinprod{\dir} {\SigmaMat_\numobs^{-1}\dir})^{\frac{1}{2}}
          \NormalQuantile{1 - \alpha/2} \right],
\end{align}
\end{subequations}
where $\OpTuning_\numobs = \opnorm{\diagCov_{\dir,
    \numobs}^{-\frac{1}{2}} \SigmaMat_{\dir, \numobs}^{\frac{1}{2}}}$
and the estimator $\thetaBasis$ was calculated
using~\eqref{eqn:thetaBasis-defn} and with the tuning
parameters~\eqref{EqnTuningGreedy}.
\end{cors}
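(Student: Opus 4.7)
The plan is to invoke \Cref{thm:asymp-normality} for the normality claim~\eqref{eqn:gen-case-asymp-normality} and \Cref{prop:gen-confinv} for the confidence interval~\eqref{eqn:gen-case-simple-CI}.  Assumptions~\ref{assumption:A1} and~\ref{assumption:A2} are part of the hypotheses, so the work reduces to verifying Assumption~\ref{assumption:A3} for the tuning choices in~\eqref{EqnTuningGreedy}; once Assumption~\ref{assumption:A3} is in hand, the reduction~\eqref{eqn:A3_to_A3prime}, together with the identity $\lambda_{\max}(\SigmaMat_\numobs) = \lambda_{\max}(\SigmaMat_{\dir,\numobs})$ for any orthonormal $\Basis$, upgrades it to Assumption~\ref{assumption:A3prime}.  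A preliminary step that feeds every part of the verification is a matrix concentration estimate for the exploration contributions.  Since $\Exs[\randDir_i \randDir_i^\top] \succeq \ExsXsqLB$, a matrix Freedman inequality applied to $\sum_{i=1}^\numobs \mathbf{1}\{i \text{ explores}\}\,\randDir_i \randDir_i^\top$, combined with the covariate bound~\eqref{eqn:bounded-covariates} and the sufficient exploration condition~\eqref{eqn:suff-exploration}, produces $\SigmaMat_\numobs \succeq (1 - \smalloP(1))\, \XscaleMat_\numobs$ for $\XscaleMat_\numobs \mydefn \sum_j \greedyEps_j \ExsXsqLB$.  Because $\XscaleMat_{i,\numobs} = \XscaleMat_\numobs$ is constant in $i$, we have $\z_i = \XscaleMat_\numobs^{-1/2} \x_i$, and therefore $\sum_i \z_i \z_i^\top = \XscaleMat_\numobs^{-1/2} \SigmaMat_\numobs \XscaleMat_\numobs^{-1/2} \succeq (1-\smalloP(1)) \MyIdDim$.

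The asymptotic negligibility condition (a) is immediate: the bounds $\|\x_i\|_2 \leq K$ and~\eqref{eqn:suff-exploration} yield $\langle \x_i,\XscaleMat_i^{-1}\x_i\rangle \leq K^2/\lambda_{\min}(\XscaleMat_\numobs) = \BigoP(1/(\log\numobs)^2)$, and dividing by $\tuneParScaled_\numobs = (\log\numobs \cdot \log\log\numobs)^{-1}$ gives $\BigoP(\log\log\numobs/\log\numobs)\to 0$.  The variance stability condition (c) is the main obstacle.  Iterating the closed form~\eqref{eqn:Wn-expression} produces the telescoping identity
\begin{align*}
\MyIdDim - \W_\numobs \Z_\numobs \;=\; \prod_{i=1}^{\numobs} P_i, \qquad P_i \mydefn \MyIdDim - \frac{\z_i \z_i^\top}{\tuneParScaled_\numobs/2 + \|\z_i\|_2^2},
\end{align*}
so the task is to show that this product of (non-commuting) Hermitian contractions has vanishing operator norm.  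The asymptotic negligibility already forces $\|\z_i\|_2^2/\tuneParScaled_\numobs \to 0$ uniformly in $i$, so each $P_i$ is close to $\MyIdDim - (2/\tuneParScaled_\numobs)\z_i \z_i^\top$, and the heuristic approximation $\prod_i P_i \approx \exp\bigl(-(2/\tuneParScaled_\numobs)\sum_i \z_i \z_i^\top\bigr)$ has operator norm at most $\exp\bigl(-(1-\smalloP(1))/\tuneParScaled_\numobs\bigr) \to 0$ by the preliminary lower bound $\sum_i \z_i \z_i^\top \succeq (1-\smalloP(1))\MyIdDim$.  To make this rigorous, I would track the potential $E_i \mydefn (\MyIdDim - \W_i\Z_i)^\top(\MyIdDim - \W_i\Z_i)$, which obeys the recursion $E_i = P_i E_{i-1} P_i$, and combine a Frobenius-norm/trace argument with the uniform contraction $P_i \preceq \MyIdDim$ to show that every eigenvalue of $E_\numobs$ decays at the exponential rate $\lambda_{\min}(\sum_i \z_i\z_i^\top)/\tuneParScaled_\numobs \to \infty$.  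The non-commutativity of the $P_i$ is the technical heart of the argument, but it is mitigated by the tight uniform bound $\|\z_i\|_2^2 = \smalloP(\tuneParScaled_\numobs)$, which makes the Lie-product corrections lower order.

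Given (a) and (c), the vanishing bias condition (b) follows easily.  Setting $Q_\numobs \mydefn \XscaleMat_\numobs^{1/2} \SigmaMat_\numobs^{-1/2}$, the decomposition
\begin{align*}
\MyIdDim - \W_\numobs \X_\numobs \SigmaMat_\numobs^{-1/2} = (\MyIdDim - Q_\numobs) + (\MyIdDim - \W_\numobs \Z_\numobs)\, Q_\numobs,
\end{align*}
together with $\opnorm{Q_\numobs}^2 = 1/\lambda_{\min}(\XscaleMat_\numobs^{-1/2} \SigmaMat_\numobs \XscaleMat_\numobs^{-1/2}) = \BigoP(1)$ from the preliminary concentration step, yields $\opnorm{\MyIdDim - \W_\numobs \X_\numobs \SigmaMat_\numobs^{-1/2}} = \BigoP(1)$.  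Multiplying by $\sqrt{\tuneParScaled_\numobs \log\lambda_{\max}(\SigmaMat_\numobs)} = \Bigo(1/\sqrt{\log\log\numobs})$, where $\log\lambda_{\max}(\SigmaMat_\numobs) = \Bigo(\log\numobs)$ from~\eqref{eqn:bounded-covariates}, verifies (b).  Therefore Assumption~\ref{assumption:A3} holds, \Cref{thm:asymp-normality} delivers~\eqref{eqn:gen-case-asymp-normality}, and an application of the reduction~\eqref{eqn:A3_to_A3prime} followed by \Cref{prop:gen-confinv} delivers~\eqref{eqn:gen-case-simple-CI}.
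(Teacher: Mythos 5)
Your overall architecture---verify Assumption~\ref{assumption:A3}, invoke \Cref{thm:asymp-normality}, then pass to Assumption~\ref{assumption:A3prime} via the calculation~\eqref{eqn:A3_to_A3prime} and \Cref{prop:gen-confinv}---is exactly the paper's, and your treatments of the asymptotic negligibility condition (a) and the vanishing bias condition (b) are essentially sound (the paper likewise reduces (b) to $\opnorm{\W_\numobs \X_\numobs \XscaleMat^{-\frac{1}{2}}} = \BigoP(1)$ together with $\opnorm{\XscaleMat^{\frac{1}{2}}\SigmaMat_\numobs^{-\frac{1}{2}}} = \BigoP(1)$). The genuine gap is in your verification of the variance stability condition (c). You propose to deduce $\opnorm{\prod_{i} P_i} \stackrel{\smallop{p}}{\longrightarrow} 0$ from the \emph{realized} spectral lower bound $\sum_i \z_i\z_i^\top \succeq (1-\smalloP(1))\MyIdDim$, asserting that because each step is small the non-commuting product behaves like $\exp\bigl(-\tfrac{2}{\tuneParScaled_\numobs}\sum_i \z_i\z_i^\top\bigr)$ up to ``lower order'' Lie-product corrections. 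That assertion is not substantiated and is the crux of the matter: a single pass of rank-one contractions applied in an adversarial order is not controlled by the spectrum of the sum alone. Tracking $\|P_i\cdots P_1 u\|^2$, the decrease at step $i$ is proportional to $(\z_i^\top w_{i-1})^2$ for the \emph{current} residual $w_{i-1}$, not for the initial $u$; the residual drifts by a cumulative amount of order $\sqrt{\tfrac{1}{\tuneParScaled_\numobs}\sum_i\|\z_i\|_2^2}$, which is large here, so the identity $\sum_i(\z_i^\top u)^2 \geq (1-\smalloP(1))$ transfers no useful lower bound to $\sum_i(\z_i^\top w_{i-1})^2$. This is precisely why the paper's product-of-contractions bound (Lemma~\ref{lems:Commutative-lemma}) is stated only for commuting $\z_i\z_i^\top$ and is used only for the bandit and autoregressive corollaries, where the matrices are diagonal or scalar.

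For this corollary the paper abandons the realized-spectrum route entirely and instead proves Lemma~\ref{lem:suff-exploration} by a conditional-expectation (supermartingale) argument: from the one-step identity $\frobnorm{\DelMat_{i-1}}^2 - \frobnorm{\DelMat_i}^2 \geq \tr\{\DelMat_{i-1}\z_i\z_i^\top\DelMat_{i-1}^\top\}/(\tfrac{\tuneParScaled_\numobs}{2}+\|\z_i\|_2^2)$, one takes $\Exs[\,\cdot \mid \filtration_{i-1}]$ and uses the fact that with probability $\greedyEps_i$ the covariate is an exploration direction \emph{independent of} $\filtration_{i-1}$ (hence of $\DelMat_{i-1}$) with $\Exs[\z_i\z_i^\top] \succeq \greedyEps_i\,(\sum_j\greedyEps_j)^{-1}\MyIdDim$, yielding $\Exs[\frobnorm{\DelMat_i}^2\mid\filtration_{i-1}] \leq \exp\bigl(-\greedyEps_i/(2\tuneParScaled_\numobs\sum_j\greedyEps_j)\bigr)\frobnorm{\DelMat_{i-1}}^2$ and, after iterating, $\Exs\frobnorm{\DelMat_\numobs}^2 \leq \Dim\exp(-1/(2\tuneParScaled_\numobs))$. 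The independence of the exploration directions from the past is doing real work there and is exactly what your argument discards by conditioning on the realized $\SigmaMat_\numobs$. To repair your proof you should replace the non-commutative product step with this conditional-expectation bound (or supply an actual proof of the Lie-product claim, which I do not believe holds at the rate you need).
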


\noindent See~\cref{sec:proof-multidim-gen-case} for the proof.

\vspace{20pt}

It is worth noting that the bounded covariate
condition~\eqref{eqn:bounded-covariates} can be relaxed.  For
instance, in absence of the condition~\eqref{eqn:bounded-covariates},
one may obtain a result similar to the part (a) of
Corollary~\ref{cor:multidim-gen-case} under the following assumptions:
\begin{align*}
  \tuneParScaled_\numobs = \smalloP(\log \lambda_{\max}
  (\SigmaMat_\numobs)), \quad \text{and} \quad \sum_{i = 1}^\numobs
  \greedyEps_i = \frac{ \max_{i \in [\numobs]} \Exs[\| \x_i \|^2_2]
  }{\lambda_{\min}(\ExsXsqLB) \cdot \smalloP(\tuneParScaled_\numobs)}.
\end{align*}

Finally, as a special case, Corollary~\ref{cor:multidim-gen-case}
allows us to construct confidence interval for $\dir^\top \thetastar$
for multi-armed bandit problems that we discussed in
Section~\ref{sec:bandits}.  The
condition~\eqref{eqn:bounded-covariates} is readily satisfied for
multi-armed bandit problems, but the
conditions~\eqref{eqn:exploration-lower-bound}
and~\eqref{eqn:suff-exploration} are mildly stronger than the
analogous condition~\eqref{eqn:bandits-tuning}.

\subsubsection{Ensuring sufficient exploration via data augmentation}

It is natural to ask if we can obtain online debiased method when the
sufficient exploration condition is either difficult to verify or is
not satisfied. In such settings, one simple fix is the following is
based on the data-augmentation technique disscued in
Section~\ref{sec:verifying-growth}. Indeed, if we may collect
$\log^2(n)$ many data points with $\x_i$ chosen uniformly at random
froma $d$-dimensional unit sphere and append it to the new data. The
new data-set has $n + \log^2(n)$ many data points, and the new dataset
satisfy the exploration
condition~\eqref{eqn:selection-with-exploration} with
\begin{align*}
\greedyEps_i = 0 \;\; \text{for all} \;\; 1 \leq i \leq \numobs \qquad
\greedyEps_i = 1 \;\; \text{for} \;\; i = \numobs + 1, \ldots, \numobs
+ \log^2(\numobs).
\end{align*}     
In summary, the growth condiiton~\eqref{eqn:suff-exploration} is
satisfied in this case up to a factor $d$.  Hence, the result from
Corollary~\ref{cor:multidim-gen-case} holds true in this case.


\subsubsection{Numerical simulation}
\label{sec:LinBanditsSimSmall}
\begin{figure}[!ht]%
    \centering
    \includegraphics[width=.33\textwidth]{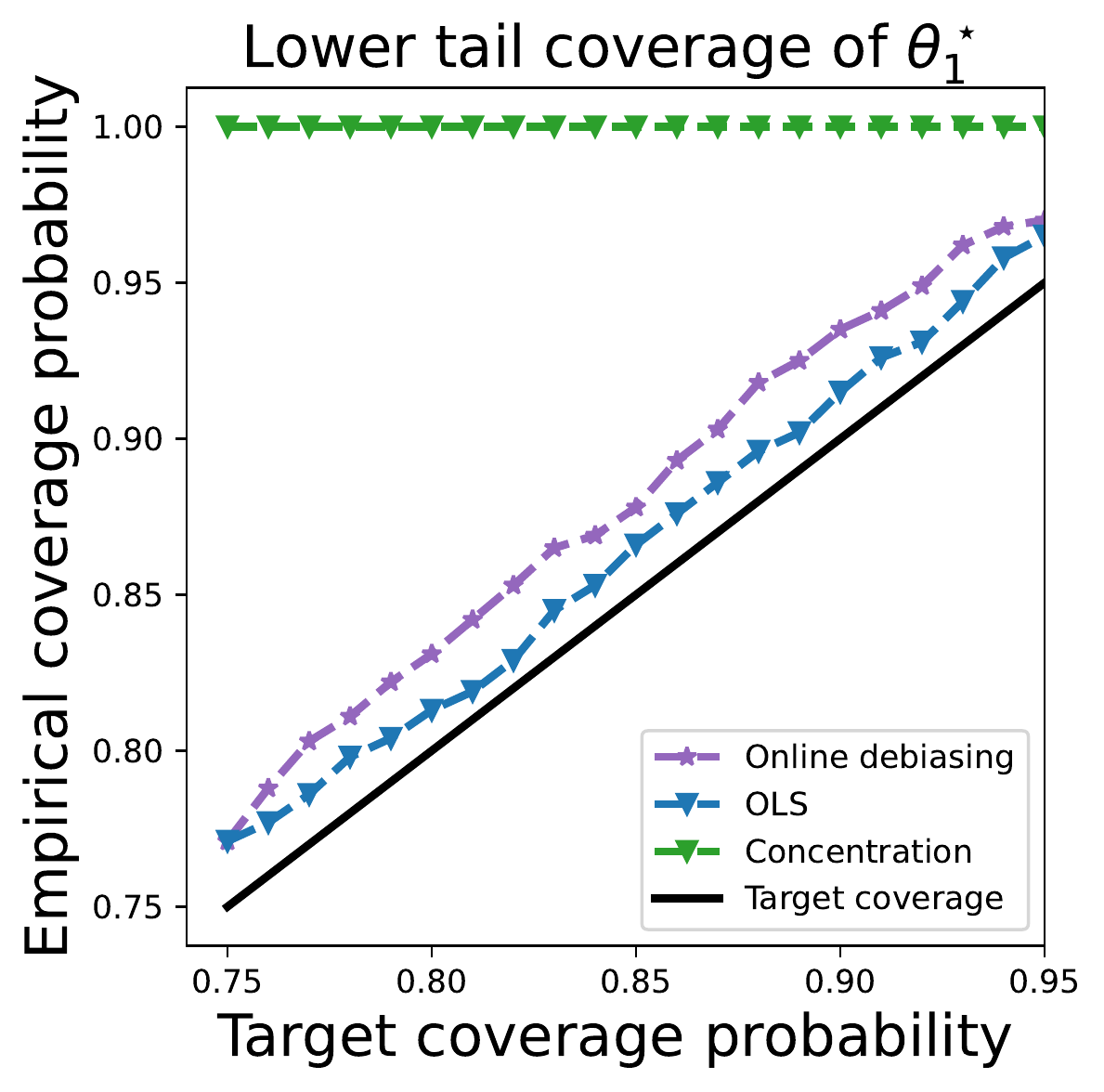}%
        \includegraphics[width=.33\textwidth]{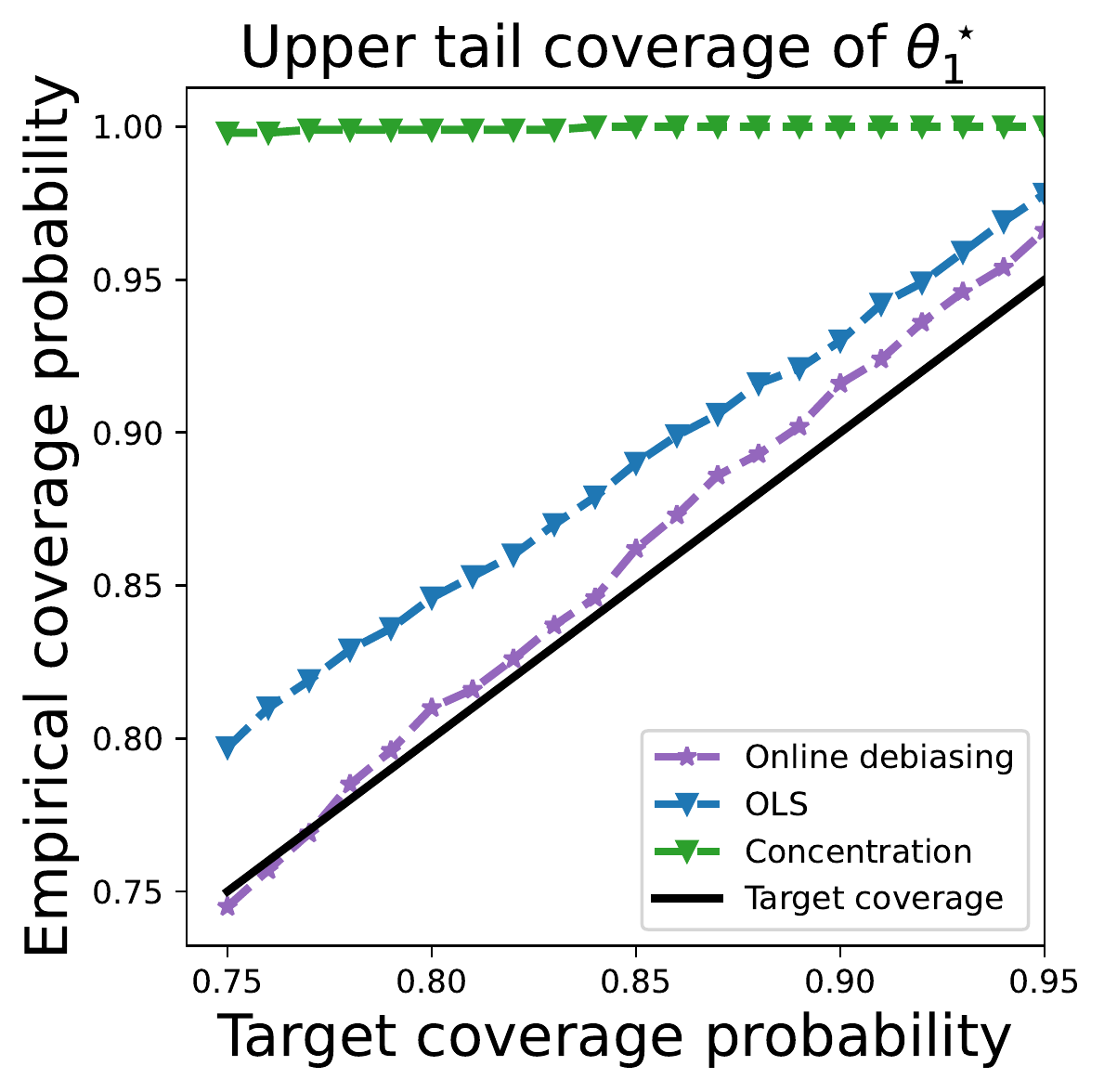}%
    \includegraphics[width=.33\textwidth]{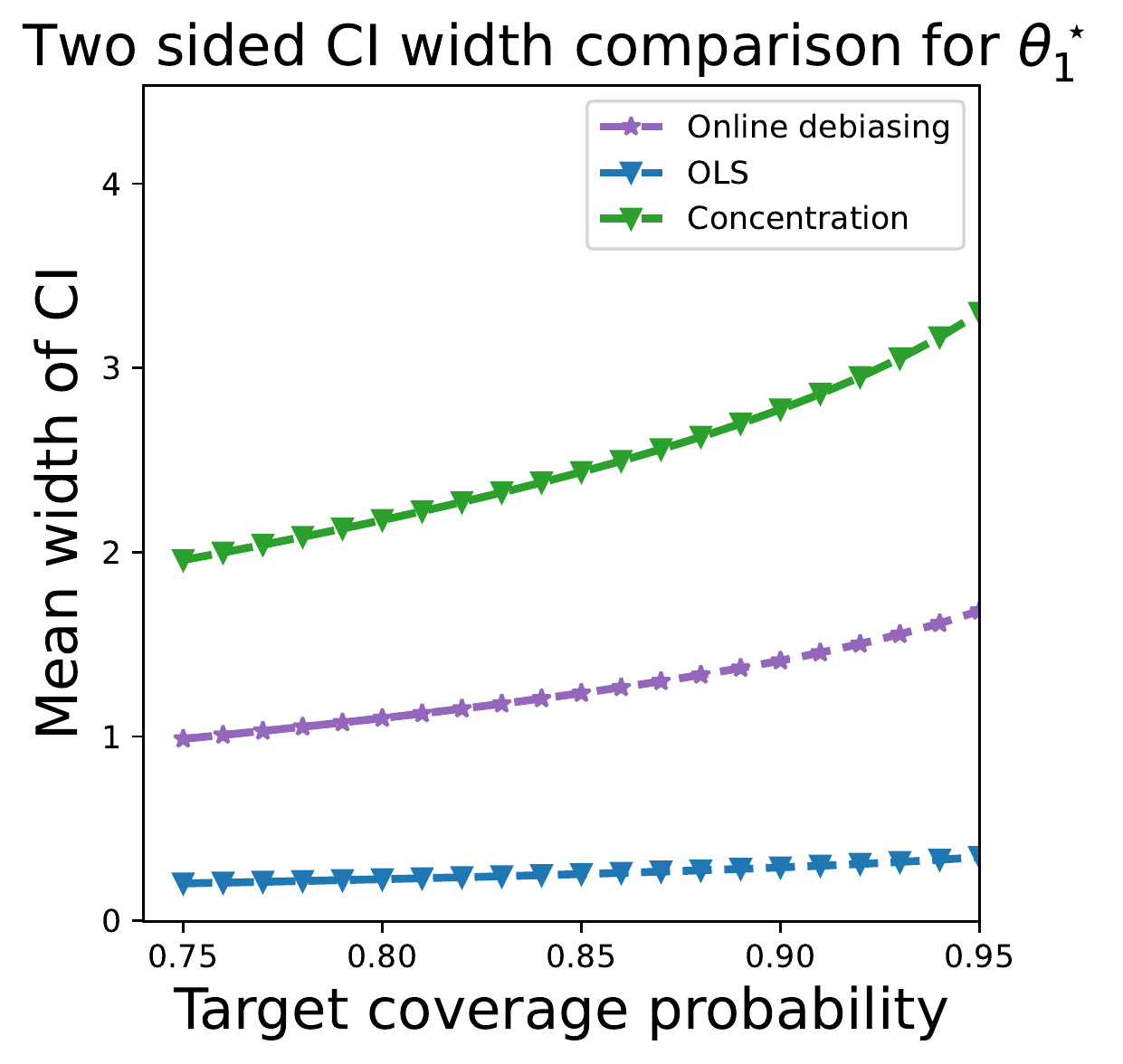} \\
    \includegraphics[width=.33\textwidth]{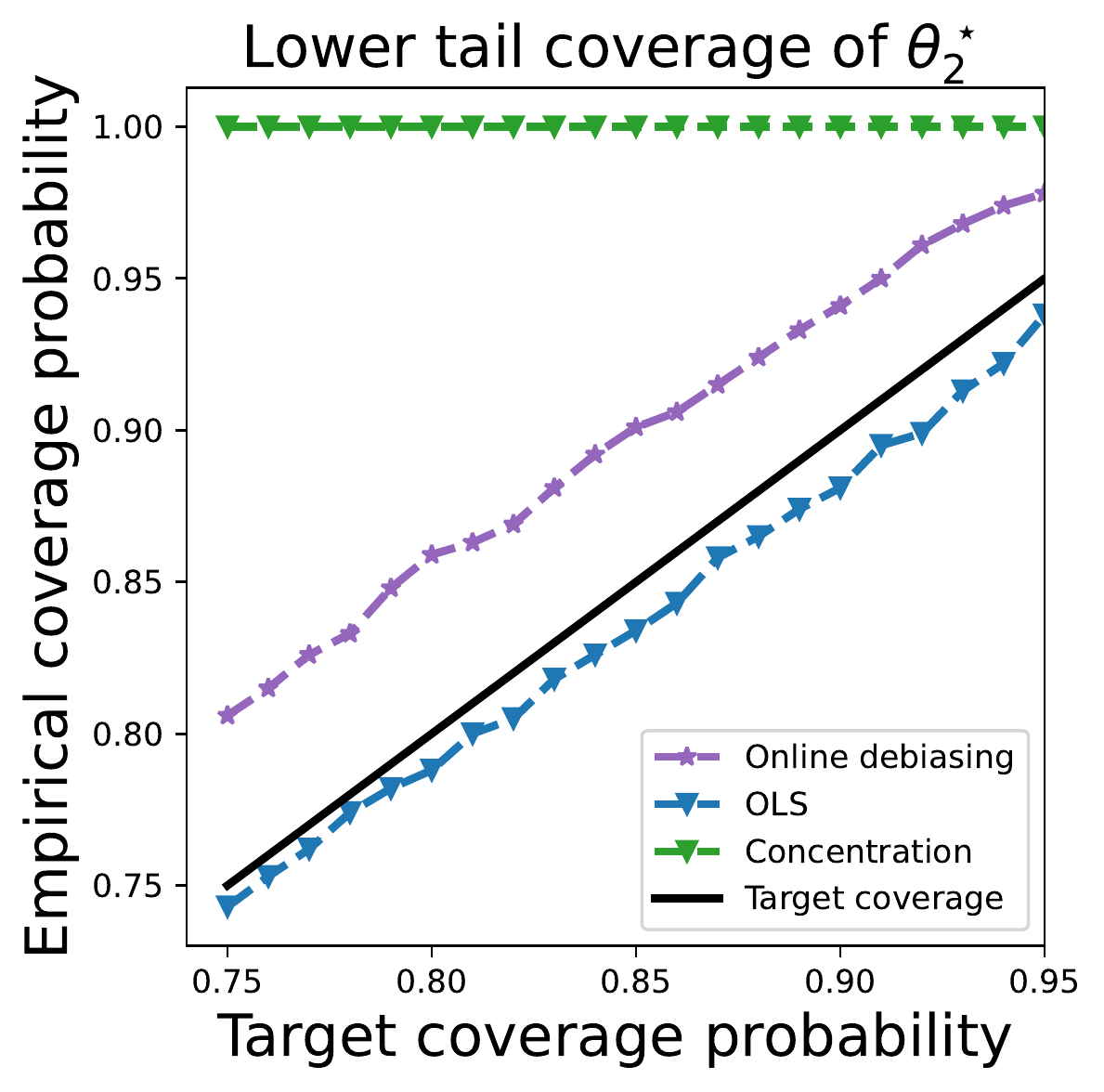}%
    \includegraphics[width=.33\textwidth]{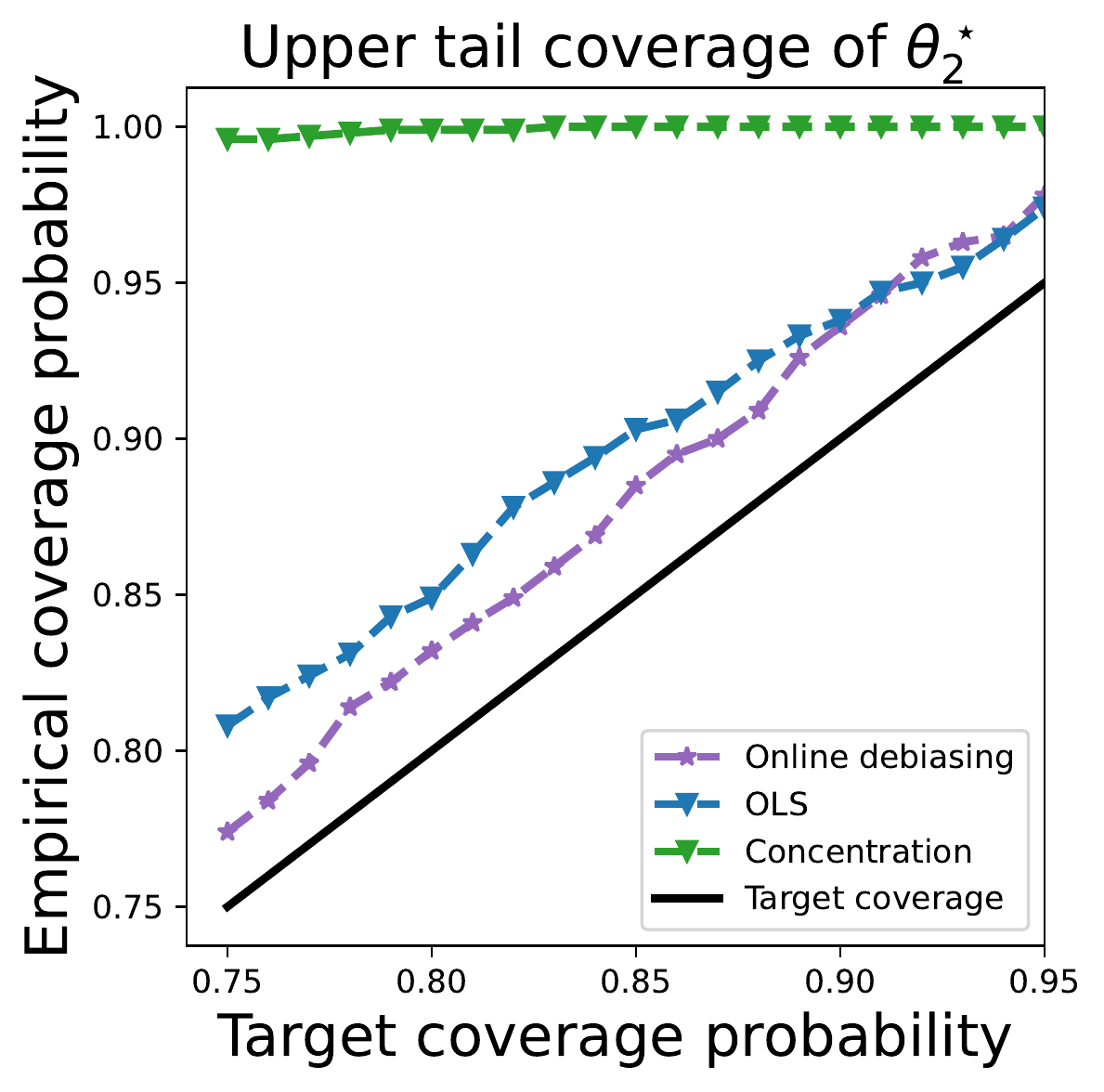}%
    \includegraphics[width=.33\textwidth]{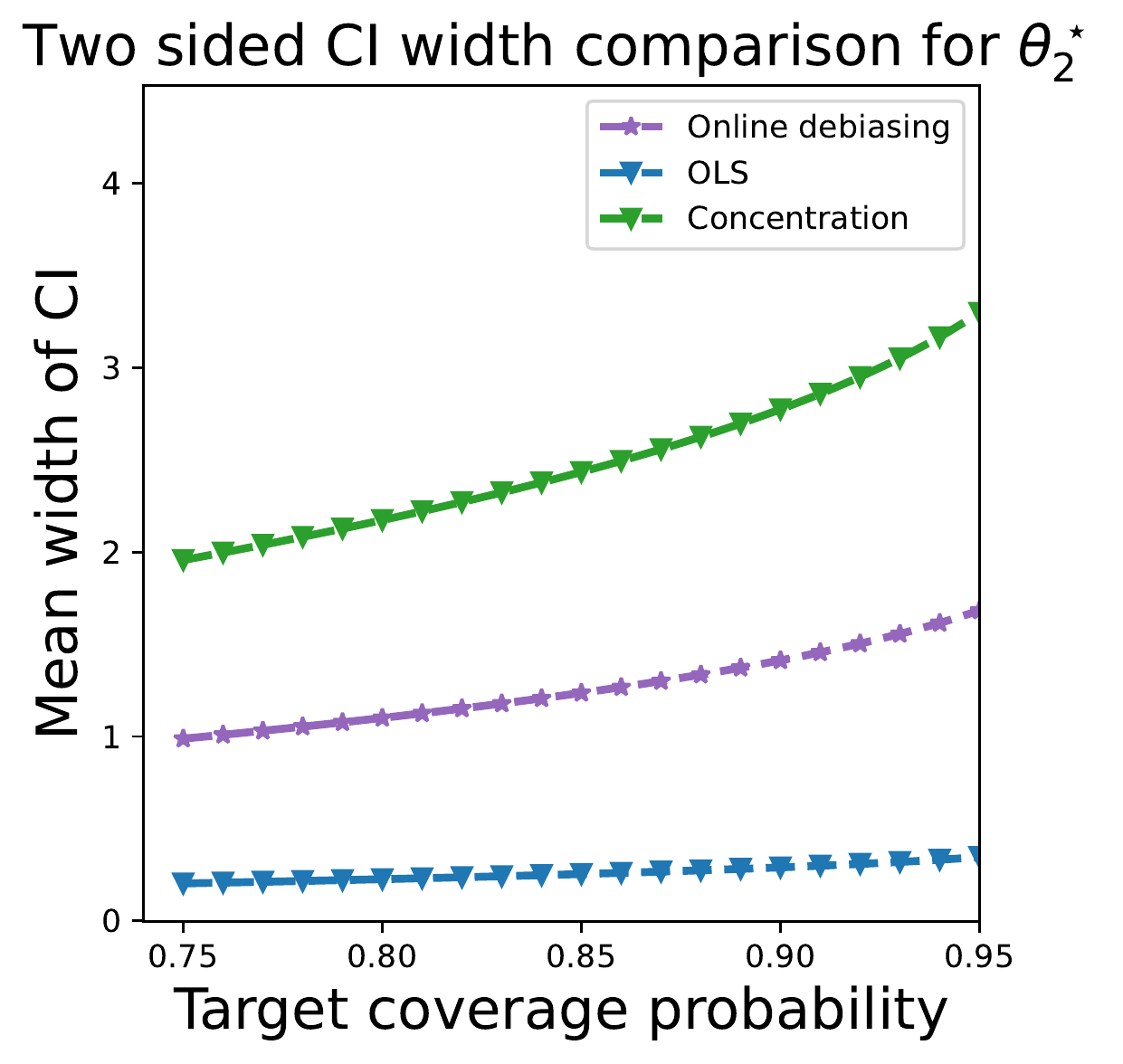}
    \caption{Average coverage and width of confidence intervals for
      $\theta_1^*$ and $\theta_2^*$ across 1000 independent
      replications of a linear bandits
      experiment~\eqref{eqn:greedy-selection} with $\thetastar \equiv
      (\theta_1^*, \theta_2^*) = (0.3, 0.3)^\top$. The covariates
      $\{\x_i\}_{i = 1}^{1000}$ were selected using the
      $\greedyEps$-greedy linear bandits
      algorithm~\eqref{eqn:greedy-selection}, and the error bars
      represent $\pm 1$ standard error.  \textbf{Left} and
      \textbf{Center:} Coverage of one-sided $1 - \alpha$ intervals
      for $\theta_1^*$ and $\theta_2^*$. \textbf{Right:} Width of
      two-sided $1 - \alpha$ intervals for $\theta_1^*$ and
      $\theta_2^*$.  See \cref{sec:LinBanditsSimSmall} for details.}
    \label{fig:LinBandits-small}
\end{figure}

\cref{fig:LinBandits-small} illustrates the performance of online
debiasing with the active learning tuning~\eqref{EqnTuningGreedy}.
Here we consider a linear bandits problem with $\thetastar = (0.3,
0.3)^\top$ and i.i.d.\ standard normal error $\{ \error_i \}_{i =
  1}^\numobs$.  The covariates $\{ \x_i \}_{i = 1}^\numobs$ were
generated using the $\greedyEps$-greedy linear bandits
algorithm~\eqref{eqn:greedy-selection}, where, for each stage, the
context set $\actionset_i$ consisted of the same $50$ vectors drawn
and uniformly from the unit sphere in dimension $2$.  For this
problem, the exploration lower bound
\cref{eqn:exploration-lower-bound} is satisfied with $\mathbf{G} =
\frac{1}{|\actionset|} \cdot \sum_{\action_i \in \actionset} \action_i
\action_i^\top$.  In this setting, \citet{abbasi2011online} only
provide concentration-based CIs based on ridge regression estimators,
rather than \OLS.  Here we report the CIs from ridge regression with
regularization parameter $\lambda_{\text{Ridge}} = 0.1$ (which closely
approximates the \OLS solution) and display analogous results for
alternative regularization parameters in
Appendix~\ref{sec:LinBanditsFullSimulation}. We computed the
confidence intervals for $\theta_1^*$ and $\theta_2^*$ using
Corollary~\ref{cor:multidim-gen-case}.

We observe once more that online debiasing provides appropriate
coverage for all confidence levels, while the \OLS lower tail interval
consistently undercovers. Meanwhile, the concentration CI provides high
coverage for all confidence levels but yields intervals typically
larger than the online debiasing CIs.


\section{Proofs of the theorems}
\label{SecProofs}
In this section, we provide the proofs of our two main results. We
prove Theorem~\ref{thm:asymp-normality} in
Section~\ref{proof:thm:asymp-normality}, and
Theorem~\ref{thm:Minimax-Lowerbound} in
Section~\ref{sec:lower-bound-proof}.

\subsection{Proof of Theorem~\ref{thm:asymp-normality}}
\label{proof:thm:asymp-normality}

Using the condition $\lambda_{\min}(\SigmaMat_\numobs) \almostSurely \infty$
from assumption~\ref{assumption:A2}, thus we may assume without loss of generality that $\SigmaMat_\numobs$
is invertible. We claim that it suffices to show that
\mbox{$\sqrt{\tuneParScaled_\numobs} \cdot \SigmaMat_\numobs^{\frac{1}{2}}
(\thetaDecorr - \thetastar)$} converges in distribution to $\normal(0,
\sigma^2 \MyIdDim)$.  Indeed, when this claim holds, then since
\mbox{$\sigmahat^2 \stackrel{\smallop{p}}{\longrightarrow} \sigma^2$} by assumption, Slutsky's theorem
implies the claim of the theorem.

Recall from equation~\eqref{eqn:bias-variance-decomp} that the random
vector $\sqrt{\tuneParScaled_\numobs} \cdot
\SigmaMat_\numobs^{\frac{1}{2}} (\thetaDecorr - \thetastar)$ can be
decomposed into the sum $\Bias_\numobs + \ZeroMartin_\numobs$. Based
on this decomposition, we see that it is sufficient to prove that
\mbox{$\Bias_\numobs \stackrel{\smallop{p}}{\longrightarrow} 0$} and \mbox{$\ZeroMartin_\numobs \indist \normal(0,
  \sigma^2 \MyIdDim)$.}  The remainder of our proof is devoted to
establishing these two claims.


\myparagraph{Analysis of $\Bias_\numobs$}

By definition of the operator norm, we have the upper bound
\begin{align}
  \label{EqnFirstTriangle}
\| \Bias_\numobs \|_2 & \leq \sqrt{\tuneParScaled_\numobs} \, \opnorm{
  \MyIdDim - \W_\numobs \X_\numobs \SigmaMat_\numobs^{-\frac{1}{2}} }
\big \| \SigmaMat_\numobs^{\frac{1}{2}}(\thetaLS - \thetastar)
\big\|_2.
\end{align}
Lemma 1 from the paper~\citep{lai1982least} guarantees that
\begin{subequations}
\begin{align}
  \label{eqn:ls-error}  
  \big\| \SigmaMat_\numobs^{\frac{1}{2}}(\thetaLS - \thetastar)
  \big\|_2 & = \Bigo \left(\sqrt{ \log
    \lambda_{\max}(\SigmaMat_\numobs) } \right) \quad \text{almost
    surely}.
\end{align}
On the other hand, the vanishing bias condition from
Assumption~\ref{assumption:A3}(b) guarantees that
\begin{align}
\label{eqn:biasterm}
\sqrt{\tuneParScaled_\numobs \log \lambda_{\max}(\SigmaMat_\numobs) }
\cdot \opnorm{ \MyIdDim - \W_\numobs \X_\numobs
  \SigmaMat_\numobs^{-\frac{1}{2}} } \stackrel{\smallop{p}}{\longrightarrow} 0.
\end{align}
\end{subequations}
Applying the bounds~\eqref{eqn:ls-error} and~\eqref{eqn:biasterm} to
the right-hand side of the inequality~\eqref{EqnFirstTriangle}
shows that $\|\Bias_\numobs\|_2 \stackrel{\smallop{p}}{\longrightarrow} 0$.


\myparagraph{Analysis of $\ZeroMartin_\numobs$}

In order to control the second term, we seek to apply a classical
martingale central limit theorem (cf. Theorem 2.2 in the
paper~\citep{dvoretzky1972asymptotic}).  We begin by observing that
$\NseqNosub{\sqrt{\tuneParScaled_\numobs} \w_i \err_i}{i}$ is a
martingale difference sequence with respect to the sigma-field
$\Nseq{\filtration}{i}$. Noting that the tuning parameter
$\tuneParScaled_\numobs$ is non-random, it follows that the sum
$\Nsum{i}\sqrt{\tuneParScaled_\numobs} \w_i \err_i$ has zero mean and
moreover that
\begin{align}
\Nsum{i} \Cov \left[\sqrt{\tuneParScaled_\numobs} \w_i \err_i \mid
  \filtration_{i - 1}\right] &= \tuneParScaled_\numobs \Nsum{i} \w_i
\w_i^\top.
\end{align}
Consequently, in order to apply the martingale CLT so as to obtain the
stated claim, we need to show that
\begin{align*} \tuneParScaled_\numobs \Nsum{i} \w_i
\w_i^\top \stackrel{\smallop{p}}{\longrightarrow} \noisesd^2 \Id_{\Dim}.
\end{align*}

Doing so requires the following auxiliary lemma, which characterizes
the behavior of the weight vector sequence $\Nseq{\w}{i}$ constructed
in equation~\eqref{eqn:Wn-expression}.
\begin{lems}
\label{lem:stability-lemma}
Under the Assumption~\ref{assumption:A3} parts (a) and (c), the
sequence of vectors $\Nseq{\w}{i}$ obtained from
equation~\eqref{eqn:Wn-expression} has the following properties:
\begin{align*}
 \text{(Stability:)} &\qquad \tuneParScaled_\numobs \sum_{i = 1
 }^{\numobs} \w_i \w_i^\top \stackrel{\smallop{p}}{\longrightarrow} \Id_p, \quad \text{and}
 \\ \text{(Vanishing norm:)} &\qquad \max_{i \in [\numobs]}
 \sqrt{\tuneParScaled_\numobs} \|\w_i\|_2 \stackrel{\smallop{p}}{\longrightarrow} 0.
\end{align*}
\end{lems}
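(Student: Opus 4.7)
The plan is to exploit a clean telescoping identity satisfied by the matrices $A_i \mydefn \MyIdDim - \W_i \Z_i$, which expresses the weighted sum $\sum_i (\gamma_n + \|\z_i\|_2^2) \w_i \w_i^\top$ as the telescoping difference $\MyIdDim - A_\numobs A_\numobs^\top$. Both claims of the lemma follow from this identity together with the two parts of Assumption~\ref{assumption:A3} invoked in the statement.

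First, I would set up the recursion. By the definition~\eqref{eqn:Wn-expression}, writing $A_i = \MyIdDim - \W_i \Z_i$, we have $A_0 = \MyIdDim$ and the one-step update $A_i = A_{i-1} - \w_i \z_i^\top$, together with the key algebraic identity $A_{i-1}\z_i = (\gamma_n/2 + \|\z_i\|_2^2)\w_i$, which also yields $\w_i \z_i^\top A_{i-1}^\top = (\gamma_n/2 + \|\z_i\|_2^2) \w_i \w_i^\top$. Expanding
\begin{align*}
A_{i-1} A_{i-1}^\top - A_i A_i^\top = A_{i-1}\z_i \w_i^\top + \w_i \z_i^\top A_{i-1}^\top - \|\z_i\|_2^2 \w_i \w_i^\top
\end{align*}
and substituting gives the clean identity
\begin{align*}
A_{i-1} A_{i-1}^\top - A_i A_i^\top = (\gamma_n + \|\z_i\|_2^2)\, \w_i \w_i^\top.
\end{align*}
Summing from $i=1$ to $\numobs$ produces $\MyIdDim - A_\numobs A_\numobs^\top = \sum_{i=1}^\numobs (\gamma_n + \|\z_i\|_2^2)\, \w_i \w_i^\top$, which is the workhorse of the argument. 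An immediate and useful corollary is that $A_i A_i^\top \preceq \MyIdDim$ (so $\opnorm{A_i}\leq 1$) and $\gamma_n \sum_i \w_i \w_i^\top \preceq \MyIdDim$ for every $n$.

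Next I would establish stability. Rearranging the telescoping identity gives
\begin{align*}
\gamma_n \sum_{i=1}^\numobs \w_i \w_i^\top \;=\; \MyIdDim - A_\numobs A_\numobs^\top - \sum_{i=1}^\numobs \|\z_i\|_2^2\, \w_i \w_i^\top.
\end{align*}
The middle term vanishes in operator norm by Assumption~\ref{assumption:A3}(c), since $\opnorm{A_\numobs} = \opnorm{\MyIdDim - \W_\numobs \Z_\numobs} \stackrel{p}{\to} 0$. For the last term, the deterministic bound $\sum_i \w_i \w_i^\top \preceq \gamma_n^{-1}\MyIdDim$ derived above yields
\begin{align*}
\sum_{i=1}^\numobs \|\z_i\|_2^2\, \w_i \w_i^\top \;\preceq\; \Bigl(\max_{i\in[\numobs]} \|\z_i\|_2^2\Bigr)\sum_{i=1}^\numobs \w_i \w_i^\top \;\preceq\; \frac{\max_i \|\z_i\|_2^2}{\gamma_n}\cdot \MyIdDim,
\end{align*}
and the right-hand side vanishes in probability by Assumption~\ref{assumption:A3}(a), which is exactly the statement $\max_i \|\z_i\|_2^2/\gamma_n \stackrel{p}{\to} 0$. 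Combining the two vanishing terms yields the stability claim.

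Finally, for the vanishing-norm claim, the bound $\opnorm{A_{i-1}}\leq 1$ gives
\begin{align*}
\|\w_i\|_2 \;=\; \frac{\|A_{i-1}\z_i\|_2}{\gamma_n/2 + \|\z_i\|_2^2} \;\leq\; \frac{\|\z_i\|_2}{\gamma_n/2},
\end{align*}
so $\sqrt{\gamma_n}\,\|\w_i\|_2 \leq 2\|\z_i\|_2/\sqrt{\gamma_n}$ and therefore $\max_{i\in[\numobs]}\sqrt{\gamma_n}\,\|\w_i\|_2 \leq 2\sqrt{\max_i \|\z_i\|_2^2/\gamma_n}\stackrel{p}{\to}0$, again by Assumption~\ref{assumption:A3}(a). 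The main obstacle, conceptually, is spotting the telescoping identity; once it is in hand, both conclusions reduce to operator-norm manipulations driven directly by conditions (a) and (c).
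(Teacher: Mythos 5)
Your proposal is correct and follows essentially the same route as the paper: the identity $\DelMat_{i-1}\DelMat_{i-1}^\top - \DelMat_i\DelMat_i^\top = (\tuneParScaled_\numobs + \|\z_i\|_2^2)\w_i\w_i^\top$ is exactly the paper's recursion~\eqref{eqn:last-line}, its telescoped form is~\eqref{eqn:Id-minus-qqT-expression}, and the two residual terms are killed by Assumptions~\ref{assumption:A3}(c) and (a) just as in Appendix~\ref{AppProofStabilityLemma}. The only cosmetic difference is that you bound $\sum_i\|\z_i\|_2^2\w_i\w_i^\top$ by a positive-semidefinite domination $\preceq \gamma_\numobs^{-1}\max_i\|\z_i\|_2^2\,\MyIdDim$ where the paper uses the equivalent trace bound $\tuneParScaled_\numobs\sum_i\|\w_i\|_2^2\le \Dim$.
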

\noindent See Appendix~\ref{AppProofStabilityLemma} for the proof of
this lemma.\\

With the above lemma in hand, we now apply a standard martingale
central limit theorem\footnote{Concretely, by applying Theorem 2.2
  from the paper~\citep{dvoretzky1972asymptotic}, we first show that
  for any unit vector $u$, the inner product $\tfrac{1}{\noisesd}
  \myinprod{u}{ \Nsum{i} \sqrt{\tuneParScaled_\numobs} \w_i \err_i}$
  converges to a standard Gaussian.} to conclude that
\begin{align*}
\sum_{i = 1}^{\numobs} \sqrt{\tuneParScaled_\numobs} \w_i \err_i
\indistrb \Ncal(0, \noisesd^2 \Id_\Dim).
\end{align*}  
Putting together the pieces, we conclude that
\begin{align*}
  \sqrt{\tuneParScaled_\numobs} \cdot \SigmaMat_\numobs^{\frac{1}{2}}
  (\thetaDecorr - \theta) = \Bias_\numobs + \sum_{i = 1}^{\numobs}
  \sqrt{\tuneParScaled_\numobs} \w_i \err_i \indistrb \Ncal(0,
  \noisesd^2 \Id_\Dim),
\end{align*}
which completes the proof of Theorem~\ref{thm:asymp-normality}.

\subsubsection{Proof of claim~\eqref{EqnL1-convergence}:}
\label{sec:L1-convergence}
For simplicity, let us assume $\noisesd$ is known. Recalling the decomposition~\eqref{eqn:bias-variance-decomp}
we have 
\begin{align*}
  \tuneParScaled_\numobs \cdot 
  \| \thetaDecorr - \thetastar \|_{\SigmaMat_\numobs}^2
  &= \enorm{\Bias_\numobs}^2 + 
  2 \myinprod{\Bias_\numobs}{\ZeroMartin_\numobs} + \enorm{\ZeroMartin_\numobs}^2 \\
  &\leq \enorm{\Bias_\numobs}^2
  + 2 \enorm{\Bias_\numobs} \cdot \enorm{\ZeroMartin_\numobs} 
  + \enorm{\ZeroMartin_\numobs}^2
\end{align*}
Invoking the condition $\sqrt{\tuneParScaled_\numobs \log
    \lambda_{\max}(\SigmaMat_\numobs) } \cdot \opnorm{\MyIdDim -
    \W_\numobs \X_\numobs \SigmaMat_\numobs^{-\frac{1}{2}}}  \inEllOne
  0$ we immediately have $\enorm{\Bias_\numobs}^2
\inEllOne 0$. 
It suffices to show that $\enorm{\ZeroMartin_\numobs}^2 \leq \dims$
and $\Exs[\enorm{\ZeroMartin_\numobs}^2] \rightarrow \dims$. Observe that 
\begin{align*}
  \Exs[\enorm{\ZeroMartin_\numobs}^2]
  &= \tuneParScaled_\numobs \cdot \sum_{i = 1}^{\numobs} \Exs [\error_i^2 \enorm{\w_i}^2]
  + \sum_{i \neq j} \tuneParScaled_\numobs \cdot \Exs [\error_i \error_j \w_i^\top \w_j] \\
  & \stackrel{(i)}{=} \sum_{i = 1}^{\numobs} \tuneParScaled_\numobs \cdot \noisesd^2 \cdot \Exs [\enorm{\w_i}^2]
  +  \sum_{i \neq j} \tuneParScaled_\numobs \cdot \Exs [\error_i \error_j \w_i^\top \w_j]
\end{align*}
The last line above follows from the assumption $\Exs[\error_i^2 \mid
  \filtration_{i - 1}] = \noisesd^2$ and the fact (by construction)
that $\w_i \in \filtration_{i - 1}$. Taking trace on both sides of
equation~\eqref{eqn:Id-minus-qqT-expression} we have that
\mbox{$\tuneParScaled_\numobs \cdot \enorm{\w_i}^2 \leq\dims$}, and
using the stronger $L_1$ version of assumption~\ref{assumption:A3}
along with the proof techniques of Lemma~\ref{lem:stability-lemma} we
have $ \tuneParScaled_\numobs \cdot \sum_{i = 1}^\numobs
\Exs[\enorm{\w_i}^2] \rightarrow \noisesd^2 \dims$. It remains to show
that $\Exs [\error_i \error_j \w_i^\top \w_j] = 0$ for all $i \neq
j$. Without loss of generality, assume $i < j$. By construction of
$\w_i$ and the martingale assumption~\ref{assumption:A1} of the noise
$\error_i$, we have $\{ \w_i, \w_j, \error_i\} \in \filtration_{j -
  1}$. As a result, we conclude that
\begin{align*}
 \Exs [\error_i \error_j \w_i^\top \w_j] = \Exs \Big[ \error_i \cdot
   \w_i^\top \w_j \Exs[ \error_j \mid \filtration_{j - 1}]\Big] = 0,
\end{align*}    
thereby completing the proof of the claim~\eqref{EqnL1-convergence}.



\subsection{Proof of Theorem~\ref{thm:Minimax-Lowerbound}}
\label{sec:lower-bound-proof}
We prove part (a) of Theorem~\ref{thm:Minimax-Lowerbound} in
Section~\ref{proof:thm:Minimax-Lowerbound} and part (b) of
Theorem~\ref{thm:Minimax-Lowerbound} in
Section~\ref{proof:Confidence-interval-lower-bound}.

\subsubsection{Proof of Theorem~\ref{thm:Minimax-Lowerbound}(a)}
\label{proof:thm:Minimax-Lowerbound}
Throughout the proof, we use $\thetahat$ to denote a generic estimator
for $\thetastar$.  We assume that the estimator $\thetahat$ is a
function only of the $\numobs$ datapoints $\{(\x_i, \y_i) \}_{i =
  1}^{\numobs}$ and family of selection algorithms
\mbox{$\Selections_\numobs \mydefn (\selection_i)_{i \in [\numobs]}$},
one for each $i \in [\numobs]$; of course, the estimator $\thetahat$
does not know the value of the true parameter $\thetastar$.  Consider
any positive semidefinite and potentially data-dependent matrix
$\mahal \in \reals^{d\times d}$, and define the nonnegative scalar
loss function
\begin{align}
\label{eqn:Loss}
\Loss(\thetahat, \thetastar) & \mydefn (\thetahat - \thetastar)^\top
\mahal (\thetahat - \thetastar).
\end{align} 

\subsection*{\underline{From minimax to Bayes risk}}
In terms of the above notations, Theorem~\ref{thm:Minimax-Lowerbound}
(a) posits a lower bound on the minimax risk:
\begin{align}
\label{eqn:minimax-risk}
  \inf_{\thetahat} \sup_{\thetastar \in \real^\dims} \;
  \Exs[\Loss (\thetahat, \thetastar) \mid \thetastar],
\end{align}
where in the above expression, we have taken an expectation of the
loss $\Loss(\thetahat, \thetastar)$ over the randomness in the data
$(\X_\numobs, \Y_\numobs)$ conditioned on $\thetastar$. We establish
the lower bound Theorem~\ref{thm:Minimax-Lowerbound}(a) on the minimax
risk~\eqref{eqn:minimax-risk} via the standard avenue of first lower
bounding the minimax risk by the Bayes risk, and then providing a
lower bound on the Bayes risk.  In order to do so, we make use of the
inequality
\begin{align}
\label{eqn:Minimax-to-Bayes}
  \inf_{\thetahat} \sup_{\thetastar} \;
  \Exs[
  \Loss(\thetahat, \thetastar) \mid \thetastar]
  \geq \inf_{\thetahat} \; \Exs_{\thetastar, \X_\numobs, \Y_\numobs}
  \Loss(\thetahat, \thetastar),
\end{align}
where the expectation $\Exs_{\thetastar, \X_\numobs, \Y_\numobs}$
above is taken with respect the joint distribution on $(\thetastar,
\X_\numobs, \Y_\numobs)$. Note that this joint distribution is defined
by \emph{choosing} a prior distribution over the parameter
$\thetastar$, and this choice of prior is a design parameter in our
proof.


\subsection*{\underline{Main argument}}

We claim that it suffices to prove that for any \mbox{estimator
  $\thetahat$}
\begin{align}
\label{eqn:bayes-risk-bound}
  \Exs[\Loss(\thetahat, \thetastar) \mid \X_\numobs, \Y_\numobs] \geq
  \sigma^2 \tr(\mahal \SigmaMat_\numobs^{-1}).
\end{align}
where the expectation $\Exs[\cdot \mid \X_n, \y_n]$ is taken with
respect to the conditional distribution of \mbox{$\thetastar \mid
  \X_n, \y_n$.} Indeed, taking expectation over $\X_n, \y_n$ yields
the desired bound:
\begin{align}
\Exs_{\thetastar, \X_\numobs, \Y_\numobs} \Loss(\thetahat, \thetastar)
& = \Exs_{\X_\numobs, \Y_\numobs} \Exs[\Loss(\thetahat, \thetastar)
  \mid \X_\numobs, \Y_\numobs] \geq \sigma^2 \Exs_{\X_\numobs,
  \Y_\numobs} \tr(\mahal \SigmaMat_\numobs^{-1}).
\end{align}
Accordingly, it remains to prove the
bound~\eqref{eqn:bayes-risk-bound}.

\subsubsection*{Proof of bound~\eqref{eqn:bayes-risk-bound}}
We complete the proof of this bound by first computing the conditional
distribution of $\thetastar \mid \X_\numobs, \Y_\numobs $, and then
lower bounding the conditional expectation of the loss
$\Loss(\thetahat, \thetastar)$ given the data $(\X_n, \y_n)$.
Concretely, we show that under the prior distribution $\thetastar \sim
\Ncal(0, \rho^2 \Id_\Dim)$, we have
\begin{align}
  \label{eqn:Conditional-dist-of-thetastar}
\thetastar \mid \X_\numobs, \Y_\numobs & \sim \Ncal( \mu_n,
\PopSigma_n), \quad \text{where} \notag \\ \mu_n = \PopSigma_n
\X_\numobs \Y_\numobs \quad &\text{and} \quad \PopSigma_n =
(\SigmaMat_\numobs/\sigma^2 + \Id_\Dim/\rho^2)^{-1}.
\end{align}
A simple calculation using these distributional properties yields that
for any \mbox{positive semidefinite} matrix $\mahal$---one that may
depend on the data $(\X_n, \y_n)$---the function \mbox{$\theta \mapsto
  \Exs[\Loss(\theta, \thetastar) \mid \X_\numobs, \Y_\numobs]$} is
minimized\footnote{Here we have assumed that the prior distribution
$\thetastar \sim \Ncal(0, \rho^2)$ and the error variance $\sigma^2$
are known to the estimator $\thetahat$; this assumption is justified
since without the knowledge of the prior distribution on $\thetastar$
and error-variance $\sigma^2$, the minimum value of the expected loss
$\Exs[\Loss(\thetahat, \thetastar) \mid \X_\numobs, \Y_\numobs]$ can
only increase, which yields a (possibly) stronger lower bound.}  by
the choice \mbox{$\thetahat \mydefn \PopSigma_n \X_\numobs
  \Y_\numobs$}.  Moreover, this choice of estimator yields the minimum
value $\sigma^2\tr(\mahal \PopSigma_n)$.  Finally, we are free to
choose the value of the prior error variance $\rho^2$; in particular,
taking the limit $\rho^2 \rightarrow \infty$ yields the
claim~\eqref{eqn:bayes-risk-bound}. \\

\medskip
\noindent It remains to prove the auxiliary
claim~\eqref{eqn:Conditional-dist-of-thetastar}.

\subsection*{Proof of
  claim~\eqref{eqn:Conditional-dist-of-thetastar}} We proceed via
induction on the number of datapoints $\numobs$.
\paragraph*{Base case} For $n = 0$, we have 
\begin{align}
  \thetastar \mid \X_0, \Y_0 \equiv \thetastar \sim \Ncal(0, \rho^2
  \Id_\Dim),
\end{align}
using the facts that $\thetastar \sim \Ncal(0, \rho^2 \Id_\Dim)$ by
our choice of prior, and the triple $(\X_0, \Y_0, \SigmaMat_0)$ are
defined as zeros of respective dimensions. This proves the
statement~\eqref{eqn:Conditional-dist-of-thetastar} for $n = 0$, and
$\mu_0 = 0$, and $\PopSigma_0 = \rho^2 \Id_\Dim$.

\paragraph*{Induction step}
Given some \mbox{$n \geq 1$,} assume that the
claim~\eqref{eqn:Conditional-dist-of-thetastar} holds for $\numobs -
1$.  Here we show that the statement then holds for $n$.  Recall that
the query algorithm \mbox{$\selection_i: (\reals \times
  \reals^\Dim)^{i - 1} \to \reals^\Dim$} is oblivious to the true
value~$\thetastar$; thus, the conditional distribution $\x_n \mid
\filtration_n$ is independent of $\thetastar$ (see the discussion
before Theorem~\ref{thm:Minimax-Lowerbound}). Furthermore, from the
model~\eqref{eqn:linear-reg-model}, it follows that the conditioned
random variable $y_n \mid \filtration_{n - 1}, \x_n, \thetastar$
follows a $\Ncal(\x_\numobs^\top \thetastar, \sigma^2)$ distribution,
and using the induction
hypothesis~\eqref{eqn:Conditional-dist-of-thetastar}, we conclude that
\mbox{$\thetastar \mid \X_{n - 1}, \y_{n - 1} \sim \Ncal(\mu_{n - 1},
  \PopSigma_{n - 1})$}.

Now let $\frac{d \Prob (\thetastar \mid \X_n, \Y_n)}{d
  \lebesgue^{d}(\thetastar)}$ denote the Radon-Nikodym derivative of
the conditional distribution defind by $\thetastar \mid \X_n, \y_n$
with respect to the Lebesgue measure $\lebesgue^{d}$ on $\real^\dims$.
With the last three observations in hand, an application of Bayes'
rule yields
\begin{align*}
 \frac{d \Prob (\thetastar \mid \X_n, \Y_n)}{d
   \lebesgue^{d}(\thetastar)} & \propto \exp \left\lbrace
 -\frac{1}{2}(\thetastar - \mu_{n - 1})^\top \PopSigma_{n - 1}^{-1}
 (\thetastar - \mu_{n - 1})^\top \right\rbrace \times \exp
 \left\lbrace -\frac{1}{2\sigma^2}(\y_n - \x_n^\top \thetastar)^2
 \right\rbrace \\ &\propto \exp \left\lbrace -\frac{1}{2}(\thetastar -
 \mu_{n})^\top \PopSigma_{n} (\thetastar - \mu_{n})^\top
 \right\rbrace,
 \end{align*} 
where the pair $(\mu_n, \PopSigma_n)$ are given by
\begin{align*} 
\PopSigma_{n}^{-1} = \PopSigma_{n - 1}^{-1} + \frac{\x_n
  \x_n^\top}{\sigma^2} \quad \text{and} \quad \mu_n =
\frac{1}{\sigma^2} \PopSigma_{n} \sum_{i = 1}^{n} \x_i \y_i.
 \end{align*}
This completes the proof of the inductive step, and putting together
the pieces yields the claim of part (a)
of~\Cref{thm:Minimax-Lowerbound}.

\subsubsection{Proof of Theorem~\ref{thm:Minimax-Lowerbound}(b)}
\label{proof:thm:Minimax-Lowerbound-minimax}
This proof and construction follows by discretizing the construction in \cite{Lat23}, which establishes a similar result in a kernelized version of the problem in continuous time. Regrettably, the error terms that arise as a consequence of the discretization lead to a rather unpleasant calculation. We may assume without loss of generality that $\sigma = 1$. The
general result with $\sigma > 0$ can be obtained by a rescaling
argument. For simplicity we also assume that $n$ is divisible by $d-1$, which can be relaxed by correctly rounding the indices of the many sums that appear in the calculations that follow. Let $d > 1$ and $\dir = e_d =
(\bm{0}, 1)^\top \in \reals^d$.
Our proof follows a standard Bayesian argument.
Consider a randomly generated vector $\thetastar$ that is equal to
$\zero$ with probability $1/2$, and otherwise sampled from a multivariate
Gaussian distribution with mean $\mu = (\bm{0}, 1)^\top \in \reals^d$ and degenerate
covariance
\begin{align*}
\Sigma = \begin{pmatrix} \MyIdDim & \zero \\ \zero^\top & 0 \end{pmatrix}\,.
\end{align*}
Lower bounding the supremum by an expectation over this prior, the
minimax risk can be lower bounded as
\begin{align}
\label{eq:lower-1}  
\inf_{\cE} \sup_{\thetastar} \E_{\thetastar}\left[\frac{(\ip{\dir,
      \thetastar} - \cE)^2}{\norm{\dir}^2_{\SigmaMat_n^{-1}}}\right] \geq
\inf_{\cE} \E\left[\frac{(\ip{\dir, \thetastar} -
    \cE)^2}{\norm{\dir}^2_{\SigmaMat_n^{-1}}}\right] \,.
\end{align}
where the second expectation integrates over randomness in
$\thetastar$ as well as the observations $\{(\x_i, \y_i)\}_{i \in
  [n]}$.

We now provide a sequential definition of the selection algorithm that
yields the claimed lower bound. Each covariate $\x_i$ is supported on
the last coordinate as well as one of the first $d-1$ coordinates, chosen in round-robin fashion. 
Let $u_i = 1 + (i \operatorname{mod} (d-1))$ and $v_i = \ceil{i/(d-1)}$, which are chosen so that
\begin{align*}
\{u_i\}_{i\geq 1} = \{1,2,\ldots,d-1,1,2,\ldots\} \text{ and } \{v_i\}_{i \geq 1} = \{\underbrace{1,1,\ldots,1}_{d-1 \text{ times}},2,2,\ldots\} \,.
\end{align*}
In other words, $u_i$ is the index of the first non-zero coordinate in $\x_i$
and $v_i$ is the number of times coordinate $u_i$ was non-zero in rounds $j \leq i$.
The first $d-1$ coordinates of the covariate process are deterministic and the last coordinate is chosen adaptively to maximize the difficulty of estimation. Precisely, $\{\x_i\}_{i
  \geq 1}$ is given by 
\begin{align*}
\x_i \mydefn b_{v_i} e_{u_i} + a_{u_i,v_i} e_d \in \reals^d \,,
\end{align*}
where
$b_v \mydefn v^{-1/4} / \sqrt{d}$, and the random sequence $\{a_{u,v}\}_{u \in \{1,\ldots,d-1\},v \geq 1}$ is to 
defined momentarily. 
Let $\y_{u,w} \mydefn \y_{u+(w-1)(d-1)}$, which is the observed response in the round $i$ where coordinate $u$ was non-zero for the $v$th time. Define
\begin{align*}
m_{u,v} \mydefn \sum_{w=1}^v b_w (\y_{u,w} - a_{u,w}), \quad \mbox{and} \quad d_v
\mydefn 1 + \sum_{w=1}^v b_w^2.
\end{align*}
Then $a_{u,v} \mydefn -b_v m_{u,v-1} / d_{v-1}$, noting that $a_{u,1} = 0$ for all $u$. 

To provide some intuition, our construction is designed so as to make
estimation challenging. Let $\error_{u,v} = \error_{u+(v-1)(d-1)}$. On the event that $\thetastar \neq \zero$, we
have the equality $\y_{u,v} = a_{u,v} + \thetastarScalar_u b_v + \epsilon_{u,v}$, and
the ratio $m_{u,v} / d_v$ is the ridge regression estimate of
$\thetastarScalar_u$. For any vector $\thetastar \neq 0$, the choice $\x_i$ ensures that
\begin{align*}
\ip{\x_i, \thetastar} = a_{u_i,v_i} + b_{v_i} \thetastarScalar_{u_i} \approx 0 = 
\ip{\x_i, \zero}\,,
\end{align*}
so that the observed responses are extremely similarly under either of
the events $\{\thetastar = \zero\}$ or $\{\thetastar \neq \zero\}$.
But $\ip{\dir,\thetastar} = \bm{1}(\thetastar \neq \zero)$, which means that any estimator of $\ip{\dir,\thetastar}$ must have large
error in expectation. What is missing is to formalize the above claims
and show that $\norm{\dir}^2_{\SigmaMat^{-1}_n}$ shrinks suitably fast.

Returning the proof, since $\norm{\dir}^2_{\SigmaMat_n^{-1}}$ is
$\filtration_n$-measurable, the infimum on the right-hand side of
equation~\eqref{eq:lower-1} is achieved by the estimator
\begin{align*}
\cE_\numobs \mydefn \E[\ip{\dir, \thetastar}|\filtration_n] =
\Prob\left(\thetastar \neq \zero|\filtration_n\right).
\end{align*}
Therefore, introducing the event $\Event = \{\thetastar \neq \zero\}$,
we have the lower bound
\begin{align*}
\textrm{Risk} & \geq \E \Biggr[ \frac{\big(\Prob \big[\thetastar \neq
      \zero \mid \filtration_\numobs \big] - \ip{e_d,
      \thetastar}\big)^2}{\norm{e_d}^2_{\SigmaMat_\numobs^{-1}}}
  \Biggr] = \E \Biggr[
  \frac{\cE_\numobs(1-\cE_\numobs)}{\norm{e_d}^2_{\SigmaMat_n^{-1}}}
  \Biggr] \geq \frac{1}{2} \E \Biggr[
  \frac{\cE_\numobs(1-\cE_\numobs)}{\norm{e_d}^2_{\SigmaMat_n^{-1}}}
  \Bigg| \Event \Biggr] \,.
\end{align*}

In the remainder of the proof, we study the laws of $\cE_\numobs$ and
$\smash{\norm{e_d}^2_{\SigmaMat_n^{-1}}}$ under the measure
$\Prob(\cdot \mid \Event)$.  For a sequence of random variables $X_n$,
we use the notation $X_\numobs = \Omega_p(a_n)$ to mean that $\sup_n
\Prob(X_n / a_n < C_\epsilon \mid \Event) < \epsilon$. We will show below that
\begin{subequations}
\begin{align}
\label{eq:lower-a}  
\cE_\numobs(1-\cE_\numobs) &= \Omega_p(1), \qquad \mbox{and} \\
\label{eq:lower-b}
\frac{1}{\norm{e_d}_{\SigmaMat_n^{-1}}^2} 
&= \Omega_p(d \log(n/d^3)) \,.
\end{align}
\end{subequations}
Therefore, there exists a universal constant $C > 0$ such that for $n \geq d^3/C$,
\begin{align*}
\Prob\left(1/\norm{e_d}_{\SigmaMat_n^{-1}}^2 \geq C d \log(n)\right) \geq 3/4 \text{ and }
\Prob\left(\cE_\numobs(1 - \cE_\numobs) \geq C\right) \geq 3/4\,.
\end{align*}
By a union bound and the positivity in the integrand of the risk,
\begin{align*}
\textrm{Risk} &\geq \frac{C^2 d \log(n/d^3)}{4} \,.
\end{align*}

\medskip

\noindent It remains to show that the lower bounds~\eqref{eq:lower-a}
and \eqref{eq:lower-b} hold, which we prove in Appendix~\ref{app:minimax-log-bound-proofs}.  





\subsubsection{Proof of~\Cref{thm:Minimax-Lowerbound}(c)}
\label{proof:Confidence-interval-lower-bound}

Let $\ExsData$ denote expectation over a data set drawn from the
distribution indexed by $\thetabf$, and define $\ProbData$ as the
analogous probability.  For a given dataset based on $\numobs$
samples, consider a confidence interval of the form $[\lowhat_\numobs,
  \uphat_\numobs]$.  Introducing the shorthand $\smallv^2 \mydefn v^T
\SigmaMat_\numobs^{-1} v$, we then define the minimax risk
\begin{align*}
\CIrisk & \mydefn \inf_{\hackCI} \sup_{\thetastar} \Exs_\thetastar
\bigg\{ \frac{\uphat_\numobs - \lowhat_\numobs}{\smallv} \bigg\},
\end{align*}
where we take the infimum over all estimators $\hackCI$ such that
$\Prob_{\thetastar} \big( [\lowhat_\numobs, \uphat_\numobs] \ni v^\top \thetastar \big)
\geq 1 - \alpha$ for each value of $\thetastar$.

By the usual Bayesian argument, for any prior distribution $\pi$ on $\thetastar$, we have the lower
bound
\begin{align*}
\CIrisk \geq \inf_{\hackCI} \E_{\thetastar \sim \pi} \Exs_\thetastar \bigg[
  \frac{\uphat_\numobs - \lowhat_\numobs}{\smallv} \bigg].
\end{align*}
We now obtain a further lower bound by enlarging the space of possible
estimators $\hackCI$, in particular requiring only that $\hackCI$
belong to the set
\begin{align*}
  \mathcal{A} & \mydefn \Big \{ \hackCI \; \mid \; \Exs_{\thetastar \sim
    \pi} \Prob_\thetastar \big( [\lowhat_\numobs, \uphat_\numobs] \ni
  v^\top \thetastar\big) \geq 1 - \alpha \Big \}.
\end{align*}
Since this allows for a larger collection of possible estimators, we
have the lower bound
\begin{align*}
\CIrisk & \ge \inf_{\hackCI \in \mathcal{A}} \E_{\thetastar \sim \pi}
\Exs_\thetastar \bigg\{ \frac{\uphat_\numobs - \lowhat_\numobs}{\smallv}
\bigg\}.
\end{align*}

We are now free to choose the prior.  In particular, we set $\pi$
equal to the density $\phi_\rho$ of the Gaussian random vector
$\Ncal(0, \rho^2 \Id_d)$.  From our previous calculations~\eqref{eqn:Conditional-dist-of-thetastar} we have that conditional on the observed
data, the random vector $\thetastar$ is Gaussian with
covariance $(\SigmaMat_n/\sigma^2 +\Id_d/\rho^2)^{-1}$. With this
choice, the random variable $v^\top\thetastar$, conditioned on
the observed data, is a Gaussian random variable with variance $\smallvtil^2
\mydefn v^T (\SigmaMat_n/\sigma^2 + \Id_d/\rho^2)^{-1} v$.  Therefore, the width of 
any confidence interval  $\hackCI \in \mathcal{A}$ is lower bounded by 
$2 z_{1-\alpha/2} \smallvtil$, and we have
\begin{align*}
\CIrisk & \ge 2 z_{1-\alpha/2} \E_{\thetastar \sim \pi} \Exs_\thetastar \Big[
  \frac{\smallvtil}{\smallv} \Big].
\end{align*}
It remains to show that $\lim_{\rho \to \infty} \E_{\thetastar\sim \pi}
\Exs_\thetastar \big[ \frac{\smallvtil}{\smallv} \big] = 1$.  Recalling that
$\phi_\rho$ denotes the Gaussian density with zero mean and covariance
$\rho^2 I_d$, we have
\begin{align*}
\E_{\thetastar \sim \pi} \Exs_\thetastar \Big[ \frac{\smallvtil}{\smallv} -1
  \Big] & = \int \Exs_\thetastar \Big[\frac{\smallvtil}{\smallv} - 1 \Big]
\phi_\rho(\thetastar) d \thetastar,
\end{align*}
By the bounded convergence theorem, we have
\begin{align*}
\lim_{\rho \to \infty} \Exs_\thetastar \Big[\frac{\smallvtil}{\smallv} - 1
  \Big] & = 0,
\end{align*}
pointwise for each $\thetabf$.  Consequently, the quantity in the
integral converges point-wise to zero, so that applying the bounded
convergence theorem again yields
\begin{align*}
\lim_{\rho \to \infty} \E_{\thetastar \sim \pi} \Exs_\thetastar \Big[
  \frac{\smallvtil}{\smallv} -1 \Big] = \lim_{\rho \to \infty} \int
\Exs_\thetastar \Big[ \frac{\smallvtil}{\smallv} - 1 \Big] \phi_\rho(\thetastar)
d \thetastar & = 0,
\end{align*}
which completes the proof of part (c).
%

\section{Proofs of corollaries} 
\label{app:proof-of-corollaries}

We now turn to the proofs of our three corollaries, with
Sections~\ref{proof:cor-bandits},~\ref{proof-unit-root-autoreg},
and~\ref{sec:proof-multidim-gen-case} devoted to the proofs of the
Corollaries~\ref{cor:bandits-corr},~\ref{cor:unit-root-autoreg}
and~\ref{cor:multidim-gen-case}, respectively.


\subsection{Proof of Corollary~\ref{cor:bandits-corr}}
\label{proof:cor-bandits}

In light of Theorem~\ref{thm:asymp-normality}, it suffices to verify
Assumptions~\ref{assumption:A1}--\ref{assumption:A3}.  The assumptions
stated in Corollary~\ref{cor:bandits-corr} ensure that the error
sequence $\Nseq{\error}{i}$ satisfies Assumption~\ref{assumption:A1}.
The growth conditions in Assumption~\ref{assumption:A2} are satisfied
due to the minimum arm-pull
assumption~\eqref{eqn:bandits-minimum-arm-LB}.  It remains to verify
the three conditions in Assumption~\ref{assumption:A3}.

Beginning with the asymptotic negligibility condition, we have
\begin{align*}
   \max \limits_{i \in [\numobs]} \;\;
   \frac{1}{\tuneParScaled_\numobs} \x_i^T \XscaleMat_i^{-1} \x_i \leq
   \frac{1}{\tuneParScaled_\numobs} \frac{ \max_{i \in [\numobs]}
     \|\x_i^2 \|}{(\log \numobs)^2} & = \frac{\log\log(\numobs)}{(\log
     \numobs)} \rightarrow 0.
\end{align*}
The first inequality above uses the bound $\XscaleMat_i^{-1} \preceq
\frac{1}{\log(\numobs)^2} \cdot \Id_\Dim$ (see the
definition~\eqref{eqn:bandits-tuning}); the second equality uses $\|
\x_i \|_2^2 = 1$, and the final step follows by substituting
${\tuneParScaled_\numobs = 1/((\log\numobs) \cdot \log\log(\numobs))}$.

\vspace{10pt}

Turning to the vanishing bias condition in~\ref{assumption:A3}, we invoke
the operator norm bound~\eqref{eqn:bias-bound-improved} on the matrix
$\Id_\Dim - \W_\numobs \X_\numobs \SigmaMat_\numobs^{-\frac{1}{2}}$ to
find that
\begin{align*}
  \sqrt{\tuneParScaled_\numobs \log\lambda_{\max}(\SigmaMat_\numobs)}
  \cdot \opnorm{ \Id_\Dim - \W_\numobs \X_\numobs
    \SigmaMat_\numobs^{-\frac{1}{2}} } & \leq
  \sqrt{\tuneParScaled_\numobs \log \numobs} \cdot \BigoP(1) \\
  & =  \sqrt{\frac{\log\log(n)}{(\log \numobs)}} \cdot \BigoP(1)
    \stackrel{\smallop{p}}{\longrightarrow} 0,
\end{align*}
where we have used the bound \mbox{$\lambda_{\max}(\SigmaMat_\numobs)
  \leq \tr(\SigmaMat_\numobs) = \numobs$} in the above derivation.

\vspace{10pt}

Finally, we verify the variance stability condition in
Assumption~\ref{assumption:A3} with the help of the following lemma
\begin{lems}[Commutative guarantee]
  \label{lems:Commutative-lemma}
For any collection of matrices $\;\;$ \mbox{$\NseqNosub{ \XscaleMat_i^{-\frac{1}{2}}
  \x_i \x_i^\top \XscaleMat_i^{-\frac{1}{2}}}{i}$} that commute with
each other, we have
\begin{align*}
  \opnorm{ \Id - \Nsum{i} \w_i \x_i^\top \XscaleMat_i^{- \frac{1}{2}} }
  \leq \exp \left( - \frac{\lambda_{\min}(\Nsum{i}
    \XscaleMat_i^{-\frac{1}{2}} \x_i \x_i^\top
    \XscaleMat_i^{-\frac{1}{2}})}{\tuneParScaled_\numobs} \right).
\end{align*}
\end{lems}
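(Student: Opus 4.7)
The plan is to unroll the recursion defining the weights $\w_i$ into a clean matrix product, and then to exploit the commutativity hypothesis via simultaneous diagonalization combined with the scalar inequality $1 - t \leq e^{-t}$.

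\emph{Step 1 (product representation).} I would set $M_i \mydefn \MyIdDim - \W_i \Z_i$, so $M_0 = \MyIdDim$ and $M_\numobs$ is precisely the matrix inside the operator norm on the left-hand side. Substituting the explicit solution~\eqref{eqn:Wn-expression} for $\w_i$ gives
\begin{align*}
M_i \;=\; M_{i-1} - \w_i \z_i^\top \;=\; M_{i-1}\bigl(\MyIdDim - P_i\bigr), \qquad P_i \mydefn \frac{\z_i \z_i^\top}{\tuneParScaled_\numobs/2 + \|\z_i\|_2^2},
\end{align*}
so iterating yields $M_\numobs = \prod_{i=1}^\numobs (\MyIdDim - P_i)$. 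Commutativity of the rank-one matrices $\{\z_i \z_i^\top\}$ transfers directly to the family $\{P_i\}$, so the ordering of the product is immaterial.

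\emph{Step 2 (commuting exponential bound).} Each $P_i$ is positive semidefinite with $\opnorm{P_i} = \|\z_i\|_2^2/(\tuneParScaled_\numobs/2 + \|\z_i\|_2^2) < 1$, so $\MyIdDim - P_i$ is PSD with eigenvalues in $[0,1]$. Simultaneously diagonalizing the commuting family $\{P_i\}$ and applying the scalar bound $1 - t \leq e^{-t}$ coordinate-by-coordinate through the shared eigenbasis yields the PSD ordering
\begin{align*}
M_\numobs \;=\; \prod_{i=1}^\numobs (\MyIdDim - P_i) \;\preccurlyeq\; \exp\!\Big(-\sum_{i=1}^\numobs P_i\Big).
\end{align*}

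\emph{Step 3 (comparing $\sum_i P_i$ with $V/\tuneParScaled_\numobs$).} Writing $V \mydefn \sum_{i=1}^\numobs \z_i \z_i^\top = \sum_{i=1}^\numobs \XscaleMat_i^{-1/2}\x_i \x_i^\top \XscaleMat_i^{-1/2}$, the elementary inequality $\tuneParScaled_\numobs/2+\|\z_i\|_2^2 \leq \tuneParScaled_\numobs$ (valid once $\|\z_i\|_2^2 \leq \tuneParScaled_\numobs/2$, which is the operative consequence of Assumption~\ref{assumption:A3}(a) whenever the lemma is invoked) gives $P_i \succcurlyeq \z_i \z_i^\top/\tuneParScaled_\numobs$, and hence $\sum_{i=1}^\numobs P_i \succcurlyeq V/\tuneParScaled_\numobs$. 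Monotonicity of $\exp(-\cdot)$ on commuting PSD matrices then yields $\exp(-\sum_i P_i) \preccurlyeq \exp(-V/\tuneParScaled_\numobs)$, and taking operator norms gives
\begin{align*}
\opnorm{M_\numobs} \;\leq\; \opnorm{\exp(-V/\tuneParScaled_\numobs)} \;=\; \exp\!\bigl(-\lambda_{\min}(V)/\tuneParScaled_\numobs\bigr),
\end{align*}
which is the claim.

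\emph{Main obstacle.} The delicate point is Step 3: the comparison $P_i \succcurlyeq \z_i \z_i^\top/\tuneParScaled_\numobs$ requires $\|\z_i\|_2^2 \leq \tuneParScaled_\numobs/2$, a condition not listed as a formal hypothesis of the lemma. It is supplied in context by the negligibility condition, but a cleaner statement would replace $\tuneParScaled_\numobs$ in the bound by $\tuneParScaled_\numobs/2 + \max_i \|\z_i\|_2^2$; under Assumption~\ref{assumption:A3}(a) this latter quantity is $(1+\smalloP(1))\tuneParScaled_\numobs/2$, so the weaker form of the lemma suffices for every downstream use.
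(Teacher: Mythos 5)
Your proof is correct and follows essentially the same route as the paper's: both unroll the recursion into a product of commuting rank-one corrections $\Id - \z_i\z_i^\top/(\tuneParScaled_\numobs/2 + \|\z_i\|_2^2)$, pass to a matrix exponential via $1-t \leq e^{-t}$ (the paper phrases this as $\log(\Id - A) \preccurlyeq -A$), and then compare the resulting denominators with $\tuneParScaled_\numobs$; the only cosmetic difference is that the paper applies this to the Gram matrix $(\Id - \W_\numobs\Z_\numobs)^\top(\Id - \W_\numobs\Z_\numobs)$ and extracts a square root at the end, while you work with $\Id - \W_\numobs\Z_\numobs$ directly, which is legitimate since that matrix is symmetric under the commutativity hypothesis. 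The caveat you flag in Step 3 --- that the final denominator comparison needs $\|\z_i\|_2^2 \leq \tuneParScaled_\numobs/2$, supplied in context by Assumption (A3)(a) --- is present but unacknowledged in the paper's last display as well, so your treatment is if anything the more careful one.
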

\noindent  See the end of this subsection for the proof of this claim. \\

Let us complete the proof of~\cref{cor:bandits-corr}
using~\cref{lems:Commutative-lemma}.  In the multi-armed bandit setting
of Corollary~\ref{cor:bandits-corr}, the matrices
$\NseqNosub{\XscaleMat_i^{-\frac{1}{2}} \x_i \x_i^\top
  \XscaleMat_i^{-\frac{1}{2}}}{i}$ are all diagonal, and hence they
commute. Thus, invoking the operator norm bound from
Lemma~\ref{lems:Commutative-lemma} yields
\begin{align*}
    \opnorm{ \Id_\Dim - \Nsum{i} \w_i \x_i^\top
      \XscaleMat_i^{-\frac{1}{2}} } & \leq \exp \left( - \frac{
      \lambda_{\min}(\Nsum{i} \XscaleMat_i^{-\frac{1}{2}} \x_i
      \x_i^\top \XscaleMat_i^{-\frac{1}{2}}) }{\tuneParScaled_\numobs}
    \right).
\end{align*}
Recall that in the bandits model~\eqref{model:bandits}, the matrices $\SigmaMat_\numobs$ and $\x_i \x_i^\top$ are diagonal.
By construction~\eqref{eqn:bandits-tuning} and the minimum arm-pull
condition~\eqref{eqn:bandits-minimum-arm-LB}, the tuning matrix $\XscaleMat_i$ is also diagonal with diagonal entries upper bounded by the corresponding diagonal entries of the (diagonal) matrix $\SigmaMat_\numobs$. Combining these two observations we have that 
$\SigmaMat_\numobs^{-\frac{1}{2}} \x_i \x_i^\top
\SigmaMat_\numobs^{-\frac{1}{2}} \preceq \XscaleMat_i^{-\frac{1}{2}}
\x_i \x_i^\top \XscaleMat_i^{-\frac{1}{2}}$.
Consequently, we find that
\begin{align*}
- \lambda_{\min}(\Nsum{i} \XscaleMat_i^{-\frac{1}{2}} \x_i \x_i^\top
\XscaleMat_i^{-\frac{1}{2}}) & \stackrel{(i)}{\leq} -
\lambda_{\min}(\Nsum{i} \SigmaMat_\numobs^{-\frac{1}{2}} \x_i
\x_i^\top \SigmaMat_\numobs^{-\frac{1}{2}}) 
     = -1,
\end{align*}
where the final equality follows from the definition of
$\SigmaMat_\numobs$.  Substituting the value $\tuneParScaled_\numobs =
1 / (\log(\numobs) \cdot \log\log(\numobs))$ yields
 \begin{align*}
   \opnorm{\Id_\Dim - \Nsum{i} \w_i \x_i^\top
     \XscaleMat_i^{-\frac{1}{2}} } \leq \exp(-1/\tuneParScaled_\numobs)
   \; \leq \; \frac{1}{\numobs} \rightarrow 0.
 \end{align*}
This verifies the variance stability condition from
Assumption~\ref{assumption:A3}, and applying
Theorem~\ref{thm:asymp-normality} yields
Corollary~\ref{cor:bandits-corr}. \\

\noindent The only remaining detail is to prove
Lemma~\ref{lems:Commutative-lemma}.


\subsection*{Proof of Lemma~\ref{lems:Commutative-lemma}} 

For notational convenience, we use the shorthands \mbox{$\z_i \mydefn
  \x_i \XscaleMat_i^{-\frac{1}{2}}$} and $\Z_i^\top
\mydefn \begin{bmatrix} \z_1 & \cdots & \z_i \end{bmatrix}$, as
previously introduced in Section~\ref{sec:W-decorr}. Substituting the
formula for the weight vector $\w_i$ from~\cref{eqn:Wn-expression},
and performing some algebra yields
\begin{align*}
  \left( \Id - \W_\numobs \Z_\numobs\right)^\top \left(\Id -
  \W_\numobs \Z_\numobs\right) & = \prod \limits_{j = 1}^{\numobs}
  \left( \Id_\Dim - \frac{\z_{\numobs + 1 -j} \z_{\numobs + 1 -
      j}^\top}{\tuneParScaled_\numobs + \|\z_{\numobs +1 -j}\|^2}
  \right) \prod \limits_{i = 1}^{\numobs} \left( \Id_\Dim - \frac{\z_i
    \z_i^\top}{\tuneParScaled_\numobs + \|\z_i\|^2} \right) \\
&=
\begin{multlined}[t]
 \exp \left[ \sum_{j=1}^\numobs \log \left( \Id_\Dim -
   \frac{\z_{\numobs + 1- j} \z_{\numobs + 1-
       j}^\top}{\tuneParScaled_\numobs + \|\z_{\numobs + 1 - j}\|^2}
   \right)\right. \\ + \left.\Nsum{i} \log \left( \Id_\Dim -
   \frac{\z_{i} \z_{i}^\top}{\tuneParScaled_\numobs + \|\z_{i}\|^2}
   \right) \right]
\end{multlined}\\
& \stackrel{(i)}{\preccurlyeq} \exp \left( - \Nsum{j}
  \frac{\z_{\numobs + 1 -j} \z_{\numobs + 1
      -j}^\top}{\tuneParScaled_\numobs + \|\z_{\numobs + 1 -j}\|^2} - \Nsum{i}
  \frac{\z_i \z_i^\top}{\tuneParScaled_\numobs  + \|\z_{i}\|^2} \right) \\
& \preccurlyeq \exp \left( - 2\cdot\Nsum{i} \frac{\z_i
    \z_i^\top}{\tuneParScaled_\numobs} \right),
\end{align*}
where step (i) above uses the fact that $\exp(\log(1 - a)) \leq
\exp(-a)$ for any scalar $a < 1$ and that the matrices $\{\z_i
\z_i^\top\}_{i \in [\numobs]}$ commute.  Via an inductive argument, it
can be verified that the entries of the matrix $\Id_\Dim -
\frac{\z_{i} \z_{i}^\top}{\tuneParScaled_\numobs + \|\z_{i}\|^2}$ are
all upper bounded by $1$.  Putting together the pieces, we conclude
that the operator norm satisfies the bound
\begin{align*}
  \opnorm{ \Id - \W_\numobs \Z_\numobs } \leq \exp \left( -
  \frac{\lambda_{\min}(\Nsum{i} \z_i
    \z_i^\top)}{\tuneParScaled_\numobs} \right),
\end{align*}
as claimed. 


\subsection{Proof of Corollary~\ref{cor:unit-root-autoreg}}
\label{proof-unit-root-autoreg}

The proof of this claim is similar to that of
Corollary~\ref{cor:bandits-corr}; in particular, we need to verify
Assumptions~\ref{assumption:A1}--\ref{assumption:A3}.  Recall that the
time series model~\eqref{eqn:autoreg-model} in
Corollary~\ref{cor:unit-root-autoreg} is a special case of the
stochastic linear regression model~\eqref{eqn:linear-reg-model} with
$(x_i, y_i) \equiv (y_{i - 1}, y_i)$; thus the covariance term based
on the data $\{(x_i, y_i) \}_{i \in [\numobs]}$ is given by $\Nsum{i}
x_{i}^2 \equiv \Nsum{i} y_{i - 1}^2$. Here we have used the convention
$y_0 = 0$.

\vspace{10pt}

The moment condition~\ref{assumption:A1} is satisfied since the
additive noise $\error_i$ in the autoregressive
model~\eqref{eqn:autoreg-model} is assumed to have a standard Gaussian
distribution. Before we verify the remaining conditions, it is helpful
to deduce a few bounds regarding the sample covariance term $\Nsum{i}
y_{i - 1}^2$. In particular, we show that for any $\thetastarScalar
\in (-1, 1]$, the sample covariance term satisfies the following
  relations
\begin{subequations}
  \begin{align}
\label{eqn:AR1-lower-bound}    
\Nsum{i} y_{i - 1}^2 &\rightarrow \infty \quad \text{almost surely},
\quad \\
\label{eqn:AR1_var_scaling}
\log ( \Nsum{i} y_{i - 1}^2 ) &= \BigoP(\log \numobs), \quad
\text{and} \quad (\log\numobs)^2  y_{\numobs}^2 =
\BigoP ( \Nsum{i} y_{i - 1}^2 ),
\end{align}
\end{subequations}
We prove
these bounds at the end of this sub-section, but let us complete the
proof of the Corollary using these bounds.

First, observe that the condition~\ref{assumption:A2} follows from the
growth condition~\eqref{eqn:AR1-lower-bound}, and the asymptotic
negligibility condition in~\ref{assumption:A3} is satisfied by noting
that
\begin{align*}
  \max_{i \in [\numobs]} \frac{1}{\tuneParScaled_\numobs} \cdot
  \frac{y_{i - 1}^2}{ \max\{ (\log \numobs)^{2} y_{i -
      1}^2 , \;\; \sum_{j =1}^{i-1} y_j^2 \} } & \leq \frac{(\log
    \numobs) \cdot \log\log(n) }{(\log \numobs)^{2}}
  \rightarrow 0.
\end{align*}

Next, in order to verify the vanishing bias condition in
Assumption~\ref{assumption:A3}, doing a calculation similar to
Proposition~\ref{prop:zero-bias-lemma} we find that (see the arguments
leading up to
bounds~\eqref{eqn:first-term-max-norm-bound}--\eqref{eqn:second-term-max-norm-bound}
and their proofs)
\begin{align*}
\sqrt{\tuneParScaled_\numobs \log (\Nsum{i} y_{i - 1}^2 )} \cdot
\left| 1 - \Nsum{i} \frac{w_i y_{i -1}}{\XscaleMat_n^{\frac{1}{2}}}
\right | & \stackrel{(i)}{=} \BigoP \left( \frac{1}{\sqrt{\log\log(n)}} \right) \cdot \BigoP \left( 1 + \sqrt{
  \frac{\XscaleMat_\numobs}{\Nsum{i} y_{i - 1}^2 } } \right) \\
& \stackrel{(ii)}{=} \BigoP \left( \frac{1}{\sqrt{\log\log(n)}} \right)
\cdot \BigoP(1) \\
& \stackrel{\smallop{p}}{\longrightarrow} 0,
\end{align*}
where step (i) follows by invoking the first part of the
bound~\eqref{eqn:AR1_var_scaling} and step (ii) uses the second part
of~\cref{eqn:AR1_var_scaling}.

Finally, we verify the variance stability condition
in~\ref{assumption:A3} with the help of
Lemma~\ref{lems:Commutative-lemma}, as previously stated and proved in
the proof of Corollary~\ref{cor:bandits-corr}.  Note that in dimension
$\Dim = 1$, the commutativity condition in
Lemma~\ref{lems:Commutative-lemma} holds trivially.  Consequently, we
may apply Lemma~\ref{lems:Commutative-lemma} to the one-dimensional
autoregressive model~\eqref{eqn:autoreg-model} so as to obtain the
bound
\begin{align*}
  \left | 1 - \Nsum{i} \frac{w_i y_{i - 1}}{ \XscaleMat_i^\frac{1}{2}}
  \right| & \leq \frac{1}{\numobs}.
\end{align*}
See the calculations following the statement of
Lemma~\ref{lems:Commutative-lemma} in the proof of
Corollary~\ref{cor:bandits-corr} for details on this step.

This verifies the variance stability condition from
Assumption~\ref{assumption:A3}, and applying
Theorem~\ref{thm:asymp-normality} yields
Corollary~\ref{cor:unit-root-autoreg}. \\

\noindent The only remaining detail is to prove the
\mbox{bounds~\eqref{eqn:AR1-lower-bound}--\eqref{eqn:AR1_var_scaling}.}


\subsection*{Proofs of the
  bounds~\eqref{eqn:AR1-lower-bound}--\eqref{eqn:AR1_var_scaling}}

The proof of the first part of the bound \eqref{eqn:AR1_var_scaling}
follows by invoking Theorem 2 part (i) from the
paper~\citep{lai1982least}.  Concretely, in the
paper~\citep{lai1982least}, the authors showed that when
$|\thetastarScalar| \leq 1$, then there is some constant $a > 0$ such
that $y_\numobs = \BigoP(\numobs^a)$. Thus, we have the relation $\Nsum{i} y_{i -
  1}^2 = \BigoP(\numobs^{2a + 1})$, and first part of the
bound~\eqref{eqn:AR1_var_scaling} follows.

We divide the proof of the remaining bounds into two parts, depending
on the value of $\thetastarScalar$.

\subsection*{\underline{Case 1}} First, suppose that $\thetastarScalar = 1$.
Recall that in~\cref{eqn:Var-scaling-AR1} we argued that
\begin{align*}
\frac{1}{\numobs^2} \Nsum{i} \y_{i - 1}^2 &\indistrb \int_{0}^1
\wiener^2(t) dt.
 \end{align*} 
In light of the last relation, the growth
condition~\eqref{eqn:AR1-lower-bound} is immediate.  For the remaining
bounds, note that \mbox{$y_\numobs \mydefn \sum_{i \in [\numobs - 1]}
  \error_i \sim \Ncal(0, \numobs - 1)$}; thus we have \mbox{$\frac{1}{\numobs^2} \cdot (\log\numobs)^{2} y_\numobs^2 \stackrel{\smallop{p}}{\longrightarrow} 0$}, and we conclude that
\begin{align*}
  \log(\numobs)^{2}  \y_{\numobs}^2 = \BigoP
  \left( \Nsum{i} \y_{i - 1}^2 \right),
\end{align*}
as claimed.

\subsection*{\underline{Case 2}} Otherwise, we may assume that $|\thetastarScalar| 
 < 1$, in which case the term $\Nsum{i} \y_{i - 1}^2$ stabilizes~\citep{lai1982least};
concretely, we have
\begin{align*}
  \frac{1}{\numobs} \Nsum{i} \y_{i - 1}^2 \almostSurely c, \quad
  \text{where} \quad c>0 \;\; \text{is a non-random scalar}.
\end{align*}
The growth condition~\eqref{eqn:AR1-lower-bound} follows directly from
the above relation.  Moreover, we have $y_{\numobs - 1} = \Nsum{i}
{\thetastarScalar}^{i} \error_{\numobs - i} \sim \Ncal \left(0,
\frac{1}{1 - {\thetastarScalar}^2} \right)$.  Putting these two pieces
together yields ${(\log\numobs)^{2} \cdot y_{\numobs - 1}^2 = \BigoP
  \left( \Nsum{i} \y_{i - 1}^2 \right)}$.


\subsection{Proof of Corollary~\ref{cor:multidim-gen-case}}
\label{sec:proof-multidim-gen-case}
We obtain the first claim of the Corollary~\ref{cor:multidim-gen-case}
by applying Theorem~\ref{thm:asymp-normality}, and the second part of
the Corollary~\ref{cor:multidim-gen-case} follows
from~\cref{prop:gen-confinv}.  We prove these two parts separately.

\vspace{8pt}

\textbf{\underline{Proof of claim~\eqref{eqn:gen-case-asymp-normality}}:} \hspace{4pt} 
In order to apply Theorem~\ref{thm:asymp-normality} to the setup of
Corollary~\ref{cor:multidim-gen-case} it suffices to verify the
Assumption~\ref{assumption:A3}. Recall that our choice of scaling
$\XscaleMat_i = \Nsum{j} \greedyEps_j \ExsXsqLB$ matrices does not
actually vary as a function of the round $i$.  For this reason, we simply
write $\XscaleMat$ from here onwards.
We begin by verifying the asymptotic negligibility condition
in~\ref{assumption:A3}.  Observe that
\begin{align}
\label{eqn:asymp-negligibility-suff-explore}
\Exs \left\lbrace \max_{i \in [\numobs]} \;\;
\frac{1}{\tuneParScaled_\numobs} \x_i^\top \XscaleMat^{-1} \x_i
\right\rbrace \leq \frac{1}{\tuneParScaled_\numobs} \cdot \frac{ \Exs
  \left [ \max \limits_{i \in [\numobs]} \| \x_i \|_2^2 \right ]}{
  \lambda_{\min}(\ExsXsqLB) \; (\Nsum{i} \greedyEps_i ) } \rightarrow
0,
\end{align}
where the first inequality above follows by substituting the value of
the scaling matrix $\XscaleMat$, and the second step follows by
invoking the sufficient exploration
condition~\eqref{eqn:suff-exploration}.

Next, we verify the variance stability and vanishing bias conditions
in~\ref{assumption:A3}.  In doing, we make use of the following
auxiliary result:
\begin{lems}
\label{lem:suff-exploration}
Under the sufficient exploration
condition~\eqref{eqn:suff-exploration}, for any tuning parameter
$\tuneParScaled_\numobs \in (0, 1/ (\log(\numobs) \cdot \log\log(\numobs))]$ and
  a sufficient large sample size $\numobs$, we have
\begin{align*}
  \Exs \big[ \frobnorm{ \MyIdDim - \W_\numobs \X_\numobs
      \XscaleMat^{-\frac{1}{2}} }^2 \big] & \leq \frac{\Dim}{K\numobs}.
  \end{align*}
\end{lems}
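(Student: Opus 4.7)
The plan is to exploit the explicit multiplicative recursion for $\Id - \W_i \Z_i$ implied by the closed-form weights~\eqref{eqn:Wn-expression}, and then to use the $\greedy$-exploration hypothesis to obtain a fast geometric decay of its squared Frobenius norm in expectation. Writing $c = \tuneParScaled_\numobs / 2$, $\z_i = \XscaleMat^{-1/2} \x_i$, and $M_i = \z_i \z_i^\top / (c + \|\z_i\|^2)$, formula~\eqref{eqn:Wn-expression} gives the one-step multiplicative update $A_i = A_{i-1}(\Id - M_i)$, where $A_i \mydefn \Id - \W_i \Z_i$ and $A_0 = \Id$. Setting $D_i = A_i^\top A_i$ so that $\frobnorm{A_i}^2 = \tr(D_i)$, this becomes $D_i = (\Id - M_i) D_{i-1} (\Id - M_i)$.

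Since each $M_i$ is PSD with spectrum contained in $[0,1)$, the operator inequality $(\Id - M_i)^2 \preccurlyeq \Id - M_i$ holds, and positivity of $D_{i-1}$ then yields
\begin{align*}
\tr(D_i) \leq \tr(D_{i-1}(\Id - M_i)) = \tr(D_{i-1}) - \tr(D_{i-1} M_i).
\end{align*}
Taking the conditional expectation given $\filtration_{i-1}$, and using that $D_{i-1}$ is $\filtration_{i-1}$-measurable and PSD, the task reduces to producing a deterministic lower bound of the form $\Exs[M_i \mid \filtration_{i-1}] \succcurlyeq \alpha_i \Id$.

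Such a bound follows by chaining three ingredients. First, the covariate bound~\eqref{eqn:bounded-covariates} together with the choice $\XscaleMat = \bigl(\sum_j \greedy_j\bigr) \ExsXsqLB$ and the sufficient exploration condition~\eqref{eqn:suff-exploration} force $\|\z_i\|^2 = O(1/(\log\numobs)^2)$ uniformly in $i$, so that $(c + \|\z_i\|^2)^{-1} \geq (1 - \smallo(1))/c$ deterministically. Second, the $\greedy$-exploration property~\eqref{eqn:selection-with-exploration}--\eqref{eqn:exploration-lower-bound} gives $\Exs[\x_i \x_i^\top \mid \filtration_{i-1}] \succcurlyeq \greedy_i \ExsXsqLB$. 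Third, by construction $\XscaleMat^{-1/2} \ExsXsqLB \XscaleMat^{-1/2} = \bigl(\sum_j \greedy_j\bigr)^{-1} \Id$. Combining these yields $\Exs[M_i \mid \filtration_{i-1}] \succcurlyeq \alpha_i \Id$ with $\alpha_i = 2(1 - \smallo(1)) \greedy_i / \bigl(\tuneParScaled_\numobs \sum_j \greedy_j\bigr)$.

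Iterating the resulting scalar recursion $\Exs[\tr(D_i)] \leq (1 - \alpha_i) \Exs[\tr(D_{i-1})]$ from $\tr(D_0) = \Dim$, and using $1 - x \leq e^{-x}$ together with $\sum_i \alpha_i = 2(1 - \smallo(1)) / \tuneParScaled_\numobs$, produces
\begin{align*}
\Exs\bigl[\frobnorm{\Id - \W_\numobs \X_\numobs \XscaleMat^{-1/2}}^2\bigr] \leq \Dim \exp\bigl(-2(1 - \smallo(1))/\tuneParScaled_\numobs\bigr) \leq \Dim \cdot \numobs^{-2(1 - \smallo(1)) \log\log\numobs},
\end{align*}
which is well below $\Dim / (K\numobs)$ for all sufficiently large $\numobs$. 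The main obstacle is the careful bookkeeping of the $(1 + \smallo(1))$ factor arising from $\|\z_i\|^2 / c$: condition~\eqref{eqn:suff-exploration} is tuned precisely so that $\|\z_i\|^2$ is uniformly negligible against the chosen scale $\tuneParScaled_\numobs$, and without this negligibility the geometric decay in the recursion would be destroyed.
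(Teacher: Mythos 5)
Your proposal is correct and follows essentially the same route as the paper: a one-step contraction of $\frobnorm{\Id - \W_i\Z_i}^2$ derived from the closed-form weights, a conditional-expectation lower bound $\Exs[\z_i\z_i^\top \mid \filtration_{i-1}] \succcurlyeq \greedyEps_i(\sum_j\greedyEps_j)^{-1}\Id$ coming from the exploration hypothesis, control of the denominator via the negligibility of $\|\z_i\|_2^2$ against $\tuneParScaled_\numobs$, and iteration with $1-x\le e^{-x}$. The only difference is cosmetic: you package the step as $(\Id-M_i)^2 \preccurlyeq \Id - M_i$ applied to the multiplicative recursion, whereas the paper writes out the exact Frobenius-norm decrement and then drops the same factor.
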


\noindent 
See Section~\ref{sec:Proof-of-suff-exploration} for the proof of this
lemma. \\

Taking \cref {lem:suff-exploration} as given, we now complete the
proof of Corollary~\ref{cor:multidim-gen-case}.  Note that the
variance stability condition in~\ref{assumption:A3} follows directly
from the Frobenius norm bound in Lemma~\ref{lem:suff-exploration} and
by letting the number of datapoints $n \rightarrow \infty$, keeping the
dimension $\Dim$ fixed.

In order to prove the vanishing bias condition in~\ref{assumption:A3},
we first bound the operator norm of the matrix $\MyIdDim - \W_\numobs
\X_\numobs \SigmaMat_\numobs^{-\frac{1}{2}}$:
\begin{align}
 \opnorm{ \MyIdDim - \W_\numobs \X_\numobs
   \SigmaMat_\numobs^{-\frac{1}{2}}} & \leq 1 + \opnorm{ \W_\numobs
   \X_\numobs \XscaleMat^{-\frac{1}{2}} } \cdot \opnorm{
   \XscaleMat^{\frac{1}{2}} \SigmaMat_\numobs^{-\frac{1}{2}} } \notag \\
& = \BigoP(1) 
\label{eqn:WNS-gen-bound},
\end{align}
where the derivation above uses the Frobenius norm upper bound from
Lemma~\ref{lem:suff-exploration} and the fact that $\opnorm{
  \XscaleMat^\frac{1}{2} \SigmaMat_\numobs^{-\frac{1}{2}}} = \BigoP(1)$
by the choice of the tuning parameter $\XscaleMat$; see the
bound~\eqref{eqn:exploration-lower-bound} for instance. Using the last
bound on $\opnorm{ \MyIdDim - \W_\numobs \X_\numobs
  \SigmaMat_\numobs^{-\frac{1}{2}}}$, we then find that
\begin{align*}
  \sqrt{\tuneParScaled_\numobs \log \lambda_{\max}(\SigmaMat_\numobs)}
  \cdot \opnorm{ \MyIdDim - \W_\numobs \X_\numobs
    \SigmaMat_\numobs^{-\frac{1}{2}} } \leq
  \sqrt{\tuneParScaled_\numobs \log \lambda_{\max}(\SigmaMat_\numobs)}
  \cdot \BigoP(1) \stackrel{\smallop{p}}{\longrightarrow} 0,
\end{align*}
where the last step above utilizes the choice $\tuneParScaled_\numobs
= \smalloP(\log K\numobs)$ and the bound $
\lambda_{\max}(\SigmaMat_\numobs) = \BigoP(\log K\numobs )$. (Recall
the uniform boundedness assumption~\eqref{eqn:bounded-covariates}.)
All together, we have verified the assumptions of
Theorem~\ref{thm:asymp-normality}, so that
Corollary~\ref{cor:multidim-gen-case} follows. \\

\noindent It remains to prove Lemma~\ref{lem:suff-exploration}.


\subsubsection{Proof of Lemma~\ref{lem:suff-exploration}}
\label{sec:Proof-of-suff-exploration}

Throughout this proof, we use the shorthands \mbox{$\z_i = \XscaleMat^{-\frac{1}{2}} \x_i
  $}, $\Z_i^\top = [\z_1, \z_2, \ldots \z_i]$,
$\W_i = [\w_1, \ldots, \w_i]$ and ${\DelMat_i \mydefn \MyIdDim - \W_i
  \Z_i}$.  Substituting the expression~\eqref{eqn:Wn-expression} for
the weight vector $\w_i$ we find that
\begin{align}
\frobnorm{ \DelMat_{i-1} }^{2} - \frobnorm{ \DelMat_{i} }^{2} & =
\frac{\tuneParScaled_\numobs + \| \z_{i}\|_{2}^{2}}{\left(
  \tuneParScaled_\numobs/2 + \| \z_{i} \|_{2}^{2}\right)^{2}} \tr
\left\{ \DelMat_{i-1} \z_{i} \z_{i}^{\top}
\DelMat_{i-1}^{\top}\right\} \nonumber \\
\label{eqn:delta_i_decay}
& \geq \frac{1}{ \tfrac{\tuneParScaled_\numobs}{2} + \|\z_{i}
  \|_{2}^{2}} \tr \left\{\DelMat_{i-1} \z_{i} \z_{i}^{\top}
\DelMat_{i-1}^{\top}\right\}.
\end{align}
In equation~\eqref{eqn:asymp-negligibility-suff-explore}, we proved
that the random variable \mbox{$\frac{1}{\tuneParScaled_\numobs}
  \max_{i \in [\numobs]}\| \z_i\|_2^2$} converges to zero in
probability; consequently, we may assume that
\begin{align*}
  \Prob \big[
    \max_{i \in \numobs} \| \z_i\|_2^2 \leq \tuneParScaled_\numobs/2
    \big] \geq \frac{1}{2}
\end{align*}
for all sufficiently large values of the sample size
$\numobs$. Keeping this in mind, taking expectations conditional on
the sigma-field $\filtration_{i-1}$ on both sides in the
inequality~\eqref{eqn:delta_i_decay}, and using the fact that
$\DelMat_{i} \in \filtration_{i - 1}$, we have
\begin{align*}
\Exs \left [ \frobnorm{\DelMat_{i-1} }^{2} \mid \filtration_{i-1}
  \right ] - \Exs \left [ \frobnorm{\DelMat_{i}}^{2} \mid
  \filtration_{i-1} \right ] \geq \frac{\greedyEps_i}{2
  \tuneParScaled_\numobs \Nsum{i} \greedyEps_i} \Exs \left [
  \frobnorm{\DelMat_{i - 1}}^{2} \mid \filtration_{i-1}\right].
\end{align*}
Rearranging the last inequality and using the upper bound $(1 - t)
\leq \exp(-t)$ for $t \geq 0$ we obtain
\begin{align*}
  \Exs \left [ \frobnorm{\DelMat_{i}}^{2} \mid \filtration_{i-1}
    \right] & \leq \exp \left(\frac{-\greedyEps_i}{2
    \tuneParScaled_\numobs \Nsum{i} \greedyEps_i }\right)
  \frobnorm{\DelMat_{i-1}}^{2}.
\end{align*}
Iterating the last bound $\numobs$ times and removing the conditioning
on the sigma filed $\filtration_{i -1}$, we find that
\begin{align*}
  \Exs \left\{ \frobnorm{\DelMat_{\numobs}}^{2} \right\} \leq \Dim
  \exp \left(-\frac{1}{2\tuneParScaled_\numobs} \right) \stackrel{(i)}{\leq}
  \Dim \exp \left(-\frac{1}{2} \cdot \log(\numobs) \cdot \log\log(\numobs)
  \right) \stackrel{(ii)}{\leq} \frac{\Dim}{K\numobs}.
\end{align*}
Here step (i) follows by using $\tuneParScaled_\numobs \leq
\tfrac{1}{\log(\numobs) \cdot \log\log(\numobs)}$, and step (ii) holds since
the sample size is assumed $\numobs$ to be sufficiently large. This
completes the proof of the claim~\eqref{eqn:gen-case-asymp-normality}.

\vspace{8pt}

\textbf{\underline{Proof of
    claim~\eqref{eqn:gen-case-simple-CI}:}} \hspace{4pt} In order to
apply~\cref{prop:gen-confinv}, it suffices to verify condition~\ref{assumption:A3prime} for $\{\Basis \x_i \}_{i = 1}^\numobs$
with the
choice of tuning parameters~\eqref{EqnTuningGreedy}. Note that in the proof
of~\eqref{eqn:gen-case-asymp-normality}, we already verified that conditions~\eqref{eqn:exploration-lower-bound}, \eqref{eqn:bounded-covariates}, and~\eqref{eqn:suff-exploration}
ensures that assumption~\ref{assumption:A3} is satisfied. Fortunately, these three conditions 
are not affected by the change of basis transformation, and are readily satisfied by the regressors $\{ \Basis \x_i \}_{i = 1}^\numobs$. 

Indeed, for any orthonormal basis matrix $\Basis$, via linearity of expectation, we have
\begin{align*}
  \Exs[\Basis \randDir_i \randDir_i^\top \Basis] = \Basis \Exs[ \randDir_i \randDir_i^\top] \Basis^\top 
  \succeq \Basis \ExsXsqLB \Basis^\top.   
\end{align*}
Moreover, for any orthonormal basis matrix $\Basis$, we have
\begin{align*}
\lambda_{\min}(\Basis \ExsXsqLB \Basis^\top) = \lambda_{\min}
(\ExsXsqLB) \qquad \text{and} \qquad \enorm{\Basis \x_i} =
\enorm{\x_i} \leq K.
\end{align*}
Thus, following a proof similar
to~\eqref{eqn:gen-case-asymp-normality} we have that the
assumption~\ref{assumption:A3prime} parts (a), (c), and
condition~\ref{assumption:A3} part (b), modified for $\{ \Basis \x_i
\}_{i = 1}^{\numobs}$, are satisfied. Finally, from the bounded
covariates condition~\eqref{eqn:bounded-covariates} we have
$\lambda_{\max}(\SigmaMat_{\dir, \numobs}) \leq K \numobs$, and as a
result, \mbox{$\tuneParScaled_\numobs \cdot
  \log(\lambda_{\max}(\SigmaMat_\numobs)) = \smalloP(1)$}. Combining
this observation with a calculation similar to
equation~\eqref{eqn:A3_to_A3prime}, we deduce that
condition~\ref{assumption:A3prime}(b) holds, thereby completing the
proof of the claim~\eqref{eqn:gen-case-simple-CI}.


\section{Discussion}
\label{SecDiscussion}

In this paper, we proposed a family of online debiasing estimators for
adaptive linear regression and analyze their asymptotic properties. We
introduced an online debiasing estimator, and proved that it admits a
Gaussian limit under considerably weaker conditions than the \OLS
estimator. We highlighted its practical behavior using examples from
multi-armed bandits, time series modeling, and active learning in
which online debiasing yields asymptotic normality while \OLS does
not.  We also proved a minimax lower bound for the adaptive linear
regression model; in conjunction with our upper bounds, our results
reveal that the online debiasing estimator is minimax optimal.
 
This work opens up a number of directions for future research.  For
example, it would be interesting to characterize the non-asymptotic
behavior of estimators based on online debiasing.  Concretely, we
would like to investigate the rate of distributional convergence of
the online debiasing estimators to the appropriate Gaussian
distributions.

\subsection*{Acknowledgements}  This work was partially supported by
a BAIR-Microsoft research grant to MJW and LM, as well as DOD ONR
Office of Naval Research N00014-21-1-2842, National Science Foundation
DMS grant 2015454, and National Science Foundation CCF grant 1955450 to MJW.


\appendix


\section{Proofs of the propositions}

This section provides the proofs of the two propositions stated in
this paper.  ~\Cref{app:gen-confinv} is devoted to the proof
of~\Cref{prop:gen-confinv}, whereas~\Cref{app-bias-control-lemma} is
devoted to the proof of~\Cref{prop:zero-bias-lemma}.


\subsection{Proof of~\Cref{prop:gen-confinv}}
\label{app:gen-confinv}
Our proof is based on the following auxiliary result that
characterizes the asymptotic behavior of $\thetaBasis$.
\begin{lems}
\label{lemma:confinv-asn}
Under \mbox{Assumptions~\ref{assumption:A1}, \ref{assumption:A2},
  and~\ref{assumption:A3prime}}, given any consistent estimator
$\widehat{\noisesd}^2$ of $\noisesd^2$, we have
\begin{align}
\label{eqn:confinv-asn}
\sqrt{\frac{\tuneParScaled_\numobs}{ \OpTuning_\numobs^2
    \widehat{\noisesd}^2}} \cdot
\diagCov_\numobs^{\frac{1}{2}}(\thetaBasis - \Basis \thetastar)
\indistrb \Ncal(0, \MyIdDim).
\end{align}
\end{lems}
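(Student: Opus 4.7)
\textbf{Proof proposal for Lemma~\ref{lemma:confinv-asn}.} The plan is to mirror the proof of Theorem~\ref{thm:asymp-normality}, but carried out in the rotated basis, with the extra block-diagonal scaling $\diagCov_{\dir,\numobs}^{1/2}$ and the normalizing constant $\OpTuning_\numobs$ absorbed into an analogous bias/variance split. First, by Slutsky's theorem it suffices to treat $\widehat{\noisesd}^2$ as equal to $\noisesd^2$; the only remaining claim is then that $\frac{\sqrt{\tuneParScaled_\numobs}}{\OpTuning_\numobs}\,\diagCov_{\dir,\numobs}^{1/2}(\thetaBasis - \Basis\thetastar) \indistrb \Ncal(0,\noisesd^2\MyIdDim)$. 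Observing that $\y_i = \x_{\dir,i}^\top(\Basis\thetastar) + \error_i$ because $\Basis$ is orthonormal, I would substitute the definition~\eqref{eqn:thetaBasis-defn} of $\thetaBasis$ to get the decomposition
\begin{align*}
\frac{\sqrt{\tuneParScaled_\numobs}}{\OpTuning_\numobs}\,\diagCov_{\dir,\numobs}^{\frac{1}{2}}(\thetaBasis - \Basis\thetastar)
\;=\; \Bias_\numobs' + \ZeroMartin_\numobs',
\end{align*}
where
\begin{align*}
\Bias_\numobs' &\mydefn \sqrt{\tuneParScaled_\numobs}\Bigl(\tfrac{1}{\OpTuning_\numobs}\diagCov_{\dir,\numobs}^{\frac{1}{2}}\SigmaMat_{\dir,\numobs}^{-\frac{1}{2}} - \W_\numobs \X_{\dir,\numobs}\SigmaMat_{\dir,\numobs}^{-\frac{1}{2}}\Bigr)\,\SigmaMat_{\dir,\numobs}^{\frac{1}{2}}(\thetaBasisLS - \Basis\thetastar), \\
\ZeroMartin_\numobs' &\mydefn \sqrt{\tuneParScaled_\numobs}\sum_{i=1}^{\numobs} \w_i \error_i.
\end{align*}

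The bias term $\Bias_\numobs'$ is handled in one line: its norm is at most $\sqrt{\tuneParScaled_\numobs}\,\opnorm{\tfrac{1}{\OpTuning_\numobs}\diagCov_{\dir,\numobs}^{1/2}\SigmaMat_{\dir,\numobs}^{-1/2} - \W_\numobs \X_{\dir,\numobs}\SigmaMat_{\dir,\numobs}^{-1/2}}\,\|\SigmaMat_{\dir,\numobs}^{1/2}(\thetaBasisLS - \Basis\thetastar)\|_2$, and the first factor times $\sqrt{\log\lambda_{\max}(\SigmaMat_{\dir,\numobs})}$ is $\smalloP(1)$ by Assumption~\ref{assumption:A3prime}(b), while the Lai-Wei least-squares estimate (Lemma~1 of~\citep{lai1982least}) applied to the model with design $\{\x_{\dir,i}\}$ gives $\|\SigmaMat_{\dir,\numobs}^{1/2}(\thetaBasisLS - \Basis\thetastar)\|_2 = O(\sqrt{\log\lambda_{\max}(\SigmaMat_{\dir,\numobs})})$ almost surely. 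Combining these yields $\|\Bias_\numobs'\|_2 \stackrel{\smallop{p}}{\longrightarrow} 0$.

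For the martingale term $\ZeroMartin_\numobs'$, I would again appeal to the Dvoretzky martingale CLT. The conditional covariance is $\tuneParScaled_\numobs\,\noisesd^2 \sum_i \w_i \w_i^\top$, so I need the stability $\tuneParScaled_\numobs \sum_i \w_i \w_i^\top \stackrel{\smallop{p}}{\longrightarrow} \MyIdDim$ together with the asymptotic negligibility $\max_i \sqrt{\tuneParScaled_\numobs}\|\w_i\|_2 \stackrel{\smallop{p}}{\longrightarrow} 0$. Both follow from Lemma~\ref{lem:stability-lemma} once we verify its two hypotheses---namely parts (a) and (c) of Assumption~\ref{assumption:A3}---for the rotated regressors $\{\x_{\dir,i}\}$; but these are exactly Assumption~\ref{assumption:A3prime}(a) and (c). Hence $\ZeroMartin_\numobs' \indistrb \Ncal(0,\noisesd^2\MyIdDim)$, and adding the pieces completes the proof.

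The main obstacle is essentially bookkeeping: ensuring that the hybrid scaling $\tfrac{1}{\OpTuning_\numobs}\diagCov_{\dir,\numobs}^{1/2}\SigmaMat_{\dir,\numobs}^{-1/2}$ --- which replaces the identity matrix that appeared in the bias term of Theorem~\ref{thm:asymp-normality} --- lines up correctly so that the algebraic identity in the decomposition above holds and so that Assumption~\ref{assumption:A3prime}(b) is exactly the quantity whose $\smalloP$ behavior is needed. Nothing else substantively new happens beyond the argument already used for Theorem~\ref{thm:asymp-normality}.
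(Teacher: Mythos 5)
Your proposal is correct and follows essentially the same route as the paper's own proof: the identical bias/martingale decomposition with the hybrid scaling $\tfrac{1}{\OpTuning_\numobs}\diagCov_{\dir,\numobs}^{\frac{1}{2}}\SigmaMat_{\dir,\numobs}^{-\frac{1}{2}}$ in place of the identity, the Lai--Wei bound on $\|\SigmaMat_{\dir,\numobs}^{\frac{1}{2}}(\thetaBasisLS - \Basis\thetastar)\|_2$ combined with Assumption~\ref{assumption:A3prime}(b) to kill the bias, and the martingale CLT via Lemma~\ref{lem:stability-lemma} applied to the rotated regressors using parts (a) and (c) of Assumption~\ref{assumption:A3prime}. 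No substantive differences.
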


We prove this claim shortly, but let us complete the proof of
Proposition~\ref{prop:gen-confinv} using
Lemma~\ref{lemma:confinv-asn}.  Now, by construction we have $\Basis
\Basis^\top = \MyIdDim$ and $\Basis^\top\e_1 = \dir$.  Using these two
properties, we can write
\begin{align*}
  \e_1^\top \Basis \thetastar = \dir^\top \thetastar  \;\;
  \text{and} \;\; \y_i = \inprod{\Basis \x_i}{ \Basis \thetastar} +
  \error_i \quad \text{for all} \;\; i = 1, \ldots, \numobs.
\end{align*}
Consequently, in this new basis, estimating the scalar $\dir^\top
\thetastar$ is same as estimating the first coordinate of transformed
vector $\Basis \thetastar$. Next, by construction of the matrix $\Basis$, we have 
\begin{align}
\label{eqn:Cov-diag-entry}
  \e_1^\top \diagCov_{\dir, \numobs}^{-1} \e_1 = \dir^\top \SigmaMat_{\numobs}^{-1} \dir
  \qquad \text{and} \qquad \OpTuning_\numobs = \opnorm{\diagCov_{\dir, \numobs}^{-\frac{1}{2}} \SigmaMat_{\dir,\numobs}^{\frac{1}{2}}}.
\end{align} 

Thus, we deduce
\begin{align*}
  \e_1^\top \diagCov_{\dir,\numobs}^{\frac{1}{2}}(\thetaBasis - \Basis \thetastar)
  & = \sqrt{(\diagCov_\numobs)_{11}} \cdot ( (\thetaBasis)_1 - \dir^\top \thetastar) \\
  & = \sqrt{\frac{1}{(\diagCov_\numobs^{-1})_{11}}} \cdot (\e_1^\top \thetaBasis -  \dir^\top \thetastar) \\
  & =  \sqrt{\frac{1}{\dir^\top \SigmaMat_\numobs^{-1} \dir} } \cdot 
  ( \e_1^\top \thetaBasis -  \dir^\top \thetastar),
\end{align*}
The first equality above follows since the first row of the matrix $\diagCov_{\dir, \numobs}$ is proportional 
to $\e_1$ by construction and the fact that $\e_1 \Basis \thetastar = \dir^\top \thetastar$. 
The last line follows from the relation~\eqref{eqn:Cov-diag-entry}. Thus, from  property~\eqref{eqn:confinv-asn} 
 we deduce   
\begin{align}
\label{eqn:one-dim-asn}
  \sqrt{\frac{\tuneParScaled_\numobs}{\OpTuning_\numobs^2 \noisesd^2}} \cdot \sqrt{\frac{1}{\dir^\top \SigmaMat_\numobs^{-1} \dir} } \cdot 
  ( \e_1^\top \thetaBasis -  \dir^\top \thetastar) \indistrb \Ncal(0, 1). 
\end{align}
Define, the set $\CIset_{\dir, 1 - \alpha} \subseteq \real$ as
%
\begin{align*}
  \CIset_{\dir, 1 - \alpha}  &\mydefn \left\{ \theta \in \real \mid  -\NormalQuantile{1 - \alpha/2} \leq \sqrt{\frac{\tuneParScaled_\numobs}{ \OpTuning_\numobs^2 \sigmahat^2}} \cdot \sqrt{\frac{1}{\dir^\top \SigmaMat_\numobs^{-1} \dir} } \cdot 
  ( \e_1^\top \thetaBasis -  \theta) \leq \NormalQuantile{1 - \alpha/2} \right\} \\
  &\equiv \Big[ 
   \e_1^\top \thetaBasis -
      \tfrac{\OpTuning_\numobs \cdot \sigmahat}{\sqrt{\tuneParScaled_\numobs}}
      (\myinprod{\e_1}{\SigmaMat_\numobs^{-1} \e_1})^{\frac{1}{2}}
      \NormalQuantile{1 - \alpha/2},
       \quad 
      \e_1^\top \thetaBasis +
      \tfrac{\OpTuning_\numobs \cdot \sigmahat}{\sqrt{\tuneParScaled_\numobs}}
      (\myinprod{\e_1}{\SigmaMat_\numobs^{-1} \e_1})^{\frac{1}{2}}
      \NormalQuantile{1 - \alpha/2} \Big]
\end{align*}
where, $\NormalQuantile{1 - \alpha/2}$ is the $1 - \alpha/2$ quantile of
the standard Gaussian random variable.
From the equation~\eqref{eqn:one-dim-asn} we have $ \lim_{\numobs
  \rightarrow \infty} \Prob(\dir^\top \thetastar \in \CIset_{\dir, 1 -
  \alpha}) = 1 - \alpha$, i.e., $\CIset_{\dir, 1 - \alpha}$ is an
asymptotically exact $1 - \alpha$ confidence intervals for $\dir^\top
\thetastar$. This completes the proof of the Proposition~\ref{prop:gen-confinv}.
It remains to prove Lemma~\ref{lemma:confinv-asn}. 

\subsection*{Proof of Lemma~\ref{lemma:confinv-asn}}
Observe that 
\begin{align*}
    \y_i = \inprod{\Basis \x_i}{ \Basis \thetastar} +
  \error_i \quad \text{for all} \;\; i = 1, \ldots, \numobs.
\end{align*}
The proof of the Lemma~\ref{lemma:confinv-asn}
is similar to the proof of Theorem~\ref{thm:asymp-normality} but modified 
for the data $\{ \Basis \x_i, \y_i \}_{i = 1}^\numobs$ and with $\thetastar$
replaced by $\Basis \thetastar$. Without loss of generality, we
assume that $\noisesd$ is known; thus, it suffices to 
prove \mbox{$\frac{\sqrt{\tuneParScaled_\numobs}}{\OpTuning_\numobs} \cdot
\diagCov_{\dir, \numobs}^{\frac{1}{2}}(\thetaBasis - \thetastar) \indistrb
\Ncal(0, \noisesd^2 \MyIdDim)$.}
Recalling the expression for $\thetaBasis$ from the definition~\eqref{eqn:thetaBasis-defn} we have
\begin{align*}
  \frac{\sqrt{\tuneParScaled_\numobs}}{\OpTuning_\numobs} \cdot
  \diagCov_{\dir,\numobs}^{\frac{1}{2}}(\thetaBasis - \Basis\thetastar) &=
    \sqrt{\tuneParScaled_\numobs} \cdot \sum_{i = 1}^\numobs \w_i \error_i\\
   &  \quad \quad  + 
     \sqrt{\tuneParScaled_\numobs} \cdot
   \left( \tfrac{1}{\OpTuning_\numobs} \cdot \diagCov_{\dir,\numobs}^{\frac{1}{2}} \SigmaMat_{\dir,\numobs}^{-\frac{1}{2}} - 
   \W_\numobs \X_{\dir,\numobs} \SigmaMat_{\dir,\numobs}^{-\frac{1}{2}}  \right)  \SigmaMat_{\dir,\numobs}^{\frac{1}{2}} (\thetaBasisLS - \Basis\thetastar) \\
  &= \ZeroMartin_\numobs + \Bias_\numobs
\end{align*}
It remains to prove $\Bias_\numobs \stackrel{\smallop{p}}{\longrightarrow} 0$ and $\ZeroMartin_\numobs \indist \Ncal(0, \noisesd^2 \MyIdDim)$. Observe that
\begin{align*}
  \enorm{\Bias_\numobs} &\leq \sqrt{\tuneParScaled_\numobs} \cdot \opnorm{ \tfrac{1}{\OpTuning_\numobs} \cdot  \diagCov_{\dir, \numobs}^{\frac{1}{2}} \SigmaMat_{\dir,\numobs}^{-\frac{1}{2}}- 
    \W_\numobs \X_{\dir,\numobs} \SigmaMat_{\dir,\numobs}^{-\frac{1}{2}}} \cdot 
  \enorm{\SigmaMat_{\dir, \numobs}^{\frac{1}{2}}(\thetaBasisLS - \Basis\thetastar)} \\
  &\leq \sqrt{\tuneParScaled_\numobs \log
    \lambda_{\max}(\SigmaMat_{\dir,\numobs})} \cdot \opnorm{ \tfrac{1}{\OpTuning_\numobs} \cdot \diagCov_\numobs^{\frac{1}{2}} \SigmaMat_{\dir,\numobs}^{-\frac{1}{2}}- 
    \W_\numobs \X_{\dir,\numobs} \SigmaMat_{\dir,\numobs}^{-\frac{1}{2}}}  \\
    &\stackrel{\smallop{p}}{\longrightarrow} 0,
\end{align*}
where, the second inequality uses Theorem 1 from the paper~\cite{lai1982least}, and the last step uses the 
vanishing bias condition~\ref{assumption:A3prime}(b).

The analysis of the martingale term $\ZeroMartin_\numobs$ is exactly same as that of Theorem~\ref{thm:asymp-normality} proof. This completes the proof of the Lemma~\ref{lemma:confinv-asn}.

\subsection{Proof of Proposition~\ref{prop:zero-bias-lemma}} 
\label{app-bias-control-lemma}

Recalling that $\maxnorm{\mymatrix{M}} \mydefn \max_{i,j}
|\mymatrix{M}_{ij}|$ denotes the maximum absolute entry of a matrix,
we claim that it suffices to show that $\maxnorm{\W_\numobs \X_\numobs
  \XscaleMat_\numobs^{-\frac{1}{2}}} \leq 4$. Indeed, when this claim
holds, we have
\begin{align*}
\opnorm{\Id - \W_\numobs \X_\numobs \SigmaMat_\numobs^{-\frac{1}{2}}}
& \leq 1 + \opnorm{ \W_\numobs \X_\numobs
  \XscaleMat_\numobs^{-\frac{1}{2}} } \opnorm{
  \XscaleMat_\numobs^{\frac{1}{2}} \SigmaMat_\numobs^{-\frac{1}{2}}} \\
& \leq 1 + \BigoP(\sqrt{\Dim}) \opnorm{ \W_\numobs \X_\numobs
  \XscaleMat_\numobs^{-\frac{1}{2}} } \\
& \leq 1 + \BigoP(\Dim^2) \maxnorm{\W_\numobs \X_\numobs
  \XscaleMat_\numobs^{-\frac{1}{2}} }
\end{align*}
The second last inequality above follows by noting that the diagonal
entries of the matrix $\XscaleMat_\numobs^{\frac{1}{2}}
\SigmaMat_\numobs^{-1} \XscaleMat_\numobs^{\frac{1}{2}}$ is of the
order $\BigoP(1)$; this bound uses the expression of the scaling
matrix $\XscaleMat_\numobs$ from the
definition~\eqref{eqn:scaling_mat-choice}, and the operator-norm bound
$\| \SigmaMatLB_\numobs^{\frac{1}{2}} \diag(\SigmaMat_\numobs^{-1})
\SigmaMatLB_\numobs^{\frac{1}{2}} \|_{op} = \BigoP(1)$ from
assumption~\eqref{eqn:SigmaLB-conditions}.  The last inequality above
follows from the fact that $\opnorm{ \mymatrix{A} } \leq
\Dim^{\frac{3}{2}} \maxnorm{ \mymatrix{A} }$, for any
$\Dim$-dimensional matrix $\mymatrix{A}$. This completes the proof of
Proposition~\ref{prop:zero-bias-lemma}. The remainder of the proof is
devoted to establishing an upper-bound on the max-norm of the matrix
$\W_\numobs \X_\numobs \XscaleMat_\numobs^{-\frac{1}{2}}$. We do so by
proving the following upper bounds
\begin{subequations}
  \begin{align}
    \label{eqn:first-term-max-norm-bound}    
    \maxnorm{ \sum_{i =1 }^k \w_i \x_i^\top \XscaleMat_i^{-\frac{1}{2}}
    } & \leq 2 \quad \text{for} \;\; k = 1, \ldots \numobs, \qquad
    \text{and} \\
    \label{eqn:second-term-max-norm-bound}    
    \maxnorm{ \Nsum{i} \w_i \x_i^\top \XscaleMat_i^{-\frac{1}{2}} (\Id
      - \XscaleMat_i^{\frac{1}{2}} \XscaleMat_\numobs^{-\frac{1}{2}} ) }
    & \leq 2.
  \end{align}
\end{subequations}
\noindent
Note that a combination of these two bounds implies that \mbox{$
  \maxnorm{ \W_\numobs \X_\numobs \XscaleMat_\numobs^{-\frac{1}{2}}}
  \leq 4$}. \\

\noindent Accordingly, the remainder of our proof is devoted to
establishing the bounds~\eqref{eqn:first-term-max-norm-bound}
and~\eqref{eqn:second-term-max-norm-bound}.


\subsection*{\textbf{Proof of bound~\eqref{eqn:first-term-max-norm-bound}}}

Using the expression for the weight vector $\w_i$ from
equation~\eqref{eqn:Wn-expression}, we have
\begin{align*}
  \Id - \sum_{i=1}^k \w_i \x_i \XscaleMat_i^{-\frac{1}{2}} = \prod_{i =
    1}^k \left( \Id - \frac{\XscaleMat_i^{-\frac{1}{2}} \x_i \x_i^\top
    \XscaleMat_i^{-\frac{1}{2}} }{\tfrac{\tuneParScaled_\numobs}{2} +
    \|\XscaleMat_i^{-\frac{1}{2}} \x_i\|^2} \right).
\end{align*}

We claim
\begin{align*}
\left\| \Id - \frac{\XscaleMat_i^{-\frac{1}{2}} \x_i \x_i^\top
    \XscaleMat_i^{-\frac{1}{2}} }{\tfrac{\tuneParScaled_\numobs}{2} +
    \|\XscaleMat_i^{-\frac{1}{2}} \x_i\|^2} \right\|_{\mathrm{op}} \leq 1 \quad \text { for all } i \in[n] .    
\end{align*}
It suffices to show that the maximum absolute eigenvalue of the symmetric matrix $\Id - \frac{\XscaleMat_i^{-\frac{1}{2}} \x_i \x_i^\top
    \XscaleMat_i^{-\frac{1}{2}} }{\tfrac{\tuneParScaled_\numobs}{2} +
    \|\XscaleMat_i^{-\frac{1}{2}} \x_i\|^2}$ is upper bounded by 1. Indeed, for any $u \in \real^d$ with $\|u\|_2 = 1$
\begin{align*}
 0 \leq    u^\top \left( \Id - \frac{\XscaleMat_i^{-\frac{1}{2}} \x_i \x_i^\top
    \XscaleMat_i^{-\frac{1}{2}} }{\tfrac{\tuneParScaled_\numobs}{2} +
    \|\XscaleMat_i^{-\frac{1}{2}} \x_i\|^2}  \right) u = 1 - \frac{(\x_{i}^{\top} \XscaleMat_{i}^{-\frac{1}{2}} u)^2}{\gamma_{n} / 2+\left\|\XscaleMat_{i}^{-\frac{1}{2}} \x_{i}\right\|_{2}^{2}} \leq 1
\end{align*}
The above conclusions follow from the fact that for $\|u\|_2 \leq 1$
\begin{align*}
    |\x_{i}^{\top} \XscaleMat_{i}^{-\frac{1}{2}} u| \leq
    \left\|\XscaleMat_{i}^{-\frac{1}{2}} \x_{i}\right\|_{2}
\end{align*}
Thus we conclude that for all $k \in[n]$ we have the bound
\begin{align*}
\left\|\sum_{i=1}^{k} \w_{i} \x_{i} \XscaleMat_{i}^{-\frac{1}{2}}\right\|_{\max } \leq\left\|\sum_{i=1}^{k} \w_{i} \x_{i} \XscaleMat_{i}^{-\frac{1}{2}}\right\|_{\mathrm{op}} \leq 2,    
\end{align*}
where, in the last derivation we used the fact that the max-norm of a matrix is upper bounded by the operator norm of that matrix. This completes the proof of the bound~\eqref{eqn:first-term-max-norm-bound}.


\subsection*{\textbf{Proof of bound~\eqref{eqn:second-term-max-norm-bound}}}

The proof is this bound exploits the following auxiliary lemma:
\begin{lems}
  \label{lem:real-sum-product-bound}
Consider a non-increasing sequence of nonnegative real numbers
$\Nseq{\delta}{i}$ and a sequence of real numbers $\Nseq{a}{i}$ for
which there exists a constant $\MaxVal$ such that $\max_{k \in
  [\numobs]} |\sum_{i=1}^k  a_i| \leq \MaxVal$.  Then we have
  \begin{align}
    |\Nsum{i} a_i \delta_i| \leq \MaxVal \delta_1.
  \end{align}
\end{lems}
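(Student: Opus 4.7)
The plan is to use Abel summation (summation by parts), which is the natural tool whenever one sees a sum of products against a monotone sequence and has control over partial sums.

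First I would introduce the partial sums $S_k \mydefn \sum_{i=1}^k a_i$ for $k \in [\numobs]$, with the convention $S_0 = 0$. The hypothesis then reads $|S_k| \leq \MaxVal$ uniformly in $k$. Writing $a_i = S_i - S_{i-1}$ and rearranging the sum yields
\begin{align*}
\sum_{i=1}^\numobs a_i \delta_i
= \sum_{i=1}^\numobs (S_i - S_{i-1})\delta_i
= S_\numobs \delta_\numobs + \sum_{i=1}^{\numobs-1} S_i (\delta_i - \delta_{i+1}),
\end{align*}
where I have used $S_0 = 0$ to drop the boundary term at $i=0$.

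Next I would bound this expression by the triangle inequality, taking advantage of two facts: $|S_i| \leq \MaxVal$ by hypothesis, and $\delta_i - \delta_{i+1} \geq 0$ together with $\delta_\numobs \geq 0$ by the monotonicity and nonnegativity of $\{\delta_i\}$. This gives
\begin{align*}
\Big| \sum_{i=1}^\numobs a_i \delta_i \Big|
\leq \MaxVal \delta_\numobs + \MaxVal \sum_{i=1}^{\numobs-1}(\delta_i - \delta_{i+1})
= \MaxVal \delta_\numobs + \MaxVal(\delta_1 - \delta_\numobs)
= \MaxVal \delta_1,
\end{align*}
where the middle equality telescopes. This is exactly the claimed bound.

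There is no real obstacle here; the only subtlety is correctly handling the boundary term and the sign of $\delta_i - \delta_{i+1}$, both of which are taken care of by the nonincreasing and nonnegativity assumptions on $\{\delta_i\}$.
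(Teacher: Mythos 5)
Your proof is correct and follows essentially the same route as the paper's: both express the sum via Abel summation in terms of the partial sums $S_k$, then use the uniform bound $|S_k|\leq \MaxVal$ together with the nonnegativity and monotonicity of $\{\delta_i\}$ to telescope down to $\MaxVal\delta_1$. Nothing further to add.
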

\noindent 
We prove this lemma at the end of this subsection. \\

Taking \cref{lem:real-sum-product-bound} as given, let us prove the
bound~\eqref{eqn:second-term-max-norm-bound}.  The
bounds~\eqref{eqn:first-term-max-norm-bound} guarantee that
\begin{align*}
\maxnorm{\sum_{i =1}^k \w_i \x_i^\top \XscaleMat_i^{-\frac{1}{2}}} &
\leq 2 \quad \mbox{for each $k \in [\numobs]$.}
\end{align*}
Moreover, by construction, the diagonal entries of the matrix $(\Id -
\XscaleMat_i^{-\frac{1}{2}} \XscaleMat_\numobs^{\frac{1}{2}})$, for $i =
1, \ldots, \numobs$, are positive and non-increasing.  Thus, we can
apply Lemma~\ref{lem:real-sum-product-bound} with the sequence
$\Nseq{a}{i}$ as the entries of the matrix $\w_i \x_i^\top
\XscaleMat_i^{-\frac{1}{2}}$ and $\delta_i$ as the diagonal entries of
the (diagonal) matrix $\Id - \XscaleMat_i^{\frac{1}{2}}
\XscaleMat_\numobs^{-\frac{1}{2}}$.  Invoking
Lemma~\ref{lem:real-sum-product-bound} yields
\begin{align*}
\maxnorm{ \Nsum{i} \w_i \x_i^\top \XscaleMat_i^{-\frac{1}{2}} (\Id -
  \XscaleMat_i^{\frac{1}{2}} \XscaleMat_\numobs^{-\frac{1}{2}} ) } \leq
2 \cdot \maxnorm{ \Id - \XscaleMat_1^{\frac{1}{2}}
  \XscaleMat_\numobs^{-\frac{1}{2}} } \leq 2,
\end{align*} 
where, the last inequality above uses the property that the diagonal
matrices $\XscaleMat_1$ and $\XscaleMat_\numobs$, by construction,
satisfy a positive semidefinite ordering $\XscaleMat_1 \preceq
\XscaleMat_\numobs$.  This concludes the proof of
bound~\eqref{eqn:second-term-max-norm-bound}. \\

\noindent It remains to prove the
Lemma~\ref{lem:real-sum-product-bound}.

\subsection*{\textbf{Proof of Lemma~\ref{lem:real-sum-product-bound}}}
\label{sec:Proof-real-sum-prod-lemma}

Let $s_k \mydefn \sum_{i =1}^k a_i$ denote the $k^{th}$ partial sum of
the sequence $\Nseq{a}{i}$.  The sum $\Nsum{i} a_i \delta_i$ can be
represented in terms of these partial sums as
\begin{align*}
| \Nsum{i} a_i \delta_i | & = | \sum_{i =1}^{\numobs-1}q
(\delta_{\numobs - i} - \delta_{\numobs - i + 1}) s_{\numobs - i} +
\delta_\numobs s_\numobs | \\
& \stackrel{(i)}{\leq} \MaxVal \cdot [ \delta_\numobs + \sum_{i
    =1}^{\numobs-1} (\delta_{\numobs - i} - \delta_{\numobs - i + 1})
] \\
& = \MaxVal \delta_1,
\end{align*}
where inequality (i) uses the bound $|s_{\numobs - i}| \leq \MaxVal$
and the ordering \mbox{$\delta_{\numobs - i} \geq \delta_{\numobs - i
    + 1}$}.  This completes the proof of
Lemma~\ref{lem:real-sum-product-bound}.


\subsection*{\textbf{Proof of bound~\eqref{eqn:bias-bound-improved}}}

Note that in the setting of multi-armed bandits (cf.
Section~\ref{sec:bandits}), the covariance matrix $\SigmaMat_\numobs$
is diagonal, and consequently, the
definition~\eqref{eqn:scaling_mat-choice} simplifies to $\XscaleMat_i =
\max\{ \SigmaMat_i, \SigmaMatLB_\numobs\}$. Moreover, a simple
argument, using the method of induction on the integer index $i$,
reveals that the matrix $\W_i \Z_i$ is a diagonal matrix with
nonnegative entries.  In particular, we have $\maxnorm{ \W_\numobs
  \Z_\numobs } = \opnorm{ \W_\numobs \Z_\numobs}$. By combining these
facts, we see that
\begin{align}
  \opnorm{ \Id - \W_\numobs \X_\numobs
    \SigmaMat_\numobs^{-\frac{1}{2}} } & \stackrel{(i)}{\leq} 1 +
  \opnorm{ \W_\numobs \X_\numobs \XscaleMat_\numobs^{-\frac{1}{2}} }
  \cdot \opnorm{ \max\{ \MyIdDim, \; \SigmaMatLB_\numobs^{\frac{1}{2}}
    \SigmaMat_\numobs^{-1} \SigmaMatLB_\numobs^{\frac{1}{2}} \} } \notag \\
  \label{EqnHamster}
& \stackrel{(ii)}{\leq} 1 + \maxnorm { \W_\numobs \X_\numobs
      \XscaleMat_\numobs^{-\frac{1}{2}} } \cdot \BigoP(1),
\end{align}
where step (i) uses the fact that in multi-armed bandit problems the
covariance matrix $\SigmaMat_\numobs$ is diagonal, and the matrix
takes the form $\XscaleMat_\numobs = \max\{ \SigmaMat_\numobs,
\SigmaMatLB_\numobs \}$; and step (ii) follows from
assumption~\eqref{eqn:SigmaLB-conditions} on the matrix
$\SigmaMatLB_\numobs$ and the fact that max-norm equals the operator
norm for diagonal matrices.

By combining the bounds~\eqref{eqn:first-term-max-norm-bound}
and~\eqref{eqn:second-term-max-norm-bound}, we see that
\mbox{$\maxnorm{ \W_\numobs \X_\numobs
    \XscaleMat_\numobs^{-\frac{1}{2}} } \leq 4$.}  Combining this bound
with inequality~\eqref{EqnHamster} yields
\begin{align*}\opnorm{ \Id -
    \W_\numobs \X_\numobs \SigmaMat_\numobs^{-\frac{1}{2}} } =
  \BigoP(1),
\end{align*}
as claimed in the bound~\eqref{eqn:bias-bound-improved}.


\section{Proof of stability Lemma~\ref{lem:stability-lemma}}
\label{AppProofStabilityLemma}

For notational convenience, we use the shorthand notation
\begin{align*}
\z_i \mydefn \x_i \XscaleMat_i^{-\frac{1}{2}}, \quad \Z_i^\top \mydefn
  [\z_1, \ldots, \z_i], \quad \mbox{and} \quad \W_i = [\w_1, \ldots,
    \w_i],
\end{align*}
as previously introduced in Section~\ref{sec:W-decorr}.

\subsection*{\textbf{Verifying the stability condition}}

The proof of the stability condition is based on a recursion relation
that connects the terms $\DelMat_{i} \mydefn \Id-\W_{i} \Z_{i}$ and
\mbox{$\DelMat_{i - 1} \mydefn \Id-\W_{i - 1} \Z_{i - 1}$}.
Substituting the expression~\eqref{eqn:Wn-expression} for the vector
$\w_i$ yields
\begin{align}
\DelMat_i \DelMat_i^\top & = (\DelMat_{i - 1} - \w_i \z_i^\top)
(\DelMat_{i - 1} - \w_i \z_i^\top)^\top \notag \\
& = \DelMat_{i - 1} \DelMat_{i - 1}^\top - \DelMat_{i - 1}(\w_i
\z_i^\top)^\top - \w_i \z_i^\top \DelMat_{i - 1}^\top + \w_i
\myinprod{\z_i} {\z_i} \w_i^\top \notag \\
\label{eqn:last-line}
& = \DelMat_{i - 1} \DelMat_{i - 1}^\top - (\tuneParScaled_\numobs +
\|\z_i\|^2)\w_i \w_i^\top,
\end{align}
Summing the last recursion from $i = 1$
to $i = \numobs$ and using the initial condition $\W_0 = 0$ yields
\begin{align}
  \label{eqn:Id-minus-qqT-expression}  
\MyIdDim - \sum _{i = 1}^{\numobs} \tuneParScaled_\numobs \w_i
\w_i^\top = \underbrace{\sum _{i = 1}^{\numobs} \| \z_i \|_2^2 \w_i
  \w_i^\top}_{\A_\numobs} + \underbrace{(\MyIdDim - \W_\numobs
  \Z_\numobs)(\MyIdDim - \W_\numobs \Z_\numobs)^\top}_{\B_\numobs}.
\end{align}
Equipped with the last relation, it suffices to verify $\opnorm{
  \A_\numobs } \stackrel{\smallop{p}}{\longrightarrow} 0$ and $ \opnorm{\B_\numobs} \stackrel{\smallop{p}}{\longrightarrow} 0$.  We
begin by observing that
\begin{align*}
  \Prob[\opnorm{\A_\numobs} > \epsilon] & \leq \Prob[\tr(\A_\numobs) >
    \epsilon ] \\
& \leq \Prob \left[\frac{\max_{i \in [\numobs]}
  \|\z_i\|_2^2}{\tuneParScaled_\numobs} \Nsum{i}
    \tuneParScaled_\numobs \| \w_i \|_2^2 > \epsilon \right].
\end{align*}
Now from ~\cref{eqn:Id-minus-qqT-expression}, we have the upper bound
\begin{align*}
  \sum_{i = 1}^\numobs \tuneParScaled_\numobs \| \w_i\|_2^2 = \Dim -
\tr(\A_\numobs) - \tr(\B_\numobs) \leq \Dim.
\end{align*}
Thus, we have
\begin{align*}
    \Prob[\opnorm{\A_\numobs} > \epsilon] & \leq \Prob \left[
      \frac{\max_{i \in [\numobs]}
        \|\z_i\|_2^2}{\tuneParScaled_\numobs} > \frac{\epsilon}{\Dim}
      \right].
\end{align*}
Combined with the asymptotic negligibility assumption
in~\ref{assumption:A3}, this bound implies that $\opnorm{ \A_\numobs}
\stackrel{\smallop{p}}{\longrightarrow} 0$, as desired.
j

On the other hand, using the operator-norm bound on the matrix
\mbox{$\MyIdDim - \W_\numobs \Z_\numobs$} from the variance stability
condition in~\ref{assumption:A3}, we have
\begin{align*}
\opnorm{\B_\numobs} & = \opnorm{\MyIdDim - \W_\numobs \Z_\numobs }^2
\stackrel{\smallop{p}}{\longrightarrow} 0.
\end{align*}
Putting together the pieces we conclude $\tuneParScaled_\numobs
\Nsum{i} \w_i \w_i^\top \stackrel{\smallop{p}}{\longrightarrow} \MyIdDim$ as claimed.

\subsection*{\textbf{Verifying the vanishing norm condition}}

Using the expression~\eqref{eqn:Wn-expression} for the weight vector
$\w_i$, we find that
\begin{align*}
  \|\w_i\|^2_2 \leq \frac{1}{(\tuneParScaled_\numobs/2 +
    \|\z_i\|_2^2)^2} \cdot \opnorm{\MyIdDim-\W_{i-1} \Z_{i - 1} }^2 \,
  \|\z_i\|_2^2.
\end{align*}
Doing a calculation similar to the derivation~\eqref{eqn:last-line} we
have \mbox{$ \opnorm{\parDelta_{i} }^2 \leq \opnorm{\parDelta_{0}}^2 =
  1$}. Combining the last two observations with the asymptotic
negligibility assumption \mbox{$\frac{1}{\tuneParScaled_\numobs}
  \max_{i \in [\numobs]} \|\z_i\|_2^2 \stackrel{\smallop{p}}{\longrightarrow} 0$} yields
\begin{align*}
  \tuneParScaled_\numobs \max_{i \in [\numobs]} \; \| \w_i \|_2^2 \leq
  \frac{4}{\tuneParScaled_\numobs} \cdot \max_{i \in [\numobs]} \|
  \z_{i} \|_2^2 \stackrel{\smallop{p}}{\longrightarrow} 0,
\end{align*}
as claimed. This completes the proof of
Lemma~\ref{lem:stability-lemma}.


\section{Numerical experiment supplement}
In this section, we present the results of additional experiments complementing those in \cref{sec:applications}.

\subsection{Multi-armed bandits:}
\label{sec:bandits-simulation}
%


In this section, we repeat the experiment
of~\Cref{sec:bandits-simulation} using covariates $\{ \x_i \}_{i =
  1}^\numobs$ generated by the following three bandit algorithms:
\begin{enumerate}[label=(\alph*)]
    \item the Thompson sampling
      algorithm~\citep{thompson1933likelihood}.
     \item a standard $\greedy$-greedy
       algorithm~\citep{lattimore2020bandit}.
    \item the upper confidence bound (UCB) strategy based on the
      paper~\citep{jamieson2014lil}
\end{enumerate}
As shown in Figures~\ref{fig:Bandits-all-plots-arm-1}
and~\ref{fig:Bandits-all-plots-arm-2}, online debiasing provides
appropriate coverage for all confidence levels, all bandit algorithms,
and both of the coordinates $\theta_1^*$ and $\theta_2^*$.  In
contrast, the lower tail estimates based on the \OLS estimate severely
undercover for all bandit algorithms and parameters, whereas the
$W$-decorrelation procedure undercovers for several configurations
despite having uniformly larger widths than online debiasing in all
experiments.  Finally, the concentration CIs lead to 100\% coverage
for all confidence levels, but this coverage is based on intervals
that are substantially and uniformly larger than the CIs returned by
online debiasing.

\subsection{Linear bandits}
\label{sec:LinBanditsFullSimulation}
In this section, we repeat the experiment of \cref{sec:LinBanditsSimSmall} with alternative settings of the ridge regression regularization parameter $\lambdaRidge \in \{1, 10\}$ for the concentration inequality CIs. 
Recall that given a dataset $\{\x_i, \y_i \}_{i = 1}^{n}$ from the model~\eqref{eqn:linear-reg-model},
the ridge regression estimate \thetaRidge is defined as
\begin{align}
   \thetaRidge \in \arg\max_{\theta} \; \left\lbrace \sum_{i = 1}^n(\y_i - \x_i^\top \theta)^2 + 
   \lambda_{\text{ridge}}  \cdot \| \thetaVec\|_2^2 \right\rbrace
 \end{align} 
Here, $\lambda_{\text{ridge}} > 0$ is the regularization parameter for the ridge regression, and 
$\| \thetaVec\|_{2}$ denotes the $\ell_{2}$ norm of the vector $\thetaVec$. 
In Figure~\ref{fig:LinBanditsFullSim}, we observe that the concentration based CIs always 
provide appropriate coverage but are uniformly larger than the online debiasing CIs for both $\lambdaRidge = 1$ and $\lambdaRidge = 10$ and for both parameters $\theta_1^*$ and $\theta_2^*$.  


\begin{figure}[H]
\begin{subfigure}{\linewidth}
\includegraphics[width=.32\textwidth]{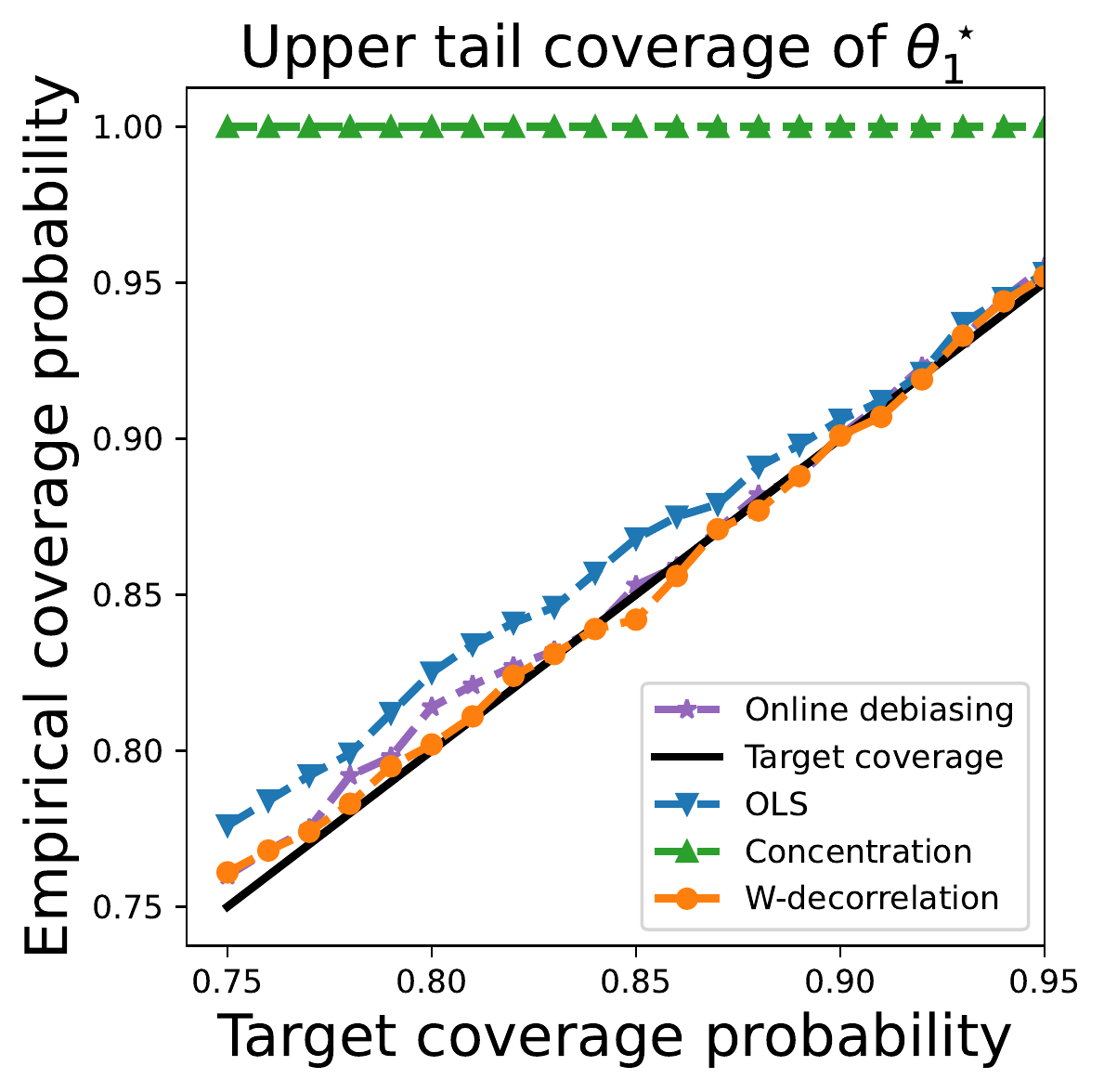}
\includegraphics[width=.32\textwidth]{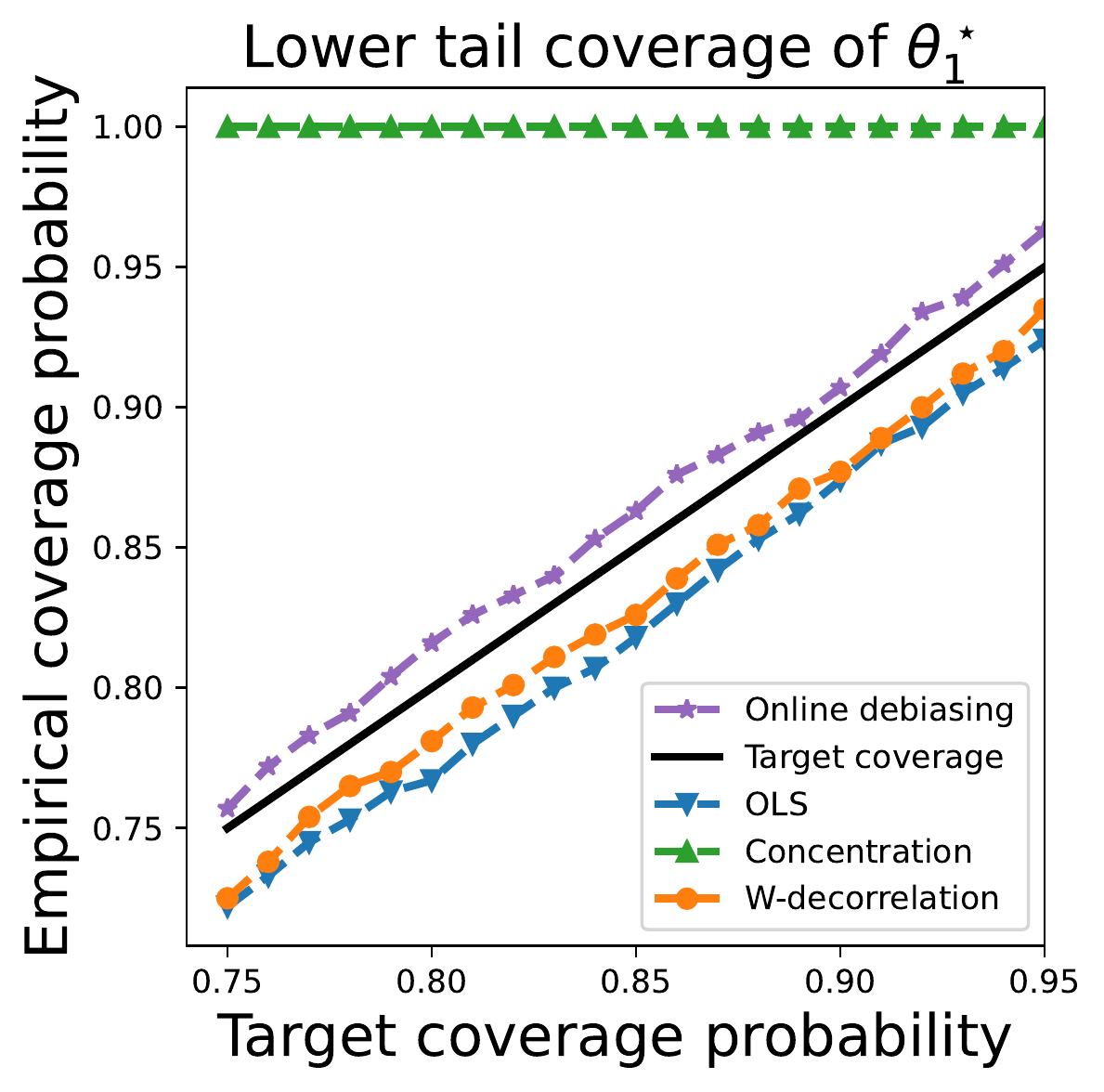}
\includegraphics[width=.32\textwidth]{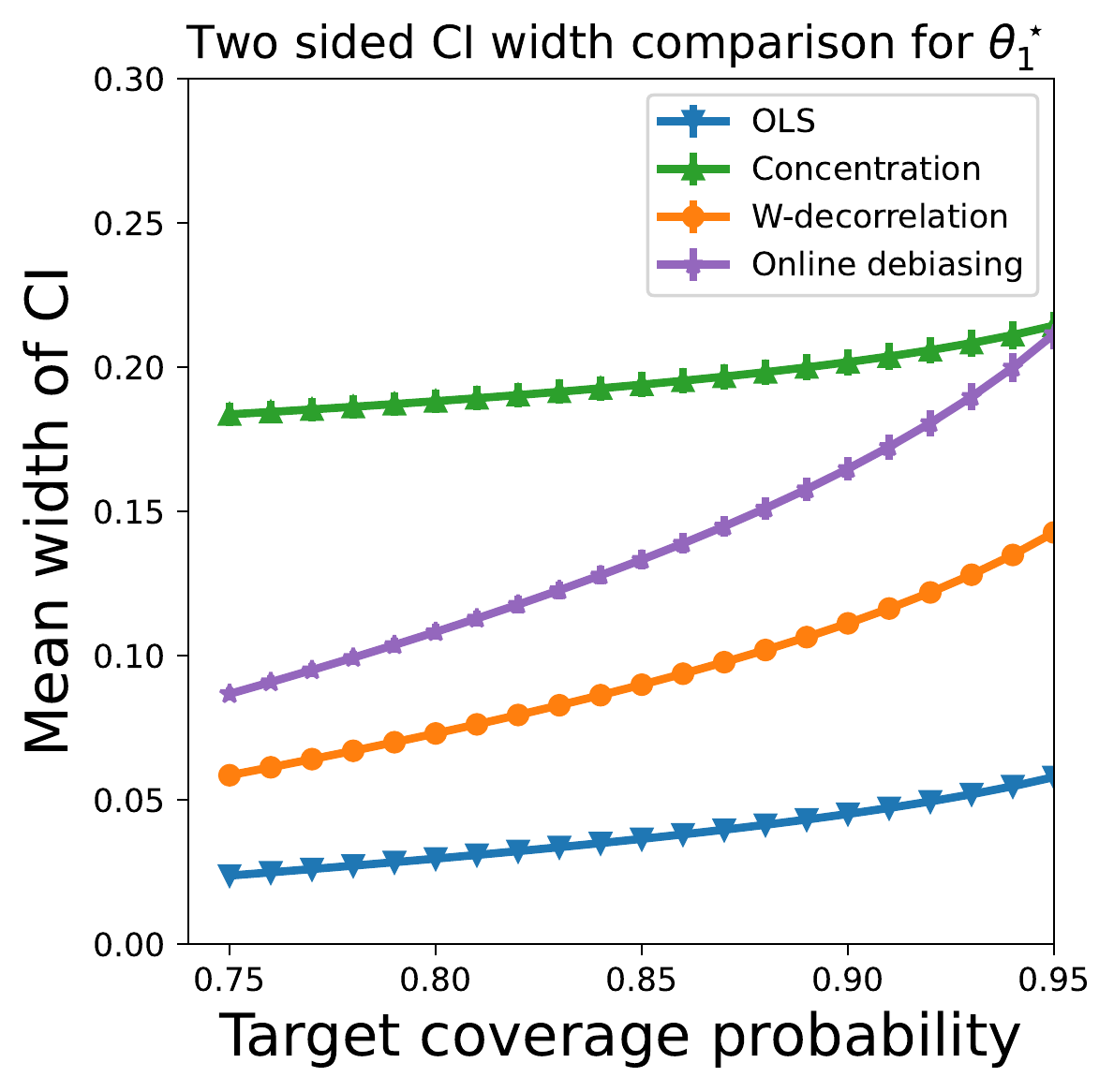}
\caption{Thompson sampling algorithm}
\end{subfigure}
\\
\begin{subfigure}{\linewidth}
\includegraphics[width=.32\textwidth]{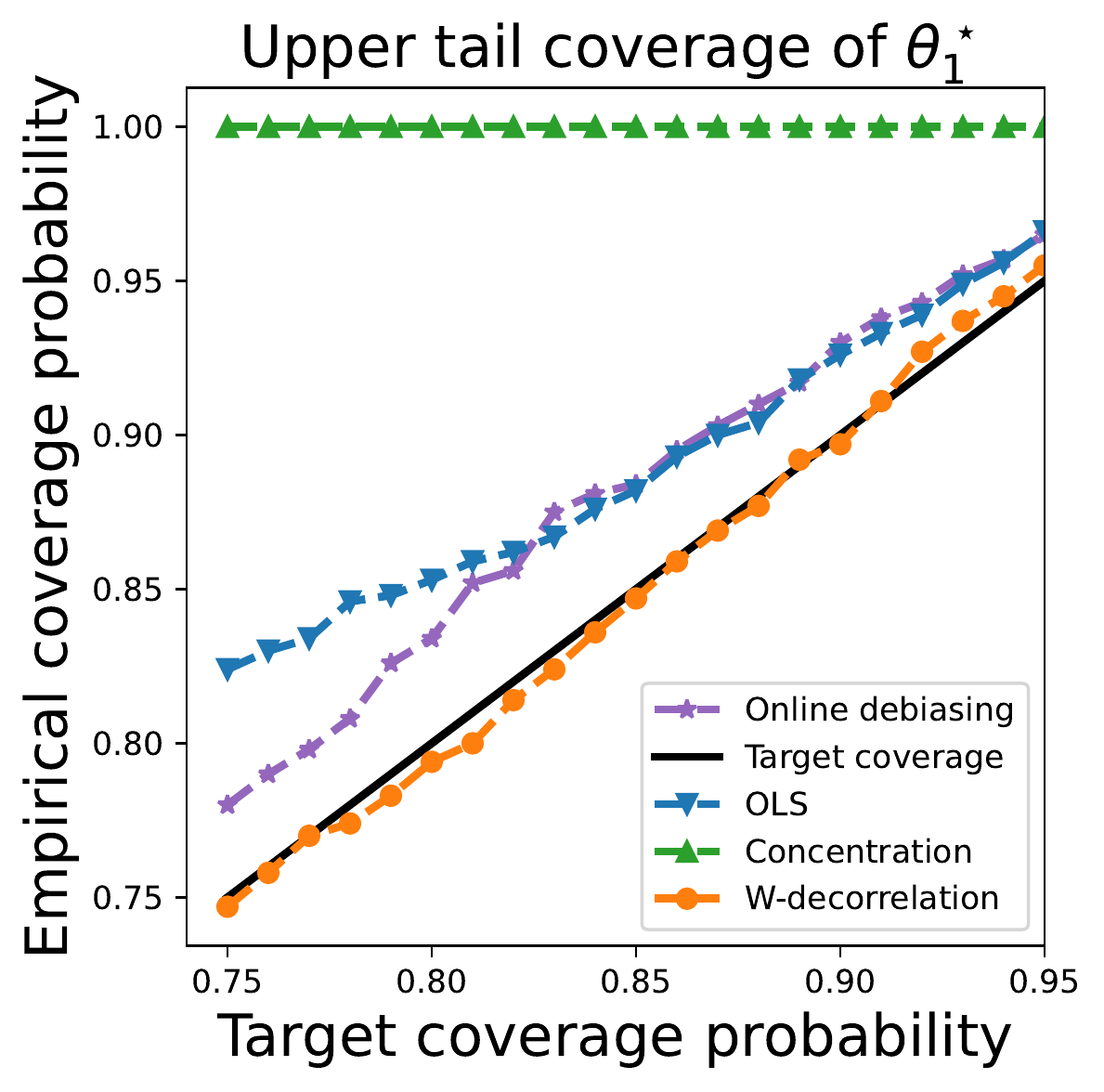}
\includegraphics[width=.32\textwidth]{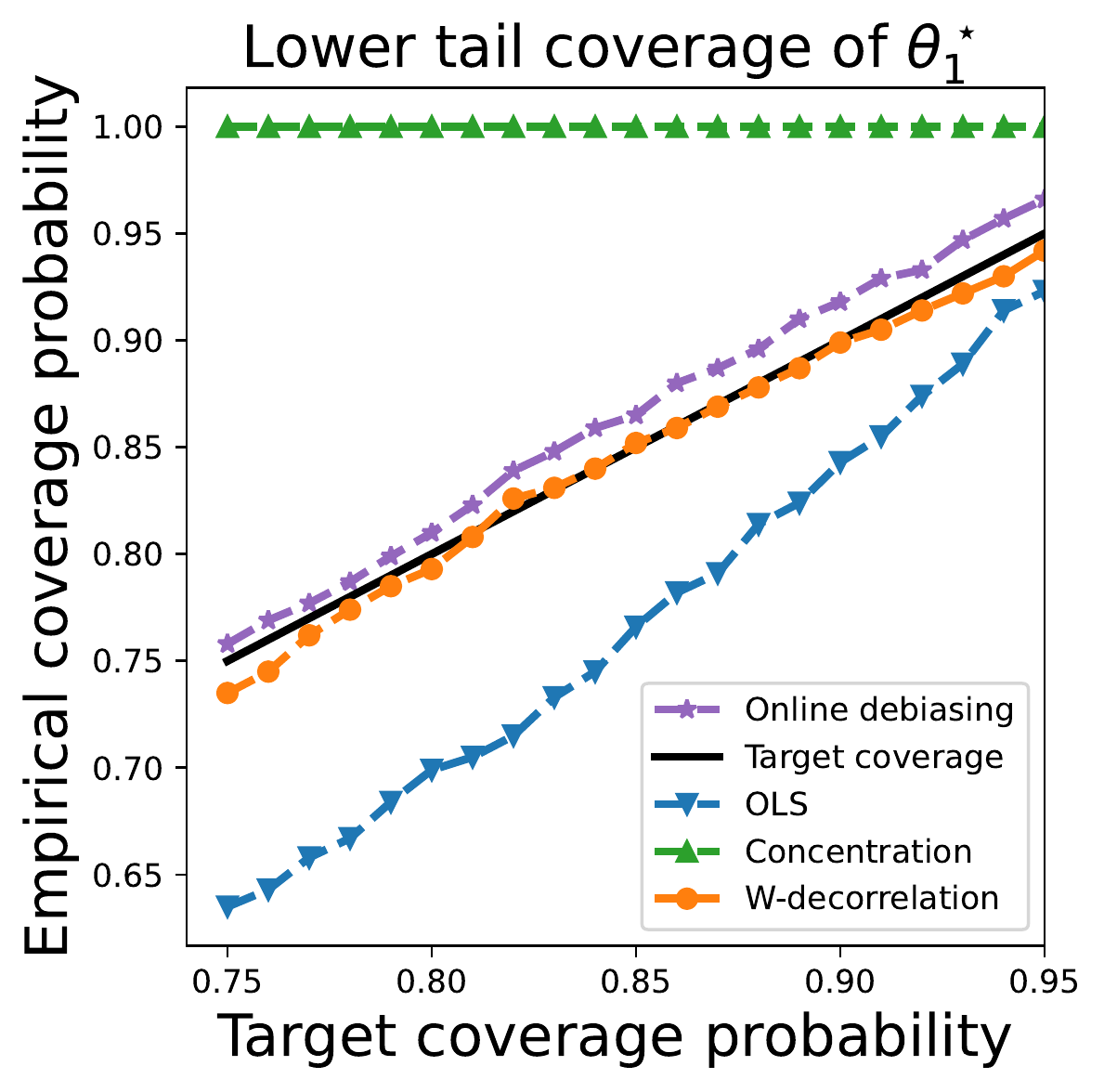}
\includegraphics[width=.32\textwidth]{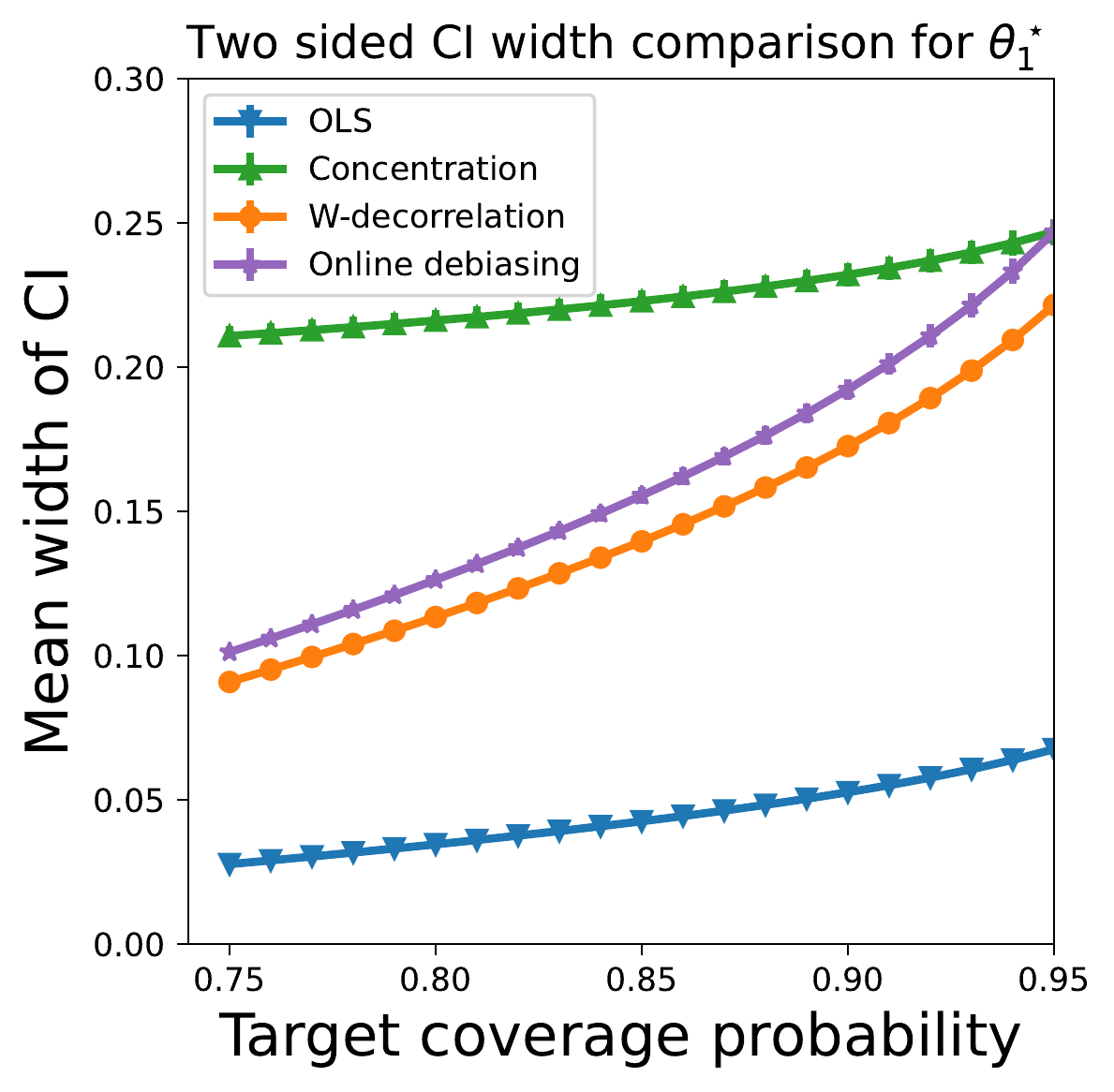}
\caption{$\greedy$-greedy algorithm}
\end{subfigure}
\begin{subfigure}{\linewidth}
\includegraphics[width=.33\textwidth]{figs/bandits_figs/theory_choice_plots/Fig_UCB_upper}
\includegraphics[width=.33\textwidth]{figs/bandits_figs/theory_choice_plots/Fig_UCB_lower}
\includegraphics[width=.35\textwidth]{figs/bandits_figs/theory_choice_plots/Fig_UCB_CI_width}
\caption{Upper confidence bound (UCB) algorithm}
\end{subfigure}
\caption{
Average coverage and width of confidence intervals for $\theta_1^*$ across 1000 independent replications  of a multi-armed bandit experiment~\eqref{model:bandits} with $\thetastar \equiv (\theta_1^*, \theta_2^*) =  (0.3, 0.3)^\top$. The covariates $\{\x_i\}_{i = 1}^{1000}$
    were selected using a) Thompson sampling~\citep{thompson1933likelihood},
    (b) the $\greedy$-greedy algorithm~\citep{lattimore2020bandit},  and (c) the upper confidence bound algorithm (UCB)~\cite{jamieson2014lil}.
    The error bars represent $\pm 1$ standard error. 
    \textbf{Left} and \textbf{Center:} Coverage of one-sided $1 - \alpha$
    intervals for $\theta_1^*$. \textbf{Right:} Width of two-sided $1 - \alpha$ intervals for $\theta_1^*$. 
    See Appendix~\ref{sec:bandits-simulation} for details.}
\label{fig:Bandits-all-plots-arm-1}
\end{figure}


\begin{figure}[H]
\begin{subfigure}{\linewidth}
\includegraphics[width=.32\textwidth]{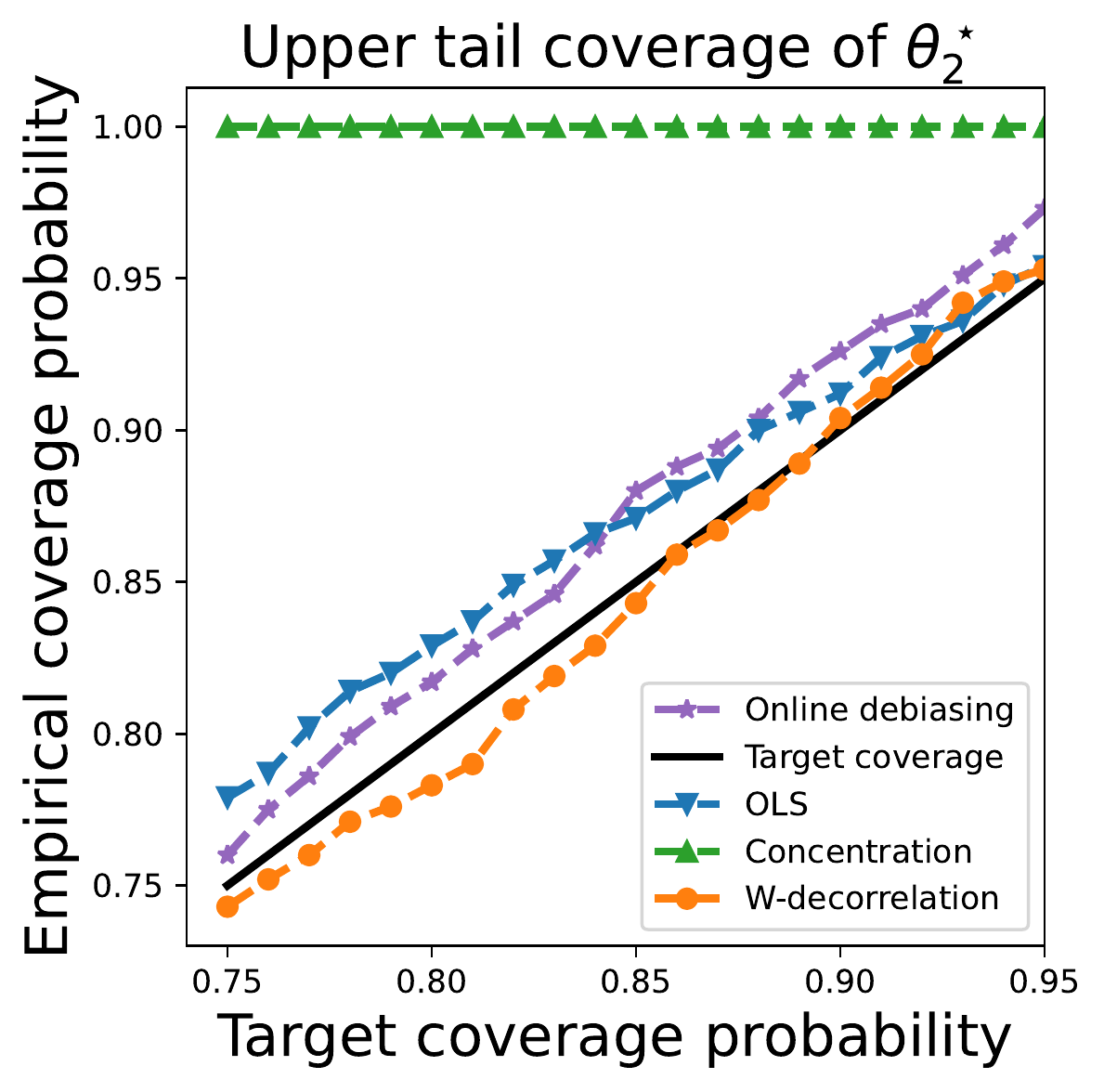}
\includegraphics[width=.32\textwidth]{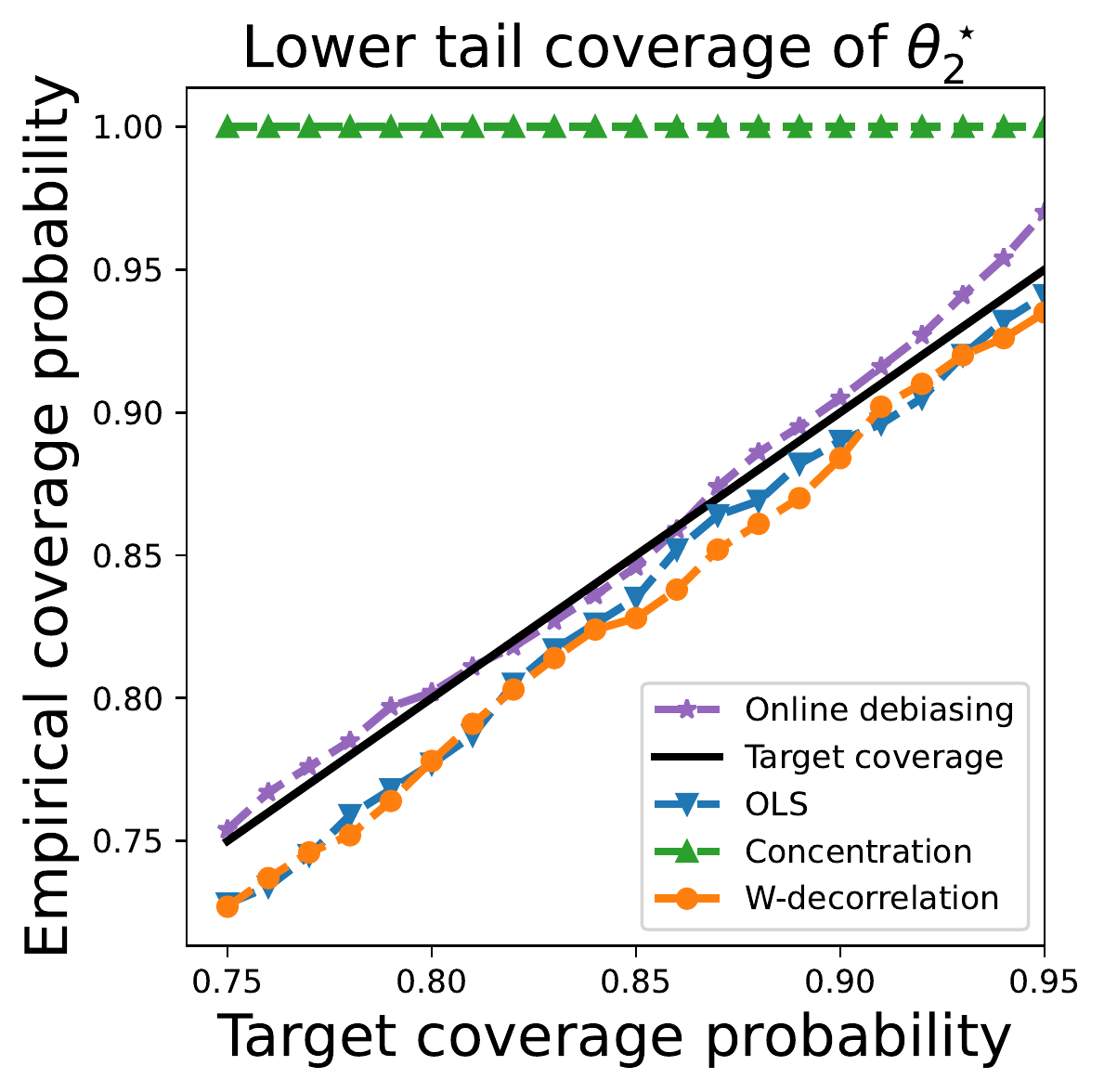}
\includegraphics[width=.32\textwidth]{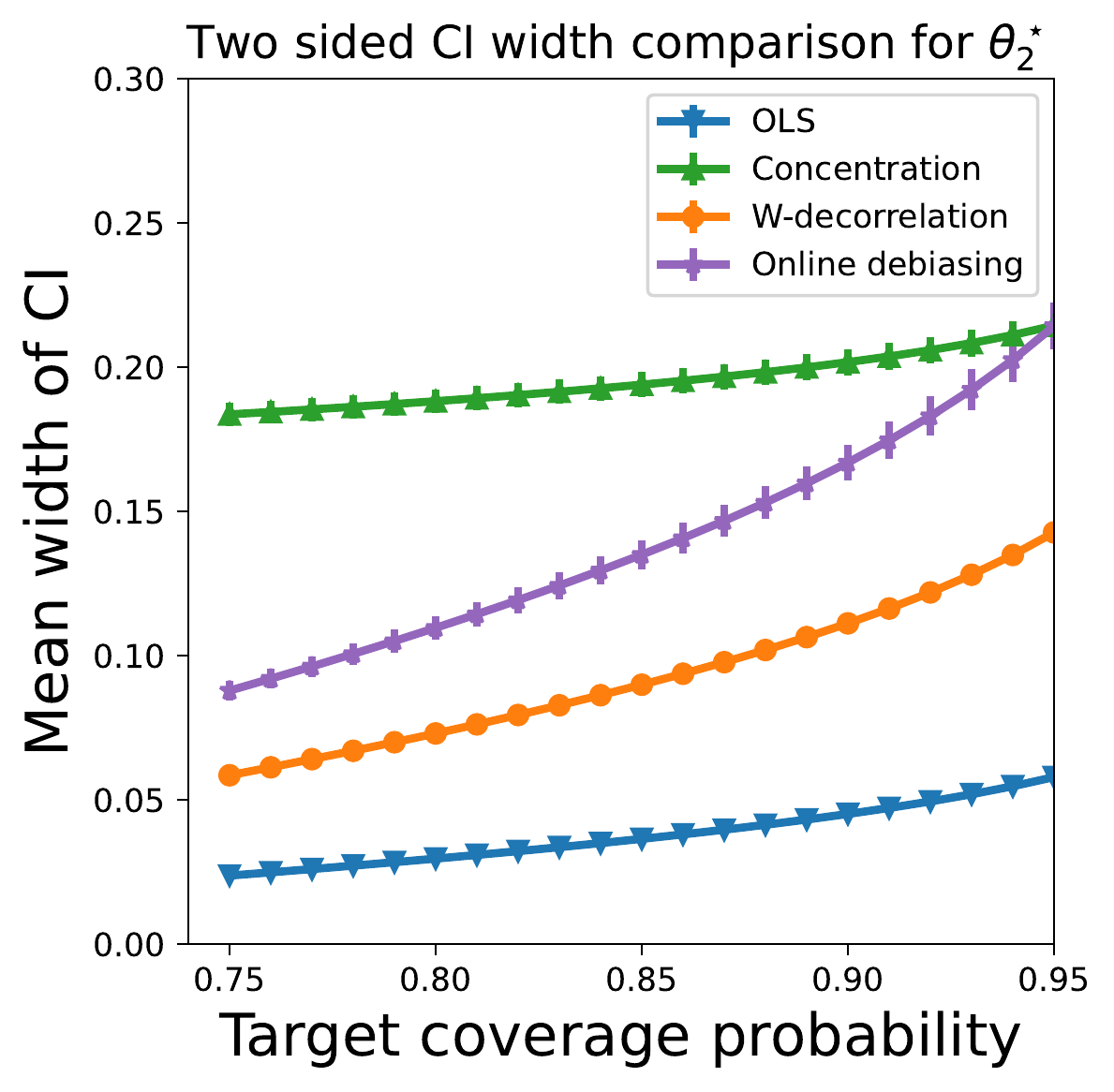}
\caption{Thompson sampling algorithm}
\end{subfigure}
\\
\begin{subfigure}{\linewidth}
\includegraphics[width=.32\textwidth]{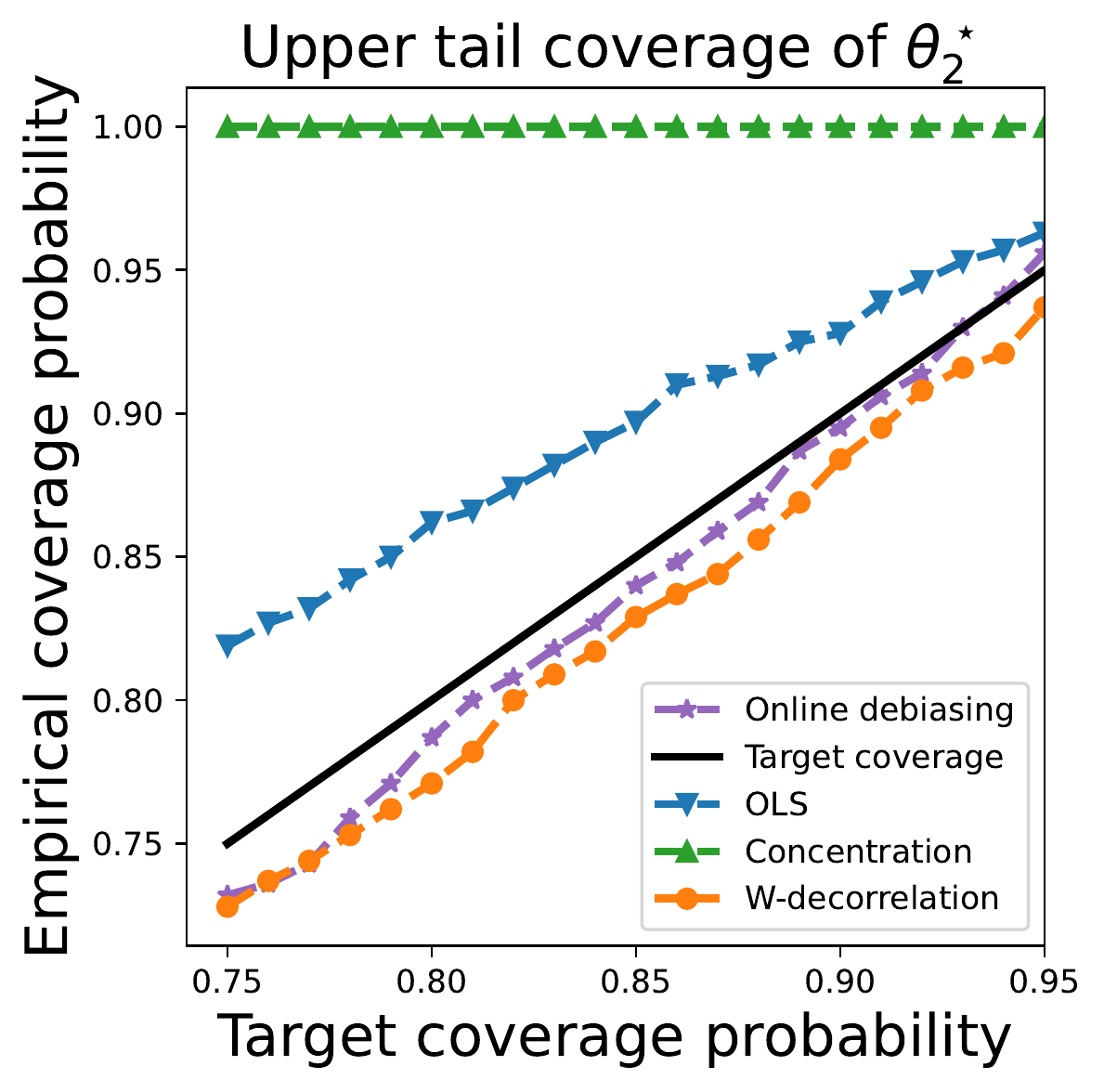}
\includegraphics[width=.32\textwidth]{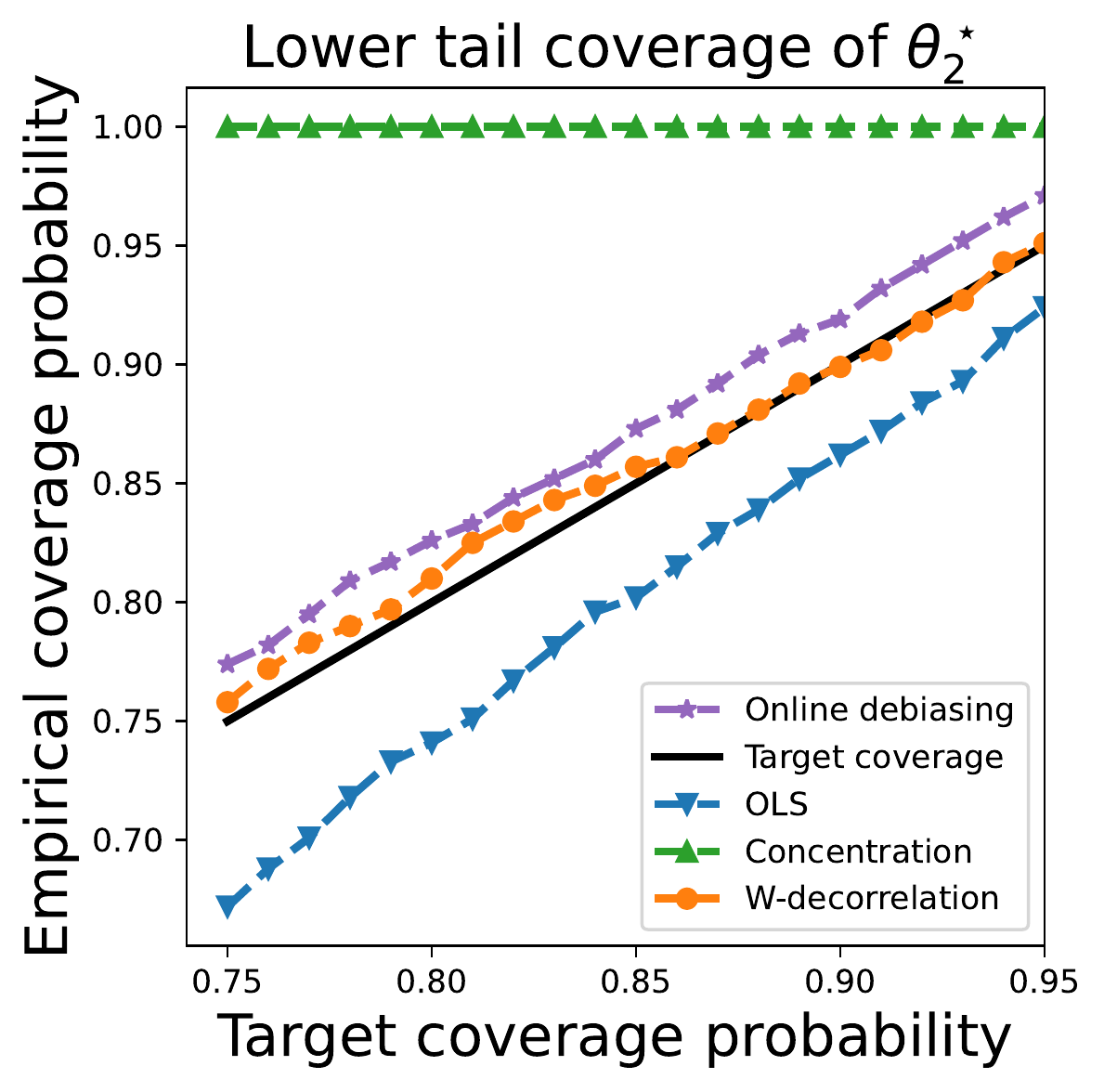}
\includegraphics[width=.32\textwidth]{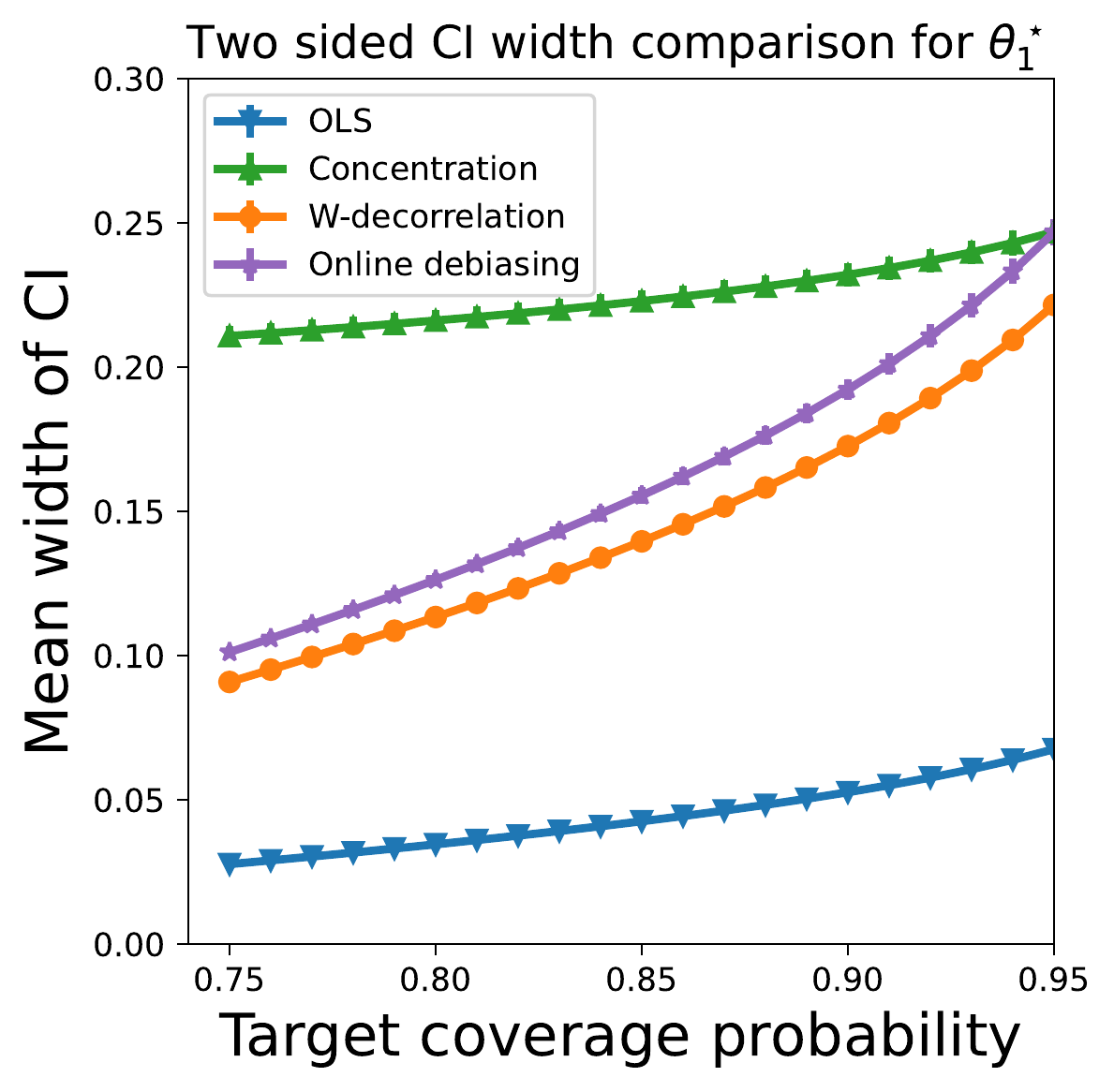}
\caption{$\greedy$-greedy algorithm}
\end{subfigure}
\begin{subfigure}{\linewidth}
\includegraphics[width=.32\textwidth]{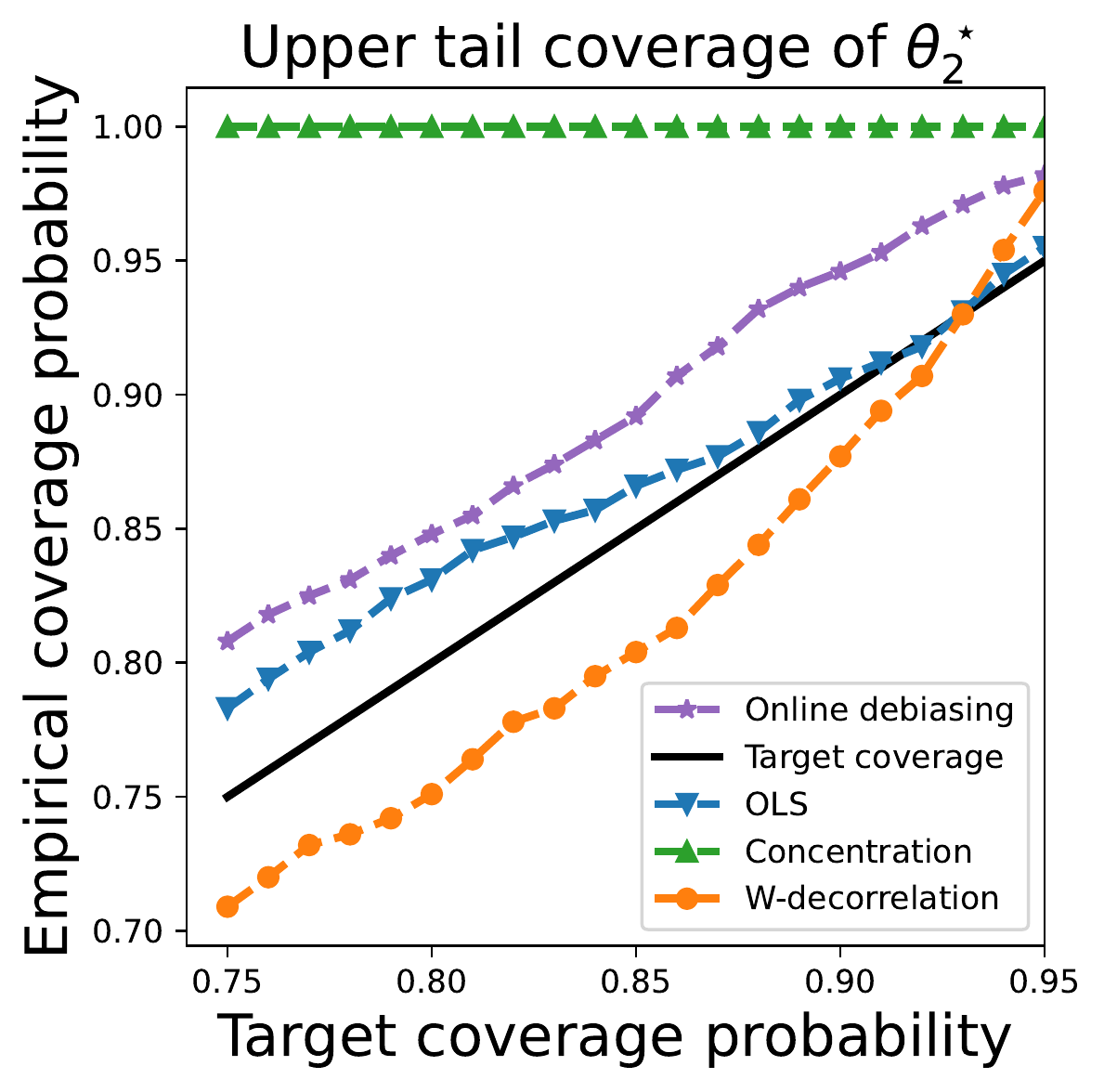}
\includegraphics[width=.32\textwidth]{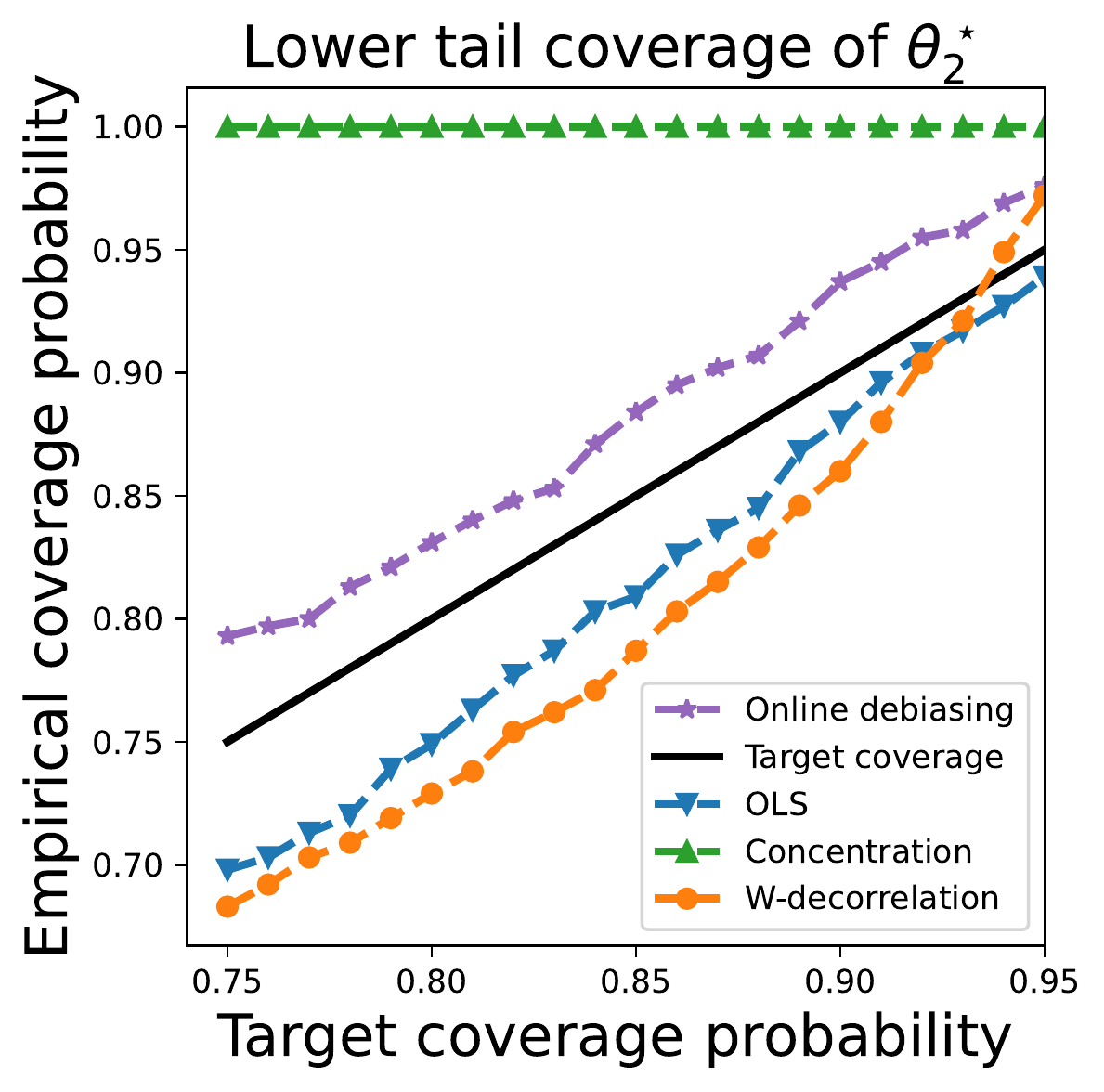}
\includegraphics[width=.32\textwidth]{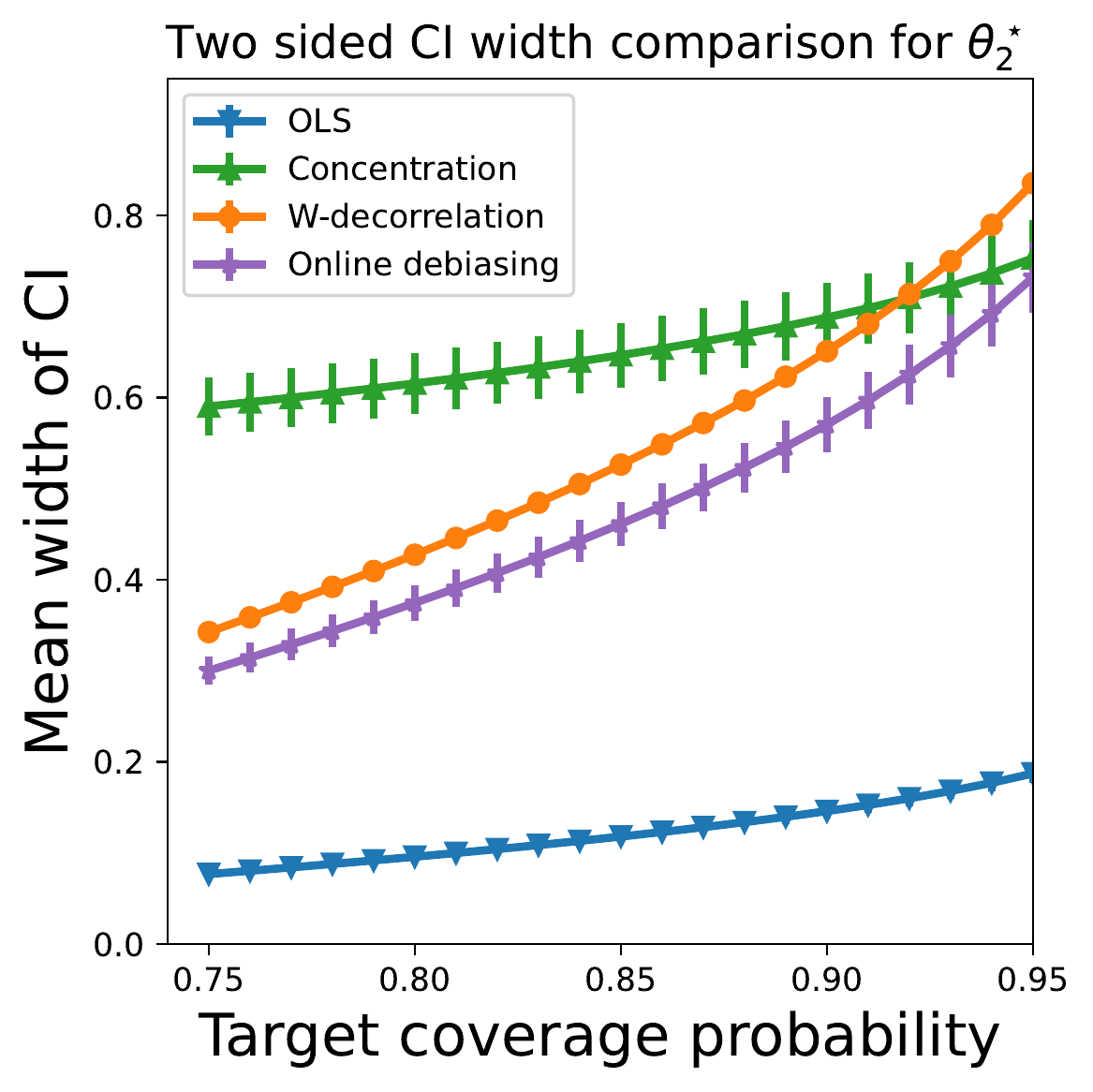}
\caption{Upper confidence bound (UCB) algorithm}
\end{subfigure}
\caption{
Average coverage and width of confidence intervals for $\theta_2^*$ across 1000 independent replications  of a multi-armed bandit experiment~\eqref{model:bandits} with $\thetastar \equiv (\theta_1^*, \theta_2^*) =  (0.3, 0.3)^\top$. The covariates $\{\x_i\}_{i = 1}^{1000}$
    were selected using a) Thompson sampling~\citep{thompson1933likelihood},
    (b) the $\greedy$-greedy algorithm~\citep{lattimore2020bandit},  and (c) the upper confidence bound algorithm (UCB)~\cite{jamieson2014lil}.
    The error bars represent $\pm 1$ standard error. 
    \textbf{Left} and \textbf{Center:} Coverage of one-sided $1 - \alpha$
    intervals for $\theta_2^*$. \textbf{Right:} Width of two-sided $1 - \alpha$ intervals for $\theta_2^*$.  
    See Appendix~\ref{sec:bandits-simulation} for details. }
\label{fig:Bandits-all-plots-arm-2}
\end{figure}

\begin{figure}[H]
\begin{subfigure}{\linewidth}
    \includegraphics[width=.33\textwidth]{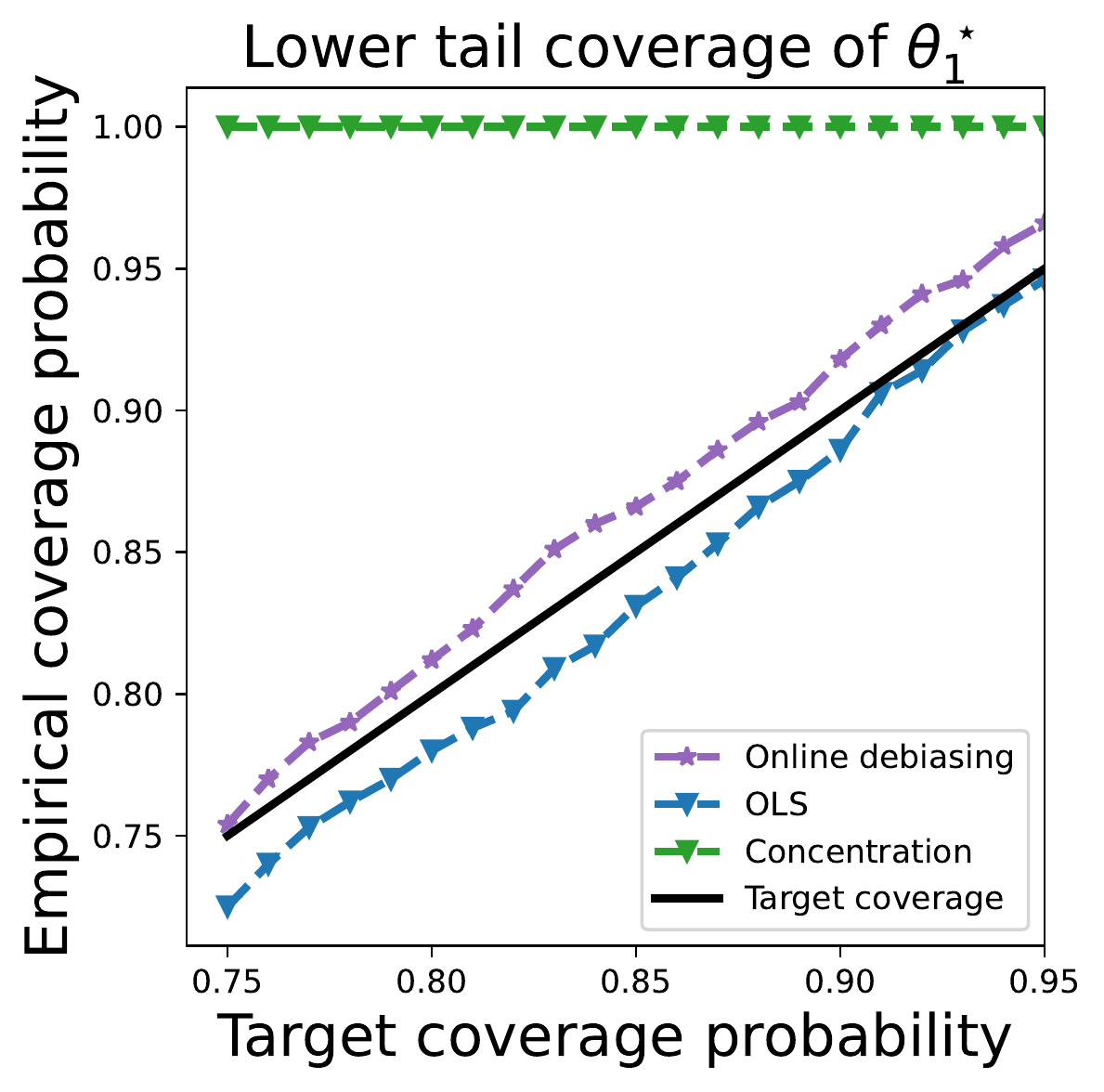}%
        \includegraphics[width=.33\textwidth]{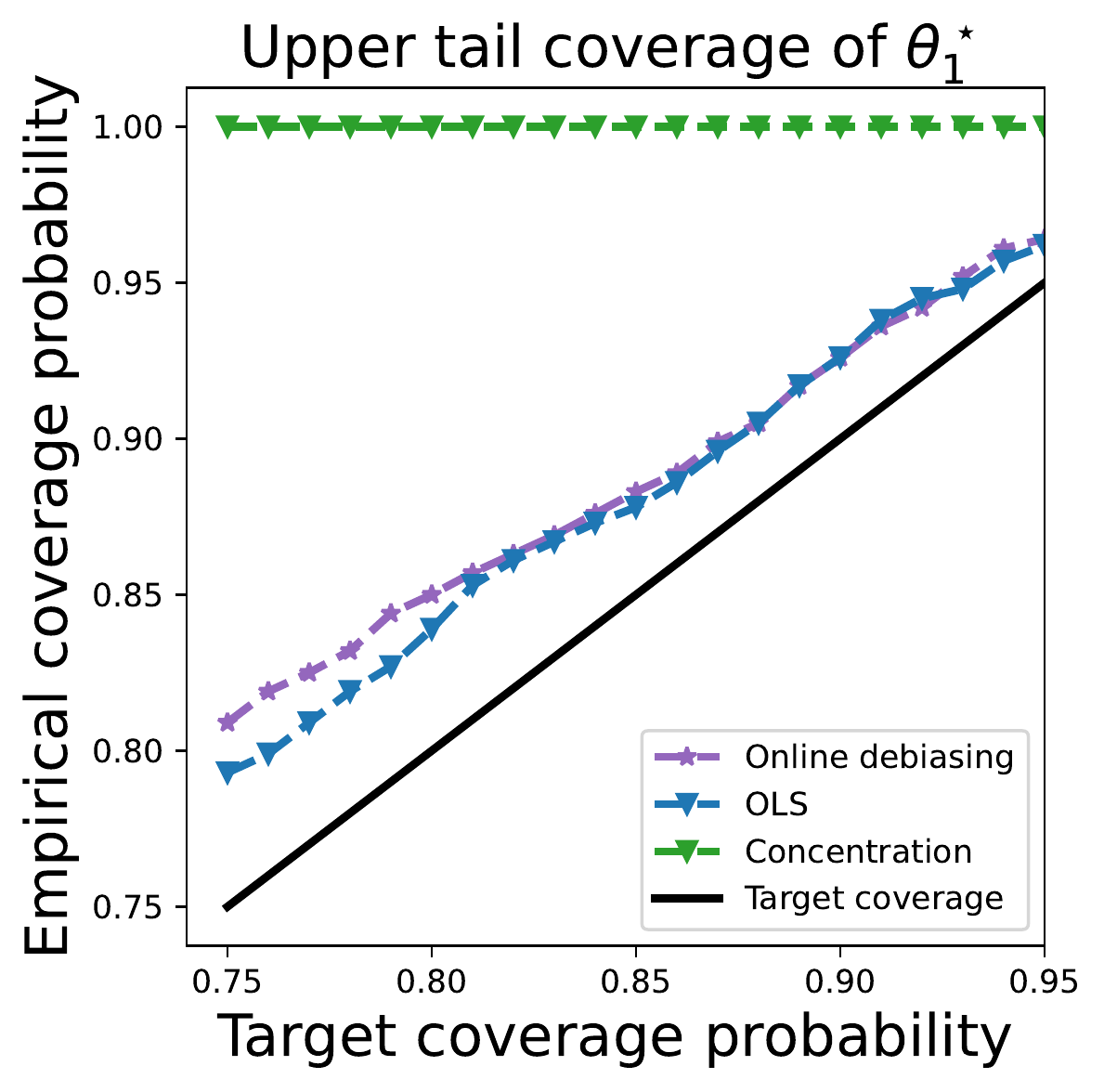}%
    \includegraphics[width=.33\textwidth]{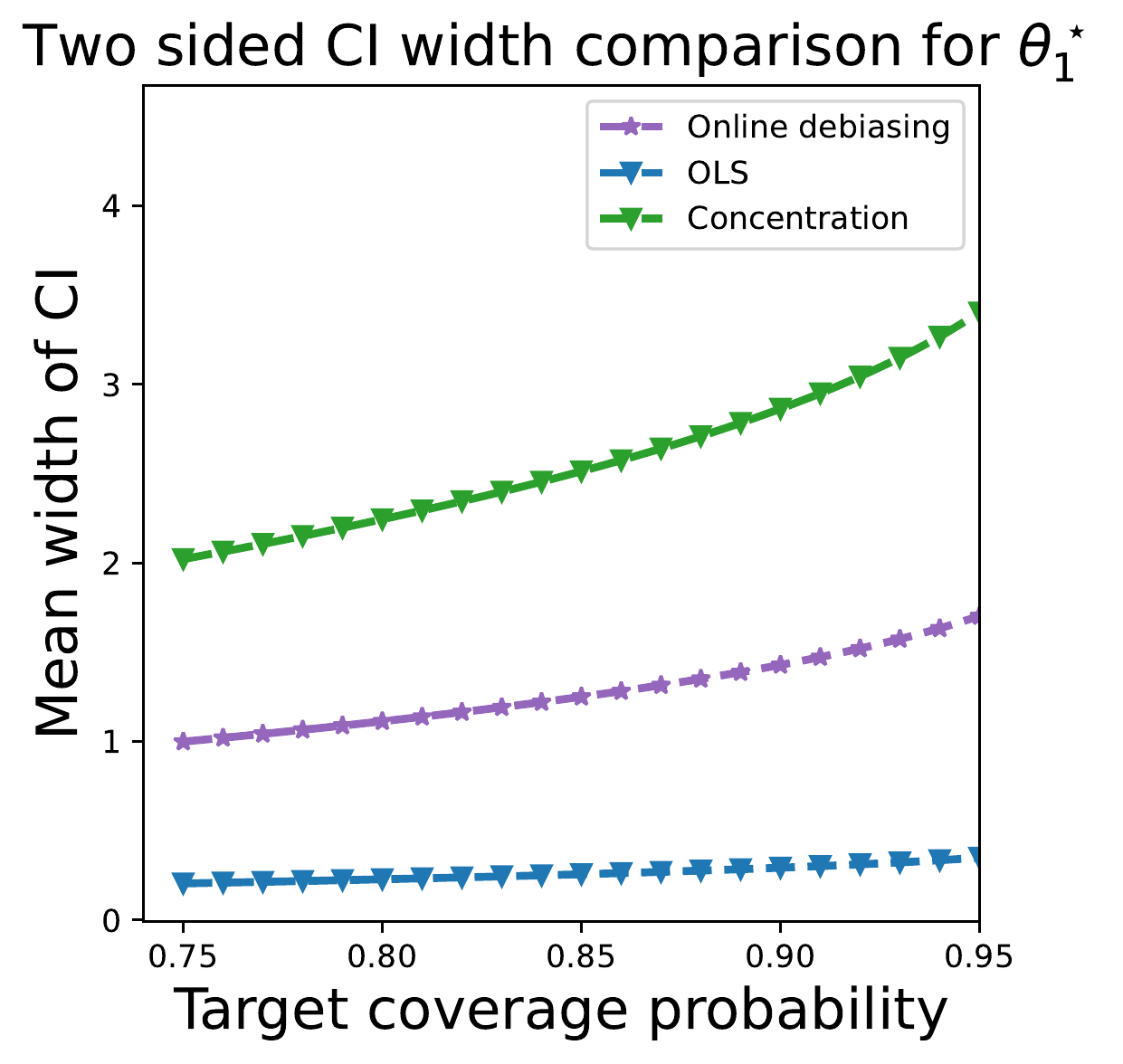} \\
    \includegraphics[width=.33\textwidth]{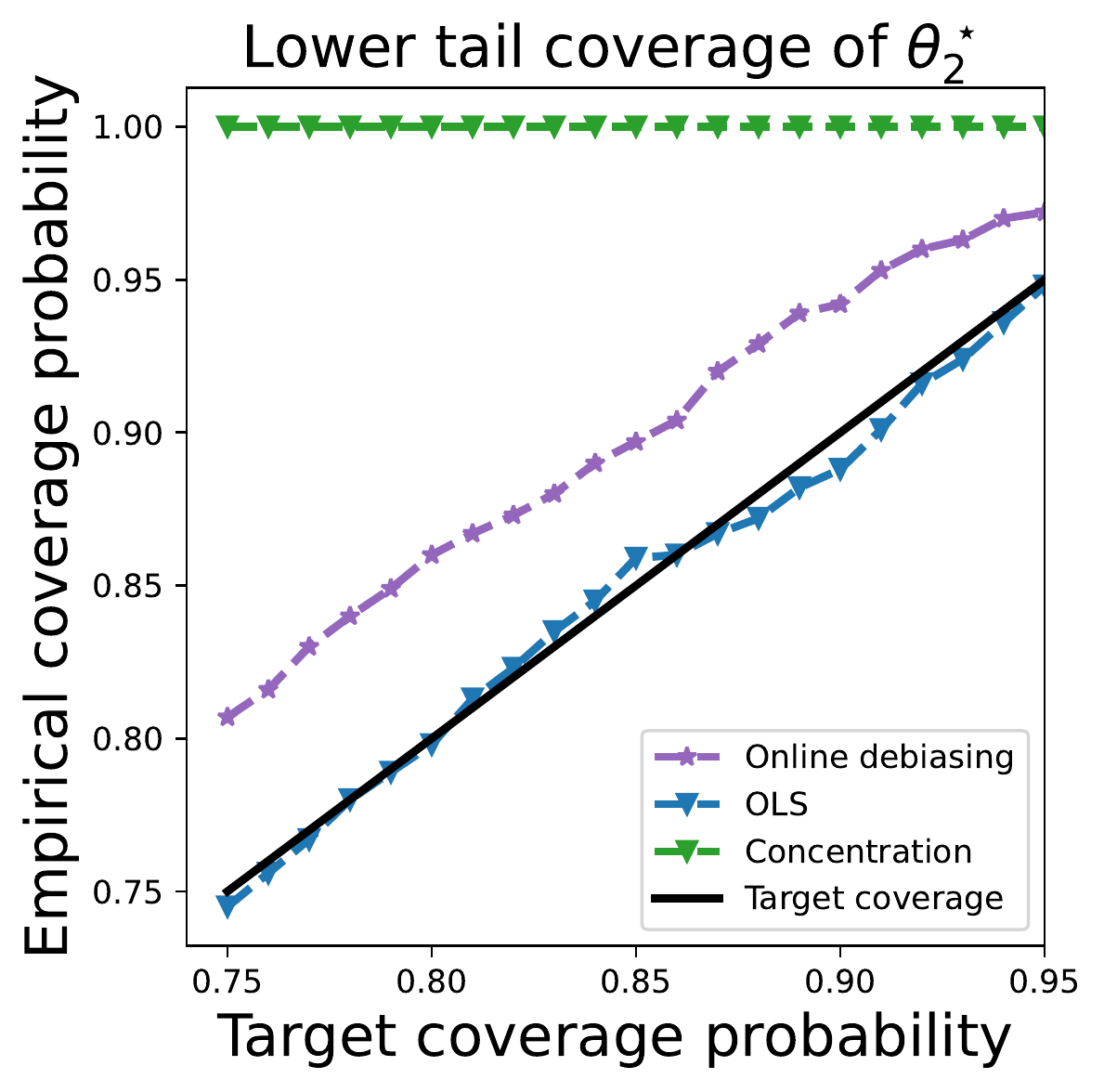}%
        \includegraphics[width=.33\textwidth]{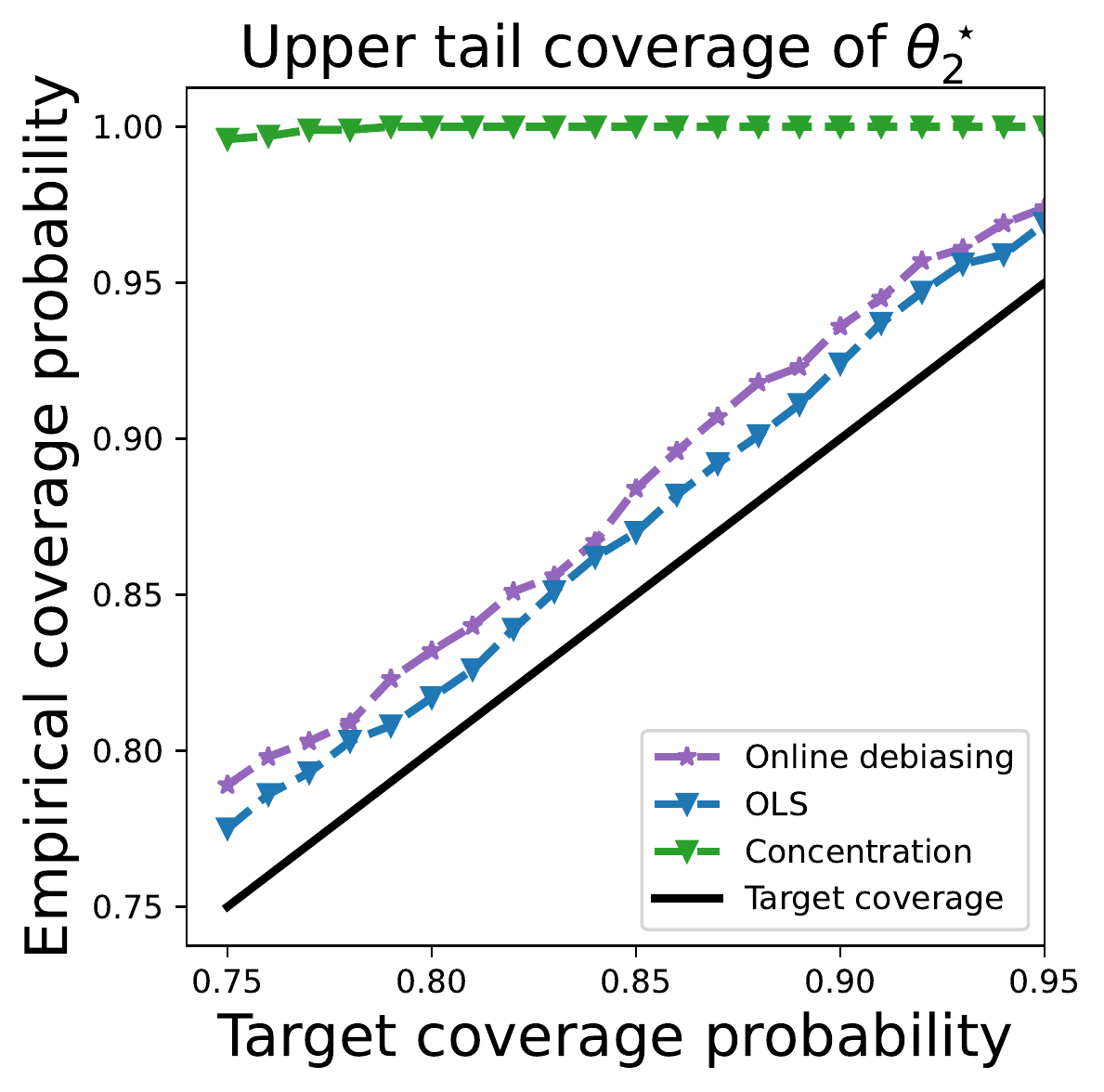}%
    \includegraphics[width=.33\textwidth]{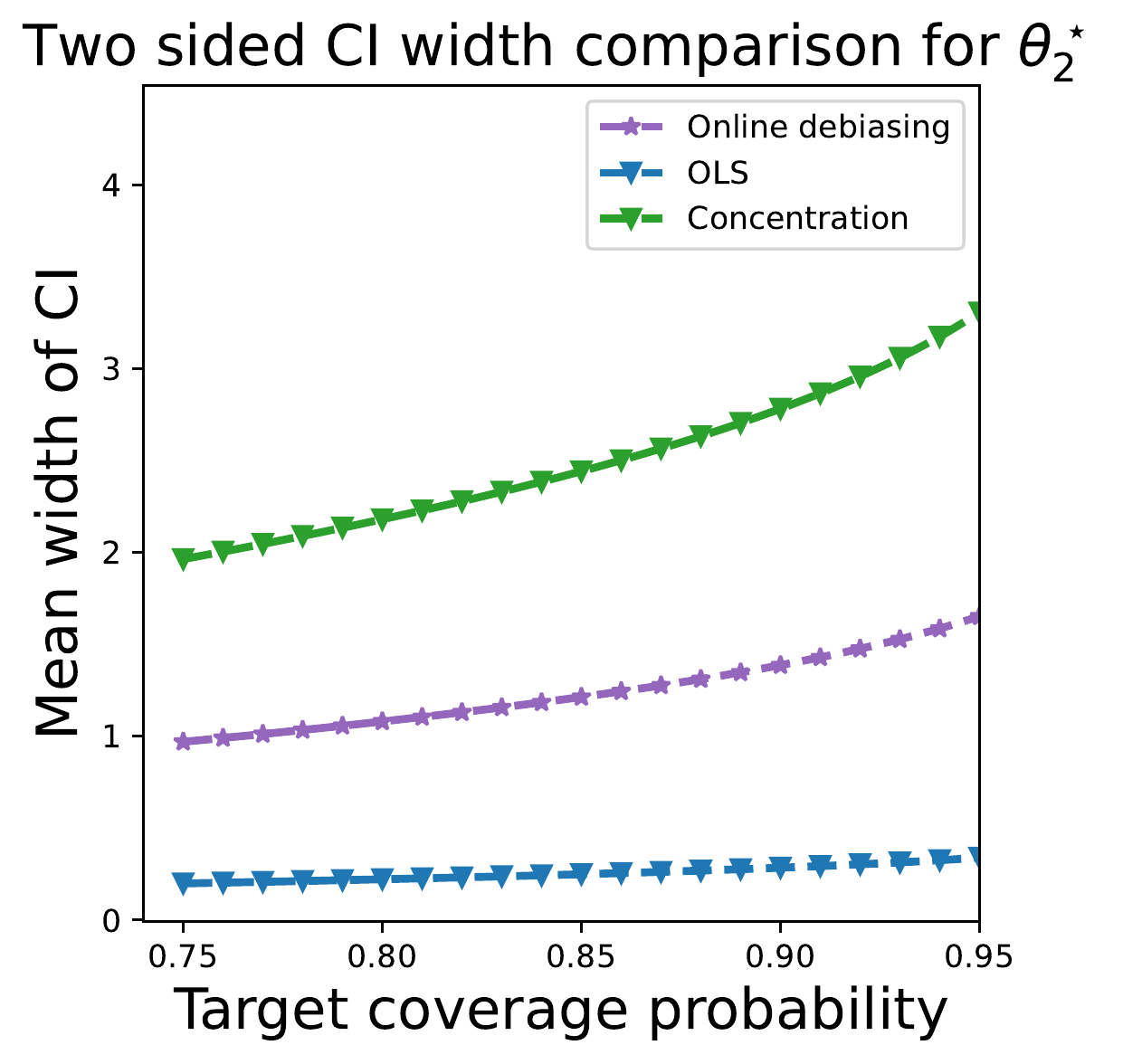}
    \caption{Plots for $\lambda_{\text{ridge}} = 1$}
\end{subfigure}
\begin{subfigure}{\linewidth}
    \includegraphics[width=.33\textwidth]{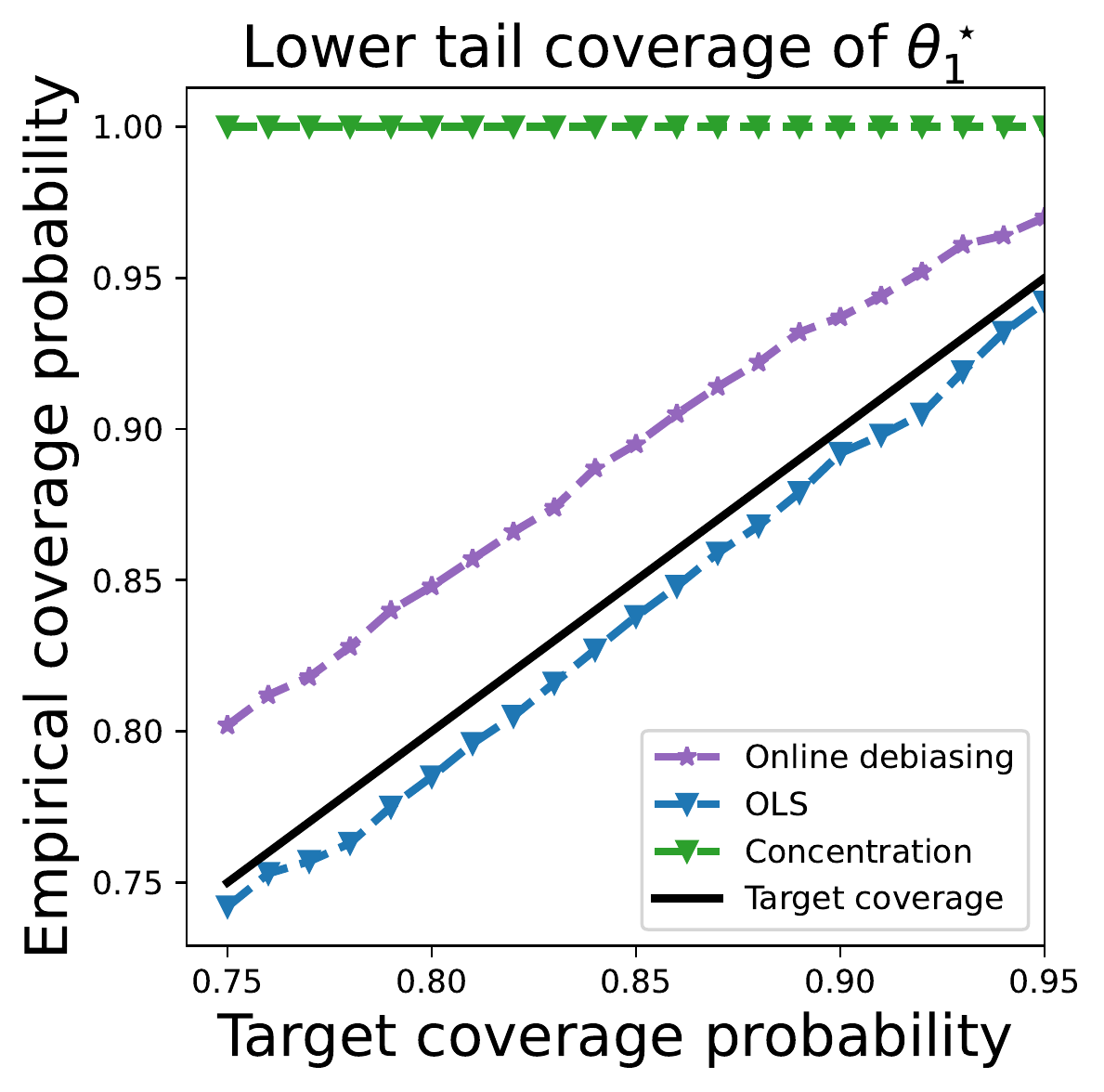}%
        \includegraphics[width=.33\textwidth]{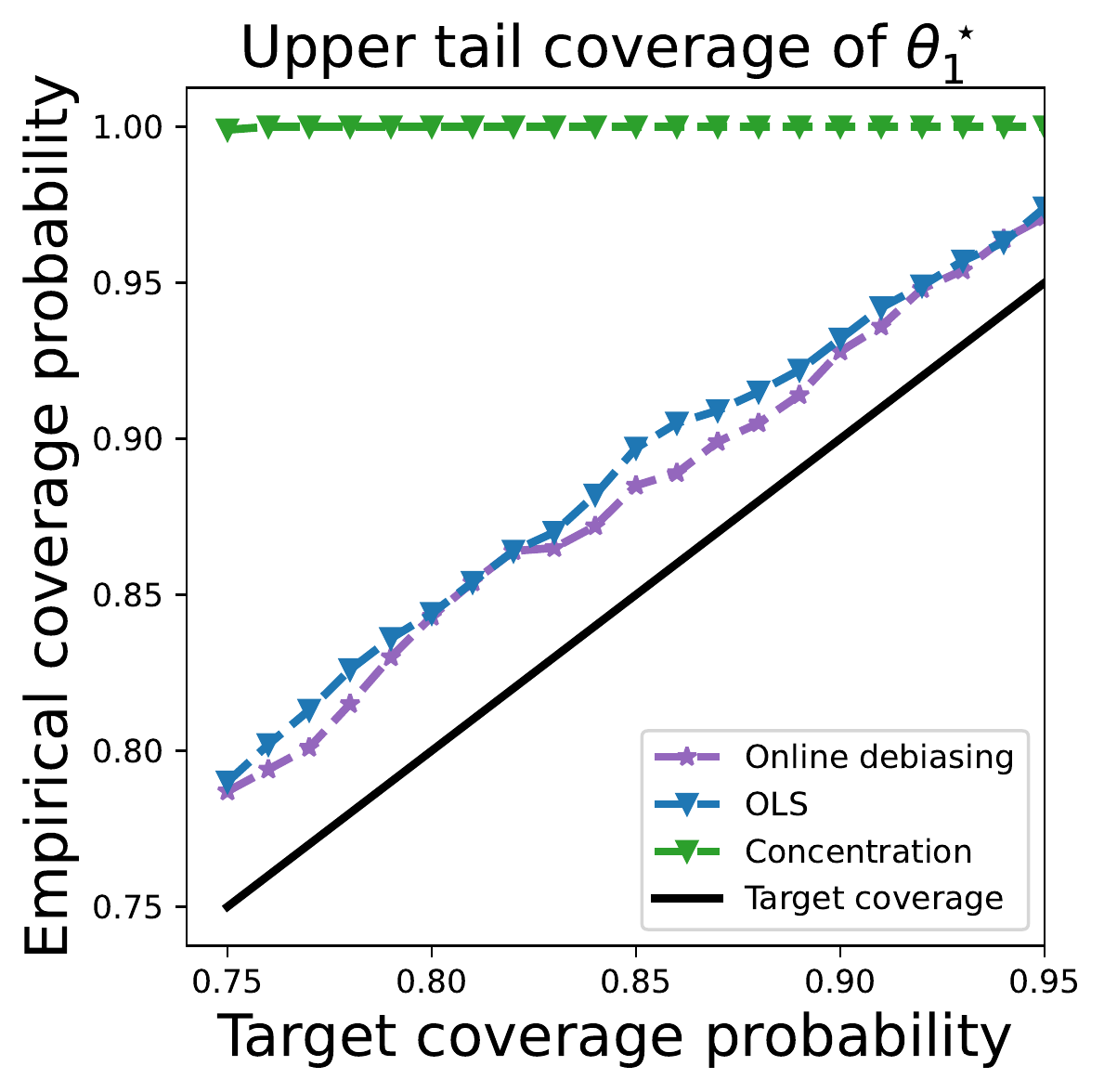}%
    \includegraphics[width=.33\textwidth]{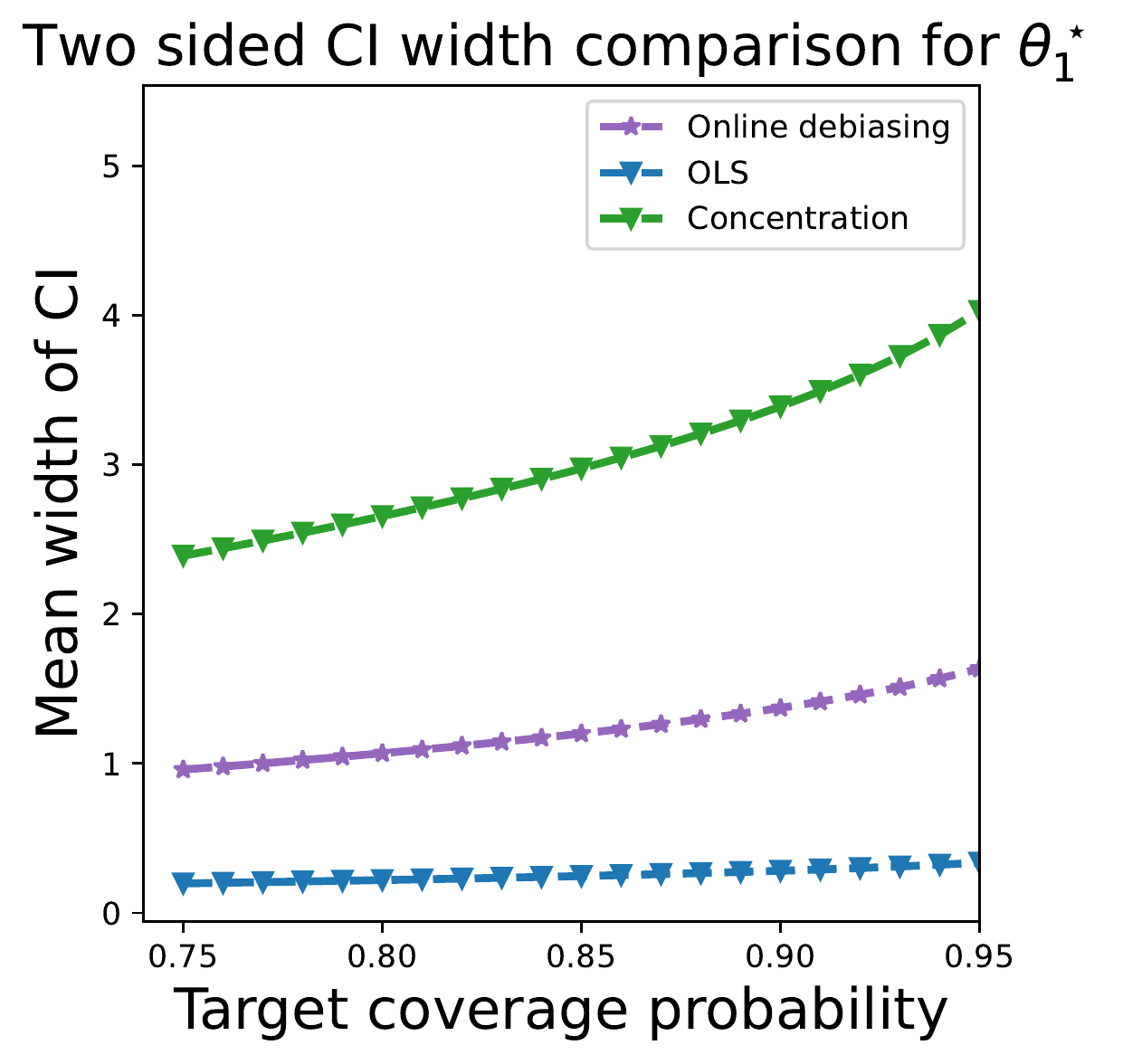} \\
    \includegraphics[width=.33\textwidth]{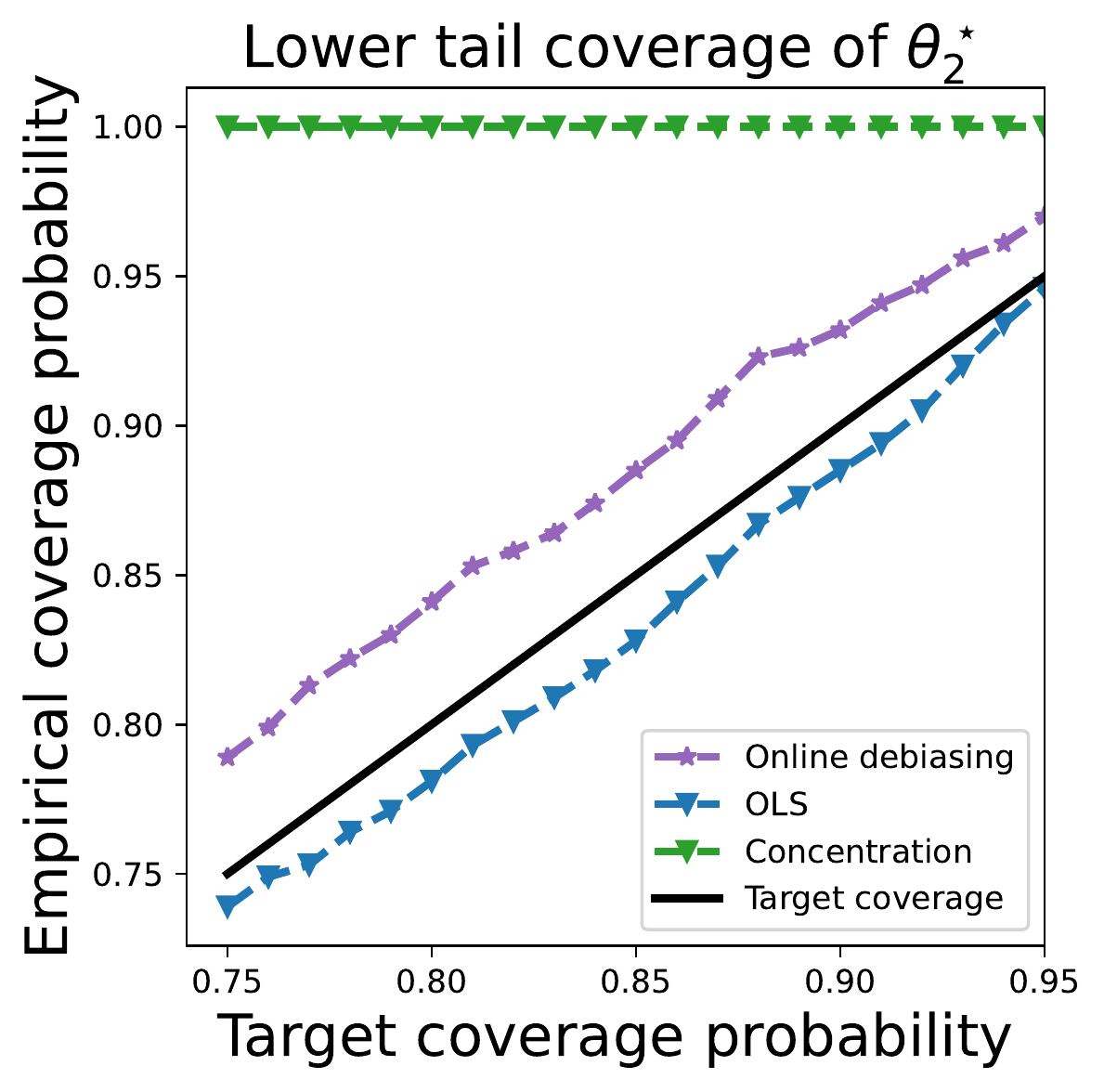}%
        \includegraphics[width=.33\textwidth]{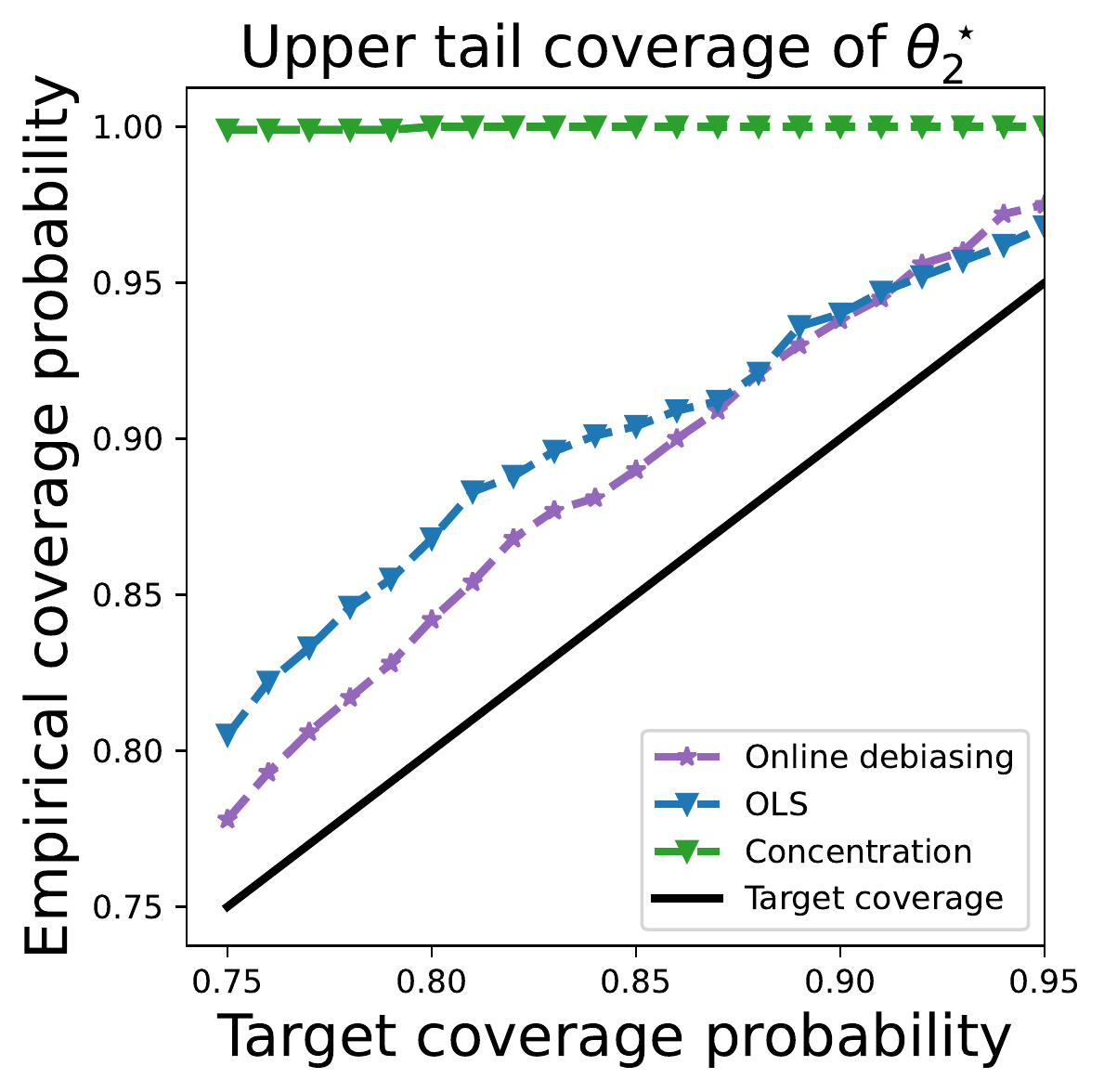}%
    \includegraphics[width=.33\textwidth]{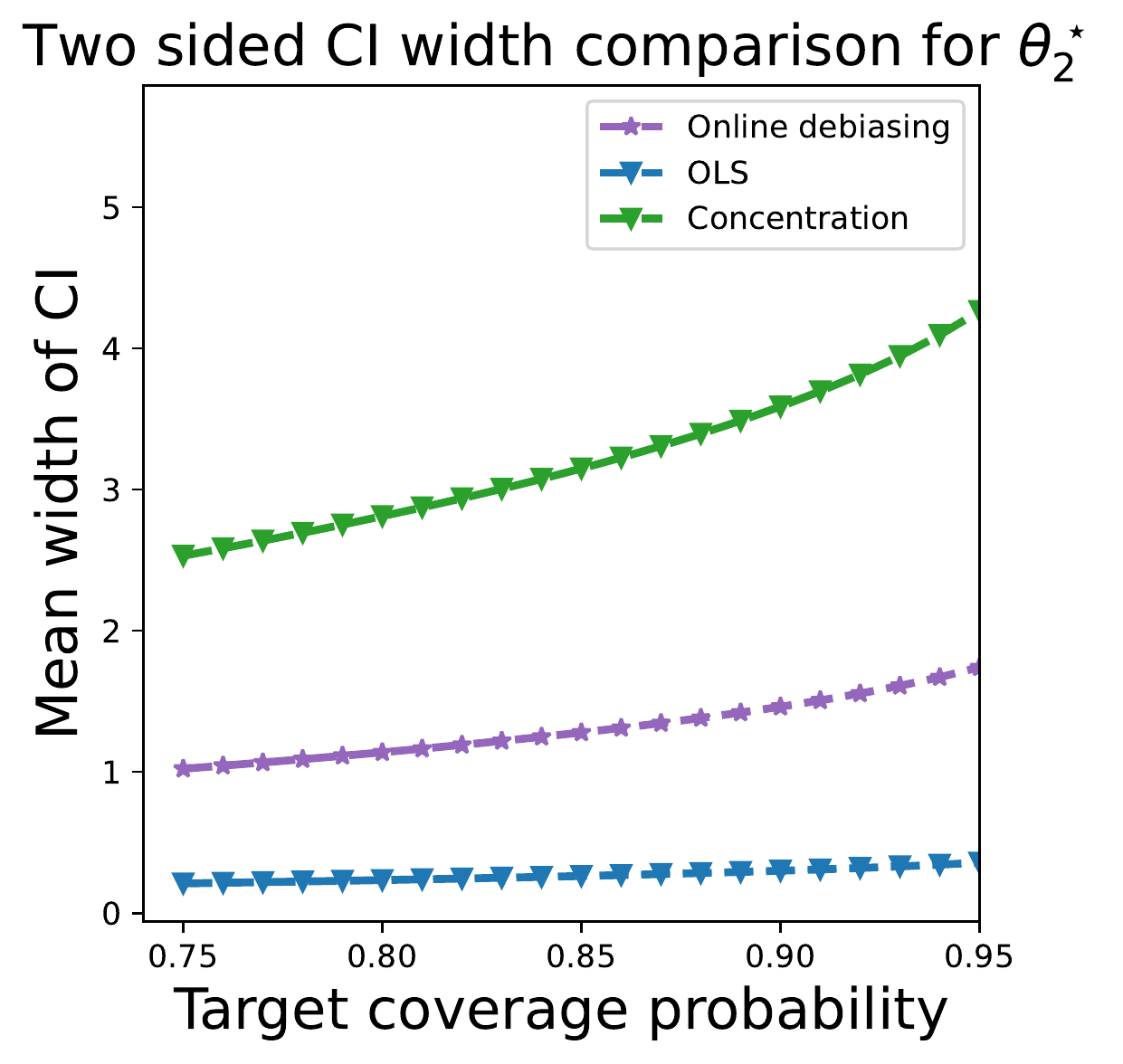}
    \caption{Plots for $\lambda_{\text{ridge}} = 10$}
    \end{subfigure}
    \caption{Average coverage and width of confidence intervals for $\theta_1^*$ and $\theta_2^*$ across 1000 independent replications  of linear bandits experiment~\eqref{eqn:greedy-selection} with $\thetastar \equiv (\theta_1^*, \theta_2^*) =  (0.3, 0.3)^\top$. The covariates $\{\x_i\}_{i = 1}^{1000}$
    were selected using the $\greedyEps$-greedy linear bandits algorithm~\eqref{eqn:greedy-selection},
    and the error bars represent $\pm 1$ standard error.
    \textbf{Left} and \textbf{Center:} Coverage of one-sided $1 - \alpha$
    intervals \textbf{Right:} Width of two-sided $1 - \alpha$ intervals.  
    See Appendix~\ref{sec:LinBanditsFullSimulation} for details.}
    \label{fig:LinBanditsFullSim}
\end{figure}

\section*{Acknowledgments} 
\label{sec:acknowledgments}
This work was partially supported by the Microsoft-Berkeley BAIR collaboration.

\section{Comparison to the least squares estimator}
\label{sec:LS-vs-online}
In this section, we provide a more fine-grained comparison between the least squares estimator and the online debiased estimator. We start with a lower bound on the MSE of the ordinary least squares estimator, which is based on the law of iterated logarithm.

\subsection*{Law of the iterated logarithm lower bound}
Let $\epsilon_1,\epsilon_2,\ldots$ be a sequence of independent
standard Gaussian random variables and and $d = 1$ and $\thetastar =
0$, which means the least squares estimator is
\begin{align*}
\hat \theta_n = \frac{\sum_{i=1}^n \x_i \epsilon_i}{\sum_{i=1}^n \x_i^2}\,.
\end{align*}
We will choose an adaptive design with $\x_i \in \{0, 1\}$ and for which
\begin{align*}
\limsup_{n\to\infty} \Prob(\mathcal E_n > 1) \geq \frac{1}{2} \,\, \qquad \text{with} \qquad 
\mathcal E_n = \frac{\SigmaMat_n^{1/2} \hat \theta_n}{\sqrt{\frac{1}{2} \log \log n}} \,.
\end{align*}
The construction is essentially based on using the adaptive covariates to derandomize the law of the iterated logarithm.
Let $M_t = \sum_{i=1}^t \epsilon_i$ be a Gaussian random walk.
Define a sequence of intervals $\{m_k, m_k + \ell_k - 1\}$ of length $\ell_k$ inductively as follows.
Let $m_0 = 0$ and $m_k = m_{k-1} + \ell_k$ where
\begin{align*}
\ell_k = \min\left\{\ell : \Prob\Big(\text{exists } 1 \leq s \leq \ell : M_s \geq \sqrt{(m_{k-1}+s) \log \log m_{k-1}}\Big) \geq \frac{1}{2}\right\}\,.
\end{align*}
By the same tedious argument used in the proof of the law of the iterated logarithm one can also show
that $\log \log m_{k+1} \leq 2 \log \log m_k$ for suitably large $k$.
By symmetry, 
\begin{align*}
\Prob\left(\text{exists } m_{k-1} + 1 \leq t \leq m_{k-1} + \ell_k : |M_t| \geq \sqrt{t \log \log m_{k-1}} \Big| M_{m_{k-1}}\right) 
\geq \frac{1}{2} \,\, a.s.\,.
\end{align*}
Let $\mathbb N = \cup_{k=1}^\infty I_k$ where $\{I_k : k \in \mathbb N\}$ are disjoint consecutive intervals and
$I_k$ has length $2 \ell_k$.
We choose an adaptive design so that $\x_i \in \{0,1\}$ for all $i$ and $\sum_{i \in I_k} \x_i = \ell_k$ for all $k$. That is,
$\x_i = 1$ in exactly half of the time in each interval.
The adaptivity within each interval is as follows. The feature $\x_i = 1$ for either $\ell_k$ time-steps or until $\cE_{i-1} \geq 1$.
Then $\x_i = 0$ until a final block of the interval where $\x_i = 1$ to guarantee that $\sum_{i \in I_k} \x_i = \ell_k$ holds.
Formally,
\begin{align*}
\x_i &= \begin{cases}
0 & \text{if } m(i-1) = m_k \\
1 & \text{if } \mathcal E_{i-1} \leq 1 \\
1 & \text{if } m(i-1) + |I_k \setminus \{1,2,\ldots,i-1\}| = m_k \\
0 & \text{otherwise}\,.
\end{cases}
\end{align*}
By construction, if $s_k$ is the $\ell_k$th time-step in interval $I_k$, then
\begin{align*}
\Prob\left(\cE_{s_k} > 1\right)
&= \Prob\left(\sum_{i=1}^{s_k} \x_i \epsilon_i \geq \sqrt{\frac{m(s_k)}{2} \log \log s_k}\right) \\
&\geq \Prob\left(\sum_{i=1}^{s_k} \x_i \epsilon_i \geq \sqrt{m(s_k)\log \log m_{k-1}}\right) \\
&\geq \frac{1}{2}\,.
\end{align*}

%


\subsection*{Reducing bias via post debiasing correction} 

In this section, we consider a modified version of the online debiased estimator. Throughout this section, we assume $\Exs[\error_i^2] = 1$. We show that this modified estimator can have less bias for certain multiarmed bandit problems, and thereby improving upon the performance of the least squares estimators. We also show that this improved estimator has the same performnace as online debiased estimator~\eqref{eqn:ThetaDecorr-defn} in the worst case. We start with the following modified version of the online debiased estimtor
\begin{align}
\label{eqn:New-OD-estimator}
\thetaODNew = \thetaLS + \sqrt{\frac{\gamma_n}{\gammahat_n}} \cdot \sum_{i = 1}^{n} \SigmaMat_n^{-\frac{1}{2}} \w_i \left(y_i - \x_i^\top \thetaLS \right)   
\end{align}
where
\begin{align}
\label{eqn:gammahat-defn}
\sqrt{\gammahat_n} = \max \left\{ \banditdir^\top  \sqrt{\gamma_n} \sum_{i = 1}^n \w_i \x_i^\top \SigmaMat_n^{-\frac{1}{2}}  \banditdir \;,\; \frac{1}{\log(\numobs)\cdot \log\log(\numobs)} \right\}
\end{align}
Focusing on a k-armed bandit problem, we assume that 
the tuning parameters $\gamma_n$, and $\{\XscaleMat_{i,n}\}_{i \leq n}$ satisfy the following conditions
\begin{subequations}
\begin{align} 
\label{eqn:modified-gamma-n-conditions}
&\gamma_n \rightarrow 0, \qquad \text{and} \qquad \frac{\gamma_n}{\log(\numobs)\cdot \log\log(\numobs)} \rightarrow \infty.  \\
& \frac{1}{\gamma_n} \max_{i \in [n]} \cdot \x_i^\top \Gamma_{i,n}^{-1} \x_i   \convprob 0 \qquad \text{and} \qquad \SigmaMat_n \succ \Gamma_{i,n} \succ 0.
\label{eqn:conditions-on-samplecov-bandits}
\end{align}
\begin{align}
\label{eqn:bounded-cov-variation}
  \banditdir^\top \SigmaMat_n^{-1} \XscaleMat_{i,n} \banditdir > c_1 > 0
  \qquad \text{for all} \quad n \geq 1 \;\; \text{and} \;\; i \leq n.    
\end{align}
\end{subequations}
\begin{theos}
\label{thm:post-deboasing-correction}
Given a target direction $\banditdir$,  suppose  the covariance matrix $\SigmaMat_n$ of a $k$-armed bandits problem satisfies conditions~\eqref{eqn:modified-gamma-n-conditions},~\eqref{eqn:conditions-on-samplecov-bandits} and~\eqref{eqn:bounded-cov-variation}. Then the online debiased estimator from~\eqref{eqn:New-OD-estimator} satisfies 
\begin{align}
\label{eqn:normality-and-gammanhat}
\sqrt{\gammahat_n} \banditdir^\top \cdot \SigmaMat^{\frac{1}{2}} (\thetaODNew - \thetastar) \indistrb \Ncal(0, 1)
\quad \text{and} \quad \lim\sup \sqrt{\frac{\gammahat_n}{\gamma_n}} \geq O_p(\sqrt{c_1}). 
\end{align}

Moreover, we have 
\begin{align}
    \Exs \left(\sqrt{\gammahat_n} \banditdir^\top \cdot \SigmaMat^{\frac{1}{2}} (\thetaODNew - \thetastar) \right)^2
    =  \gamma_n  \Exs\left(\sum_{i = 1}^\numobs \banditdir^\top \w_i \w_i^\top \banditdir \right) \leq 1.  
\end{align}
\end{theos}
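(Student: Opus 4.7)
The plan is to follow the bias-plus-martingale decomposition used in the proof of Theorem~\ref{thm:asymp-normality}, exploiting the diagonal structure specific to $k$-armed bandits and the deliberate choice of $\gammahat_n$ in~\eqref{eqn:gammahat-defn}. First, substituting $y_i - \x_i^\top\thetaLS = \error_i + \x_i^\top(\thetastar - \thetaLS)$ into~\eqref{eqn:New-OD-estimator} and multiplying by $\sqrt{\gammahat_n}\,\banditdir^\top\SigmaMat_n^{1/2}$ yields
\[
\sqrt{\gammahat_n}\,\banditdir^\top\SigmaMat_n^{1/2}(\thetaODNew - \thetastar) \;=\; \banditdir^\top A_n\, \SigmaMat_n^{1/2}(\thetaLS-\thetastar) \;+\; \sqrt{\gamma_n}\sum_{i=1}^n (\banditdir^\top\w_i)\,\error_i,
\]
with $A_n \mydefn \sqrt{\gammahat_n}\,\Id - \sqrt{\gamma_n}\sum_i \w_i\x_i^\top\SigmaMat_n^{-1/2}$. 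Since $\x_i=\basis_{k_i}$ and the explicit update~\eqref{eqn:Wn-expression} forces $\w_i$ to be supported on its $k_i$-th coordinate with a non-negative entry (easy induction), the matrices $A_n$, $\SigmaMat_n$, and $\sum_i\w_i\x_i^\top$ are all diagonal.

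Taking $\banditdir = \basis_j$ and setting $B_n \mydefn \banditdir^\top\sum_i\w_i\x_i^\top\SigmaMat_n^{-1/2}\banditdir$, the bias reduces to $(A_n)_{jj}\cdot(\SigmaMat_n^{1/2}(\thetaLS-\thetastar))_j$ with $(A_n)_{jj} = \sqrt{\gammahat_n} - \sqrt{\gamma_n}\,B_n$. The maximum in~\eqref{eqn:gammahat-defn} is engineered so that either $(A_n)_{jj}=0$ (first branch attained) or $0<(A_n)_{jj}\le 1/(\log n\cdot\log\log n)$ (second branch attained); combined with the Lai--Wei almost-sure bound $|(\SigmaMat_n^{1/2}(\thetaLS-\thetastar))_j|=O(\sqrt{\log n})$ from Lemma~1 of~\citep{lai1982least}, this forces the bias to be $\smalloP(1)$. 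The martingale term is then handled by the Dvoretzky CLT after verifying that $\gamma_n\sum_i(\banditdir^\top\w_i)^2\convprob 1$: condition~\eqref{eqn:conditions-on-samplecov-bandits} certifies the asymptotic negligibility part of~\ref{assumption:A3}, and the variance stability part follows from Lemma~\ref{lems:Commutative-lemma} applied to the commuting diagonal family $\{\Gamma_i^{-1/2}\x_i\x_i^\top\Gamma_i^{-1/2}\}$ together with $\SigmaMat_n\succ\Gamma_{i,n}$, so the stability half of Lemma~\ref{lem:stability-lemma} applies. The vanishing-norm half of the same lemma supplies the Lindeberg condition, giving the first claim of~\eqref{eqn:normality-and-gammanhat}.

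For the second claim, $\sqrt{\gammahat_n/\gamma_n}\ge B_n$ directly from~\eqref{eqn:gammahat-defn}, and the lower bound on $B_n$ is obtained via the factorization
\[
\frac{w_{i,j}}{\sqrt{\Gamma_{i,jj}}} \;=\; \frac{w_{i,j}}{\sqrt{N_{j,n}}}\cdot\sqrt{\frac{N_{j,n}}{\Gamma_{i,jj}}}.
\]
Condition~\eqref{eqn:bounded-cov-variation} in the bandit case translates to $\Gamma_{i,jj}/N_{j,n} > c_1$, so $\sqrt{N_{j,n}/\Gamma_{i,jj}}\le 1/\sqrt{c_1}$; together with the non-negativity of $w_{i,j}$ and the projected variance-stability $\sum_{i:k_i=j}w_{i,j}/\sqrt{\Gamma_{i,jj}}\convprob 1$, this yields $B_n\ge \sqrt{c_1}(1-\smalloP(1))$. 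Finally, for the moment identity I would restrict to the event $\{(A_n)_{jj}=0\}$ where only the martingale term remains; the tower property with $\w_i\in\filtration_{i-1}$ and $\Exs[\error_i^2\mid\filtration_{i-1}]=1$ then produces the equality $\gamma_n\Exs[\sum_i\banditdir^\top\w_i\w_i^\top\banditdir]$, and the bound $\le 1$ follows by taking expectation of the identity~\eqref{eqn:Id-minus-qqT-expression} projected against $\banditdir$.

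The main obstacle I anticipate is the delicate bookkeeping around the maximum in~\eqref{eqn:gammahat-defn}: extending the second-moment identity beyond the event $\{(A_n)_{jj}=0\}$ requires controlling the cross-terms between the non-vanishing bias $\banditdir^\top A_n\SigmaMat_n^{1/2}(\thetaLS-\thetastar)$ and the martingale, with the complication that $(A_n)_{jj}$ and $N_{j,n}$ depend on the full trajectory of the errors rather than being $\filtration_{i-1}$-measurable, so the usual orthogonality of martingale increments does not immediately eliminate those cross-terms.
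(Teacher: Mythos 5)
Your proposal matches the paper's proof essentially step for step: the same bias-plus-martingale decomposition exploiting the diagonal bandit structure, verification of the negligibility and variance-stability conditions via~\eqref{eqn:conditions-on-samplecov-bandits}, Lemma~\ref{lems:Commutative-lemma} and Lemma~\ref{lem:stability-lemma}, and the same factorization through $\XscaleMat_{i,n}^{\frac{1}{2}}\SigmaMat_n^{-\frac{1}{2}}$ to obtain the $\sqrt{c_1}$ lower bound on $\sqrt{\gammahat_n/\gamma_n}$. The obstacle you flag around the maximum in~\eqref{eqn:gammahat-defn} is handled in the paper by Lemma~\ref{lem:bounded-cov-variation}, which shows that under the stated conditions the first branch of the max is the one attained, so the bias cancels exactly and only the martingale term contributes; on the complementary event the paper's treatment of the second-moment identity is no more careful than yours.
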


See Appendix~\ref{sec:post-debiasing-proof} for a proof of Theorem~\ref{thm:post-deboasing-correction}. 


\section{Proof of Theorem~\ref{thm:post-deboasing-correction} and related results}
\label{sec:post-debiasing-proof}
In this section, we provide a proof of Theorem~\ref{thm:post-deboasing-correction}. We show shortly that 
under the conditions of Theorem~\ref{thm:post-deboasing-correction}
\begin{align}
\label{eqn:condition-one-1}
\sqrt{\gammahat_n} =  \banditdir^\top  \sqrt{\gamma_n} \sum_{i = 1}^n \w_i \x_i^\top \SigmaMat_n^{-\frac{1}{2}}  \banditdir 
\end{align}
We come back to the proof of this statement in~Lemma~\ref{lem:bounded-cov-variation}, but let us complete the proof assuming this cindition is true.
A simple calculation using the definition of $\gammahat_n$ from~\eqref{eqn:gammahat-defn}  yields
\begin{align*}
\banditdir^\top \sqrt{\gammahat_n} \banditdir^\top \cdot \SigmaMat^{\frac{1}{2}} (\thetaODNew - \thetastar) 
&= \banditdir^\top \left( \sqrt{\gammahat_n} \cdot \Id - \sqrt{\gamma_n} \sum_{i = 1}^\numobs \w_i \x_i^\top \SigmaMat_n^{-\frac{1}{2}}  \right) \SigmaMat_n^{\frac{1}{2}}(\thetaLS - \thetastar) \\
  &\qquad \qquad + \banditdir^\top \sqrt{\gamma_n} \sum_{i = 1}^\numobs \w_i \error_i \\
&= \banditdir^\top \sqrt{\gamma_n} \sum_{i = 1}^\numobs \w_i \error_i. 
\end{align*}
In order to prove asumptotic normality for $\sqrt{\gammahat_n} \banditdir^\top \cdot \SigmaMat^{\frac{1}{2}} (\thetaODNew - \thetastar)$ it now suffices to find weights $\{ \w_i \}_{i = 1}^n$ such that the weights are stabilized. Concretely, 
we require
\begin{subequations}
\begin{align}
\label{eqn:stabilizing-weights-new}
\gamma_n \sum_{i = 1}^n \w_i \w_i^\top & \stackrel{\smallop{p}}{\longrightarrow} \Id \\
\max_{i \in [n]} \;\; \sqrt{\gamma_n} \| \w_i \|_2 & \stackrel{\smallop{p}}{\longrightarrow} 0. 
\label{eqn:weights-Lindeberg-new} 
\end{align}
\end{subequations}
Putting together the pieces we conclude that for any set tuning parameters 
$\gamma_n$ and $\{ \w_i \}_{i \geq 1}$ satisfying conditions~\eqref{eqn:stabilizing-weights-new} and~\eqref{eqn:weights-Lindeberg-new} we have 
\begin{align*}
\sqrt{\gammahat_n} \banditdir^\top \cdot \SigmaMat^{\frac{1}{2}} (\thetaODNew - \thetastar) \indistrb \Ncal(0, 1). 
\end{align*}
The proof of the second part is immediate from the definition of $\gammahat_n$ and from the fact that 
$\gamma_n \sum_{i = 1}^\numobs \w_i \w_i^\top = \Id - A_n - B_n$ for appropriate positive semidefinite matrices
 $A_n, B_n$; see the the decomposition~\eqref{eqn:Id-minus-qqT-expression} for instance. We prove part (ii) of the claim~\eqref{eqn:normality-and-gammanhat} in Lemma~\ref{lem:bounded-cov-variation}. 
  It now remains to derive conditions which guarantee the two properties~~\eqref{eqn:stabilizing-weights-new}~and~\eqref{eqn:weights-Lindeberg-new}. Towards this end we prove the following lemma.

%
%
%
%
%
%
\begin{lems}
\label{lems:Bandits-stability-refined}
Suppose the sample covariance matrix $\SigmaMat_n$ of a $k$-armed bandit problem satisfies the condition~\ref{eqn:conditions-on-samplecov-bandits} for some non-random scalars $\gamma_n$ and diagonal matrices  $\{\Gamma_{i,n}\}_{i, n \geq 1}$. Then conditions~\eqref{eqn:stabilizing-weights-new}~and~\eqref{eqn:weights-Lindeberg-new} are satisfied. 
\end{lems}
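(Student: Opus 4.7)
The plan is to mirror the two-step recipe used in the proof of Lemma~\ref{lem:stability-lemma}, but to exploit the special diagonal structure of the $k$-armed bandit setting in order to replace the variance stability assumption \ref{assumption:A3}(c) by a direct calculation. First, using the recursion derivation~\eqref{eqn:last-line} applied to $\z_i = \Gamma_{i,n}^{-\frac{1}{2}} \x_i$, I would reproduce the orthogonal decomposition
\begin{align*}
\Id - \gamma_n \sum_{i=1}^n \w_i \w_i^\top
 \;=\; \underbrace{\sum_{i=1}^n \|\z_i\|_2^2 \, \w_i \w_i^\top}_{A_n}
 \;+\; \underbrace{(\Id - W_n Z_n)(\Id - W_n Z_n)^\top}_{B_n},
\end{align*}
exactly as in equation~\eqref{eqn:Id-minus-qqT-expression}. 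So it suffices to prove $\|A_n\|_{\op}\convprob 0$, $\|B_n\|_{\op}\convprob 0$, and the vanishing-norm condition \eqref{eqn:weights-Lindeberg-new}.

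For $A_n$, observe that the decomposition already gives the a priori trace bound $\gamma_n \sum_i \|\w_i\|_2^2 \leq d$, so that
$\|A_n\|_{\op} \leq \text{tr}(A_n) \leq (\max_i \|\z_i\|_2^2)\cdot \sum_i \|\w_i\|_2^2 \leq d\cdot \tfrac{1}{\gamma_n}\max_i \x_i^\top \Gamma_{i,n}^{-1} \x_i$,
which tends to zero in probability by the asymptotic negligibility part of assumption~\eqref{eqn:conditions-on-samplecov-bandits}. Similarly, the explicit formula~\eqref{eqn:Wn-expression} together with $\opnorm{\Id - W_{i-1} Z_{i-1}}\leq 1$ gives $\gamma_n \|\w_i\|_2^2 \leq (4/\gamma_n)\|\z_i\|_2^2$, so \eqref{eqn:weights-Lindeberg-new} follows again from the negligibility condition.

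The main obstacle is controlling $B_n$, since in general settings this would require assumption~\ref{assumption:A3}(c) as an extra hypothesis. Here we avoid this by exploiting the $k$-armed bandit structure: each covariate $\x_i = \basis_{k_i}$ is a standard basis vector and each $\Gamma_{i,n}$ is diagonal, so all the rank-one matrices $\Gamma_{i,n}^{-\frac{1}{2}} \x_i \x_i^\top \Gamma_{i,n}^{-\frac{1}{2}}$ are themselves diagonal and therefore commute. Invoking Lemma~\ref{lems:Commutative-lemma} therefore yields
\begin{align*}
\opnorm{\Id - W_n Z_n} \;\leq\; \exp\!\left(-\frac{\lambda_{\min}\!\bigl(\sum_{i=1}^n \Gamma_{i,n}^{-\frac{1}{2}} \x_i \x_i^\top \Gamma_{i,n}^{-\frac{1}{2}}\bigr)}{\gamma_n}\right).
\end{align*}
The assumption $\Gamma_{i,n} \prec \SigmaMat_n$ implies $\Gamma_{i,n}^{-\frac{1}{2}}\x_i\x_i^\top\Gamma_{i,n}^{-\frac{1}{2}} \succeq \SigmaMat_n^{-\frac{1}{2}} \x_i \x_i^\top \SigmaMat_n^{-\frac{1}{2}}$ for each $i$ (here I use that in the bandit setting all these matrices are diagonal, so the PSD ordering passes coordinate-wise through the rank-one update). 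Summing over $i$ gives $\sum_i \Gamma_{i,n}^{-\frac{1}{2}} \x_i \x_i^\top \Gamma_{i,n}^{-\frac{1}{2}} \succeq \sum_i \SigmaMat_n^{-\frac{1}{2}} \x_i \x_i^\top \SigmaMat_n^{-\frac{1}{2}} = \Id$, so the minimum eigenvalue is at least $1$ and hence $\opnorm{\Id - W_n Z_n}\leq \exp(-1/\gamma_n)$. Under the ambient assumption $\gamma_n\to 0$ carried over from~\eqref{eqn:modified-gamma-n-conditions} in the calling theorem, $\|B_n\|_{\op}\leq \exp(-2/\gamma_n)\to 0$, completing the verification of \eqref{eqn:stabilizing-weights-new} and \eqref{eqn:weights-Lindeberg-new}.
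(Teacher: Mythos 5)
Your proposal is correct and follows essentially the same route as the paper: the paper simply cites Lemma~\ref{lem:stability-lemma} to reduce the claim to verifying Assumptions~\ref{assumption:A3}(a) and (c), and then verifies (c) via Lemma~\ref{lems:Commutative-lemma} together with the ordering $\XscaleMat_{i,n} \prec \SigmaMat_n$, exactly as you do. The only difference is that you unfold the proof of Lemma~\ref{lem:stability-lemma} (the decomposition~\eqref{eqn:Id-minus-qqT-expression} and the bounds on $A_n$, $B_n$, and $\max_i \gamma_n\|\w_i\|_2^2$) rather than invoking it as a black box, which changes nothing mathematically.
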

\begin{proof}
Invoking Lemma~\ref{lem:stability-lemma} we see that we only need to verify 
Assumptions (A3)~(a) and (c).  Condition (A3)~(a) is readily satisfied by the second condition in~\eqref{eqn:conditions-on-samplecov-bandits}. To verify condition (A3)~(c), invoking Lemma~\ref{lems:Commutative-lemma} we have 
\begin{align*}
\opnorm{ \Id_\Dim - \Nsum{i} \w_i \x_i^\top
      \XscaleMat_{i,n}^{-\frac{1}{2}} } & \leq \exp \left( - \frac{
      \lambda_{\min}(\Nsum{i} \XscaleMat_{i,n}^{-\frac{1}{2}} \x_i
      \x_i^\top \XscaleMat_{i,n}^{-\frac{1}{2}}) }{\tuneParScaled_\numobs}
    \right) \\
    & \stackrel{(i)}{\leq} \exp \left( - \frac{
      \lambda_{\min}(\Nsum{i} \SigmaMat_n^{-\frac{1}{2}} \x_i
      \x_i^\top \SigmaMat_n^{-\frac{1}{2}}) }{\tuneParScaled_\numobs}
    \right) \\
    &= \exp(-1/\gamma_n) \rightarrow 0.    
\end{align*}
The inequality~(i) above follows from the third condition in~\eqref{eqn:conditions-on-samplecov-bandits} and the last deduction follows from the asusmption that $\gamma_n \rightarrow 0$.
This completes the proof of Lemma~\ref{lems:Bandits-stability-refined}. 
\end{proof}


\subsection*{Control on $\gammahat_n$:}
Next we discuss the scaling of $\gammahat_n$. Note that the scaling of $\gammahat_n$ controls the asymptotic variance of the online debiased estimator $\thetaODNew$ from equation~\eqref{eqn:New-OD-estimator}; smaller values of $\gammahat_n$ leads to a larger asymptotic variance.  The scaling of $\gammahat_n$ is controlled by the following lemma. 
\begin{lems}
\label{lem:bounded-cov-variation}
Give a target direction $\banditdir$, suppose the sample covariance matrix $\SigmaMat_n$ of a $k$-armed bandit problem satiafies condition~\eqref{eqn:bounded-cov-variation}, and assume that the conditios of Lemma~\ref{lems:Bandits-stability-refined} are in force. Then the weights $\w_i$ produced by any $\gamma_n \rightarrow 0$ and scaling matrix 
$\XscaleMat_i$ from~\eqref{eqn:bounded-cov-variation} ensures
\begin{align*}
\lim\sup \sqrt{\frac{\gammahat_n}{\gamma_n}} \geq O_p(\sqrt{c_1})    
\end{align*}
Consequently, the condition~\eqref{eqn:condition-one-1} is satisfied. 
\end{lems}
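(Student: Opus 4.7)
The plan is to exploit the diagonal structure of the $k$-armed bandit problem---specifically the fact that $\SigmaMat_n$, $\XscaleMat_{i,n}$, and the weights $\w_i$ all have product structure along the standard basis---to reduce the bilinear form $\banditdir^\top \sum_i \w_i \x_i^\top \SigmaMat_n^{-1/2} \banditdir$ to an expression that factors out the quantity $\banditdir^\top \SigmaMat_n^{-1} \XscaleMat_{i,n} \banditdir$. Once this factorization is obtained, the lower bound \eqref{eqn:bounded-cov-variation} and the stability guarantee from Lemma~\ref{lems:Bandits-stability-refined} combine immediately to yield the claim.

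First I would establish the support structure of $\w_i$. In the bandit model, every covariate $\x_i$ is a standard basis vector, and the matrices $\SigmaMat_n$ and $\XscaleMat_{i,n}$ are diagonal. An induction on $i$ using the recursion \eqref{eqn:Wn-expression} then shows that $\W_i \Z_i$ is diagonal and, consequently, each $\w_i$ is supported on the same coordinate as $\x_i$. Taking $\banditdir = \e_j$ to be the standard basis direction for which \eqref{eqn:bounded-cov-variation} is meaningful, only indices with $\x_i = \banditdir$ contribute to $\banditdir^\top \sum_i \w_i \x_i^\top \SigmaMat_n^{-1/2} \banditdir$.

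Second, for any such surviving index $i$, diagonality gives the scalar identity
\begin{align*}
\x_i^\top \SigmaMat_n^{-1/2} \banditdir \;=\; \sqrt{\banditdir^\top \SigmaMat_n^{-1} \XscaleMat_{i,n} \banditdir}\;\cdot\; \x_i^\top \XscaleMat_{i,n}^{-1/2} \banditdir,
\end{align*}
and the radical is bounded below by $\sqrt{c_1}$ via condition \eqref{eqn:bounded-cov-variation}. Pulling this factor out and restoring the vanishing contributions from the other coordinates (which are zero by the structural observation above) yields
\begin{align*}
\banditdir^\top \sum_{i=1}^n \w_i \x_i^\top \SigmaMat_n^{-1/2} \banditdir \;\geq\; \sqrt{c_1}\;\cdot\; \banditdir^\top \sum_{i=1}^n \w_i \x_i^\top \XscaleMat_{i,n}^{-1/2} \banditdir.
\end{align*}
By Lemma~\ref{lems:Bandits-stability-refined}, the matrix $\sum_i \w_i \x_i^\top \XscaleMat_{i,n}^{-1/2}$ converges in operator norm to the identity in probability, so the right-hand side equals $\sqrt{c_1}\,(1 + o_p(1))$.

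Finally, multiplying by $\sqrt{\gamma_n}$ and inserting into \eqref{eqn:gammahat-defn}, the first argument of the maximum is at least $\sqrt{c_1\gamma_n}\,(1 + o_p(1))$; comparing with the floor $1/(\log n \log\log n)$ and invoking the growth assumption on $\gamma_n$ in \eqref{eqn:modified-gamma-n-conditions}, the first argument dominates for all sufficiently large $n$ with probability tending to one. Hence $\sqrt{\gammahat_n}$ equals the first argument of the max, which is precisely condition \eqref{eqn:condition-one-1}, and $\sqrt{\gammahat_n/\gamma_n} \geq \sqrt{c_1}\,(1 + o_p(1))$ as claimed. The most delicate step, to my eye, is the structural reduction in the first paragraph: making precise that $\w_i$ inherits the coordinate support of $\x_i$ (not merely the value of $\x_i \x_i^\top$) requires care with the recursion \eqref{eqn:Wn-expression}, but once this is checked the remaining steps amount to algebra and an appeal to existing stability results.
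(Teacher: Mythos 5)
Your proof is correct and follows essentially the same route as the paper's: factor $\SigmaMat_n^{-1/2}$ as $\XscaleMat_{i,n}^{-1/2}\,\XscaleMat_{i,n}^{1/2}\SigmaMat_n^{-1/2}$, use diagonality together with condition~\eqref{eqn:bounded-cov-variation} to extract the factor $\sqrt{c_1}$, then invoke the variance-stability conclusion of Lemma~\ref{lems:Bandits-stability-refined} to identify the remaining sum as $1+o_p(1)$ and the growth condition on $\gamma_n$ to see that the maximum in~\eqref{eqn:gammahat-defn} is attained by its first argument. The only point worth making explicit is that pulling the lower bound $\sqrt{c_1}$ out termwise requires the surviving coefficients $\banditdir^\top \w_i\, \x_i^\top \XscaleMat_{i,n}^{-1/2}\banditdir$ to be nonnegative, which follows from the same induction you sketch for the support structure (the paper records this as the statement that $\W_i\Z_i$ is diagonal with nonnegative entries).
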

\begin{proof}
Invoking the proof of Lemma~\ref{lems:Commutative-lemma} we know that the matrix $\sum_{i \leq n} \w_i \x_i^\top \XscaleMat_{i,n}^{-\frac{1}{2}}$ is diagonal. Furthermore, conditions of Lemma~\ref{lems:Bandits-stability-refined} ensures that the condition A3 (c) is satisfied and we have 
\begin{align*}
\banditdir^\top \sum_{i \leq n} \w_i \x_i^\top \XscaleMat_{i,n}^{-\frac{1}{2}} \convprob \banditdir^\top.  
\end{align*} 
Combing the last step with the condition~\eqref{eqn:bounded-cov-variation} and the fact that the matrices $\XscaleMat_{i, n}$, $\SigmaMat_n$
and $\sum_{i \leq n} \w_i \x_i^\top \XscaleMat_{i,n}^{-\frac{1}{2}}$ are diagonal yields
\begin{align*}
\sqrt{\frac{\gammahat_n}{\gamma_n}} &=  \banditdir^\top \sum_{i = 1}^n \w_i \x_i^\top \SigmaMat_n^{-\frac{1}{2}} \banditdir \\ 
&= \banditdir^\top \sum_{i = 1}^n \w_i \x_i^\top \XscaleMat_{i,n}^{-\frac{1}{2}} \XscaleMat_{i,n}^{\frac{1}{2}} \SigmaMat_n^{-\frac{1}{2}} \banditdir \\
& \geq \sqrt{c_1} \cdot  \banditdir^\top \sum_{i = 1}^n \w_i \x_i^\top \XscaleMat_{i,n}^{-\frac{1}{2}} \banditdir\\ 
& \geq \sqrt{c_1} \cdot O_p(1).  
\end{align*}
The last line combined with condition~\eqref{eqn:modified-gamma-n-conditions} ensures that the condition~\eqref{eqn:condition-one-1} is satisfied.
This completes the proof of Lemma~\ref{lem:bounded-cov-variation}. 
\end{proof}

\subsection{Worst case performance:}
Next, we consider the choice of $\gammahat_n$ when the
condition~\eqref{eqn:condition-one-1} is not satisfied. A simple
calculation using the fact that the matrices $\SigmaMat_n$ and $\w_i
\x_i^\top$ are diagonal for a multiarmed bandit problem, we have
\begin{align}
\sqrt{\gammahat_n} \cdot \banditdir^\top \cdot \SigmaMat^{\frac{1}{2}}
(\thetaODNew - \thetastar) &= \banditdir^\top \cdot \left(
\sqrt{\gammahat_n} \cdot \Id - \sqrt{\gamma_n} \sum_{i = 1}^\numobs
\w_i \x_i^\top \SigmaMat_n^{-\frac{1}{2}} \right)
\SigmaMat_n^{\frac{1}{2}}(\thetaLS - \thetastar) + \sqrt{\gamma_n}
\sum_{i = 1}^\numobs \w_i \error_i \notag \\ & = \sqrt{\gammahat_n}
\cdot \banditdir^\top \cdot \SigmaMat_n^{\frac{1}{2}}(\thetaLS -
\thetastar) - \sqrt{\gamma_n} \banditdir^\top \cdot \sum_{i =
  1}^\numobs \w_i \x_i^\top \SigmaMat_n^{-\frac{1}{2}} \banditdir
\cdot \banditdir^\top \SigmaMat_n^{\frac{1}{2}} (\thetaLS -
\thetastar) \notag \\ & \qquad \qquad + \sqrt{\gamma_n} \sum_{i =
  1}^\numobs \banditdir^\top \cdot \w_i \error_i
  \label{eqn:modified-online-debiased}
\end{align}
The third term above converges in distribution to $\mathcal{N}(0, 1)$
following the arguments in the proof of
Theorem~\ref{thm:post-deboasing-correction}.  The first two terms in
the last line correspnds to the bias-term in the modified online
debiased estimator, and we show that they converge to zero in
probablity. The first term in~\eqref{eqn:modified-online-debiased}
above can be bounded as
\begin{align*}
\|\sqrt{\gammahat_n} \cdot \SigmaMat_n^{\frac{1}{2}}(\thetaLS -
\thetastar) \|_2 &= \sqrt{\gammahat_n} \cdot \BigoP(\log(\numobs)) \\
& = \smalloP(1/\sqrt{\log(\numobs)}) \cdot
\BigoP(\sqrt{\log(\numobs)}) = \smalloP(1).
\end{align*}  
Focusing on the second term in~\eqref{eqn:modified-online-debiased}
and using the fact that condition~\eqref{eqn:condition-one-1} is
violated
\begin{align*}
&|\sqrt{\gamma_n} \banditdir^\top \cdot \sum_{i = 1}^\numobs \w_i
  \x_i^\top \SigmaMat_n^{-\frac{1}{2}} \banditdir \cdot
  \banditdir^\top \SigmaMat_n^{\frac{1}{2}} (\thetaLS - \thetastar)|
  \\
 &\stackrel{(i)}{\leq} |\sqrt{\gamma_n} \banditdir^\top \cdot \sum_{i = 1}^\numobs \w_i \x_i^\top \SigmaMat_n^{-\frac{1}{2}} \banditdir| 
 \cdot |\banditdir^\top \SigmaMat_n^{\frac{1}{2}} 
  (\thetaLS - \thetastar)|  \\
&\leq \frac{1}{\sqrt{\log(\numobs)\cdot \log\log(\numobs)}}
\cdot \BigoP(\sqrt{\log(\numobs)})  = \smalloP(1).  
\end{align*} 
where step (i) utilizes the fact that
condition~\eqref{eqn:condition-one-1} is violated, and hence
\begin{align*}
0 \leq \banditdir^\top \sqrt{\gamma_n} \sum_{i = 1}^n \w_i \x_i^\top
\SigmaMat_n^{-\frac{1}{2}} \banditdir \leq
\frac{1}{\sqrt{\log(\numobs) \cdot \log\log(\numobs)}}
\end{align*}
Recall that by construction, in the special case of multi-armed
bandits, the matrix $\w_i\x_i^\top$ is a diagonal matrix with
non-negative entries.

Putting together the pieces we conclude that for a multiarm bandit
problem, under the conditions
\begin{align*}
\gamma_n \rightarrow 0 \qquad \text{and} \qquad \frac{1}{\gamma_n}
\max_{i \in [n]} \cdot \x_i^\top \Gamma_{i,n}^{-1} \x_i \convprob 0,
\end{align*}   
we have
\begin{align*}
\sqrt{\gammahat_n} \banditdir^\top \cdot \SigmaMat^{\frac{1}{2}}
(\thetaODNew - \thetastar) \indistrb \Ncal(0, 1).
\end{align*}

\begin{props}
\label{prop:stability-to-debiasing}
Assume that the satbility condition~\ref{eqn:Cond-Cov-stability} is
satisfied, $\| \x_i\| \leq 1$, and conditions~\ref{assumption:A1}
and~\ref{assumption:A2} are in force. Then condition~(A3) is satisfied
with $\gamma_n = 1$ and $\XscaleMat_i = B_n$.
\end{props}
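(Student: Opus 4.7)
The plan is to verify the three parts of Assumption~\ref{assumption:A3} with $\tuneParScaled_\numobs = 1$, $\XscaleMat_i = \SigmaLim_\numobs$, taking the weight vectors as the natural choice $\w_i = \SigmaLim_\numobs^{-\frac{1}{2}} \x_i$ mentioned in the remark preceding~\Cref{thm:asymp-normality}. A structural observation under this choice is that the normal equations $\sum_i \x_i(\y_i - \x_i^\top \thetaLS) = \zero$ for ordinary least squares force $\sum_i \w_i(\y_i - \x_i^\top \thetaLS) = \SigmaLim_\numobs^{-\frac{1}{2}} \cdot \zero = \zero$, so that $\thetaDecorr = \thetaLS$ exactly---this is the announced reduction to OLS. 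A preliminary step is to deduce $\lambda_{\min}(\SigmaLim_\numobs) \stackrel{\smallop{p}}{\longrightarrow} \infty$ from the stability condition~\eqref{eqn:Cond-Cov-stability} and the growth condition~\ref{assumption:A2}: specializing the generalized Rayleigh quotient $\uVec^\top \SigmaMat_\numobs \uVec / \uVec^\top \SigmaLim_\numobs \uVec$, which converges uniformly over unit vectors $\uVec$ to $1$ in probability, to a bottom eigenvector of $\SigmaLim_\numobs$ yields $\lambda_{\min}(\SigmaLim_\numobs) \geq (1 - \smalloP(1)) \lambda_{\min}(\SigmaMat_\numobs) \stackrel{\smallop{p}}{\longrightarrow} \infty$.

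Parts~(a) and~(c) follow quickly. For~(a), the bounded-covariate assumption $\|\x_i\|_2 \leq 1$ combined with the preliminary step gives $\max_i \myinprod{\x_i}{\SigmaLim_\numobs^{-1} \x_i} \leq 1/\lambda_{\min}(\SigmaLim_\numobs) \stackrel{\smallop{p}}{\longrightarrow} 0$. For~(c), the chosen weights give directly
\begin{align*}
\sum_i \w_i \x_i^\top \SigmaLim_\numobs^{-\frac{1}{2}} = \SigmaLim_\numobs^{-\frac{1}{2}} \SigmaMat_\numobs \SigmaLim_\numobs^{-\frac{1}{2}} \stackrel{\smallop{p}}{\longrightarrow} \MyIdDim
\end{align*}
by the stability condition.

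Part~(b) is the main technical obstacle. With our chosen weights $\W_\numobs \X_\numobs \SigmaMat_\numobs^{-\frac{1}{2}} = \SigmaLim_\numobs^{-\frac{1}{2}} \SigmaMat_\numobs^{\frac{1}{2}}$, so the task is to show $\sqrt{\log \lambda_{\max}(\SigmaMat_\numobs)} \cdot \opnorm{\MyIdDim - \SigmaLim_\numobs^{-\frac{1}{2}} \SigmaMat_\numobs^{\frac{1}{2}}} \stackrel{\smallop{p}}{\longrightarrow} 0$. Setting $N_\numobs \mydefn \SigmaLim_\numobs^{-\frac{1}{2}} \SigmaMat_\numobs^{\frac{1}{2}}$, the plan is to use the Frobenius identity
\begin{align*}
\frobnorm{\MyIdDim - N_\numobs}^2 = \dims - 2\,\tr(N_\numobs) + \tr(N_\numobs N_\numobs^\top).
\end{align*}
Since $N_\numobs N_\numobs^\top = \SigmaLim_\numobs^{-\frac{1}{2}} \SigmaMat_\numobs \SigmaLim_\numobs^{-\frac{1}{2}} \stackrel{\smallop{p}}{\longrightarrow} \MyIdDim$ by stability, we have $\tr(N_\numobs N_\numobs^\top) \stackrel{\smallop{p}}{\longrightarrow} \dims$; and because $N_\numobs$ is similar to the symmetric positive definite matrix $\SigmaLim_\numobs^{-\frac{1}{4}} \SigmaMat_\numobs^{\frac{1}{2}} \SigmaLim_\numobs^{-\frac{1}{4}}$, its eigenvalues are positive real and are sandwiched between its smallest and largest singular values---which converge to~$1$ by the previous display---so $\tr(N_\numobs) \stackrel{\smallop{p}}{\longrightarrow} \dims$ as well. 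Tracking these rates gives $\frobnorm{\MyIdDim - N_\numobs} = \BigoP\bigl(\opnorm{N_\numobs N_\numobs^\top - \MyIdDim}^{\frac{1}{2}}\bigr)$, hence $\opnorm{\MyIdDim - N_\numobs} \stackrel{\smallop{p}}{\longrightarrow} 0$. The chief subtlety---and the main obstacle---lies in the very last step: multiplying by the factor $\sqrt{\log \lambda_{\max}(\SigmaMat_\numobs)}$ requires that the convergence in the stability condition~\eqref{eqn:Cond-Cov-stability} occur faster than $1/\log \lambda_{\max}(\SigmaMat_\numobs)$, a quantitative strengthening which holds in the canonical examples where stability is invoked (e.g., when $\SigmaMat_\numobs$ has bounded condition number and the rate in~\eqref{eqn:Cond-Cov-stability} is polynomial in $\numobs$).
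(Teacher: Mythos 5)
Your proposal follows essentially the same route as the paper's proof: the same choices $\tuneParScaled_\numobs = 1$, $\XscaleMat_i = \SigmaLim_\numobs$, $\w_i = \SigmaLim_\numobs^{-\frac{1}{2}}\x_i$, the same one-line verifications of parts (a) and (c) via $\lambda_{\min}(\SigmaLim_\numobs) \to \infty$ and $\SigmaLim_\numobs^{-\frac{1}{2}}\SigmaMat_\numobs\SigmaLim_\numobs^{-\frac{1}{2}} \stackrel{\smallop{p}}{\longrightarrow} \MyIdDim$, and the same identification $\W_\numobs\X_\numobs\SigmaMat_\numobs^{-\frac{1}{2}} = \SigmaLim_\numobs^{-\frac{1}{2}}\SigmaMat_\numobs^{\frac{1}{2}}$ for part (b). You actually supply two details the paper omits: the Rayleigh-quotient argument for $\lambda_{\min}(\SigmaLim_\numobs) \to \infty$ (the paper asserts this, with a typo writing $\convprob 0$), and the trace/similarity argument showing that $\SigmaLim_\numobs^{-\frac{1}{2}}\SigmaMat_\numobs\SigmaLim_\numobs^{-\frac{1}{2}} \stackrel{\smallop{p}}{\longrightarrow} \MyIdDim$ really does force $\SigmaLim_\numobs^{-\frac{1}{2}}\SigmaMat_\numobs^{\frac{1}{2}} \stackrel{\smallop{p}}{\longrightarrow} \MyIdDim$, which is not completely immediate and which the paper simply states.

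The one place you stop short is the $\sqrt{\tuneParScaled_\numobs \log\lambda_{\max}(\SigmaMat_\numobs)}$ prefactor in Assumption~\ref{assumption:A3}(b): with $\tuneParScaled_\numobs = 1$ this factor diverges, so showing $\opnorm{\MyIdDim - \W_\numobs\X_\numobs\SigmaMat_\numobs^{-\frac{1}{2}}} \stackrel{\smallop{p}}{\longrightarrow} 0$ alone does not finish part (b); a rate on the stability condition~\eqref{eqn:Cond-Cov-stability} faster than $1/\sqrt{\log\lambda_{\max}(\SigmaMat_\numobs)}$ is needed. You flag this correctly as a quantitative strengthening of the hypotheses. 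Be aware that the paper's own proof has exactly the same gap --- it verifies only the unweighted convergence and declares (b) satisfied, silently dropping the logarithmic factor --- so your more scrupulous treatment is an improvement on, not a deviation from, the published argument; but as written neither your proposal nor the paper's proof establishes part (b) from the stated hypotheses without this additional rate assumption.
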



\begin{proof}
Combining condition~\ref{assumption:A2} and the stability
condition~\ref{eqn:Cond-Cov-stability} we have $\lambda_{\min}(B_n)
\convprob 0$. This combined with $\|\x_i\| \leq 1$ensures that part
(a) of~\ref{assumption:A3} is satisfied. Next, under the stability
condition~\ref{eqn:Cond-Cov-stability} and using $\w_i = \x_i
\XscaleMat_i^{-\frac{1}{2}} = \x_i B_n^{-\frac{1}{2}}$ we have
\begin{align*}
\W \X \SigmaMat_n^{-\frac{1}{2}}= B_n^{-\frac{1}{2}} \sum_{i = 1}^n
\x_i \x_i^\top \SigmaMat_n^{-\frac{1}{2}} = B_n^{-\frac{1}{2}}
\SigmaMat_n^{\frac{1}{2}} \convprob \Id.
\end{align*}
Observe that the psd matrix $B_n$ is is invertible by
assumption~\ref{assumption:A2} and the stability
condition~\ref{eqn:Cond-Cov-stability}. This verifies part (b) of
assumption~\ref{assumption:A3}. Finally, we note that
\begin{align*}
\sum_{i = 1}^n \w_i \x_i \XscaleMat_i^{-\frac{1}{2}} =
B_n^{-\frac{1}{2}} \sum_{i = 1}^n \x_i \x_i^\top B_n^{-\frac{1}{2}} =
B_n^{-\frac{1}{2}} \SigmaMat_n B_n^{-\frac{1}{2}} \convprob \Id
\end{align*}
This verifies part (c) of Assumption~\ref{assumption:A3}.
\end{proof}

\section{Proofs related to Theorem~\ref{thm:Minimax-Lowerbound} part (b)}
\label{app:minimax-log-bound-proofs}
In this appendix, we prove the bounds~\eqref{eq:lower-a} and~\eqref{eq:lower-b}  that were used in the proof of part (b) of Theorem~\ref{thm:Minimax-Lowerbound}

\subsubsection*{Proof of the bound~\eqref{eq:lower-a}}
Let $v = n/(d-1)$. By Bayes' rule,
\begin{align*}
\frac{\cE_\numobs}{1 - \cE_\numobs}
&= \prod_{u=1}^{d-1}\frac{1}{\sqrt{2\pi}} \int_\reals \exp\left(-\frac{1}{2} \sum_{w=1}^v (a_{u,w} + b_w \varphi - \y_{u,w})^2 + \frac{1}{2} \sum_{i=1}^n \y_{u,w}^2 - \frac{\varphi^2}{2}\right) d\varphi \\
&= \prod_{u=1}^{d-1}\exp\left(\sum_{w=1}^v a_{u,w} \y_{u,w} - \frac{1}{2} \sum_{w=1}^v a_{u,w}^2 + \frac{m_{u,v}^2}{2 d_v} - \frac{1}{2} \log d_v\right) \\ 
&\mydefn \exp\left(\sum_{u=1}^{d-1} Z_{u,w}\right)\,.
\end{align*}
Note that $\cE_\numobs(1 - \cE_\numobs) = \Omega_p(1)$ is equivalent
to $|\sum_{u=1}^{d-1} Z_{u,w}| = O_p(1)$, which we now establish.  To begin,
\begin{align*}
\frac{m_{u,w+1}^2}{d_{w+1}}
&= \frac{m_{u,w}^2}{d_w} + \frac{a_{u,w+1}^2 d_w}{d_{w+1}} + \frac{b_{w+1}^2(y_{u,w+1} - a_{u,w+1})^2}{d_{w+1}} - \frac{2 a_{u,w+1} d_w}{d_{w+1}} y_{u,w+1} \,. 
\end{align*}
Telescoping shows that
\begin{align*}
Z_{u,v} = 
\underbracket{-\frac{1}{2} \sum_{w=1}^v \frac{b_w^2 a_{u,w}^2}{d_w} }_{\textrm{A}_{u,v}} 
+ \underbracket{\sum_{w=1}^v \frac{b_w^2 a_{u,w} y_{u,w}}{d_w}}_{\textrm{B}_{u,v}} 
+ \underbracket{\frac{1}{2} \sum_{w=1}^v \frac{b_w^2 (y_{u,w} - a_{u,w})^2}{d_w} - \frac{1}{2} \log d_v}_{\textrm{C}_{u,v}} \,. 
\end{align*}
By definition, $b_w = w^{-1/4} / d^{1/2}$ and $d_w = \Omega(w^{1/2}/d)$. Furthermore,
\begin{align*}
    \E[a_{u,w}^2] = b_w^2 \left(\left(\frac{d_{w-1}-1}{d_{w-1}}\right)^2 + \frac{d_{w-1}-1}{d_{w-1}^2}\right) = O(b_w^2)
    \text{ and } \E[y_{u,w}^2] = O(1)\,.
\end{align*}
Therefore, by Cauchy-Schwarz and Chebyshev's inequality, $|\sum_{u=1}^{d-1} \textrm{A}_{u,v}| = O_p(1)$ 
and $|\sum_{u=1}^{d-1} \textrm{B}_{u,v}| = O_p(1)$ and
\begin{align*}
\left|\sum_{u=1}^{d-1}\textrm{C}_{u,v}\right| 
&= \frac{1}{2} \left|\sum_{u=1}^{d-1}\sum_{w=1}^v \frac{b_w^2 (y_{u,w} - a_{u,w})^2}{d_w} - \log d_v\right| \\
&= \frac{1}{2} \left|\sum_{u=1}^{d-1}\sum_{w=1}^v \frac{b_w^2 (b_w + \error_{u,w})^2}{d_w} - \log d_v\right| \\
&= \frac{1}{2} \left|\sum_{u=1}^{d-1}\sum_{w=1}^v \frac{b_w^4 + b_w^2 \error_{u,w}^2 + 2b_w^3 \error_{u,w}}{d_w} - \log d_v\right| \\
&= \frac{1}{2} \left|\sum_{u=1}^{d-1}\sum_{w=1}^v \frac{b_w^2}{d_w} - \log d_v\right| + O_p(1) \\
&= O_p(1)\,,
\end{align*}
where the second last equality follows from Chebyshev's inequality and the last by an elementary integral approximation.
Therefore $|\sum_{u=1}^{d - 1} Z_{u,v}| = O_p(1)$, which
completes the proof of (\ref{eq:lower-a}).
\subsubsection*{Proof of bound \ref{eq:lower-b}} 
By definition,
\begin{align*}
\SigmaMat_n
= \left[\begin{matrix}
  \sum_{w=1}^v b_w^2 & 0 & \cdots & \sum_{w=1}^v a_{1,w} b_w \\
  0 & \ddots & 0 & \vdots \\
  \vdots & 0 & \sum_{w=1}^v b_w^2 & \sum_{w=1}^v a_{d-1,w} b_w \\
  \sum_{w=1}^v a_{1,w} b_w & \cdots & \sum_{w=1}^v a_{d-1,w} b_w & \sum_{u=1}^{d-1} \sum_{w=1}^v a_{u,v}^2 
\end{matrix}\right]
\end{align*}
Using the formula for the inverse of a block matrix,
\begin{align}
1/\norm{e_d}^2_{\SigmaMat_n^{-1}}
&=1/(\SigmaMat_n^{-1})_{d,d} \nonumber \\
&= \sum_{u=1}^{d-1} \sum_{w=1}^v a_{u,w}^2 - \sum_{u=1}^{d-1} \frac{\left(\sum_{w=1}^v a_{u,w} b_w\right)^2}{\sum_{w=1}^v b_w^2} \nonumber \\ 
&= \sum_{u=1}^{d-1} \sum_{w=1}^v b_w^2\left(\thetastarScalar_u + a_{u,w}/b_w\right)^2 - \sum_{u=1}^{d-1} \left(\frac{\sum_{w=1}^v b_w^2(\thetastarScalar_u + a_{u,w}/b_{u,w})}{\sqrt{\sum_{i=1}^n b_w^2}} \right)^2 \,. 
\label{eq:lower:2-part}
\end{align}
By the definition of $a_{u,w}$,
\begin{align*}
\thetastarScalar_u + \frac{a_{u,w}}{b_w} 
= \frac{\thetastarScalar_u - \sum_{s=1}^{w-1} b_s \error_{u,s}}{d_{w-1}} \,.
\end{align*}
Therefore, 
\begin{align}
\sum_{u=1}^{d-1} \sum_{w=1}^v b_w^2(\thetastarScalar_u + a_{u,w}/b_w)^2 
&= \sum_{u=1}^{d-1} \sum_{w=1}^v \frac{b_w^2}{d_{w-1}^2} \left(\thetastarScalar_u - \sum_{s=1}^{w-1} b_s \epsilon_{u,s}\right)^2 \nonumber \\
&\geq 
\sum_{u=1}^{d-1} \sum_{w=1}^v \frac{b_w^2}{d_{w-1}^2} \left(\sum_{s=1}^{w-1} b_s \epsilon_{u,s}\right)^2 
- 2 \sum_{u=1}^{d-1} \thetastarScalar_u \sum_{w=1}^v \frac{b_w^2}{d_{w-1}^2} \sum_{s=1}^{w-1} b_s \epsilon_{u,s} \nonumber \\ 
&= \sum_{u=1}^{d-1} \sum_{w=1}^v \frac{b_w^2}{d_{w-1}^2} \left(\sum_{s=1}^{w-1} b_s \epsilon_{u,s}\right)^2 - 2 \sum_{u=1}^{d-1} \theta^*_u \sum_{s=1}^{v-1} b_s \epsilon_{u,s} \sum_{w=s+1}^v \frac{b_w^2}{d_{w-1}^2} \nonumber \\
&\geq \sum_{u=1}^{d-1}\sum_{w=1}^v \frac{b_w^2}{d_{w-1}^2} \left(\sum_{s=1}^{w-1} b_s \epsilon_{u,s}\right)^2 - O_p(d)\,,
\label{eq:lower3}
\end{align}
where the final inequality follows because $\theta^*_2$ is distributed like a standard Gaussian and
\begin{align}
    \sum_{u=1}^{d-1} \sum_{s=1}^{v-1} b_s \epsilon_{u,s} \sum_{w=s+1}^n \frac{b_w^2}{d_{w-1}^2} &\stackrel{d}=
    \mathcal N\left(0, \beta^2\right)\,, &
    \beta^2 &= \sum_{u=1}^{d-1} \sum_{s=1}^{v-1} b_s^2 \left(\sum_{w=s+1}^n \frac{b_w^2}{d_{w-1}^2}\right)^2 = O(d) \,.
    \label{eq:lower:beta}
\end{align}
The last calculation needed to lower bound the first term in \cref{eq:lower:2-part} is to bound
the sum in \cref{eq:lower3}:
\begin{align}
    \sum_{u=1}^{d-1} \sum_{w=1}^v \frac{b_w^2}{d_{w-1}^2} \left(\sum_{s=1}^{w-1} b_s \epsilon_{u,s}\right)^2
    &= \sum_{u=1}^{d-1} \sum_{w=1}^{v-1} \sum_{s=1}^{n-1} \epsilon_{u,w} \epsilon_{u,s} A_{w,s}\,,
    \label{eq:chaos}
\end{align}
where
\begin{align*}
A_{w,s} = b_w b_s \sum_{t=\max(w,s)+1}^v \frac{b_t^2}{d_{t-1}^2}\,.
\end{align*}
Viewing $A$ as an $(n-1)\times(n-1)$ matrix, the expression in
\cref{eq:chaos} is a sum of Gaussian chaos'. A simple calculation
using the facts that $b_i = i^{-1/4} / \sqrt{d}$ and $d_{i-1}^2 = \Omega(i/d^2)$ shows that
\begin{align*}
    \tr(A) &= \Omega(\log(v/d^2)) &
    \norm{A}_F &= O(\sqrt{\log(v)}) \,.
\end{align*}
where $\norm{A}_F$ is Frobenius norm of matrix $A$. By the Hansen--Write inequality \citep[Theorem 6.2.1]{vershynin2018high},
\begin{align*}
\E\left[\sum_{w=1}^{v-1} \sum_{s=1}^{v-1} \epsilon_{u,w} \epsilon_{u,s} A_{w,s}\right] &= \tr(A_{w,s}) = \Omega(\log(v/d^2)) \\
  \V\left[\sum_{w=1}^{v-1} \sum_{s=1}^{v-1} \epsilon_{u,w} \epsilon_{u,s} A_{w,s}\right] &= O(\norm{A}_F^2) = O(\log(v)) \,.
\end{align*}
Therefore, by Chebyshev's inequality, the right-hand side of \cref{eq:chaos} is $\Omega_P(d \log(v/d^2))$.
Moving to the second term in \cref{eq:lower:2-part},
\begin{align*}
\sum_{u=1}^{d-1} \left(\frac{\sum_{w=1}^v b_w^2(\thetastarScalar_u +
  a_{u,w}/b_w)}{\sqrt{\sum_{w=1}^v b_w^2}}\right)^2 
&= \sum_{u=1}^{d-1} \left(\frac{\sum_{w=1}^v \frac{b_w^2}{d_{w-1}}\left(\thetastarScalar_u
  - \sum_{s=1}^{w-1} b_s \error_{u,s}\right)}{\sqrt{\sum_{w=1}^v
    b_w^2}}\right)^2 \\
&\leq \frac{2 \sum_{u=1}^{d-1} (\theta_u^*)^2 \left(\sum_{w=1}^v \frac{b_w^2}{d_{w-1}}\right)^2
+ 2 \left(\sum_{w=1}^v \frac{b_w^2}{d_{w-1}} \sum_{s=1}^{w-1} b_s \epsilon_{u,s}\right)^2}{\sum_{w=1}^v b_w^2} \\
&= O_p(d)\,,
\end{align*}
where in the final inequality we used the fact that $\theta^*_u$ is distributed like a standard
Gaussian for $u \in \{1,\ldots,d-1\}$ and $(\sum_{w=1}^v b_w^2/d_{w-1})^2 / (\sum_{w=1}^v b_w^2) = O(1)$ to bound the first
term while the second term was bounded using the same calculation as in \cref{eq:lower:beta}.
Therefore
\begin{align*}
    1/\norm{e_d}^2_{\SigmaMat_n^{-1}} = \Omega_P\left(\log(v/d^2)\right) = \Omega_P\left(\log(n/d^3)\right)\,.
\end{align*}
which completes the proof of \cref{eq:lower-b}.

\bibliography{koulikpaper}

\end{document}